\documentclass[10pt]{article}
\usepackage[
a4paper,
left=2.1cm,
right=2.1cm,
top=2.1cm,
bottom=2.1cm
]{geometry}

\usepackage[utf8]{inputenc}

\usepackage{amsmath,amsthm,amssymb,amsopn,bm,bbm}
\usepackage{indentfirst}
\usepackage[section]{placeins}
\numberwithin{equation}{section}

\usepackage[table,svgnames]{xcolor}
\usepackage{graphicx}
\usepackage{tikz}
\usetikzlibrary{patterns}
\usetikzlibrary{calc} 
\usepackage{enumitem,multicol}

\usepackage{caption}
\usepackage{subcaption}
\usepackage{booktabs}



\usepackage{hyperref}

\setlength{\parindent}{0pt}
\setlength{\parskip}{2pt}
\setlength{\overfullrule}{0pt}


\newtheorem{thm}{Theorem}
\newtheorem{theorem}[thm]{Theorem}
\numberwithin{thm}{section}
\newtheorem{lemma}[thm]{Lemma}

\newtheorem{proposition}[thm]{Proposition}
\newtheorem{rem+}[thm]{Remark}
\newtheorem{remark}[thm]{Remark}

\newtheorem{definition}[thm]{Definition}





\newcommand{\ds}{\displaystyle}


\usepackage{mathtools} 

\usepackage{braket}



\usepackage{setspace}



\setlength{\parskip}{2pt}



\def\ed{{\varepsilon\delta}}

\def\d{{\delta}}
\def\1{{\bf 1}}

\def\D{{\mathbb{D}}}
\def\A{{\mathbb{A}}}
\def\R{{\mathbb{R}}}
\def\N{{\mathbb{N}}}
\def\Z{{\mathbb{Z}}}
\def\Q{{\mathbb{Q}}}

\def\H{{\mathbb{H}}}

\def\W{{\mathbb{W}}}

\def\p{{\partial}}

\def\Ga{{\bf a}}

\def\Ge{{\bf e}}

\def\Gu{{\bf u}}

\def\GA{{\bf A}}
\def\GB{{\bf B}}

\def\GE{{\bf E}}

\def\GH{{\bf H}}
\def\GI{{\bf I}}

\def\GL{{\bf L}}
\def\GM{{\bf M}}

\def\GR{{\bf R}}
\def\GS{{\bf S}}

\def\GU{{\bf U}}
\def\GV{{\bf V}}

\def\cS{{\cal S}}

\def\cQ{{\cal Q}}

\def\O{{\Omega}}
\def\o{{\omega}}
\def\cJ{{\cal J}}
\def\cU{{\cal U}}
\def\cR{{\cal R}}

\def\cA{{\cal A}}
\def\cV{{\cal V}}
\def\cC{{\cal C}}

\def\cO{{\cal O}}

\def\cY{{\cal Y}}

\def\cH{{\cal  H}}

\def\cT{{\cal  T}}

\def\cH{{\cal  H}}
\def\cK{{\mathcal K}}
\def\cV{{\mathcal V}}

\def\fv{\frak{v}}
\def\fs{\frak{s}}

\def\fI{\frak{I}}
\def\fG{\frak{G}}

\def\Ted{\cT_\ed}

\def\dia{{\diamond}}
\def\bt{{\blacktriangle}}

\def\wt{\widetilde}
\def\wh{\widehat}
\def\X{{\times}}
\def\e{\varepsilon}
\def\ov{\overline}

\def\fu{\frak{u}}
\def\fU{\frak{U}}

\def\fF{\frak{F}}

\def\fV{\frak{V}}

\def\fr{\frak{r}}
\def\fv{\frak{v}}

\def\fV{\frak{V}}

\def\a{{\alpha}}
\def\b{{\beta}}

\def\bt{\blacktriangle}

\definecolor{skyblue}{RGB}{135,206,235}
\definecolor{deepskyblue}{RGB}{0, 191, 255}

\title{Homogenization of  a thin linear elastic plate reinforced with a   periodic mosaic of small rigid plates}

\author{%
	Amartya Chakrabortty\thanks{SMS, Fraunhofer ITWM, Kaiserslautern 67663, Germany. \texttt{amartya.chakrabortty@itwm.fraunhofer.de}}\hskip 10mm
	Georges Griso\thanks{Laboratoire Jacques-Louis Lions (LJLL), Sorbonne Universit\'e, CNRS, Universit\'e de Paris, F-75005 Paris, France. \texttt{griso@ljll.math.upmc.fr}}\hskip 10mm 
	Julia Orlik\thanks{SMS, Fraunhofer ITWM, Kaiserslautern 67663, Germany. \texttt{julia.orlik@itwm.fraunhofer.de}}
}

\date{}

\makeatletter
\renewcommand{\maketitle}{%
	\begin{center}
		{\LARGE \@title \par}
		\vskip 0.8em
		{\large \@author}
	\end{center}
	\vskip -1em
	\@thanks  
}
\makeatother

\newcommand{\keywords}[1]{\par\vspace{1ex}\noindent\textbf{Keywords:} #1}
\newcommand{\subjclass}[1]{\par\noindent\textbf{MSC 2020:} #1}

\begin{document}
	
	\maketitle
	
	\begin{abstract}		
In the framework of linearized elasticity, we study thin elastic composite plates with thickness $\delta$. The plates contain small, rigid rectangular plates distributed periodically along $\e$. Between two neighboring rigid plates is an elastic beam with thickness $\delta < \varepsilon/3 < 1$.   Through a simultaneous process of homogenization and dimension reduction, we obtain the limit model. Our analysis yields Korn-type inequalities adapted to the rigid-elastic geometry of the structure and provides a precise characterization of the limit deformation and displacement fields. In the  $2$D limit problem, the bending is the sum of two functions, each depending on only one variable. This is due to the fact that the mixed derivatives of the outer-plane displacement vanish. Finally, the limiting 2D problem is two decoupled plates or strips, each one with just three degrees of freedom:  shear along the strip axis, the cross-contraction (-extension), and the cross-bending.
The corresponding correctors are defined in the same way in the periodicity cell. 
In the linearized setting, all the correctors are decomposed.

\end{abstract}

\keywords{Homogenization, dimension reduction, Kirchhoff-Love plate theory, rigid inclusions,  non-penetration condition.}

\subjclass{74K20, 74Q05, 74B05, 35B27, 74E30, 35Q74}

\section{Introduction}

The mathematical analysis of thin elastic structures has a long history. Starting from the classical Kirchhoff--Love and Reissner--Mindlin theories, a variety of reduced models have been derived from three-dimensional elasticity by means of asymptotic analysis or $\Gamma$-convergence methods (see, e.g., \cite{ciarlet1997mathematical,friesecke2002theorem,Ciar01}). In parallel, the homogenization of heterogeneous elastic materials with periodic structures has been extensively studied (see \cite{bensoussan1978asymptotic,allaire1992homogenization,nguetseng1989,UFO1,Ole}), leading to the concept of effective elastic tensors that capture the influence of micro-scale heterogeneity.

When both small thickness and periodic heterogeneity are present, the mechanical behavior of the structure depends on the interplay between the two small parameters: the plate thickness $\delta$ and the periodicity $\varepsilon$. The simultaneous limit $(\varepsilon,\delta) \to (0,0)$ therefore requires a careful multiscale analysis, since the corresponding limits do not commute \cite{Nedelec01}. Several studies have addressed related problems for thin periodic plates, shells, textiles, tubes and general $3$D structures (see, for instance, \cite{Migunova01, Migunova02, Julia02, Neu1, Friedrich01,GGDecomplPlate,larysa01,larysa02,larysa03,GFOW01,GOW01,hauck01,Panasenko01,Panasenko02}), but the presence of rigid inclusions with a non-penetration constraint introduces additional challenges. In particular, the rigidity constraint in the inclusions restricts the admissible displacement fields and affects the limit behavior in a nontrivial way. 

For homogenization of high-contrast composites, see \cite{High01,High02}. For homogenization without dimension reduction of composites with rigid inclusions, we refer to \cite{Kreisbeck01,Kreisbeck02,Kreisbeck03}. Simultaneous homogenization and dimension reduction have been addressed in \cite{Amartya01,Schmidt01} for high-contrast composites with soft inclusions, and in \cite{Amartya02} for thin composites with rigid inclusions subjected to nonlinear elasticity in the von K\'arm\'an regime.

In this work, we focus on thin elastic composites containing periodically distributed rigid inclusions; such composites play a crucial role in modern engineering applications (see \cite{julia01,glass}) because of their ability to conform to prescribed shapes without folding. Understanding their effective (or homogenized) two-dimensional mechanical behavior is therefore of significant theoretical and practical importance for design optimization.

\medskip

This paper presents a derivation of a limit model obtained through the simultaneous homogenization and dimension reduction of thin elastic plates reinforced with rigid substructures, within the framework of linearized elasticity. 
The structure under consideration, denoted by $\Omega_\delta$, consists of a connected elastic matrix forming a periodic framework of thin, straight rods intersecting along two mutually orthogonal in-plane directions, thereby creating a periodic structural frame $\Omega^S_{\varepsilon\delta}$ (the connected soft part). The resulting perforations are filled with rigid inclusions $\Omega^H_{\varepsilon\delta}$ (the disconnected hard part), see Figure~\ref{fig:01}. The rigidity of the hard inclusions is enforced by requiring that the infinitesimal (linearized) strain tensor $e(u)$ vanishes in $\Omega^H_{\varepsilon\delta}$.

Unlike \cite{larysa01,larysa02,GFOW01,GOW01}, which deal with beam structures with free lateral boundaries (homogeneous Neumann conditions on the lateral beam surfaces), in our setting the pores are filled with rigid (strain-free) subdomains that preserve distances. This corresponds to imposing inhomogeneous Dirichlet conditions on the holes, i.e., on the corresponding parts of the lateral beam surfaces. These Dirichlet boundary data are obtained by restricting an interpolation of the rigid displacements of the inclusions to their boundaries. A similar concept of inclusions in contact with the matrix, controlled via tangential jumps, was developed in \cite{CDO,Julia02,Migunova01}. We also refer to \cite{masonary1,masonary2} for related results on masonry structures.

This study concerns three main aspects: (i) the thickness and periodicity parameters $\delta$ and $\varepsilon$, respectively, satisfying
\begin{equation}\label{AS0}
	0 < 3\delta < \varepsilon < 1, \quad \text{and} \quad
	\lim_{(\varepsilon,\delta)\to(0,0)} \frac{\delta}{\varepsilon} = 0,
\end{equation}
(ii) rigidity on the disconnected hard parts, and (iii) a non-penetration constraint. With this perspective, we assume that two consecutive rigid plates in the in-plane directions satisfy a non-penetration condition (NPC). Due to this constraint, the elasticity problem is formulated as a variational inequality. In the linearized setting, this assumption requires some justification, since violation of the non-penetration condition can occur only in rather degenerate situations, such as loss of coercivity of the linear material or special choices of the Poisson ratio that allow the inter-layer to shear until its thickness effectively collapses to a line. However, in the context of large deformations, as considered in our later studies, a multiaxial deformation of the elastic part may lead to contact between contiguous rigid substructures. We therefore impose the non-penetration conditions on the admissible fields and formulate the problem from the outset as a variational inequality (similar to \cite{CDO,Julia02,GOW01,GFOW01}), so as to obtain a formulation that remains valid for other regimes, such as the von K\'arm\'an and large-deformation regimes. 

Next, we recall a few mathematical tools, mostly developed by the second author, that are used in this paper.  
The orthotropic character of the structure allows us to improve the Kirchhoff--Love plate decomposition from \cite{GGDecomplPlate}, \cite[Chapter~11]{PUM}, \cite{larysa03}, and \cite{GGKL} by cancelling the shear terms (see the proof in the Appendix).
By an interpolation technique, the rigid displacements of the disconnected rigid substructures are extended to the whole domain. Such extensions are often used for perforated structures with Neumann conditions on the perforations; see, for example, \cite{PUM,larysa03,larysa01,larysa02,Migunova02}. In contrast, under homogeneous Dirichlet boundary conditions one usually extends by zero.

In this paper, we also present polynomial approximations (in this case even the exact solutions $\cU^\bt$ and $\cU^{\bt\bt}$), which strongly converge to the main limiting fields and correctors. Similar tools can be found in \cite{lattice,anisotropic,africangirls}.

We also point out that, in our setting, the inter-layer between two contiguous plates is thin and elastic, i.e.~softer than the plates themselves. If we fix $\varepsilon$ and let $\delta \to 0$, then the corrector problems and their corresponding macroscopic strain components coincide with those obtained in \cite{geymonat} and \cite{Migunova01}. The third corrector corresponds to a cylindrical hinge between two rigid plates.

The main mathematical novelty of this work lies in the analytical framework developed to handle the interplay between dimension reduction, high-contrast homogenization, and unilateral (non-penetration) constraints within linearized elasticity. In particular, the paper introduces a new decomposition of displacements consistent with the rigidity constraint, establishes uniform Korn-type estimates adapted to the hard/soft composite structure, and employs rescaled unfolding operators to capture the multiscale limit behavior.
These tools allow for the derivation of the limit variational equation governing the effective plate model. Below, we summarize the main analytical techniques and results.

\begin{itemize}
	\item In Section~\ref{S04}, we establish that every admissible displacement can be decomposed into a Kirchhoff--Love (KL) type displacement and a residual part satisfying the rigidity constraint, i.e.,
	\[
	u(x) = U_{KL}(x) + \wt u(x)
	= \begin{pmatrix}
		\ds \fU_1(x') - x_3 \p_1 \fU_3(x') \\[1.5mm]
		\ds \fU_2(x') - x_3 \p_2 \fU_3(x') \\[1.5mm]
		\fU_3(x') + \fu(x')
	\end{pmatrix} + \wt u(x),
	\quad \text{for a.e. } x = (x',x_3) \in \Omega_\delta,
	\]
	such that
	\[
	e(u) = e(U_{KL})=0, \quad \text{and} \quad \wt u = 0 \quad \text{a.e. in } \Omega^H_{\varepsilon\delta}.
	\]
	The membrane displacement $\fU_m = \fU_1 \mathbf{e}_1 + \fU_2 \mathbf{e}_2$ and the bending displacement $\fU_3$ satisfy the following Korn-type estimates:
	\[
	\|\fU_3\|_{H^1(\omega_\delta)} \leq \frac{C}{\varepsilon^{1/2}\delta} \|e(u)\|_{L^2(\Omega_\delta)}, 
	\quad 
	\|\fU_m\|_{L^2(\omega_\delta)} \leq \frac{C}{\varepsilon^{1/2}} \|e(u)\|_{L^2(\Omega_\delta)},
	\]
	which are sharper than those obtained using classical tools such as the Poincar\'e inequality.
	
	\item We also derive estimates of different orders for higher derivatives:
	\[
	\|\partial^2_{\alpha\beta}\fU_3\|_{L^2(\omega_\delta)} \leq \frac{C}{\delta^{3/2}} \|e(u)\|_{L^2(\Omega_\delta)}, 
	\quad 
	\|\partial_\alpha \fU_m\|_{L^2(\omega_\delta)} \leq \frac{C}{\delta^{1/2}} \|e(u)\|_{L^2(\Omega_\delta)}.
	\]
	Due to the difference in sharpness of these estimates, in Section~\ref{S05} we construct additional global fields using $\mathcal{Q}_1$-interpolation to ensure sufficient regularity of the unfolded limits of $\fU_m$ and $\fU_3$. In particular, we construct $\cU^\diamond$ and $\cR^\diamond$ in Subsection~\ref{SS42}, using $\mathcal{Q}_1$-interpolation on cells isomorphic to $(0,\varepsilon)^2$, to ensure sufficient regularity for the limit macroscopic fields. In Subsections~\ref{SS43}--\ref{SS44}, we define $\cR$, $\cU$, $\cU^\bt$, and $\cU^{\bt\bt}$ to obtain the exact unfolded limits presented in Section~\ref{S07}, specifically in Lemma~\ref{lem61}. For this construction, we divide each cell $(-\tfrac{\delta}{2}, \varepsilon - \tfrac{\delta}{2})^2$ into the union of two squares and two rectangles (see Figure~\ref{FigXX}), and perform $\mathcal{Q}_1$-interpolation on each of these subdomains.
	
	\item The main tool used to derive the explicit form of the limit strain tensor in Section~\ref{S07} relies on two rescaled unfolding operators (introduced in Section~\ref{S06}), together with the KL-decomposition and the global fields. First, in Lemma~\ref{lem61}, we derive the macroscopic displacement fields. Then, using the two-dimensional rescaled unfolding operator, we obtain the unfolded limits of these fields. Finally, in Lemma~\ref{L64}, we establish the unfolded limit of the infinitesimal (linearized) strain tensor by means of the three-dimensional rescaled unfolding operator. All microscopic fields depend on the direction corresponding to the two in-plane axes.
	
	\item The construction of global fields in Section~\ref{S05} also provides insight into the definition of the recovery sequence. The recovery sequence itself is constructed in Section~\ref{SS81}. In particular, in Theorems~\ref{lemAp5}--\ref{lemAp7}, we show that the spaces $W^{1,\infty}(\omega_\delta)$ with vanishing gradient, and $W^{2,\infty}((-\delta/2, L+\delta/2))$ with vanishing second derivative on the hard part, are dense in $W^{1,\infty}(\omega)$ and $W^{2,\infty}(0,L)$, respectively, in the sense of the unfolding operators.
	
	\item In Section~\ref{S09} (see Subsection~\ref{SS73} for the derivation), we define the set of limit displacements, denoted by $\D_0$. In Theorem~\ref{Th82}, we formulate the limit unfolded problem and establish the strong convergence of the strain tensor via the convergence of the energy, given by
	\[
	\lim_{(\e,\d)\to(0,0)}\frac{1}{\e\d^4}\int_{\O^S_{\e\d}}\A_{\ed,ijkl}(x)e_{ij}(u_\ed)e_{kl}(u_\ed)\,dx
	= \sum_{\a=1}^2\int_{\o}\cA^{(\a)}E^{(\a)}(\fU)\cdot E^{(\a)}(\fV)\,dx',
	\]
	where
	\begin{equation*}
		\begin{aligned}
			E^{(1)}(\fV)&=\begin{pmatrix*}
				\p_2\fV_1\\
				\p_2\fV_2\\
				\p^2_{22}\fV_3
			\end{pmatrix*},\qquad
			E^{(2)}(\fV)=\begin{pmatrix*}
				\p_1\fV_2\\
				\p_1\fV_1\\
				\p^2_{11}\fV_3
			\end{pmatrix*},\qquad\forall\,\fV\in\D_0.
		\end{aligned}
	\end{equation*}
	Then, by solving the associated cell problems and introducing suitable correctors, which are the solutions of the following auxiliary problems
	\begin{equation*}
		\left.\begin{aligned}
			\int_{\cY_\a}\A_{ijkl}^{(\a)}(y)\big(\GM_{r,ij}^{(\a)}+E^{(\a)}_{ij}(0,\chi^{(\a)}_r)\big)E^{(\a)}_{kl}(0,\wh v^{(\a)})\,dy= 0,		
		\end{aligned}\right.\quad\forall\,\wh v^{(\a)}\in \GL^2((0,1)_{y_\a},H^1(\fI^2)),
	\end{equation*}
	where $\GM^{(\a)}_r$ for $\a=1,2$ and $r=1,2,3$ are $3\times3$ symmetric matrices given by 
	\[
	\begin{aligned}
		\GM^{(1)}_1&=\begin{pmatrix}
			0 & \ds\frac{1}{ 2} & 0\\
			\ds\frac{1}{2} & 0 & 0\\
			0 & 0 & 0
		\end{pmatrix},\quad &&
		\GM^{(1)}_2=\begin{pmatrix}
			0 & 0 & 0\\
			0 & 1 & 0\\
			0 & 0 & 0
		\end{pmatrix},\quad &&
		\GM^{(1)}_3=\begin{pmatrix}
			0 & 0 & 0\\
			0 & -y_3 & 0\\
			0 & 0 & 0
		\end{pmatrix},\\
		\GM^{(2)}_1&=\begin{pmatrix}
			1 & 0 & 0\\
			0 & 0 & 0\\
			0 & 0 & 0
		\end{pmatrix},\quad &&
		\GM^{(2)}_2=\begin{pmatrix}
			0 & \ds\frac{1}{2} & 0\\
			\ds\frac{1}{2} & 0 & 0\\
			0 & 0 & 0
		\end{pmatrix},\quad &&
		\GM^{(2)}_3=\begin{pmatrix}
			-y_3 & 0 & 0\\
			0 & 0 & 0\\
			0 & 0 & 0
		\end{pmatrix}.
	\end{aligned}
	\]
	Observe that the correctors have zero values on the lateral boundary parts with the normal belonging to the plate midplane and have zero Neumann conditions on the lateral parts with the normal in the $x_3$-direction.
	
	Finally, we characterize the homogenized problem and compute the effective (homogenized) tensor, given by
	\begin{equation*}
		\begin{aligned}
			\cA_{rs}^{(\a)}&=\int_{\cY_\a}\A^{(\a)}_{ijkl}(y)\left[\GM_{r,ij}^{(\a)}+E^{(\a)}_{ij}(0,\chi_r^{(\a)})\right]\left[\GM_{s,kl}^{(\a)}+E^{(\a)}_{kl}(0,\chi_s^{(\a)})\right]\,dy
		\end{aligned}
	\end{equation*}
	for $r,s\in\{1,2,3\}$.
	
	\item Technical proofs are presented in Appendix~\ref{AppA01}--\ref{SAppC}.
\end{itemize}

We end the introduction by drawing the reader's attention to the limit problem, in which the out-of-plane limit field decomposes into two components (i.e. $\p^2_{12}\fU_3=0$ a.e.~in $\o$), and for each $\a$ the corresponding component depends only on the single in-plane variable $x_{3-\a}$. In the linear setting this decomposition extends to all quantities, including the three correctors. In future studies of nonlinear problems on the same topology, this decomposition will yield natural macroscopic constraints, which are particularly relevant for applications such as $3$D-printing technologies.

\subsection*{Notations} 
The following notation will be used:
\begin{itemize}
\item  $\o\doteq(0,L)^2$, $\gamma\doteq\{0\}\X(0,l)\cup (0,l)\X\{0\}$ with $0<l< L$, $\o_\d\doteq\ds\Big(-{\d\over 2},L+{\d\over 2}\Big)^2$,
\item  $\ds \fI\doteq \Big(-{1\over 2},{1\over 2}\Big)$, $\ds \fI_\kappa\doteq \Big(-{\kappa\over 2},{\kappa\over 2}\Big)$, $\kappa>0$, 
\item $Y_1\doteq(0,1)\X\ds \fI$, $Y_2\doteq\ds \fI\X(0,1)$, and $\cY_\alpha\doteq Y_\alpha\X\ds \fI$, $\alpha\in\{1,2\}$,
\item we choose $\e$ such that $\ds {L\over \e}\doteq N_\e\in \N^*$, $\ds {l\over \e}\doteq n_\e\in \N^*$\footnotemark,
\footnotetext{$ $ This implies that $\ds {L\over l}\in \Q^*$.}
\item  $\cK_\e\doteq \{0,\ldots,N_\e\}^2$,\quad   $\cK^*_\e\doteq \{0,\ldots,N_\e-1\}^2$,\quad $\ov{\cK}_\e\doteq \{-1,\ldots,N_\e\}^2$,\\
 $\cK_\e^{(1)}\doteq\{0,\ldots,N_\e-1\}\X\{0,\ldots,N_\e\}$,\quad $\cK_\e^{(2)}\doteq\{0,\ldots,N_\e\}\X\{0,\ldots,N_\e-1\}$,
\item  $\o_{pq}\doteq (p\e,p\e+\e)\X(q\e,q\e+\e)$,\\[1mm]
 $\ds \wt{\o}_{pq}\doteq \Big(p\e-{\d\over 2},(p+1)\e+{\d\over 2}\Big)\X \Big(q\e-{\d\over 2},(q+1)\e+{\d\over 2}\Big)$, \\[1mm]
$\ds \o^H_{pq}\doteq \Big(p\e+{\d\over 2},(p+1)\e-{\d\over 2}\Big)\X \Big(q\e+{\d\over 2},(q+1)\e-{\d\over 2}\Big)$,   $(p,q)\in \Z^2$,
\item $\GM_3$ is the space of $3\times 3$  real matrices, $\GS_3$ the space space of real $3\X 3$ symmetric matrices,  $\GI_3$  the identity matrix of $\GM_3$,
\item $\{\Ge_i\}_{\{1\leq i\leq n\}}$ the standard basis of $\R^n$, $n\geq 1$, $|\cdot|$ the standard Euclidean norm in $\R^n$,  $n\geq 1$, and Frobenius norm in $\GM_3$,
\item  $x'\doteq(x_1,x_2)$ the current point in $\R^2$ and $x\doteq(x',x_3)\doteq(x_1,x_2,x_3)$ the current point in $\R^3$,
\item  $\ds \partial_i\doteq\frac{\partial}{\partial x_i}$, $\ds\p^2_{ij}\doteq{\p^2\over \p x_i\p x_j}$ for $i,\,j\in \{1,2,3\}$, and $\ds d_\alpha\doteq{d\over dx_\alpha},\quad d^2_{\a\a}\doteq{d^2\over dx_\alpha^2},\quad \alpha\in\{1,2\}$,
\item  $\ds \partial_{y_i}\doteq\frac{\partial}{\partial y_i}$, for $i\in \{1,2,3\}$, 
\item   for any $u\in H^1(\O_\d)^3$  the linearized strain tensor $e(u)$ is the $3\X 3$ symmetric matrix whose entries are
\begin{equation*}
e_{ij}(u)={1\over 2}\big(\p_i u_j+\p_j u_i\big),\qquad \forall (i,j)\in \{1,2,3\}^2.
\end{equation*}
\end{itemize} 
Throughout this paper, we choose the parameters $\e$ and $\d$ such that  $0< 3\d < \e$ and
\begin{equation}\label{As}
	\lim_{(\e,\d)\to(0,0)}{\d\over \e}=0.
\end{equation}  The Greek letters  $\alpha,\, \beta$ belong to $\{1,2\}$ and the Latin letters  $i,\,j,\,k,\,l$ to $ \{1,2,3\}$ (if not specified). We use the Einstein convention of summation over repeated indices.
\section{Problem setting}\label{S03}
In this section, we describe the domain, the set of admissible displacements and the linearized elasticity problem.
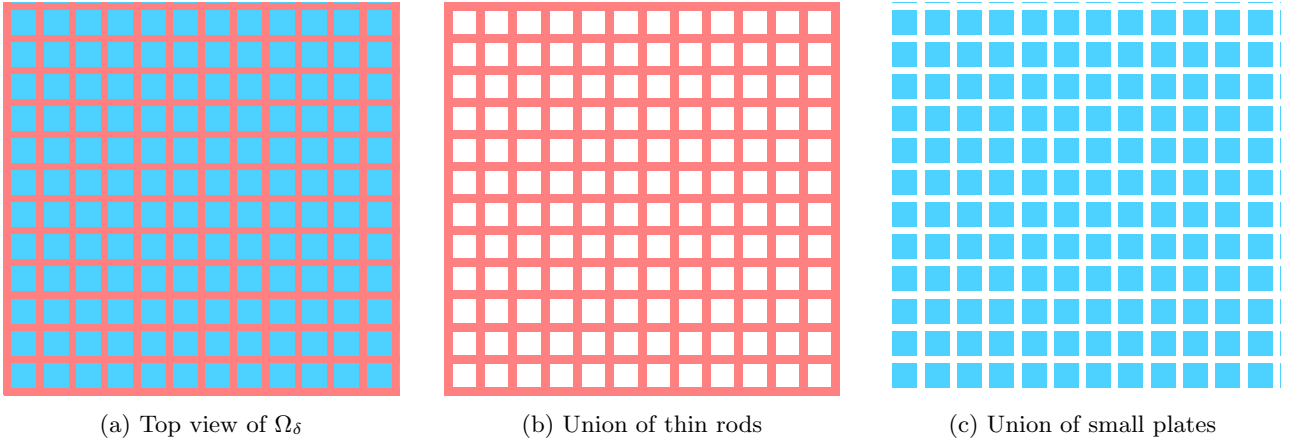
\begin{figure}[ht]
	\centering
	
	\begin{subfigure}[t]{0.31\textwidth}
		\centering
		\resizebox{\linewidth}{!}{%
			\begin{tikzpicture}[line cap=round, line join=round]
				\tikzset{hard/.style={fill=DeepSkyBlue!70}, soft/.style={fill=red!50}}
				\def\rod{0.25}\def\plate{0.75}\def\W{12.25}\def\H{12}
				\clip (0,-\rod) rectangle (\W,\H);
				\foreach \i in {0,...,13}{\fill[soft] (\i,-\rod) rectangle (\i+\rod,\H);}
				\foreach \j in {0,...,12}{\fill[soft] (0,\j-\rod) rectangle (\W,\j);}
				\foreach \i in {0,...,20}{
					\foreach \j in {0,...,14}{
						\pgfmathsetmacro\x{0.25+\i}
						\pgfmathsetmacro\y{0.00+\j}
						\fill[hard] (\x,\y) rectangle (\x+\plate,\y+\plate);
					}
				}
		\end{tikzpicture}}
		\caption{Top view of $\Omega_\delta$}
	\end{subfigure}\hfill%
	\begin{subfigure}[t]{0.31\textwidth}
		\centering
		\resizebox{\linewidth}{!}{%
			\begin{tikzpicture}[line cap=round, line join=round]
				\tikzset{soft/.style={fill=red!50}}
				\def\rod{0.25}\def\W{12.25}\def\H{12}
				\clip (0,-\rod) rectangle (\W,\H);
				\foreach \i in {0,...,13}{\fill[soft] (\i,-\rod) rectangle (\i+\rod,\H);}
				\foreach \j in {0,...,12}{\fill[soft] (0,\j-\rod) rectangle (\W,\j);}
		\end{tikzpicture}}
		\caption{Union of thin rods}
	\end{subfigure}\hfill%
	\begin{subfigure}[t]{0.31\textwidth}
		\centering
		\resizebox{\linewidth}{!}{%
			\begin{tikzpicture}[line cap=round, line join=round]
				\tikzset{hard/.style={fill=DeepSkyBlue!70}}
				\def\plate{0.75}\def\rod{0.25}\def\W{12.25}\def\H{12}
				\clip (0,-\rod) rectangle (\W,\H);
				\foreach \i in {0,...,20}{
					\foreach \j in {0,...,14}{
						\pgfmathsetmacro\x{0.25+\i}
						\pgfmathsetmacro\y{0.00+\j}
						\fill[hard] (\x,\y) rectangle (\x+\plate,\y+\plate);
					}
				}
		\end{tikzpicture}}
		\caption{Union of small plates}
	\end{subfigure}
	
	\caption{Blue (hard) tiles placed periodically, and red (soft) rod grid.}
	\label{fig:01}
\end{figure}

\subsection{Domain description}
The rigid re-enforced composite plate in its reference free-stressed form is occupying the region given by 
$$
\O_{\d}\doteq \o_\d\X\fI_\d=\hbox{Interior}\big(\ov{\O^S_{\e\d}}\cup\ov{\O^H_{\ed}}\big).
$$
$\O_{\d}$ is the union of small plates and beams given by
$$
\begin{aligned}
	&\O^H_{pq}=\o^H_{pq}\X\fI_\d,\quad \forall\, (p,q)\in \cK^*_\e, \\
	& \o^H_{\ed}=\bigcup_{(p,q)\in\cK_\e^*}\o^H_{pq}, \qquad \o^S_{\ed}\doteq \o_\d \setminus \ov{\o^H_{\ed}},\qquad \O^H_\ed=\o^H_\ed \X\fI_\d,\qquad  \O^S_{\e\d}=\O_{\d}\setminus \ov{\O^H_{\ed}}=\o^S_\ed \X\fI_\d.
\end{aligned}
$$
We also define the set of beams of direction $\Ge_1$ and $\Ge_2$ by 
$$
\begin{aligned}
	&\O^{(1)}_{\e\d}\doteq \o^{(1)}_{\e\d}\X\fI_\d,\quad \o^{(1)}_{\e\d}\doteq\bigcup_{q=0}^{N_\e}\Big(-{\d\over 2},L+{\d\over 2}\Big)\X\Big(q\e-{\d\over 2},q\e+{\d\over 2}\Big),\\ 
	& \O^{(2)}_{\e\d}\doteq \o^{(2)}_{\e\d}\X\fI_\d,\quad \o^{(2)}_{\e\d}\doteq\bigcup_{p=0}^{N_\e}\Big(p\e-{\d\over 2},p\e+{\d\over 2}\Big)\X\Big(-{\d\over 2},L+{\d\over 2}\Big)\quad\hbox{so}\quad \O^S_{\e\d}=\O^{(1)}_{\e\d}\cup \O^{(2)}_{\e\d}.
\end{aligned}
$$
\subsection{Set of admissible displacements}\label{SS32}
In this paper, we deal with displacements $u\in H^1(\O_\d)^3$ satisfying the following conditions:
\begin{itemize}
\item on a part of the boundary of soft part, we have homogeneous boundary condition 
\begin{equation}\label{EQ31}
	u=0,\quad\text{a.e. on }\Gamma_\d=\Big[\fI_\d\X\Big(-{\d\over 2},l-{\d\over 2}\Big)\cup \Big(-{\d\over 2},l-{\d\over 2}\Big)\X\fI_\d\Big]\X\fI_\d=\gamma_\d\X\fI_\d.
\end{equation}
\item the displacements are rigid displacements in $\O^H_\ed$
\begin{equation}\label{EQ32}
e(u)=0\qquad \hbox{a.e. in } \O^H_\ed.
\end{equation} So, we have
$$
\begin{aligned}
&u(x)=\Ga_{pq}+\cR_{pq}\land \ov{x}_{pq}=\begin{pmatrix} 
	\Ga_{pq,1}+\cR_{pq,2}\ov{x}_{pq,3}-\cR_{pq,3}\ov{x}_{pq,2}\\
	\Ga_{pq,2}+\cR_{pq,3}\ov{x}_{pq,1}-\cR_{pq,1}\ov{x}_{pq,3}\\
	\Ga_{pq,3}+\cR_{pq,1}\ov{x}_{pq,2}-\cR_{pq,2}\ov{x}_{pq,1}
	\end{pmatrix}\qquad \forall (p,q)\in\cK_\e^*,\\
	& \ov{x}_{pq}=x-\e\Big(p+{1\over 2}\Big)\Ge_1-\e\Big(q+{1\over 2}\Big)\Ge_2,\quad \forall (p,q)\in\cK_\e^*,\qquad (\Ga_{pq},\cR_{pq})\in\R^3\X \R^3.
\end{aligned}
$$
\item  they satisfy the non-penetration condition between  two consecutive rigid bodies 
$$
\begin{aligned}
&\Big[\Big(q\e+{\d\over 2}\Big)+u_2\Big(p\e+z_1,q\e+{\d\over 2},z_3\Big)\Big]-\Big[\Big(q\e-{\d\over 2}\Big)+u_2\Big(p\e+z_1,q\e-{\d\over 2},z_3\Big)\Big] \geq 0,\\
&\hbox{for all }(p,q)\in \{1,\ldots,N_\e-1\}\X\{0,\ldots,N_\e-1\}\;  \hbox{and for a.e. } (z_1,z_3)\in\left({\d\over 2},\e-{\d\over 2}\right)\X\fI_\d,\\
&\Big[\Big(p\e+{\d\over 2}\Big)+u_1\Big(p\e+{\d\over 2},q\e+z_2,z_3\Big)\Big]-\Big[\Big(p\e-{\d\over 2}\Big)+u_1\Big(p\e-{\d\over 2},q\e+z_2,z_3\Big)\Big] \geq 0,\\
&\hbox{for all }(p,q)\in \{0,\ldots,N_\e-1\}\X\{1,\ldots,N_\e-1\}\; \hbox{and for a.e. } (z_1,z_3)\in\left({\d\over 2},\e-{\d\over 2}\right)\X\fI_\d.
\end{aligned}
$$   This is equivalent to 
\begin{equation}\label{NPCM}
\begin{aligned}
& u_2\Big(p\e+z_1,q\e+{\d\over 2},z_3\Big)-u_2\Big(p\e+z_1,q\e-{\d\over 2},z_3\Big) \geq -\d,\\
&\hbox{for all }(p,q)\in \{1,\ldots,N_\e-1\}\X\{0,\ldots,N_\e-1\}\;  \hbox{and for a.e. } (z_1,z_3)\in\left({\d\over 2},\e-{\d\over 2}\right)\X\fI_\d,\\
&u_1\Big(p\e+{\d\over 2},q\e+z_2,z_3\Big)-u_1\Big(p\e-{\d\over 2},q\e+z_2,z_3\Big) \geq -\d,\\
&\hbox{for all }(p,q)\in \{0,\ldots,N_\e-1\}\X\{1,\ldots,N_\e-1\}\; \hbox{and for a.e. } (z_1,z_3)\in\left({\d\over 2},\e-{\d\over 2}\right)\X\fI_\d.
\end{aligned}
\end{equation} 
\end{itemize}
Observe that
\begin{equation}\label{EQ34}
\nabla u=\begin{pmatrix} 
	0 & -\cR_{pq,3} &  \cR_{pq,2}\\
	\cR_{pq,3} & 0 & -\cR_{pq,1}\\
	 -\cR_{pq,2} & \cR_{pq,1} & 0
	\end{pmatrix}\;\text{a.e. in } \O^H_{pq}.
\end{equation}
Since the soft part is glued to the hard part, we have continuity of displacement and response function at interface.\\  
The set of admissible displacements $\GU_{\e\d}$ is a closed convex subset of $H^1(\O_\d)^3$ 
\begin{equation*}
\GU_{\e\d}\doteq \big\{u\in H^1(\O_{\d})^3\;|\; \text{such that $u$ satisfies }\;\eqref{EQ31}-\eqref{EQ32}-\eqref{NPCM}\big\}.
\end{equation*}

Note that   for any displacement $u$ belonging to $\GU_{\e\d}$, we have 
$$\big\|e(u)\big\|_{L^2(\O_\d)}=\big\|e(u)\big\|_{L^2(\O^S_\ed)}=\GE(u).$$
\subsection{Linearized elasticity problem}\label{LEP}
 We consider the following elasticity problem, given in the form of a variational inequality (see \cite{contact,Stampa01}):
  \begin{equation}\label{Pb}
 \left\{ \begin{aligned}
& \hbox{Find $u_\ed\in \GU_{\e\d}$ such that},\\
&\hskip 10mm \int_{\O^S_{\e\d}}\A_{\ed,ijkl}(x)e_{ij}(u_\ed)e_{kl}(u_\ed-v)\,dx\leq  \int_{\O_{\d}}f_{\e\d}\cdot (u_\ed-v)\,dx,\quad\forall\, v\in \GU_{\e\d}.
\end{aligned}\right.
\end{equation}
We assume that the Hooke's coefficients $\A_{\ed,ijkl}\in L^\infty(\O_\ed^S)$ satisfy: 
\begin{itemize}
\item there exists  $c_0>0$ such that
		\begin{equation}\label{CoeCon2}
		\A_{\ed,ijkl} \GS_{ij}\GS_{kl}\geq c_0|\GS|^2,\quad\forall\,\GS\in\GS_3.
		\end{equation}  
\item $\A_{\ed,ijkl}=\A_{\ed,ijlk}=\A_{\ed,klij}$, for all $(i,j,k,l)\in \{1,2,3\}^4$.
\end{itemize}
For $f_\ed\in L^2(\O_\d)^3$, we have using Stampacchia's lemma in \cite{Stampa01}, that exist a unique weak solution (due to the boundary condition \eqref{EQ31}) $u_\ed\in\GU_\ed$ to the problem \eqref{Pb}.

\section{Decomposition of the displacements of the structure}\label{S04}

We consider the following subspace of $H^1(\O_\d)^3 $:
$$
  \cH^1(\O_\d)^3\doteq\Big\{  u\in H^1(\O_\d)^3\;|\;  e(u)=0 \quad \hbox{ a.e. in  } \O^H_\ed\; \Big\}. 
 $$ 
The theorem below is a variant of \cite[Theorem 2]{GGKL}.  
\begin{theorem}\label{TH42} Any  displacement $u$ in $\cH^1(\O_{\d})^3$ is decomposed as follows: 
\begin{equation}\label{EQ41D+}
u(x)= \begin{pmatrix}
\ds \fU_1(x')-x_3 \p_1\fU_3 (x')\\[1.5mm]
\ds\fU_2(x')-x_3 \p_2 \fU_3 (x')\\[1.5mm]
\fU_3(x')+\fu(x')\\
\end{pmatrix}+\wt{u}(x)\quad  \hbox{ for a.e. $x$ in }  \Omega_\delta.
\end{equation} where $\fU_\a \in H^1(\o_{\d})$, $\fU_3 \in H^2(\o_{\d})$, $\fu\in H^1(\o_\d)$  and  $\wt{u}\in H^1(\O_\d)^3$ satisfies 
\begin{equation}\label{WMC02}
\wt{u}=0\quad \hbox{a.e. in }\O^H_\ed,\qquad 	\int_{\fI_\d} \wt{u}(\cdot, x_3)\, dx_3=0\qquad \hbox{a.e. in }\o_\d.
\end{equation}
We set $\fU_m=\fU_1\Ge_1+\fU_2\Ge_2$. The following estimates hold:
	\begin{equation}\label{rem+}
	\begin{aligned}
	&\|e_{\alpha\beta}(\fU_m)\|_{L^2(\o_\d)}+\d\big\|\p^2_{\alpha\beta}\fU_3\big\|_{L^2(\o_\d)}\le {C\over \d^{1/2}}\|e(u)\|_{L^2(\O_\d)},\\
	&\|\fu\|_{L^2(\o_\d)}+\d \big\|\nabla \fu\big\|_{L^2(\o_\d)} \leq {C \d^{1/2}}\|e(u)\|_{L^2(\O_\d)},\\
	&\|\wt{u}\|_{L^2(\O_\d )}+\d\|\nabla \wt{u}\|_{L^2(\O_\d)}\le C \d \|e(u)\|_{L^2(\O_\d)}.
	\end{aligned}
	\end{equation}
	The constants  depend only on $\o$.\\
	Moreover, if $u=0$ a.e. on $\Gamma_\d$ then, the terms of the decomposition \eqref{EQ41D+} satisfy
$$\fU_m=0,\quad \fU_3=\fu=0,\quad \nabla\fU_3=0, \quad \wt{u}=0\qquad \hbox{a.e. on } \gamma.$$	
\end{theorem}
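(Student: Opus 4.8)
The plan is to build the decomposition \eqref{EQ41D+} in three stages: first apply the known Kirchhoff--Love decomposition for a plate of thickness $\d$ (from \cite{GGDecomplPlate,GGKL}, essentially \cite[Theorem 2]{GGKL}) to an arbitrary $u\in\cH^1(\O_\d)^3$, producing fields $\fU_m\in H^1(\o_\d)^2$, a transverse field $\cU_3\in H^2(\o_\d)$, a warping $\fu$, and a remainder $\bar u$, together with the \emph{classical} (non-sharp) estimates; second, exploit the orthotropy hypothesis $e(u)=0$ on the hard cells $\O^H_{pq}$ to improve the estimate on the shear entries $e_{\a 3}$, which is what allows the transverse field to be written as $\fU_3(x')-x_3\p_\a\fU_3$ in the first two components (no independent shear/rotation term); third, normalize $\wt u$ by subtracting its $x_3$-average so that the second condition in \eqref{WMC02} holds, absorbing the average into $\fU_m$ and $\fu$. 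The bulk of the work is really the \emph{improvement} of the decomposition over the generic Kirchhoff--Love one: the cancellation of the shear terms, which the introduction attributes to the orthotropic geometry and defers to the Appendix.

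For the estimates \eqref{rem+}, I would proceed entirely on the soft part $\O^S_{\e\d}$, where $e(u)$ lives, and use that $\O^S_{\e\d}$ is a union of thin beams of thickness $\d$ spanning $\o_\d$ in the $\Ge_1$ and $\Ge_2$ directions. On each such beam one has a 1D-in-cross-section Korn/Poincaré inequality with the sharp $\d$-powers: the transverse second derivative $\p^2_{\a\b}\fU_3$ costs $\d^{-3/2}$, the membrane strain $e_{\a\b}(\fU_m)$ costs $\d^{-1/2}$, the warping $\fu$ gains a factor $\d^{1/2}$, and $\wt u$ with its vanishing $x_3$-mean gains a full $\d$ (Poincaré in the thin direction). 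These are exactly the scalings quoted, and each follows by integrating the pointwise identities relating the components of $e(u)$ to derivatives of $\fU_m,\fU_3,\fu,\wt u$ over a beam cross-section and summing over the $O(1/\e)$ beams; the summation is where the implicit $\e$-uniformity is checked, and I would make sure the constant depends only on the fixed macroscopic domain $\o$ and not on $\e$ or $\d$ — the beams tile $\o_\d$ with bounded overlap, so no loss occurs. The subtlety is that $\fU_3\in H^2$ needs to be recovered globally on $\o_\d$ (not just beam-by-beam); here one uses that the beams in the two directions overlap on the $\d\times\d$ junction squares, giving compatibility of the gradients, and the standard gluing argument for $H^2$ functions assembled from overlapping pieces.

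For the boundary statement, I would argue that $u=0$ a.e. on $\Gamma_\d=\gamma_\d\times\fI_\d$ forces all pieces of the decomposition to vanish on $\gamma$: since $\gamma_\d$ is a neighbourhood of $\gamma$ of width $\d$ lying inside the soft beams, and $\wt u$ is normalized to have zero $x_3$-average, one first gets $\wt u=0$ on $\Gamma_\d$ by a trace/Poincaré argument in the cross-section, then $\fU_m=0$, $\fU_3+\fu=0$ and $\p_\a\fU_3=0$ on $\gamma$ by reading off the components; combined with $\fu=0$ on $\gamma$ (which again follows from the normalization and the fact that $\fu$ has small $x_3$-oscillation) this yields $\fU_3=0$ and $\nabla\fU_3=0$ on $\gamma$ as claimed.

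The main obstacle is the shear cancellation that distinguishes this decomposition from the generic Kirchhoff--Love one — establishing that the first two components of $u$ are exactly $\fU_\a(x')-x_3\p_\a\fU_3(x')$ with no extra rotation field, using only $e(u)=0$ on the hard cells plus the thin-beam geometry. This requires carefully tracking how the rigidity of each $\O^H_{pq}$ constrains the elementary rotation $\cR_{pq}$ relative to the transverse gradient of the assembled $\fU_3$, and then propagating this along each beam between consecutive hard cells where $e(u)$ is controlled but nonzero; a Poincaré-type estimate along the beam (length $\sim\e$, the quantity we want to bound scaled by $\d^{-3/2}$) closes the argument. Everything else — the two-sided scalings and the boundary conditions — is a routine assembly of one-dimensional cross-sectional inequalities summed over the periodic cells, and I would relegate the detailed verification to the Appendix as the authors indicate.
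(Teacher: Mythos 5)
There is a genuine gap: your proposal misidentifies what is actually hard in this theorem and omits the construction that the paper's Appendix~\ref{AppA01} is devoted to. The new content of Theorem~\ref{TH42}, compared with the generic Kirchhoff--Love decomposition of \cite[Theorem 2]{GGKL} (Theorem~\ref{THA2}), is the first condition in \eqref{WMC02}: $\wt u=0$ a.e.\ in $\O^H_\ed$. This forces the Kirchhoff--Love part to coincide \emph{exactly} with the rigid displacement $\Ga_{pq}+\cR_{pq}\land\ov x_{pq}$ on every hard cell, i.e.\ $\nabla\fU_3$ must be constant (with prescribed value determined by $\cR_{pq}$) on each $\o^H_{pq}$, $\fU_m$ affine there, and $\fu$, $\fr$, $\ov u$ all vanishing there. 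Your plan never explains how to build such a globally $H^2$ field $\fU_3$ with the sharp bound $\|\p^2_{\a\b}\fU_3\|_{L^2}\le C\d^{-3/2}\|e(u)\|_{L^2(\O_\d)}$ uniformly in $\e$ and $\d$; the beam-by-beam Korn/Poincar\'e estimates you invoke reproduce the scalings of the \emph{generic} decomposition (which is already known and cited), but they do not produce a bending field whose gradient is prescribed on every inclusion. The paper's proof handles exactly this point by a nontrivial cell-by-cell construction: extend $u$ (after subtracting the local rigid motion) by reflection to an enlarged cell, build four candidate bending fields $\fV_{3,A},\dots,\fV_{3,D}$ from local mean values on $\d$-squares (shifted so that near each face of the hard square at least one candidate is admissible), patch them with a $\cC^2$ partition of unity at scale $\d$ (using $\|\psi_\d''\|_\infty\le C\d^{-2}$ against the smallness of $\fv_Z$ to keep the $\d^{-3/2}$ bound), and finally glue the cells with a second partition of unity $\psi_{pq,\ed}$. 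Nothing in your sketch ("propagating along each beam", "a Poincar\'e-type estimate along the beam closes the argument") substitutes for this.

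A related misreading: you describe the goal as a "shear cancellation", i.e.\ that the in-plane components of $u$ itself contain no rotation term beyond $-x_3\p_\a\fU_3$. That is not what the theorem asserts and is not how the paper proceeds. The shear field $\fr$ of Theorem~\ref{THA2} is not shown to vanish by an improved estimate on $e_{\a3}(u)$; it is constructed to vanish \emph{on the hard part only} and is then absorbed into the remainder by setting $\wt u=\ov u+x_3\fr$, which is admissible precisely because \eqref{WMC02} drops the first-moment condition $\int_{\fI_\d}x_3\wt u_\a\,dx_3=0$ of the generic decomposition. Your step of "normalizing $\wt u$ by subtracting its $x_3$-average and absorbing the average into $\fU_m$ and $\fu$" is also delicate in this respect: any a posteriori modification of $\fU_m$ or $\fu$ must preserve the exact rigidity matching on $\O^H_\ed$, which your ordering of steps does not guarantee (in the paper the zero-mean property is automatic, since $\ov u$ has zero mean and $x_3\fr$ is odd in $x_3$). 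The boundary-condition part of your argument and the claimed scalings are consistent with the statement, but without the exact-matching construction on the hard cells the proof of the decomposition itself is missing.
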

The proof of the above theorem is given in Appendix \ref{AppA01}. This kind of decomposition was first introduced in \cite{GDecomp} and later extended in \cite{PUM,GGKL}.
\subsection{Global estimates}
 Let $u$ be a displacement in $\GU_{\e\d}$. We decompose $u$ as \eqref{EQ41D+}, then this along with \eqref{EQ34} leads to the following equalities:
\begin{equation}\label{EQ40}
\left.\begin{aligned}
-&\p_1\fU_3=\cR_{pq,2},\quad -\p_2\fU_3=-\cR_{pq,1},\\
& \fU_1=\Ga_{pq,1}-\cR_{pq,3}\ov{x}_{pq,2},\quad \fU_2=\Ga_{pq,2}+\cR_{pq,3}\ov{x}_{pq,1},\\
& \fU_3=\Ga_{pq,3}+\cR_{pq,1}\ov{x}_{pq,2}-\cR_{pq,2}\ov{x}_{pq,1},
\end{aligned}\right\}\quad \hbox{a.e. in } \o^H_{pq},\quad \forall  (p,q)\in \cK^*_\e.
\end{equation}
Below, we give the first global estimates of the Kirchhoff-Love part of a displacement $u\in \GU_\ed$.
\begin{proposition} We have
\begin{equation}\label{EQ46}
\begin{aligned}
& \big\|\nabla\fU_3\|_{L^2(\o_\d)}\leq {C\over \e^{1/2}\d} \GE(u),\quad && \big\|\fU_3\|_{H^1(\o_\d)}\leq {C\over \e^{1/2}\d} \GE(u),\\
&\|\nabla \fU_m\|_{L^2(\o_\d)}\leq  {C\over \d^{1/2}} \GE(u),\quad && \|\fU_m\|_{L^2(\o_\d)}\leq  {C\over \e^{1/2}} \GE(u).
\end{aligned}
\end{equation}   The constants do not depend on $\e$ and  $\d$. 
\end{proposition}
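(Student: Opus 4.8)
The plan is to \emph{sharpen} the estimates that come at once from Theorem~\ref{TH42} combined with Korn's and Poincar\'e's inequalities, by exploiting the periodic rigid pattern. Recall from \eqref{EQ40} that for $u\in\GU_\ed$ the bending $\fU_3$ is affine on each hard cell $\o^H_{pq}$ and the membrane field $\fU_m$ is an infinitesimal planar rigid motion there; hence $\nabla^2\fU_3$ and $e(\fU_m)$ are supported in the soft grid $\o^S_\ed$, a union of $\d$-thin rods. I will use this twice: to pass to a discrete field on the $\e$-lattice $\cK^*_\e$, and to gain the extra powers of $\e$ that separate the sharp estimates in \eqref{EQ46} from the naive ones.

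\textbf{The soft estimates.} Since $\fU_m=0$ and $\fU_3=\nabla\fU_3=0$ on the nontrivial curve $\gamma$ (Theorem~\ref{TH42}), the two-dimensional Korn inequality on $\o_\d$ (with $\d$-stable constant) applied to $\fU_m$, and the second-order Poincar\'e inequality applied to $\fU_3$, give with \eqref{rem+}
\[
\|\fU_m\|_{H^1(\o_\d)}\le C\|e(\fU_m)\|_{L^2(\o_\d)}\le \frac{C}{\d^{1/2}}\GE(u),\qquad \|\fU_3\|_{H^2(\o_\d)}\le C\|\nabla^2\fU_3\|_{L^2(\o_\d)}\le \frac{C}{\d^{3/2}}\GE(u).
\]
The first already yields $\|\nabla\fU_m\|_{L^2(\o_\d)}\le C\d^{-1/2}\GE(u)$, the second estimate of \eqref{EQ46}. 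It remains to improve $\|\nabla\fU_3\|_{L^2(\o_\d)}$ from order $\d^{-3/2}$ to $\e^{-1/2}\d^{-1}$ and $\|\fU_m\|_{L^2(\o_\d)}$ from order $\d^{-1/2}$ to $\e^{-1/2}$.

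\textbf{The bending estimate.} For $(p,q)\in\cK^*_\e$ put $g_{pq}\defeq\nabla\fU_3|_{\o^H_{pq}}\in\R^2$ (constant by \eqref{EQ40}). Two hard cells adjacent in the $\Ge_1$-direction are joined by a rod-segment of width $\d$ in $x_1$ and length $\simeq\e$ in $x_2$, across which $\fU_3\in H^2$ interpolates the two affine pieces, so $\nabla\fU_3$ takes the values $g_{pq}$ and $g_{p-1,q}$ on the two $x_1$-faces; a one-dimensional Poincar\'e inequality in $x_1$ then gives $|g_{pq}-g_{p-1,q}|^2\le \tfrac{\d}{\e}\|\nabla^2\fU_3\|^2_{L^2(\text{segment})}$, and similarly in the $\Ge_2$-direction. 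Summing over all segments and using \eqref{rem+},
\[
\sum_{\text{segments}}\big|g_{pq}-g_{p'q'}\big|^2\ \le\ \frac{\d}{\e}\,\|\nabla^2\fU_3\|^2_{L^2(\o^S_\ed)}\ \le\ \frac{C}{\e\d^2}\,\GE(u)^2 .
\]
Since $g_{pq}=0$ on the cells meeting $\gamma$, the discrete Poincar\'e inequality on $\cK^*_\e$ (side $N_\e=L/\e$, constant $\simeq N_\e^2$) gives $\sum_{(p,q)}|g_{pq}|^2\le C N_\e^2\,\tfrac{C}{\e\d^2}\GE(u)^2=\tfrac{C}{\e^3\d^2}\GE(u)^2$. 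As $|\o^H_{pq}|\simeq\e^2$ and the part of $\nabla\fU_3$ on the soft rods is of lower order (bounded through the jump sum and \eqref{rem+}), this yields $\|\nabla\fU_3\|^2_{L^2(\o_\d)}\simeq\e^2\sum|g_{pq}|^2\le C\e^{-1}\d^{-2}\GE(u)^2$, i.e. $\|\nabla\fU_3\|_{L^2(\o_\d)}\le C\e^{-1/2}\d^{-1}\GE(u)$; adding $\|\fU_3\|_{L^2(\o_\d)}\le C\|\nabla\fU_3\|_{L^2(\o_\d)}$ (Poincar\'e, $\fU_3=0$ on $\gamma$) completes the first line of \eqref{EQ46}.

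\textbf{The membrane $L^2$-estimate, and the main obstacle.} Now $\fU_m|_{\o^H_{pq}}$ is the planar rigid motion of parameters $\big(\Ga'_{pq},\cR_{pq,3}\big)$, $\Ga'_{pq}=(\Ga_{pq,1},\Ga_{pq,2})$, from \eqref{EQ40}; these discrete fields vanish near $\gamma$, and $\|\fU_m\|^2_{L^2(\o_\d)}\simeq\e^2\sum_{(p,q)}\big(|\Ga'_{pq}|^2+\e^2|\cR_{pq,3}|^2\big)$ up to a lower-order soft term. (Equivalently, from the identity $\fU_m=\tfrac1\d\int_{\fI_\d}(u_1,u_2)\,dx_3$, valid on all of $\o_\d$, one reduces the claim to a Korn--Poincar\'e inequality $\|u\|_{L^2(\O_\d)}\le C(\d/\e)^{1/2}\GE(u)$ for the composite.) So one estimates the jumps of $\Ga'_{pq}$ and $\cR_{pq,3}$ across the rod-segments and applies a discrete Korn inequality on $\cK^*_\e$ anchored at $\gamma$, exactly as in the bending case. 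The difficulty — which I expect to be the only real work — is that a crude thin-rod Korn bound on a single segment over-estimates the jump of the in-plane rotation $\cR_{pq,3}$ (by a factor $\sim\e^2/\d^2$), since it ignores how in-plane translations and rotations propagate through the two crossing families of rods. The remedy, as in the proof of Theorem~\ref{TH42}, is to decompose each rod displacement into rigid motions of its cross-sections plus a warping (cf.~\cite{GDecomp,PUM,GGKL}), which localises the strain along the rod correctly and produces jump sums of the order required; the discrete reassembly then delivers $\|\fU_m\|_{L^2(\o_\d)}\le C\e^{-1/2}\GE(u)$, and the rest is bookkeeping of the powers of $\e$ and $\d$.
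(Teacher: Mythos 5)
Your handling of the first three estimates is sound. For \eqref{EQ46}$_3$ you argue exactly as the paper (2D Korn plus \eqref{rem+}$_1$), and for \eqref{EQ46}$_{1,2}$ your discrete route --- constancy of $\nabla\fU_3$ on the hard cells from \eqref{EQ40}, jump bounds of order $\tfrac{\d}{\e}\|\nabla^2\fU_3\|^2$ across the $\d$-rods, then a discrete Poincar\'e inequality anchored near $\gamma$ --- is a legitimate substitute for the paper's direct appeal to the Poincar\'e inequality of Lemma \ref{lemAp1} (which encodes the same $\sqrt{\d/\e}$ gain); both give the scaling $\e^{-1/2}\d^{-1}$. (Minor point: the hard cells do not meet $\gamma$, so the anchoring requires one extra jump estimate across the clamped half-rod using $\nabla\fU_3=0$ on $\gamma$; this is routine.)

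The genuine gap is \eqref{EQ46}$_4$, which you do not prove. You invoke a ``discrete Korn inequality on $\cK^*_\e$ anchored at $\gamma$'' but never formulate or establish it, and the difficulty you single out is not the real one. Sharp jump estimates are not the bottleneck: they are exactly Lemma \ref{lem42} (in particular \eqref{EQ47}$_{5,6}$ and the coupled estimates \eqref{EQ47+}), and the rod warping decomposition you propose would only reproduce them. The bottleneck is the reassembly. With the sharp jump bounds, ``jumps plus discrete Poincar\'e'' gives only $\sum_{p,q}|\cR_{pq,3}|^2\leq C\e^{-5}\GE(u)^2$; the direct rotational contribution to $\fU_m$ is then still fine (lever arm: $\e^4\cdot\e^{-5}=\e^{-1}$), but the $\Ge_2$-component of \eqref{EQ47+} contains $\tfrac{\e}{2}(\cR_{pq,3}+\cR_{p-1q,3})$, so telescoping the translations from $\gamma$ yields only $\e^2\sum_{p,q}|\Ga_{pq,\a}|^2\leq C\e^{-3}\GE(u)^2$, i.e. $\|\fU_m\|_{L^2(\o_\d)}\leq C\e^{-3/2}\GE(u)$, a full factor $\e^{-1}$ short of \eqref{EQ46}$_4$. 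What is needed is $\sum_{p,q}|\cR_{pq,3}|^2\e^2\leq C\e^{-1}\GE(u)^2$, i.e. \eqref{EQ417}$_2$, and this cannot be obtained from the rotation jumps alone: it is a two-dimensional Korn effect. The paper gets it by interpolating $(\Ga_{pq},\cR_{pq})$ into the $\cQ_1$ fields $\cU^\dia,\cR^\dia$, deducing from \eqref{EQ47}--\eqref{EQ47+} that $\|e_{\a\b}(\cU^\dia)\|_{L^2(\o_\d)}\leq C\e^{-1/2}\GE(u)$ and $\|\p_1\cU^\dia_2-\cR^\dia_3\|_{L^2(\o_\d)}+\|\p_2\cU^\dia_1+\cR^\dia_3\|_{L^2(\o_\d)}\leq C\e^{-1/2}\GE(u)$ (estimates \eqref{EQ49+}), applying the continuous 2D Korn inequality with the anchoring on $\gamma$ (Lemma \ref{lem45}), and only then returning to $\fU_m$ via \eqref{EQ417} and \eqref{EQ43}$_1$. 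Until you state and prove the discrete (or interpolated) Korn inequality that controls lattice translations and rotations simultaneously at this order, the fourth estimate --- precisely the one the paper defers to the machinery of Section \ref{S05} --- remains unproven; it is not ``bookkeeping''.
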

\begin{proof}  The estimate \eqref{EQ46}$_1$ is a consequence of  \eqref{EQ40}$_{1,2}$, \eqref{rem+}$_{2}$ and Lemma \ref{lemAp1}, from which and the boundary condition  we obtain  \eqref{EQ46}$_2$. The $2D$ Korn inequality and \eqref{rem+}$_1$ yield \eqref{EQ46}$_3$. The proof of \eqref{EQ46}$_4$ is given after the proof of  Lemma \ref{lem45}.
\end{proof}
As a consequence of the above theorem, 
\begin{lemma}\label{lem42} We have
	\begin{equation}\label{EQ47}
		\begin{aligned}
		& \sum_{p=1}^{N_\e-1}\sum_{q=0}^{N_\e-1}|\cR_{pq,2}-\cR_{p-1q,2}|^2\leq {C\over \e\d^2}\GE(u)^2,\qquad \sum_{q=1}^{N_\e-1}\sum_{p=0}^{N_\e-1}|\cR_{pq,1}-\cR_{pq-1,1}|^2\leq {C\over \e\d^2}\GE(u)^2,\\
		&\sum_{p=1}^{N_\e-1}\sum_{q=0}^{N_\e-1}|\cR_{pq,1}-\cR_{p-1q,1}|^2\leq {C\over \e^3}\GE(u)^2,\qquad \sum_{q=1}^{N_\e-1}\sum_{p=0}^{N_\e-1}|\cR_{pq,2}-\cR_{pq-1,2}|^2\leq {C\over \e^3}\GE(u)^2,\\
	&\sum_{p=1}^{N_\e-1}\sum_{q=0}^{N_\e-1}|\cR_{pq,3}-\cR_{p-1q,3}|^2\leq {C\over \e^3}\GE(u)^2,\qquad \sum_{q=1}^{N_\e-1}\sum_{p=0}^{N_\e-1}|\cR_{pq,3}-\cR_{pq-1,3}|^2\leq {C\over \e^3}\GE(u)^2
		\end{aligned}
	\end{equation} and
\begin{equation}\label{EQ47+}
		\begin{aligned}
	& \sum_{q=0}^{N_\e-1}\sum_{p=1}^{N_\e-1}\big|\Ga_{pq}-\Ga_{p-1q}-{\e\over 2}(\cR_{pq}+\cR_{p-1q})\land\Ge_1\big|^2\leq {C\over \e}\GE(u)^2,\\
	&  \sum_{p=0}^{N_\e-1}\sum_{q=1}^{N_\e-1}\big|\Ga_{pq}-\Ga_{pq-1}-{\e\over 2}(\cR_{pq}+\cR_{pq-1})\land\Ge_2\big|^2\leq {C\over \e}\GE(u)^2.
		\end{aligned}
	\end{equation}
	 The constants do not depend on $\e$, $\d$ and $L$. 
\end{lemma}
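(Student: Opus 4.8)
The plan is to derive the estimates in \eqref{EQ47} and \eqref{EQ47+} by transferring the global $L^2$-bounds on $\fU_3$, $\nabla\fU_3$ and $\fU_m$ from Proposition~\ref{EQ46} into discrete bounds on the differences of the rigid-motion data $(\Ga_{pq},\cR_{pq})$, exploiting that the soft part $\o^S_\ed$ contains, between any two adjacent cells, a thin connecting beam of width $\d$ on which these fields are affine with slopes governed by the $\cR$'s. First I would fix a horizontal beam of direction $\Ge_1$ joining the hard cells $(p-1,q)$ and $(p,q)$: on $\o^H_{p-1q}\cup\o^H_{pq}$ the identities \eqref{EQ40} express $\p_1\fU_3$, $\fU_1$, $\fU_2$, $\fU_3$ in terms of the constants $\cR_{pq}$, $\cR_{p-1q}$, $\Ga_{pq}$, $\Ga_{p-1q}$. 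Since $\p_1\fU_3\in H^1(\o_\d)$ equals $-\cR_{p-1q,2}$ on one cell and $-\cR_{pq,2}$ on the next, the jump $\cR_{pq,2}-\cR_{p-1q,2}$ is controlled, via a one-dimensional trace/Poincar\'e argument along the connecting strip (whose length is $\approx\e$ and width $\approx\d$), by $\e^{1/2}\|\p^2_{11}\fU_3\|_{L^2(\text{strip})}$ plus lower-order terms; summing over all $(p,q)$ and using \eqref{rem+}$_1$ (which gives $\|\p^2_{\a\b}\fU_3\|_{L^2(\o_\d)}\le C\d^{-3/2}\GE(u)$) together with the fact that the strips have total area $\approx\e\d$ produces the bound $C/(\e\d^2)$ in \eqref{EQ47}$_1$. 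The analogous argument on $\Ge_2$-beams gives the second inequality in \eqref{EQ47}$_1$.

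For the remaining lines of \eqref{EQ47} I would instead use the tangential behaviour: $\cR_{pq,1}-\cR_{p-1q,1}$ and $\cR_{pq,3}-\cR_{p-1q,3}$ are differences of values of $\p_2\fU_3$ and of the in-plane rotation encoded in $\fU_m$ across an $\Ge_1$-beam. Here the relevant derivative is $\p_1$ of $\p_2\fU_3$ and of $\fU_m$, which by \eqref{rem+}$_1$ is only bounded by $C\d^{-1/2}\GE(u)$ for $\p_\a\fU_m$ — but crucially these fields vary across a full macroscopic cell of size $\e$ (not just the thin strip), so a Poincar\'e inequality over $\o_{pq}$ yields $|\cR_{pq,j}-\cR_{p-1q,j}|^2\lesssim \e^{-1}\|\nabla(\cdot)\|^2_{L^2(\o_{pq})}$ and summing gives, after inserting the global bound $\|\nabla\fU_m\|_{L^2(\o_\d)}\le C\d^{-1/2}\GE(u)$ — wait, that would give $C/(\e\d)$, so instead one must use the \emph{membrane} estimate $\|\fU_m\|_{L^2(\o_\d)}\le C\e^{-1/2}\GE(u)$ from \eqref{EQ46}$_4$ together with an inverse-type inequality on the piecewise-affine structure, converting the $L^2$-bound on $\fU_m$ itself into an $\e^{-1}$-scaled bound on the cell-to-cell differences, yielding the claimed $C/\e^3$. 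The same reasoning, applied to $\fU_3$ (bounded in $L^2$ by $C\e^{-1/2}\d^{-1}\GE(u)$, hence with differences scaled by an extra $\e^{-1}$) and to its gradient, handles $\cR_{pq,3}$ as well; note $\cR_{pq,3}$ appears as the slope of $\fU_m$, so its differences are controlled by second-order information on $\fU_m$, i.e.\ by $\|\nabla\fU_m\|_{L^2}/\e$, giving again $C/\e^3$.

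Finally, \eqref{EQ47+} is obtained by matching the \emph{affine parts} across a beam: on $\o^H_{p-1q}$ one has $u=\Ga_{p-1q}+\cR_{p-1q}\land\ov x_{p-1q}$ and on $\o^H_{pq}$ one has $u=\Ga_{pq}+\cR_{pq}\land\ov x_{pq}$, and the displacement $\fU_m+\fU_3\Ge_3$ (equivalently the trace of $u$ at midplane) is an $H^1$ function on $\o_\d$; evaluating the expected continuity at the midpoint of the connecting strip, the leading discrepancy is exactly $\Ga_{pq}-\Ga_{p-1q}-\tfrac\e2(\cR_{pq}+\cR_{p-1q})\land\Ge_1$, and the $H^1$-norm of $u$ on the strip — dominated by $\GE(u)$ up to the strip geometry and the already-established bounds on $\fu$ and $\wt u$ from \eqref{rem+} — controls this discrepancy in $\ell^2$ by $C\e^{-1/2}\GE(u)$; the $\Ge_2$-version is identical. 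The main obstacle I expect is bookkeeping the correct powers of $\e$ and $\d$: one must carefully distinguish between quantities that vary only across the thin $\d$-width beam (picking up factors $\d^{-1}$) and those that vary across the full $\e$-cell (picking up factors $\e^{-1}$), and use the \emph{sharp} global estimates \eqref{EQ46}--\eqref{rem+} rather than crude ones, since a naive application of Poincar\'e would lose exactly the powers that make \eqref{EQ47}--\eqref{EQ47+} the sharp inequalities needed later (e.g.\ in Lemma~\ref{lem61}). A secondary technical point is justifying the passage from continuous $L^2$-bounds to discrete $\ell^2$-sums over $(p,q)$, which relies on the disjointness of the connecting beams and on Lemma~\ref{lemAp1}-type one-dimensional estimates along each strip.
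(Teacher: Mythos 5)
Your proposal has a genuine gap, and part of it is circular relative to the structure of the paper. The bounds with constant $C/\e^3$ in \eqref{EQ47}$_{3,4,5,6}$ are the crux, and the mechanisms you sketch for them do not hold up: you invoke the membrane estimate \eqref{EQ46}$_4$, but in the paper that estimate is only proved \emph{after} Lemma \ref{lem45}, whose proof uses \eqref{EQ47}--\eqref{EQ47+}, so you may not use it here; and even granting it, converting $\|\fU_m\|_{L^2(\o_\d)}\le C\e^{-1/2}\GE(u)$ into cell-to-cell difference bounds by equivalence of norms on affine functions yields at best $\sum|\cR_{pq,3}-\cR_{p-1q,3}|^2\le C\e^{-5}\GE(u)^2$, two powers of $\e$ short of the claim. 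The alternative you mention (``second-order information on $\fU_m$'', i.e. $\|\nabla\fU_m\|_{L^2}/\e$) is not justified either, since $\fU_m$ is only $H^1$ and no inequality is provided that produces the extra factor $\e^{-1}$ on the differences; similarly, for $\cR_{pq,1}-\cR_{p-1q,1}$ a strip-crossing argument with $\nabla\fU_3$ and the bound \eqref{EQ46}$_1$ gives $C\e^{-4}\d^{-1}$, which is worse than $C\e^{-3}$. Finally, for \eqref{EQ47+} you assert that ``the $H^1$-norm of $u$ on the strip is dominated by $\GE(u)$'', which requires a Korn-type inequality you never state, and the identification of the leading discrepancy at the strip midpoint is left heuristic.

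What is missing is precisely the paper's key tool: the local rigidity estimate \eqref{EQ43}$_2$, $\|\Psi\|_{L^2(\wt{\O}_{pq})}\le C\big(\|\Psi\|_{L^2(\O^H_{pq})}+\d\|e(\Psi)\|_{L^2(\wt{\O}_{pq})}\big)$, applied to $\Psi=u-w_{pq}$ with $w_{pq}=\Ga_{pq}+\cR_{pq}\land\ov x_{pq}$ (so the first term vanishes), combined with the explicit computation \eqref{EQ471} of the $L^2$ norm of the difference of two rigid displacements on the thin overlap slab $\wt\O_{pq}\cap\wt\O_{p-1q}$. That formula carries the whole bookkeeping you worry about: the overlap has measure $\approx\e\d^2$, the coefficient of $|\cR_2-\cR_2'|^2$ is $\d^2$ (short lever arm, giving $C/(\e\d^2)$), those of $|\cR_1-\cR_1'|^2$ and $|\cR_3-\cR_3'|^2$ are $\approx\e^2$ (long lever arm, giving $C/\e^3$), and the translation-plus-midpoint terms give exactly the quantities in \eqref{EQ47+} with constant $C/\e$ --- all six families of estimates follow at once from $\|w_{pq}-w_{p-1q}\|_{L^2(\text{overlap})}\le C\d\,\GE_{\mathrm{loc}}(u)$ and summation over the finitely overlapping cells. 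By contrast, your route through the Kirchhoff--Love fields does salvage the first line \eqref{EQ47}$_{1,2}$ (integrating $\p^2_{11}\fU_3$ across the width-$\d$ strip and averaging over the length-$\e$ direction, using \eqref{rem+}$_1$, although the correct prefactor is $(\d/\e)^{1/2}$, not $\e^{1/2}$ as you wrote), but it does not by itself deliver the remaining estimates, which is where the lemma's content lies.
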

\begin{proof} 
First, we set
	\begin{equation}\label{wto}
 \wt{\O}_{pq}\doteq \wt{\o}_{pq} \X\fI_\d,\quad\forall\, (p,q)\in \cK^*_\e.
	\end{equation}
	
The proof is organized in the following steps.
	
{\it Step 1.}  We recall the following classical result:\\
For any $\psi\in H^1(\wt{\o}_{pq})$ and $\Psi\in H^1(\wt{\O}_{pq})^3$, we have
\begin{equation}\label{EQ43}
\begin{aligned}
&\|\psi\|_{L^2(\wt{\o}_{pq})}\leq C\big(\|\psi\|_{L^2(\o^H_{pq})}+\d\|\nabla \psi\|_{L^2(\wt{\o}_{pq})}\big),\\
&\|\Psi\|_{L^2(\wt{\O}_{pq})}\leq C\big(\|\Psi\|_{L^2(\O^H_{pq})}+\d\|e(\Psi)\|_{L^2(\wt{\O}_{pq})}\big).
\end{aligned}
\end{equation} The constant does not depend on $\e$ and $\d$.\\[1mm]
Below, we give a brief proof of these inequalities. Theorem 2.3 in \cite{GDecomp} gives a rigid displacement $\GR_{pq}=\GA_{pq}+\GB_{pq}\land \ov{x}_{pq}$, $\GA_{pq},\; \GB_{pq}\in \R^3$ such that
$$\|\Psi-\GR_{pq}\|_{L^2(\wt{\O}_{pq})}\leq C\d \|e(\Psi)\|_{L^2(\wt{\O}_{pq})}\big).$$ 
Thus
$$\|\Psi\|^2_{L^2(\wt{\O}_{pq})}\leq C\big(\|\GR_{pq}\|^2_{L^2(\wt{\O}_{pq})}+\d^2\|e(\Psi)\|^2_{L^2(\wt{\O}_{pq})}\big)\leq  C\big((\e+\d)^2|\GA_{pq}|^2+(\e+\d)^4|\GB_{pq}|^2 +\d^2\|e(\Psi)\|^2_{L^2(\wt{\O}_{pq})}\big).$$
Besides, we have
$$(\e-\d)^2|\GA_{pq}|^2+(\e-\d)^4|\GB_{pq}|^2\leq C \|\GR_{pq}\|^2_{L^2(\O^H_{pq})}\leq 2\big(\|\Psi-\GR_{pq}\|^2_{L^2(\O^H_{pq})}+\|\Psi\|^2_{L^2(\O^H_{pq})}\big).$$ This ends the proof of \eqref{EQ43}$_2$. We obtain \eqref{EQ43}$_1$ with the field $\Psi=\psi\Ge_1$ and thanks to \eqref{EQ43}$_2$.\\[1mm]
{\it Step 2.} A preliminary result.

Let $v$ and $w$ be  two rigid displacements
$$
v(x)=\Ga+\cR\land x=\begin{pmatrix} 
	\Ga_{1}+\cR_{2}x_3-\cR_{3}x_2\\
	\Ga_{2}+\cR_{3}\big(x_1-{\e/2}\big)-\cR_{1}x_3\\
	\Ga_{3}+\cR_{1}x_2-\cR_{2}\big(x_1-{\e/2}\big)
	\end{pmatrix}\;\text{a.e. in } \Big(-{\d\over 2},\e\Big)\X\fI_\e\X \fI_\d,\quad (\Ga,\cR)\in\R^3\X \R^3
$$ and
$$
w(x)=\Ga^{'}+\cR^{'}\land x=\begin{pmatrix} 
	\Ga^{'}_{1}+\cR^{'}_{2}x_3-\cR^{'}_{3}x_2\\
	\Ga^{'}_{2}+\cR^{'}_{3}\big(x_1+ {\e/2}\big)-\cR^{'}_{1}x_3\\
	\Ga^{'}_{3}+\cR^{'}_{1}x_2-\cR^{'}_{2}\big(x_1+ {\e/2}\big)
	\end{pmatrix}\;\text{a.e. in } \Big(-\e,{\d\over 2}\Big)\X\fI_\e\X\fI_\d,\quad (\Ga^{'},\cR^{'})\in\R^3\X \R^3
$$ In $\ds \cO_{\ed}\doteq\fI_\d\X\fI_\e\X \fI_\d$ the components of $\Gu=v-w$ are$$
\begin{aligned}
\Gu_1&=\Ga_{1}-\Ga^{'}_{1}+(\cR_{2}-\cR^{'}_{2})x_3-(\cR_{3}-\cR^{'}_{3})x_2,\\
\Gu_2&=\Ga_{2}-\Ga^{'}_{2}-{\e\over 2}\big(\cR_{3}+\cR^{'}_{3}\Big)+(\cR_{3}-\cR^{'}_{3})x_1-(\cR_{1}-\cR^{'}_{1})x_3,\\
\Gu_3&=\Ga_{3}-\Ga^{'}_{3}+{\e\over 2}\big(\cR_{2}+\cR^{'}_{2}\big)+(\cR_{1}-\cR^{'}_{1})x_2-(\cR_{2}-\cR^{'}_{2})x_1.
\end{aligned}\qquad \hbox{in }\; \cO_{\ed}.
$$ The  $L^2$ norm of the components of $v-w$ in $\cO_\ed$ are
\begin{equation}\label{EQ471}
\begin{aligned}
\|\Gu\|^2_{L^2(\cO_{\ed})}=\ed^2\Big(|\Ga_{1}-\Ga^{'}_{1}|^2+\Big|\Ga_{2}-\Ga^{'}_{2}-{\e\over 2}\big(\cR_{3}+\cR^{'}_{3}\Big)\Big|^2+\Big|\Ga_{3}-\Ga^{'}_{3}+{\e\over 2}\big(\cR_{2}+\cR^{'}_{2}\big)\Big|^2\\
+ {\d^2\over 6}|\cR_{2}-\cR^{'}_{2}|^2+\Big({\e^2\over 12}+{\d^2\over 12}\Big)|\cR_{3}-\cR^{'}_{3}|^2+ \Big({\d^2\over 12}+{\e^2\over 12}\Big)|\cR_{1}-\cR^{'}_{1}|^2\Big).
\end{aligned}
\end{equation}
 {\it Step 3.} We prove \eqref{EQ47}-\eqref{EQ47+}.\\
We apply \eqref{EQ43}$_2$ with the displacements $v=u$ and $w_{pq}=\Ga_{pq}+\cR_{pq}\land \ov{x}_{pq}$ in  $\wt{\O}_{pq}$. Then, with $v=u$ and $w_{p-1q}=\Ga_{p-1q}+\cR_{p-1q}\land \ov{x}_{p-1q}$ in  $\wt{\O}_{p-1q}$. This allows to estimate $w_{pq}-w_{p-1q}$ in $\wt{\O}_{pq}\cap \wt{\O}_{p-1q}$  using \eqref{EQ471}. We obtain \eqref{EQ47}$_{1,3,5}$- \eqref{EQ47+}$_{1}$.\\
 Similarly, proceeding as above we estimate  $w_{pq}-w_{pq-1}$ in $\wt{\O}_{pq}\cap \wt{\O}_{pq-1}$, which along with \eqref{EQ471} give \eqref{EQ47}$_{2,4,6}$-\eqref{EQ47+}$_{2}$.
 This completes the proof.
\end{proof}
\section{The tools: Global fields using $\cQ_1$-interpolations}\label{S05}
In this section, we construct several global displacement fields using $\mathcal{Q}_1$-interpolation. These fields serve as tools to obtain the unfolded limits of the macroscopic Kirchhoff--Love displacements $\fU$. 

More precisely, we first define $\cU^\diamond, \cR^\diamond \in H^1(\omega_\delta)^3$ to obtain sufficient regularity for the macroscopic limits of $\fU$. Then we define $\cU, \cR \in H^1(\omega_\delta)^3$ and $\cU_\alpha^\bt, \cU_\alpha^{\bt\bt} \in H^1(\omega_\delta)$ to derive the exact expressions of the unfolded limits of $\fU$. Details on the corresponding convergence results are provided in Lemma~\ref{lem61}.
\subsection{The fields $\cU^\dia$ and $\cR^\dia$} \label{SS42}

 Let $u$ be in $\GU_{\e\d}$, from the previous subsection, we decompose $u$ as \eqref{EQ41D+}.\\
 First, we need to define $\Ga_{pq}$ and $\cR_{pq}$ for all $(p,q)\in \ov{\cK}_\e\setminus \cK^*_\e$. We  set \footnote{$ $ We use \( \cR_{pq} \) and \( \cR_{p,q} \) interchangeably for \( (p,q) \in \overline{\mathcal{K}}_\varepsilon \), as they refer to the same field. Similarly, \( \Ga_{pq} \) and \( \Ga_{p,q} \) are used to denote the same quantity.}
$$
\begin{aligned}
&\Ga_{-1,q}=\Ga_{0,q},\quad \cR_{-1,q}=\cR_{0,q},\quad \Ga_{N_\e,q}=\Ga_{N_\e-1,q},\quad \cR_{N_\e,q}=\cR_{N_\e-1,q},\qquad q\in\{0,\ldots,N_\e-1\},\\
&\Ga_{p,-1}=\Ga_{p,0},\quad \cR_{p,-1}=\cR_{p,0},\quad \Ga_{p,N_\e}=\Ga_{p,N_\e-1},\quad \cR_{p,N_\e}=\cR_{p,N_\e-1},\qquad p\in\{-1,\ldots,N_\e\}.
\end{aligned}
$$ If the displacement $u$ vanishes on $\Gamma_\d$, then we set
$$
\begin{aligned}
&\Ga_{-1,q}=\cR_{-1,q}=\Ga_{0,q}=\cR_{0,q}=0\qquad  q\in\{0,\ldots, n_\e-1\},\\
&\Ga_{p,-1}=\cR_{p,-1}=\Ga_{p,0}=\cR_{p,0}=0\qquad p\in\{-1,\ldots,n_\e-1\}.
\end{aligned}
$$
In order to complete the asymptotic behavior  of the function $\fU$, we introduce the fields $\cR^\dia,\;\cU^\dia$ belonging to $H^1(\o_\d)^3$ by
$$
\cR^\dia(p\e,q\e)=\cR_{pq},\quad \cU^\dia(p\e,q\e)=\Ga_{pq}\qquad (p,q)\in \ov{\cK_\e}
$$ and then, in $\ov{\o}_{pq}$, $(p,q)\in \ov{\cK_\e}$, the fields $\cR^\dia$, $\cU^\dia$ are the $\cQ_1$ interpolation of their values at the vertices of this square. 
By construction, the field  $\cR^\dia$, $\cU^\dia$ belong to $H^1\big((-\e,L+\e)^2\big)^3$ and satisfy the following boundary conditions:
$$\cU^\dia=\cR^\dia=0\qquad\hbox{on}\;\; \gamma.$$
In the lemma below, we give the estimates of the restrictions of these fields to $\o_\d$.
\begin{lemma}\label{lem45}  We have
\begin{equation}\label{EQ49}
\begin{aligned}
&\|\nabla \cR^\dia\|_{L^2(\o_\d)}+ \|\cR^\dia\|_{L^2(\o_\d)}\leq {C\over \e^{1/2}\d}\GE(u),\\
&\|\nabla \cR^\dia_3\|_{L^2(\o_\d)}\leq {C\over \e^{3/2}}\GE(u),\quad  \|\cR^\dia_3\|_{L^2(\o_\d)}\leq  {C\over \e^{1/2}}\GE(u),\\
 &  \|\p_1\cR^\dia_1\|_{L^2(\o_\d)}+\|\p_2\cR^\dia_2\|_{L^2(\o_\d)} \leq {C\over \e^{3/2}}\GE(u)
 \end{aligned}
\end{equation} 
and 
 \begin{equation}\label{EQ49+}
\begin{aligned}
  &\big\|\partial_1\cU^\dia_1\big\|_{L^2(\o_\d)}+\big\|\partial_2\cU^\dia_2\big\|_{L^2(\o_\d)}\leq {C\over \e^{1/2}}\GE(u),\\
  &\big\|\partial_1\cU^\dia_2-\cR^\dia_3\big\|_{L^2(\o_\d)}+\big\|\partial_2\cU^\dia_1+\cR^\dia_3\big\|_{L^2(\o_\d)}\leq {C\over \e^{1/2}}\GE(u),\\
    & \big\|\partial_1\cU^\dia_3+\cR^\dia_2\big\|_{L^2(\o_\d)}+\big\|\partial_2\cU^\dia_3-\cR^\dia_1\big\|_{L^2(\o_\d)}\leq C{\e^{1/2}\over \d}\GE(u).
  \end{aligned}
\end{equation} 
Moreover, we have
\begin{equation}\label{EQ413}
\|\cU^\dia_\a\|_{H^1(\o_\d)}\leq {C\over \e^{1/2}}\GE(u),\qquad \|\cU^\dia_3\|_{H^1(\o_\d)}\leq {C\over \e^{1/2}\d}\GE(u). 
\end{equation}
 The constants do not depend on $\e$ and $\d$. 
\end{lemma}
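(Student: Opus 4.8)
The plan is to estimate each quantity by comparing the $\cQ_1$-interpolants $\cR^\dia,\cU^\dia$ to the piecewise-constant data $\cR_{pq},\Ga_{pq}$ cell by cell, and then to invoke the discrete estimates of Lemma~\ref{lem42} together with the Korn-type bounds \eqref{EQ46} from the Proposition. Concretely, on a fixed cell $\ov\o_{pq}$ the $\cQ_1$-interpolant of nodal values $\{v_{p+i,q+j}\}_{i,j\in\{0,1\}}$ has gradient whose $L^2(\ov\o_{pq})$-norm squared is comparable to $\e^0\bigl(|v_{p+1,q}-v_{pq}|^2+|v_{p,q+1}-v_{pq}|^2+\cdots\bigr)$ — i.e.\ the $L^2$-norm of a discrete gradient on that cell scales like the nodal differences (no extra power of $\e$ in two dimensions, since the cell area $\e^2$ cancels against the $\e^{-2}$ from the difference quotient). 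Summing over $(p,q)$, each bound in \eqref{EQ49} then follows by matching a sum of squared nodal differences of $\cR$ against the right entry of \eqref{EQ47}: for instance $\|\p_1\cR^\dia_3\|_{L^2(\o_\d)}^2+\|\p_2\cR^\dia_3\|_{L^2(\o_\d)}^2 \lesssim \sum|\cR_{pq,3}-\cR_{p-1q,3}|^2+\sum|\cR_{pq,3}-\cR_{pq-1,3}|^2 \lesssim \e^{-3}\GE(u)^2$, giving \eqref{EQ49}$_2$ (gradient part); and \eqref{EQ49}$_1$ uses all six sums in \eqref{EQ47}, the worst being the $\e^{-1}\d^{-2}$ ones from $\eqref{EQ47}_1$. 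For the $L^2$-norms of $\cR^\dia$ themselves (rather than their gradients) one combines the gradient bound with the vanishing boundary condition on $\gamma$ via a discrete/continuous Poincaré inequality on $\o_\d$ — here one must be slightly careful that $\gamma$ has length $l\sim 1$, so no $\e$- or $\d$-blow-up enters the Poincaré constant.

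For \eqref{EQ49+} the idea is different: these are \emph{consistency} estimates comparing the interpolated fields to the genuine Kirchhoff--Love fields $\fU_m,\fU_3$ of the decomposition. On each hard cell $\o^H_{pq}$ the relations \eqref{EQ40} say that $\fU_1=\Ga_{pq,1}-\cR_{pq,3}\ov x_{pq,2}$, etc., so $\cU^\dia$ and the affine-in-$x'$ representation of $\fU$ agree at the cell nodes up to the (uncontrolled) behaviour across soft channels. Thus $\p_1\cU^\dia_2-\cR^\dia_3$ is, on each cell, the $\cQ_1$-interpolant of a quantity that on the hard part equals $\p_1\fU_2-\cR_{pq,3}=0$; the discrepancy is driven by the thin soft strips of width $\sim\d$ plus the nodal jumps $|\Ga_{pq}-\Ga_{p-1q}-\tfrac\e2(\cR_{pq}+\cR_{p-1q})\land\Ge_1|$ and the $\cR$-jumps. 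Each of these is controlled by \eqref{EQ47+} ($\lesssim\e^{-1/2}\GE(u)$ after the interpolation scaling) and by the portions of $\eqref{EQ47}$ that carry only $\e^{-3/2}$ or better — except the last line \eqref{EQ49+}$_3$, where the jumps $|\cR_{pq,\a}-\cR_{p-1q,\a}|$ relevant to $\p_1\cU^\dia_3+\cR^\dia_2$ involve the $\e^{-1}\d^{-2}$ sum $\eqref{EQ47}_1$, producing the $\e^{1/2}/\d$ factor. Finally \eqref{EQ413} is a corollary: the $H^1(\o_\d)$-bound on $\cU^\dia_\a$ combines $\|\p_\a\cU^\dia_\a\|_{L^2}\lesssim\e^{-1/2}\GE(u)$ from \eqref{EQ49+}$_1$, the mixed-derivative bounds $\|\p_\b\cU^\dia_\a\|_{L^2}$ obtained by writing $\p_2\cU^\dia_1=(\p_2\cU^\dia_1+\cR^\dia_3)-\cR^\dia_3$ and using \eqref{EQ49+}$_2$ and \eqref{EQ49}$_2$, plus a Poincaré step using $\cU^\dia_\a=0$ on $\gamma$; the $\cU^\dia_3$ bound similarly uses \eqref{EQ49+}$_3$ and the $\e^{-1/2}\d^{-1}$ bound on $\cR^\dia_{1,2}$ from \eqref{EQ49}$_1$.

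The routine part is the per-cell interpolation algebra (an explicit computation on the reference square $(0,1)^2$, then affine rescaling by $\e$), which I would state once as a short lemma rather than repeat. \textbf{The main obstacle} is bookkeeping the different $\e,\d$ powers: one must pair each target quantity with exactly the right sum from \eqref{EQ47}/\eqref{EQ47+}, since using a cruder bound (e.g.\ the $\e^{-1}\d^{-2}$ sum where the $\e^{-3}$ one suffices) would destroy the sharpness of \eqref{EQ49}$_2$–\eqref{EQ49}$_3$ and \eqref{EQ49+}$_1$–\eqref{EQ49+}$_2$; and in \eqref{EQ49+} one must correctly separate the ``soft-channel of width $\d$'' contribution (where the interpolant transitions between the two adjacent affine fields) from the nodal-jump contribution, checking that the former is absorbed because it is supported on a set of measure $\sim\d\e$ per cell while the integrand there is $O(\d^{-1})\GE$-type — this is exactly the place where the hypothesis $\d/\e\to0$, \eqref{As}, is implicitly needed to keep constants uniform.
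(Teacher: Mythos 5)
Your overall framework for \eqref{EQ49} and \eqref{EQ49+} (per-cell $\cQ_1$ interpolation algebra reducing everything to the discrete sums of Lemma \ref{lem42}) is the same as the paper's, and the pairing of each gradient bound with the right line of \eqref{EQ47}--\eqref{EQ47+} is correct. But there is a genuine gap at the sharp estimates $\|\cR^\dia_3\|_{L^2(\o_\d)}\leq C\e^{-1/2}\GE(u)$ and $\|\cU^\dia_\a\|_{H^1(\o_\d)}\leq C\e^{-1/2}\GE(u)$. You propose to get the $L^2$-norms of $\cR^\dia$ ``from the gradient bound plus Poincar\'e''; for $\cR^\dia_3$ this only yields $C\e^{-3/2}\GE(u)$, since the available gradient bound is \eqref{EQ49}$_3$. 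You then use that $L^2$-bound on $\cR^\dia_3$ to control the mixed derivatives $\p_2\cU^\dia_1,\p_1\cU^\dia_2$ and hence $\|\cU^\dia_\a\|_{H^1}$ --- so as written your argument either loses a full factor $\e^{-1}$ in \eqref{EQ413}$_1$ or is circular (you need the sharp bound on $\cR^\dia_3$ to prove the bound you use to prove it). The missing idea is the paper's Korn step: from \eqref{EQ49+}$_{1,2,3,4}$ alone one controls the \emph{symmetric} gradient, $\|e_{\alpha\beta}(\cU^\dia)\|_{L^2(\o_\d)}\leq C\e^{-1/2}\GE(u)$ (note $\cR^\dia_3$ cancels in $e_{12}(\cU^\dia)$), then the $2$D Korn inequality with the boundary condition on $\gamma$ gives \eqref{EQ413}$_1$, and only afterwards one recovers \eqref{EQ49}$_4$ by writing $\cR^\dia_3=\p_1\cU^\dia_2-(\p_1\cU^\dia_2-\cR^\dia_3)$. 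Without this step the lemma's sharp constants are not reached.

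A secondary inaccuracy: your description of \eqref{EQ49+} as a consistency estimate against $\fU$ driven by ``thin soft strips of width $\sim\d$'' conflates $\cU^\dia,\cR^\dia$ (which are plain $\cQ_1$ interpolants on the coarse cells $\ov\o_{pq}\simeq(0,\e)^2$, with no $\d$-strips in their construction) with the fields $\cU,\cR$ of Subsection \ref{SS43}. The paper's proof of \eqref{EQ49+} is an exact algebraic identity: on each $\o_{pq}$, $\p_1\cU^\dia-\cR^\dia\land\Ge_1$ is a convex/affine combination of the compatibility differences $\Ga_{p+1q}-\Ga_{pq}-\tfrac\e2(\cR_{p+1q}+\cR_{pq})\land\Ge_1$ (divided by $\e$) and the jumps $\cR_{p+1q}-\cR_{pq}$, so \eqref{EQ47}--\eqref{EQ47+} apply directly; no comparison with $\fU$ via \eqref{EQ40}, no soft/hard splitting, and no use of $\d/\e\to0$ is needed at this stage. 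Your bookkeeping of which jump sums enter which line (e.g.\ the $\e^{-1}\d^{-2}$ sums only in \eqref{EQ49+} line 3) is otherwise consistent with the paper.
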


\begin{proof}
	The estimates \eqref{EQ49}$_{1,3,5,6}$ are the immediate consequences of \eqref{EQ47}$_{1,2,3,4,5,6}$ and the $\cQ_1$-character of $\cR^\dia$. Then, the boundary condition satisfied by $\cR^\dia$ leads to \eqref{EQ49}$_{2}$.\\[1mm]
Now, in $\o_{pq}$, for any $(z_1,z_2)\in (0,\e)^2.$ we have
$$
\begin{aligned}
\cU^\dia(p\e+z_1,q\e+z_2)&=\Ga_{pq}{(\e-z_1)(\e-z_2)\over \e^2}+\Ga_{p+1q}{z_1(\e-z_2)\over \e^2}+\Ga_{pq+1}{(\e-z_1)z_2\over \e^2}+\Ga_{p+1q+1}{z_1 z_2\over \e^2},\\
\p_1\cU^\dia(p\e+z_1,q\e+z_2)&={\Ga_{p+1q}-\Ga_{pq}\over \e}{\e-z_2\over \e}+{\Ga_{p+1q+1}-\Ga_{pq+1}\over \e}{z_2\over \e},\\
\cR^\dia(p\e+z_1,q\e+z_2)&=\cR_{pq}{(\e-z_1)(\e-z_2)\over \e^2}+\cR_{p+1q}{z_1(\e-z_2)\over \e^2}+\cR_{pq+1}{(\e-z_1)z_2\over \e^2}+\cR_{p+1q+1}{z_1 z_2\over \e^2},\\
&=\Big({1\over 2}\big(\cR_{p+1q}+\cR_{pq}\big)+{z_1-\e/2\over \e}\big(\cR_{p+1q}-\cR_{pq}\big)\Big){\e-z_2\over \e}\\
&+\Big({1\over 2}\big(\cR_{p+1q+1}+\cR_{pq+1}\big)+{z_1-\e/2\over \e}\big(\cR_{p+1q+1}-\cR_{pq+1}\big)\Big){z_2\over \e}.
\end{aligned}
$$ Then, we obtain
$$
\begin{aligned}
\big(\partial_1\cU^\dia-\cR^\dia\land \Ge_1\big)(p\e+z_1,q\e+z_2)=&{1\over \e}\Big(\Ga_{p+1q}-\Ga_{pq}-{\e\over 2}(\cR_{p+1q}+\cR_{pq})\land\Ge_1\Big){\e-z_2\over \e} \\
+&{1\over \e}\Big(\Ga_{p+1q+1}-\Ga_{pq+1}-{\e\over 2}(\cR_{p+1q+1}+\cR_{pq+1})\land\Ge_1\Big){z_2\over \e} \\
+\big(\cR_{p+1q}-\cR_{pq}\big)\land & \Ge_1{2z_1-\e\over 2\e}{\e-z_2\over \e}+\big(\cR_{p+1q+1}-\cR_{pq+1}\big)\land\Ge_1{2z_1-\e\over 2\e}{z_2\over \e}.
\end{aligned}
$$ Thanks to the estimates \eqref{EQ47} this gives \eqref{EQ49+}$_{1,3,5}$. Similarly we prove  \eqref{EQ49+}$_{2,4,6}$.\\
The estimate \eqref{EQ413}$_2$ is direct consequence of \eqref{EQ49}$_{2}$-\eqref{EQ49+}$_{5,6}$ and Poincar\'e inequality. Estimates \eqref{EQ49+}$_{1,2,3,4}$ lead to
$$\|e_{\alpha\beta}(\cU^\dia)\|_{L^2(\o_\d)}\leq {C\over \e^{1/2}}\GE(u)$$
which along with the  $2$D Korn's inequality give \eqref{EQ413}$_1$ from which and \eqref{EQ49+}$_{3,4}$ we obtain \eqref{EQ49}$_{4}$.

Finally, from \eqref{EQ413}$_1$ and \eqref{EQ49+}$_3$ we deduce \eqref{EQ49}$_4$. 
 This ends the proof of the lemma.
\end{proof}

\begin{proof}[{\bf Proof of the estimate  \eqref{EQ46}$_4$}] From \eqref{EQ49}$_4$-\eqref{EQ413}$_1$ and the $\cQ_1$ character of $\cU^\dia_\a$, $\cR^\dia_3$ we obtain
\begin{equation}\label{EQ417}
\sum_{(p,q)\in \cK^*_\e} |\Ga_{pq,\a}|^2\e^2 \leq {C\over \e}\GE(u)^2,\qquad \sum_{(p,q)\in \cK^*_\e} |\cR_{pq,3}|^2\e^2\leq {C\over \e}\GE(u)^2.
\end{equation}
Therefore, we get
 $$\sum_{(p,q)\in \cK^*_\e}\|\fU_\a\|^2_{L^2(\o^H_{pq})}\leq {C\over \e}\GE(u)^2$$
 which along with \eqref{EQ46}$_3$ and inequality \eqref{EQ43}$_1$, give the required result \eqref{EQ46}$_4$.
\end{proof}
\begin{lemma}\label{lem47} We have
\begin{equation}\label{N3}
\begin{aligned}
&\big\|\p_1\fU_3+\cR^\dia_2\big\|_{L^2(\o_\d)}+\big\|\p_2\fU_3-\cR^\dia_1\big\|_{L^2(\o_\d)}\leq C{\e^{1/2}\over \d}\GE(u),\\
&\|\fU_\a-\cU^\dia_\a\|_{L^2(\o_\d)}\leq C{\e^{1/2}}\GE(u),\\
& \|\fU_3-\cU^\dia_3\|_{H^1(\o_\d)}\leq  C{\e^{1/2}\over \d}\GE(u).
\end{aligned}
\end{equation} The constants do not depend on $\e$ and $\d$.
\end{lemma}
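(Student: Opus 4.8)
\textbf{Proof plan for Lemma~\ref{lem47}.} The strategy is to compare $\fU_\a$, $\fU_3$ and $\nabla\fU_3$ with the $\cQ_1$-interpolants $\cU^\dia$, $\cR^\dia$ first on the rigid cells $\o^H_{pq}$, where \eqref{EQ40} gives all these fields explicitly, and then to propagate the resulting $L^2$ bounds to the whole of $\o_\d$ by means of the local inequality \eqref{EQ43}$_1$, exactly as in the proof of \eqref{EQ46}$_4$. The estimate \eqref{N3}$_3$ is then obtained from \eqref{N3}$_1$ by the triangle inequality and a Poincar\'e argument.

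\textbf{Step 1 (estimates on the rigid cells).} Fix $(p,q)\in\cK^*_\e$. By \eqref{EQ40}, on $\o^H_{pq}$ we have $\p_1\fU_3=-\cR_{pq,2}$, $\p_2\fU_3=\cR_{pq,1}$, hence $\p_1\fU_3+\cR^\dia_2=\cR^\dia_2-\cR_{pq,2}$ and $\p_2\fU_3-\cR^\dia_1=\cR^\dia_1-\cR_{pq,1}$ there; likewise $\fU_1-\cU^\dia_1=(\Ga_{pq,1}-\cU^\dia_1)-\cR_{pq,3}\,\ov x_{pq,2}$, $\fU_2-\cU^\dia_2=(\Ga_{pq,2}-\cU^\dia_2)+\cR_{pq,3}\,\ov x_{pq,1}$, with $|\ov x_{pq,\a}|\le\e/2$ on $\o^H_{pq}$. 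Since $\cR^\dia$, $\cU^\dia$ are $\cQ_1$ on $\o_{pq}$, each of the differences $\cR^\dia_2-\cR_{pq,2}$, $\cR^\dia_1-\cR_{pq,1}$, $\Ga_{pq,\a}-\cU^\dia_\a$ is bounded in $L^\infty(\o_{pq})$ by the absolute values of the first and second nodal differences of the corresponding nodal array around $(p,q)$. Multiplying the square of such a bound by $|\o^H_{pq}|\le\e^2$ and summing over $(p,q)$, I will invoke: for $\cR^\dia_2-\cR_{pq,2}$ and $\cR^\dia_1-\cR_{pq,1}$, the jump bounds \eqref{EQ47}$_{1,2,3,4}$, in which the dominant terms are the $\e^{-1}\d^{-2}$ ones, giving $\|\p_1\fU_3+\cR^\dia_2\|_{L^2(\o^H_\ed)}+\|\p_2\fU_3-\cR^\dia_1\|_{L^2(\o^H_\ed)}\le C\e^{1/2}\d^{-1}\GE(u)$; and for $\Ga_{pq,\a}-\cU^\dia_\a$, the decisive fact that $(\cR\wedge\Ge_1)_1=(\cR\wedge\Ge_2)_2=0$, so that by \eqref{EQ47+} the $\Ge_1$-jumps of $\Ga_{\cdot,1}$ and the $\Ge_2$-jumps of $\Ga_{\cdot,2}$ are $O(\e^{-1/2}\GE(u))$ on their own, while the remaining jumps, where the cross product contributes a $\pm\cR_{\cdot,3}$ term, are controlled by \eqref{EQ47+} together with the bound $\sum_{(p,q)}|\cR_{pq,3}|^2\e^2\le C\e^{-1}\GE(u)^2$ from \eqref{EQ417}; combining this with $\sum_{(p,q)}|\cR_{pq,3}|^2\|\ov x_{pq,\a}\|^2_{L^2(\o^H_{pq})}\le C\e^4\sum|\cR_{pq,3}|^2\le C\e^2\GE(u)^2$ yields $\|\fU_\a-\cU^\dia_\a\|_{L^2(\o^H_\ed)}\le C\e^{1/2}\GE(u)$.

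\textbf{Step 2 (passage to $\o_\d$).} Since $\fU_3\in H^2(\o_\d)$ and $\cR^\dia,\cU^\dia$ are globally $\cQ_1$, the functions $\p_1\fU_3+\cR^\dia_2$, $\p_2\fU_3-\cR^\dia_1$ and $\fU_\a-\cU^\dia_\a$ lie in $H^1(\wt\o_{pq})$ for every $(p,q)\in\cK^*_\e$; moreover $\o^H_{pq}\subset\wt\o_{pq}$ and the $\wt\o_{pq}$ cover $\o_\d$ with bounded overlap. Applying \eqref{EQ43}$_1$ to each of these functions and summing the squares over $(p,q)$, it remains to control the gradient contributions: $\d\|\nabla\p_\a\fU_3\|_{L^2(\o_\d)}\le C\d^{-1/2}\GE(u)$ and $\d\|\nabla\cR^\dia\|_{L^2(\o_\d)}\le C\e^{-1/2}\GE(u)$ from \eqref{rem+} and \eqref{EQ49}$_1$, and $\d\|\nabla\fU_m\|_{L^2(\o_\d)}\le C\d^{1/2}\GE(u)$, $\d\|\nabla\cU^\dia_\a\|_{L^2(\o_\d)}\le C\d\e^{-1/2}\GE(u)$ from \eqref{EQ46}$_3$ and \eqref{EQ413}$_1$. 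Because $3\d<\e$, each of $\d^{-1/2}$, $\e^{-1/2}$ is $\le C\e^{1/2}\d^{-1}$ and each of $\d^{1/2}$, $\d\e^{-1/2}$ is $\le C\e^{1/2}$; hence the Step 1 bounds survive the passage to $\o_\d$, which gives \eqref{N3}$_1$ and \eqref{N3}$_2$.

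\textbf{Step 3 (the $H^1$ estimate) and the main obstacle.} For the gradient part of \eqref{N3}$_3$, I write $\p_1\fU_3-\p_1\cU^\dia_3=(\p_1\fU_3+\cR^\dia_2)-(\p_1\cU^\dia_3+\cR^\dia_2)$ and $\p_2\fU_3-\p_2\cU^\dia_3=(\p_2\fU_3-\cR^\dia_1)-(\p_2\cU^\dia_3-\cR^\dia_1)$, bounding the $\fU_3$-brackets by \eqref{N3}$_1$ (just established) and the $\cU^\dia_3$-brackets by \eqref{EQ49+}$_{5,6}$ of Lemma~\ref{lem45}, all of order $\e^{1/2}\d^{-1}\GE(u)$; this gives $\|\nabla(\fU_3-\cU^\dia_3)\|_{L^2(\o_\d)}\le C\e^{1/2}\d^{-1}\GE(u)$. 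For the $L^2$-part, $\fU_3-\cU^\dia_3=0$ a.e.\ on $\gamma$ (by Theorem~\ref{TH42} applied to $u\in\GU_\ed$ and by the construction of $\cU^\dia$), so the Poincar\'e inequality on $\o_\d$, with constant uniform in $\d$, promotes the gradient bound to the same order, proving \eqref{N3}$_3$. The delicate point of the whole argument is Step 1: one must separate, among the six jump estimates of Lemma~\ref{lem42}, the sharper $O(\e^{-3/2})$ ones from the coarser $O(\e^{-1/2}\d^{-1})$ ones and exploit $(\cR\wedge\Ge_\a)_\a=0$ so that the in-direction jumps of the membrane nodal arrays $\Ga_{\cdot,\a}$ land in the better class; arranging the powers of $\e$ and $\d$ so that, after invoking $3\d<\e$ in Step 2, nothing worse than $\e^{1/2}\d^{-1}\GE(u)$ (resp.\ $\e^{1/2}\GE(u)$) appears is the crux.
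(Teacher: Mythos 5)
Your proposal is correct and follows essentially the paper's own route: on the rigid cells you use \eqref{EQ40} to replace $\p_\a\fU_3$ and $\fU_\a$ by the nodal data, control the resulting differences through the jump bounds of Lemma \ref{lem42} and \eqref{EQ417} (equivalently, the $\cQ_1$ bounds \eqref{LOL1}, \eqref{EQ49}, \eqref{EQ413} the paper uses), pass to $\o_\d$ with \eqref{EQ43}$_1$ exactly as in the text, and obtain the gradient part of \eqref{N3}$_3$ from \eqref{N3}$_{1,2}$ and \eqref{EQ49+}$_{5,6}$, just as the paper does. The only deviations are harmless: for the $L^2$ part of \eqref{N3}$_3$ you invoke a Poincar\'e inequality on $\o_\d$ using $\fU_3-\cU^\dia_3=0$ on $\gamma$ (the paper instead compares both fields with $\Ga_{pq,3}$ on $\o^H_{pq}$ and reuses \eqref{EQ43}$_1$, but it employs the very same Poincar\'e argument for \eqref{EQ413}$_2$, so your variant is legitimate), and in Step 1 the intermediate bound $C\e^{2}\GE(u)^2$ for $\sum_{(p,q)}|\cR_{pq,3}|^2\|\ov x_{pq,\a}\|^2_{L^2(\o^H_{pq})}$ should read $C\e\,\GE(u)^2$, which is the bound your stated conclusion $\|\fU_\a-\cU^\dia_\a\|_{L^2(\o^H_\ed)}\le C\e^{1/2}\GE(u)$ actually uses.
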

\begin{proof} First, due to the $\cQ_1$ character of $\cR^\dia$ we have
\begin{equation}\label{LOL1}
	\sum_{(p,q)\in \cK^*_\e}\big\|\cR^\dia_\a-\cR_{pq,\a}\big\|^2_{L^2(\o_{pq})} \leq C\e^2\|\nabla \cR^\dia_\a\|^2_{L^2(\o)}.
\end{equation}
 The equality \eqref{EQ40}$_{1}$ and estimate \eqref{EQ43}$_1$ yield
$$
\sum_{(p,q)\in \cK^*_\e}\big\|\p_1\fU_3+\cR^\dia_2\big\|^2_{L^2(\wt\o_{pq})} \leq C\sum_{(p,q)\in \cK^*_\e}\Big\{\|\cR^\dia_2-\cR_{pq,2}\|^2_{L^2(\o^H_{pq})}+\d^2\big(\|\nabla \cR^\dia_2\|^2_{L^2(\wt\o_{pq})}+\|\nabla (\p_1\fU_3)\|^2_{L^2(\wt\o_{pq})}\big)\Big\}.
$$
Then,  \eqref{EQ49}$_{1}$ together with \eqref{rem+}$_2$ give \eqref{N3}$_1$. Similarly, we show \eqref{N3}$_2$.\\ 
From the equalities \eqref{EQ40}$_{3,4}$ and estimate \eqref{EQ417}$_1$ we obtain
$$\sum_{(p,q)\in \cK^*_\e}\big\|\fU_\a-\Ga_{pq,\a}\big\|^2_{L^2(\o^H_{pq})} \leq C\e\GE(u)^2.$$
 The $\cQ_1$-character of $\cU^\dia_\a$ together with estimate \eqref{EQ413}$_{1}$ give
$$
\sum_{(p,q)\in \cK^*_\e}\big\|\cU^\dia_\a-\Ga_{pq,\a}\big\|^2_{L^2(\o_{pq})} \leq C\e^2\|\nabla \cU^\dia_\a\|^2_{L^2(\o)}\leq C\e \GE(u)^2.
$$
The above inequalities along with \eqref{EQ43}$_1$, \eqref{EQ413}$_{1}$ and 	\eqref{EQ46}$_3$ give \eqref{N3}$_3$.\\[1mm]
Now, equality \eqref{EQ40}$_5$ and estimates \eqref{EQ49}$_2$  give
$$\sum_{(p,q)\in \cK^*_\e}\big\|\fU_3-\Ga_{pq,3}\big\|^2_{L^2(\o^H_{pq})} \leq C{\e\over \d^2}\GE(u)^2.$$
Besides, due to  the $\cQ_1$-character of $\cU^\dia_3$ together with estimate \eqref{EQ413}$_{2}$ give
\begin{equation}\label{EQ418}
\sum_{(p,q)\in \cK^*_\e}\big\|\cU^\dia_3-\Ga_{pq,3}\big\|^2_{L^2(\o_{pq})} \leq C\e^2\|\nabla \cU^\dia_3\|^2_{L^2(\o)}\leq C{\e\over \d^2}\GE(u)^2.
\end{equation}
Using the above estimates along with \eqref{EQ43}$_1$, \eqref{EQ413}$_{2}$ and \eqref{EQ46}$_2$ we obtain the estimate of the $L^2$ norm of $\fU_3-\cU^\dia_3$. The estimate of the gradient of this difference is a consequence of \eqref{EQ49+}$_{5,6}$ and \eqref{N3}$_{1,2}$.
\end{proof} 
\subsection{The fields $\cU$ and $\cR$}\label{SS43}
Set
\begin{equation}\label{Rect}
	\begin{aligned}
		& R^1_{pq}\,:\, (p\e,q\e)+\fI_{\e-\d}\X\fI_\d,\qquad &&(p,q)\in \cK^{(1)}_\e,\\
		& R^2_{pq}\,:\, (p\e,q\e)+\fI_\d\X\fI_{\e-\d},\qquad &&(p,q)\in \cK^{(2)}_\e,\\
		& R^3_{pq}\,:\, (p\e,q\e)+\fI_\d^2,\qquad &&(p,q)\in \cK_\e.
	\end{aligned}
\end{equation}
Let $\phi_{pq}$, $(p,q)\in \ov{\cK}_\e$ be real numbers. To this family of real numbers we associate  a function $\phi$ defined as follows:
\begin{equation}\label{Dphi}
\begin{aligned}
		\text{in $\o_{pq}$:}\quad \phi(p\e+z_1,q\e+z_2)& =\phi_{pq},\quad (z_1,z_2)\in \fI_{\e-\d}^2\\
		\text{in $R^1_{pq}$:}\quad \phi(p\e+z_1,q\e+z_2)&=\phi_{pq}{\d+ 2z_2\over 2\d}+\phi_{pq-1}{\d-2z_2\over 2\d}, \quad (z_1,z_2)\in \fI_{\e-\d}\X\fI_\d,\\
		\text{in $R^2_{pq}$:}\quad \phi(p\e+z_1,q\e+z_2)&=\phi_{pq}{\d+ 2z_1\over 2\d}+\phi_{p-1q}{\d-2z_1\over 2\d},\quad (z_1,z_2)\in \fI_{\d}\X\fI_{\e-\d},\\
		\text{in $R^3_{pq}$:}\quad \phi(p\e+z_1,q\e+z_2)&=\phi_{pq}{\d+ 2z_1\over 2\d}{\d+ 2z_2\over 2\d}+\phi_{pq-1}{\d+ 2z_1\over 2\d}{\d-2z_2\over 2\d}\\
		&\;\;+\phi_{p-1q}{\d- 2z_1\over 2\d}{\d+ 2z_2\over 2\d}+\phi_{p-1q-1}{\d- 2z_1\over 2\d}{\d-2z_2\over 2\d},\quad (z_1,z_2)\in \fI^2_{\d}.
	\end{aligned}
\end{equation}
Now, we are going to define the global fields   $\cU\in H^1(\o_\d)^3$ and $\cR\in H^1(\o_\d)^3$, for that we set
\begin{equation}\label{H01}
\begin{aligned}
	&\cR(p\e+z_1,q\e+z_2)=\cR_{pq},\\
	&\cU(p\e+z_1,q\e+z_2)=\Ga_{pq}+ \cR_{pq}\land \begin{pmatrix} \ds z_1 \\  \ds z_2 \\  0 \end{pmatrix},\\
\end{aligned} \qquad \hbox{for all } z'=(z_1,z_2)\in \fI_{\e-\d}^2,\quad (p,q)\in \ov{\cK}_\e.
\end{equation}
\begin{figure}[ht]
	\centering
	\scalebox{0.88}{ 
		\begin{subfigure}[t]{0.48\textwidth}
			\centering
			\begin{tikzpicture}[scale=0.9, line cap=round, line join=round]
				\tikzset{
					hard/.style  ={draw=black, fill=RoyalBlue!70,  line width=1.6pt},
					softV/.style ={draw=black, fill=Crimson!60,    line width=1.6pt}, 
					softH/.style ={draw=black, fill=SeaGreen!60,   line width=1.6pt}, 
					pad/.style   ={draw=black, fill=Gold!85!yellow,line width=1.6pt}, 
					border/.style={draw=black, line width=1.6pt},
					jnode/.style ={circle, draw=black, fill=black, line width=1.6pt, minimum size=7pt, inner sep=0pt},
					labelbox/.style={font=\bfseries\footnotesize, text=black,
						fill=white, fill opacity=.9, text opacity=1,
						rounded corners=2pt, inner sep=1.5pt,
						draw=black, line width=.25pt}
				}
				
				\filldraw[hard]  (2,2) rectangle (6,6);  
				\filldraw[softV] (1,2) rectangle (2,6);  
				\filldraw[softH] (2,1) rectangle (6,2);  
				\filldraw[pad]   (1,1) rectangle (2,2);  
				
				\foreach \x in {1,2,6}   \draw[border] (\x,1)--(\x,6);
				\foreach \y in {1,2,6}   \draw[border] (1,\y)--(6,\y);
				
				\foreach \X in {1,2,6}{
					\foreach \Y in {1,2,6}{
						\node[jnode] at (\X,\Y) {};
					}
				}
				
				\node[labelbox] at (4.0,4.0) {$\omega^H_{pq}$};
				\node[labelbox] at (1.5,4.0) {$R^{2}_{pq}$}; 
				\node[labelbox] at (4.0,1.5) {$R^{1}_{pq}$}; 
				\node[labelbox] at (1.5,1.5) {$R^{3}_{pq}$}; 
			\end{tikzpicture}
			
			\caption{$\widetilde{\omega}_{pq}$}
		\end{subfigure}
		\hfill
		\begin{subfigure}[t]{0.48\textwidth}
			\centering
			\begin{tikzpicture}[scale=0.30, line cap=round, line join=round]
				\tikzset{
					hard/.style  ={draw=black, fill=RoyalBlue!70,  line width=1.6pt}, 
					softV/.style ={draw=black, fill=Crimson!60,    line width=1.6pt}, 
					softH/.style ={draw=black, fill=SeaGreen!60,   line width=1.6pt}, 
					pad/.style   ={draw=black, fill=Gold!85!yellow,line width=1.6pt}, 
					border/.style={draw=black, line width=1.6pt},
					labelbox/.style={font=\bfseries\footnotesize, text=black,
						fill=white, fill opacity=.9, text opacity=1,
						rounded corners=2pt, inner sep=1.5pt,
						draw=black, line width=.25pt}
				}
				
				\foreach \x in {2,7,12}{
					\foreach \y in {2,7,12}{
						\filldraw[hard] (\x,\y) rectangle (\x+4,\y+4);
					}
				}
				
				\foreach \xa/\xb in {2/6,7/11,12/16}{
					\filldraw[softH] (\xa, 1) rectangle (\xb, 2);
					\filldraw[softH] (\xa, 6) rectangle (\xb, 7);
					\filldraw[softH] (\xa,11) rectangle (\xb,12);
					\filldraw[softH] (\xa,16) rectangle (\xb,17);
				}
				
				\foreach \ya/\yb in {2/6,7/11,12/16}{
					\filldraw[softV] ( 1,\ya) rectangle ( 2,\yb);
					\filldraw[softV] ( 6,\ya) rectangle ( 7,\yb);
					\filldraw[softV] (11,\ya) rectangle (12,\yb);
					\filldraw[softV] (16,\ya) rectangle (17,\yb);
				}
				
				\foreach \X in {1,6,11,16}{
					\foreach \Y in {1,6,11,16}{
						\filldraw[pad] (\X,\Y) rectangle (\X+1,\Y+1);
					}
				}
				
				\foreach \x in {1,2,6,7,11,12,16,17} \draw[border] (\x,1)--(\x,17);
				\foreach \y in {1,2,6,7,11,12,16,17} \draw[border] (1,\y)--(17,\y);
				
				\foreach \X in {2,6,7,11,12,16}{
					\foreach \Y in {2,6,7,11,12,16}{
						\fill (\X,\Y) circle[radius=0.28];
					}
				}
				
				\node[labelbox] at ( 9, 4) {$\omega^H_{pq-1}$};
				\node[labelbox] at ( 4, 9) {$\omega^H_{p-1q}$};
				\node[labelbox] at ( 9, 9) {$\omega^H_{pq}$};
				\node[labelbox] at (14, 9) {$\omega^H_{p+1q}$};
				\node[labelbox] at ( 9,14) {$\omega^H_{pq+1}$};
			\end{tikzpicture}
			\caption{Part of $\omega_\delta$}
	\end{subfigure}}
	
	\caption{Covering a part of $\omega_\delta$.}
	\label{FigXX}
\end{figure}
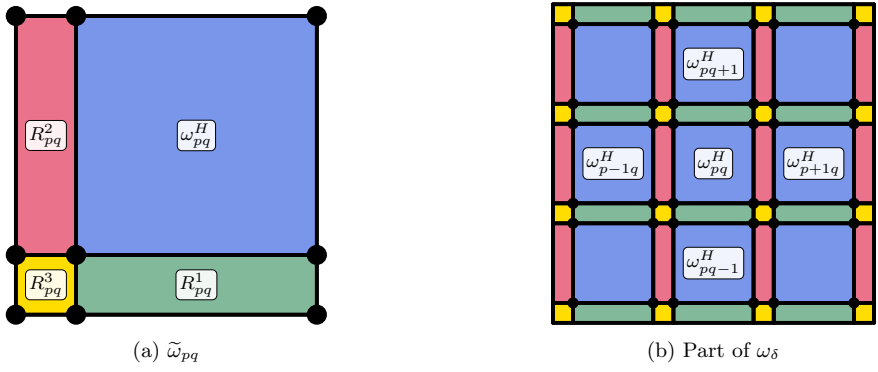
the field $\cR$ is defined from the family of real numbers $\cR_{pq}$, $(p,q)\in \ov{\cK}_\e$.
$\cU$ is defined as the $\cQ_1$-interpolate of its values at the vertices of the above  rectangles or squares \eqref{Rect}.
At this point, these fields are defined in $\ds\Big(-{\d\over 2},L+{\d\over 2}\Big)^2$ and the above constructions give $\cR,\; \cU, \in H^1(\o_\d)^3$.  We have the following expressions of $\cU$:
$$
\footnotesize{ \begin{aligned}
\hbox{in }R^1_{pq} :\quad &\cU(p\e+z_1,q\e+z_2)\\
  &=  \Big(\Ga_{pq}+{1\over 2}\cR_{pq}\land \begin{pmatrix} \ds 2z_1 \\  \ds -\e+\d \\  0 \end{pmatrix}\Big){\d+ 2z_2\over 2\d}+\Big(\Ga_{pq-1}+{1\over 2}\cR_{pq-1}\land \begin{pmatrix} \ds 2z_1 \\  \ds \e-\d\\  0 \end{pmatrix}\Big){\d-2z_2\over 2\d} ,\quad (z_1,z_2)\in \fI_{\e-\d}\X\fI_\d,\\
\hbox{in }R^2_{pq} :\quad &\cU(p\e+z_1,q\e+z_2)\\
  &= \Big(\Ga_{pq}+{1\over 2}\cR_{pq}\land \begin{pmatrix} \ds -\e+\d \\  \ds 2z_2 \\  0 \end{pmatrix}\Big){\d+ 2z_1\over 2\d}+\Big(\Ga_{p-1q}+{1\over 2}\cR_{p-1q}\land \begin{pmatrix} \ds \e-\d \\  \ds 2z_2 \\  0 \end{pmatrix}\Big){\d-2z_1\over 2\d} ,\quad (z_1,z_2)\in \fI_{\d}\X\fI_{\e-\d},\\
\hbox{in }R^3_{pq} :\quad &\cU(p\e+z_1,q\e+z_2)\\
  &=\Big(\Ga_{pq}+{1\over 2}\cR_{pq}\land \begin{pmatrix} \ds -\e+\d\\  \ds -\e+\d \\  0 \end{pmatrix} \Big){\d+ 2z_1\over 2\d}{\d+ 2z_2\over 2\d}+\Big(\Ga_{pq-1}+{1\over 2}\cR_{pq-1}\land \begin{pmatrix} \ds -\e+\d \\  \ds \e-\d\\  0 \end{pmatrix}\Big){\d+ 2z_1\over 2\d}{\d-2z_2\over 2\d}\\
		+\Big(\Ga_{p-1q}+{1\over 2}  \cR_{p-1q} & \land \begin{pmatrix} \ds \e -\d\\  \ds -\e+\d \\  0 \end{pmatrix} \Big){\d- 2z_1\over 2\d}{\d+ 2z_2\over 2\d}+\Big(\Ga_{p-1q-1}+{1\over 2}\cR_{p-1q-1}\land \begin{pmatrix} \ds \e-\d\\  \ds \e-\d \\  0 \end{pmatrix} \Big){\d- 2z_1\over 2\d}{\d-2z_2\over 2\d}, \quad (z_1,z_2)\in \fI^2_{\d}.
\end{aligned}}
$$ Observe that by construction, the fields $\cU$ and $\cR$ satisfy
$$
\partial_\alpha\cU=\cR\land\Ge_\alpha\quad\text{a.e. in $\o^H_{pq}$}\hbox{ for all } (p,q)\in \cK_\e^*\quad \hbox{and}\quad  \cU=\cR=0 \qquad \hbox{a.e. in }\gamma_\d.
$$
Below, we give estimates for $\cU,\;\cR\in H^1(\o_\d)^3$.
\begin{lemma}\label{lem51} We have
\begin{equation}\label{EQ414}
\begin{aligned}
 &  \|\nabla \cR_\a\|_{L^2(\o_\d)}\leq {C\over \d^{3/2}}\GE(u),\quad \|\cR_\a\|_{L^2(\o_\d)}\leq {C\over \e^{1/2}\d}\GE(u),\\
&  \|\p_1\cR_1\|_{L^2(\o_\d)}+\|\p_2\cR_2\|_{L^2(\o_\d)} \leq {C\over \e\d^{1/2}}\GE(u),\\
 &\big\|\partial_1\cU_3+\cR_2\big\|_{L^2(\o_\d)}+\big\|\partial_2\cU_3-\cR_1\big\|_{L^2(\o_\d)}\leq {C\over \d^{1/2}}\GE(u),\\
 &\|\nabla\cR_3\|_{L^2(\o_\d)} \leq {C\over \e\d^{1/2}}\GE(u),\quad  \| \cR_3 \|_{L^2(\o_\d)}\leq {C\over \e^{1/2}}\GE(u),\\
 & \|\nabla \cU_\alpha\|_{L^2(\o_\d)}\leq {C\over \d^{1/2}}\GE(u),\quad \|\cU_\alpha\|_{L^2(\o_\d)}\leq {C\over \e^{1/2}}\GE(u),\quad \|\cU_3\|_{H^1(\o_\d)}\leq {C\over \e^{1/2}\d}\GE(u)
\end{aligned}
\end{equation} and also
\begin{equation}\label{EQ415}
\begin{aligned}
& \|\cR_\a-\cR^\dia_\a\|_{L^2(\o_\d)}\leq C{ \e^{1/2}\over\d}\GE(u),\qquad   \|\cR_3-\cR^\dia_3\|_{L^2(\o_\d)}\leq {C \over \e^{1/2}}\GE(u),\\
&  \|\cU_\a-\cU^\dia_\a\|_{L^2(\o_\d)}\leq C\e^{1/2}\GE(u),\qquad   \|\cU_3-\cU^\dia_3\|_{L^2(\o_\d)}\leq C{\sqrt{\e}\over\d}\GE(u).
\end{aligned}
\end{equation}
 The constants do not depend on $\e$ and $\d$. 
\end{lemma}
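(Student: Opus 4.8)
The plan is to exploit the explicit piecewise-$\cQ_1$ description of $\cR$ and $\cU$ given in \eqref{Dphi}--\eqref{H01} and in the displayed formulas for $\cU$ on $R^1_{pq},R^2_{pq},R^3_{pq}$: each of these fields equals a nodal value (the constant $\cR_{pq}$, resp.\ the affine field $\Ga_{pq}+\cR_{pq}\land(z_1,z_2,0)$, for which $\p_\alpha\cU=\cR\land\Ge_\alpha$) on the squares of side $\sim\e$, while its variation is confined to the two thin rectangles $R^1_{pq},R^2_{pq}$ (width $\d$, measure $\lesssim\e\d$) and the corner square $R^3_{pq}$ (measure $\d^2$), where the field is a first-order difference of its nodal values divided by $\d$. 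Every inequality will therefore be reduced to the discrete sums of Lemma~\ref{lem42}, to \eqref{EQ417}, and to the a priori nodal bound $\sum_{(p,q)}\e^2|\cR_{pq,\a}|^2\le C\e^{-1}\d^{-2}\GE(u)^2$, which follows from $\cR_{pq,2}=-\p_1\fU_3$, $\cR_{pq,1}=\p_2\fU_3$ a.e.\ in $\o^H_{pq}$ (see \eqref{EQ40}$_{1,2}$), from $|\o^H_{pq}|\ge\e^2/4$ (a consequence of $3\d<\e$), and from \eqref{EQ46}$_1$.

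I first treat $\cR$. Since $\nabla\cR$ is supported on the $\d$-strips $R^1_{pq}\cup R^2_{pq}\cup R^3_{pq}$ and equals there a finite difference of the $\cR_{pq}$'s over $\d$, squaring and summing turns $\|\nabla\cR_\a\|_{L^2(\o_\d)}^2$ into $\lesssim\tfrac{\e}{\d}$ times the difference sums of \eqref{EQ47}: the $\Ge_2$-differences of $\cR_1$ and the $\Ge_1$-differences of $\cR_2$ carry the weight $\e^{-1}\d^{-2}$ and give the $\d^{-3/2}$ bound, all other differences being of strictly lower order since $3\d<\e$; keeping only $\p_1\cR_1$ (resp.\ $\p_2\cR_2$) retains just the $\Ge_1$- (resp.\ $\Ge_2$-) differences of order $\e^{-3}$, yielding $\e^{-1}\d^{-1/2}$; for $\cR_3$ both families of differences are of order $\e^{-3}$, whence $\|\nabla\cR_3\|\le C\e^{-1}\d^{-1/2}\GE(u)$. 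The $L^2$-bounds on $\cR_\a$ and $\cR_3$ then follow from the nodal sums above and \eqref{EQ417}, together with a one-dimensional Poincar\'e estimate on each $\d$-strip controlling the oscillation of $\cR$ there by $\d\,\|\nabla\cR\|_{L^2}$.

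Next I treat $\cU$. From the explicit formulas one checks that $\p_\alpha\cU=\cR\land\Ge_\alpha$ holds exactly on $\o^H_{pq}$ and on the rectangle whose long side is parallel to $\Ge_\alpha$, while on the transverse rectangle the discrepancy $\p_1\cU-\cR\land\Ge_1$ equals $\tfrac1\d\big(\Ga_{pq}-\Ga_{p-1q}-\tfrac\e2(\cR_{pq}+\cR_{p-1q})\land\Ge_1\big)+\tfrac{z_2}{\d}(\cR_{pq}-\cR_{p-1q})\land\Ge_2-\tfrac{z_1}{\d}(\cR_{pq}-\cR_{p-1q})\land\Ge_1$ with $|z_1|\le\tfrac\d2$, $|z_2|\le\tfrac\e2$ (symmetrically for $\p_2\cU-\cR\land\Ge_2$; the corresponding terms on $R^3_{pq}$ are of lower order). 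For $\|\p_1\cU_3+\cR_2\|_{L^2}+\|\p_2\cU_3-\cR_1\|_{L^2}$ the decisive observation is that passing to the third component annihilates the $\cR_{\cdot,3}$-differences and leaves only the $\cR_{\cdot,1}$- and $\cR_{\cdot,2}$-differences, so \eqref{EQ47+} and \eqref{EQ47}$_{1,2,3,4}$ produce exactly the $\d^{-1/2}$ bound. Then $\|\nabla\cU_\alpha\|\le C\d^{-1/2}\GE(u)$ and $\|\nabla\cU_3\|\le C\e^{-1/2}\d^{-1}\GE(u)$ follow by combining these identities with the already proved $\|\cR_3\|\le C\e^{-1/2}\GE(u)$, $\|\cR_\a\|\le C\e^{-1/2}\d^{-1}\GE(u)$ and the bound on $\|\p_1\cU_3+\cR_2\|$; the $L^2$-bound on $\cU_3$ comes from $\cU=0$ on $\gamma_\d$ and Poincar\'e's inequality (Lemma~\ref{lemAp1}), and that on $\cU_\a$ from $\sum\e^2|\Ga_{pq,\a}|^2\le C\e^{-1}\GE(u)^2$ and $\sum\e^4|\cR_{pq,3}|^2\le C\e\,\GE(u)^2$ (both via \eqref{EQ417}) plus the strip oscillation — here one uses that, by the structure of the cross product, the components of the $\Ga$-differences that actually occur are controlled through \eqref{EQ47+} and \eqref{EQ417} alone (only $\cR_{\cdot,3}$ enters, never the large $\cR_{\cdot,\a}$).

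Finally, for \eqref{EQ415} I compare $\cR$ with $\cR^\dia$ via the constant $\cR_{pq}$ and $\cU$ with $\cU^\dia$ via the affine field $\Ga_{pq}+\cR_{pq}\land(z_1,z_2,0)$: in each cell $\|\cR_\a-\cR^\dia_\a\|_{L^2}\le\|\cR_\a-\cR_{pq,\a}\|_{L^2}+\|\cR_{pq,\a}-\cR^\dia_\a\|_{L^2}$, the first term being $\le\d\,\|\nabla\cR_\a\|_{L^2}$ by the strip Poincar\'e estimate and the second $\le C\e\,\|\nabla\cR^\dia_\a\|_{L^2}$ by the $\cQ_1$-character of $\cR^\dia$ (exactly as in \eqref{LOL1}); \eqref{EQ414}, \eqref{EQ49} and $3\d<\e$ then give the stated bounds, and the analogous splitting for $\cU$, with \eqref{EQ413}, \eqref{EQ417} and \eqref{LOL1}, does the rest. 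I expect the only genuine difficulty to be organizational: on each of the four pieces of a cell and for each component one must read off which of the six sums in \eqref{EQ47} and the two in \eqref{EQ47+} actually enters — in particular keeping the components involving only $\cR_{\cdot,3}$ separate from those that pick up the large $\cR_{\cdot,\a}$ — so that the claimed powers of $\e$ and $\d$ come out correctly (the relevant chain being $\d^{-3/2}\ge\e^{-1}\d^{-1/2}\ge\e^{-3/2}\ge\dots$, valid since $3\d<\e$); no idea beyond the cell decomposition and Lemma~\ref{lem42} is required.
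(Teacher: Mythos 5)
Your proposal is correct in substance and rests on the same pillars as the paper's proof: the explicit piecewise interpolation structure of $\cR$ and $\cU$ on the squares and on the thin rectangles $R^1_{pq},R^2_{pq},R^3_{pq}$, reduction of all gradient bounds to the difference sums of Lemma~\ref{lem42}, the discrepancy identities for $\p_\alpha\cU-\cR\land\Ge_\alpha$ on the transverse rectangles, and comparison with $\cR^\dia,\cU^\dia$ for \eqref{EQ415}. Where you genuinely deviate is in the $L^2$ bounds: you get $\|\cR_\a\|_{L^2}$, $\|\cR_3\|_{L^2}$ and $\|\cU_\a\|_{L^2}$ directly from nodal sums (the bound $\sum\e^2|\cR_{pq,\a}|^2\le C\e^{-1}\d^{-2}\GE(u)^2$ via \eqref{EQ40}$_{1,2}$ and \eqref{EQ46}$_1$, plus \eqref{EQ417}) together with a $\d$-oscillation estimate, and only then deduce \eqref{EQ415}; the paper goes the other way, first proving \eqref{EQ415}$_{1,2}$ from \eqref{LOL1}--\eqref{LOL2} and then reading off \eqref{EQ414}$_{2,8}$ from the already established $\cR^\dia$ bounds \eqref{EQ49}$_{2,4}$, and similarly obtains $\|\cU_\a\|_{L^2}$ through $\cU^\dia_\a$ and \eqref{EQ413}$_1$. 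Likewise, you obtain $\|\nabla\cU_\a\|$ componentwise from the discrepancy identities plus the $\cR_3,\cR_\a$ bounds, whereas the paper bounds $e_{\a\b}(\cU)$ and invokes the $2$D Korn inequality. Both routes are legitimate and use only material available before the lemma; yours is slightly more self-contained (it does not lean on Lemma~\ref{lem45} for the $L^2$ bounds of $\cR$), while the paper's ordering produces \eqref{EQ415}$_{1,2}$ as the primary estimates. One small inaccuracy: you cannot invoke Lemma~\ref{lemAp1} for the $L^2$ bound of $\cU_3$, since $\cU_3\notin\GV$ (its gradient equals $(\cR\land\Ge_\a)_3\neq 0$ on the hard squares); what is needed, and what the paper uses, is the ordinary Poincar\'e inequality on $\o_\d$ for functions vanishing on $\gamma$, whose constant is independent of $\e,\d$ and which already yields $\|\cU_3\|_{L^2}\le C\e^{-1/2}\d^{-1}\GE(u)$, so the conclusion is unaffected. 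Also note that your ``decisive observation'' that the third component kills the $\cR_{\cdot,3}$-differences is not actually needed: the full vector discrepancy is of order $\d^{-1/2}$ anyway, as your own bookkeeping on $R^2_{pq}$ shows.
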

\begin{proof}
	The proof is divided into four steps given below.
	
 {\bf Step 1.}  We prove \eqref{EQ414}$_{1,3,4,7}$.\\
In the rectangles $R^2_{pq}$ for $(p,q)\in \cK_\e^{(2)}$, we have
$$
\p_1\cR(p\e+z_1,q\e+z_2)={\cR_{pq}-\cR_{p-1q} \over \d},\quad\forall\,(z_1,z_2)\in\fI_\d\X\Big({\d\over 2},\e-{\d\over 2}\Big).
$$ 
So, we have
\begin{equation}\label{EQ414+}
\|\partial_1\cR_i\|^2_{L^2(R^2_{pq})}\leq {\e\over \d}|\cR_{pq,i}-\cR_{p-1q,i}|^2.
\end{equation}
Similarly, in the rectangles $R^1_{pq}$ for $(p,q)\in\cK_\e^{(1)}$, we have
\begin{equation}\label{EQ414++}
\|\partial_2\cR_i\|^2_{L^2(R^1_{pq})}\leq {\e\over \d}|\cR_{pq,i}-\cR_{pq-1,i}|^2.
\end{equation}
In the squares $R^3_{pq}$ for $(p,q)\in\cK_\e$, we have
\begin{equation}\label{EQ415+}
\begin{aligned}
\p_2\cR(p\e+z_1,q\e+z_2)=&{\cR_{p-1q}-\cR_{p-1q-1}\over \d}{{\d}-2z_1\over 2\d} +{\cR_{pq}-\cR_{pq-1}\over \d}{{\d}+2z_1\over 2\d}\\
\end{aligned} \quad  \forall\,(z_1,z_2)\in\fI_\d^2.
\end{equation}
Similarly, we compute $\partial_2\cR_i$. We therefore obtain 
\begin{equation}\label{EQ416}
\begin{aligned}
	&\|\partial_1\cR_i\|^2_{L^2(R^3_{pq})}\leq  C\big(|\cR_{pq-1,i}-\cR_{p-1q-1,i}|^2+|\cR_{pq,i}-\cR_{p-1q,i}|^2\Big)\\ 
	\hbox{and}\;\; \;	
	&\|\partial_2\cR_i\|^2_{L^2(R^3_{pq})}\leq  C\big(|\cR_{p-1q,i}-\cR_{p-1q-1,i}|^2+|\cR_{pq,i}-\cR_{pq-1,i}|^2\big).
\end{aligned}
\end{equation} 
Then, using the above results \eqref{EQ414+}--\eqref{EQ416}, we get
\begin{equation}\label{INE01}
\begin{aligned}
	&\|\nabla \cR_i\|^2_{L^2(\o_\d)}\leq {C\e\over \d}\Big(\sum_{p=0}^{N_\e-1}\sum_{q=0}^{N_\e}|\cR_{pq,i}-\cR_{p-1q,i}|^2+ \sum_{q=0}^{N_\e}\sum_{p=0}^{N_\e-1}|\cR_{pq,i}-\cR_{pq-1,i}|^2\Big).
\end{aligned}
\end{equation}
The above estimate with \eqref{EQ47}$_3$, \eqref{EQ47}$_4$ and \eqref{EQ47}$_{5,6}$ give \eqref{EQ414}$_{3,4,7}$ respectively. The estimate \eqref{EQ414}$_1$ is a consequence of \eqref{EQ47}$_{1,2,3,4}$ and \eqref{INE01}.\\[1mm]
{\bf Step 2.} We prove \eqref{EQ414}$_{2,8}$ and \eqref{EQ415}$_{1,2}$

Observe that, since  $\cR$ is constant and equal to $\cR_{pq}$ in $\o^H_{pq}$,  due to \eqref{EQ43}$_1$ and \eqref{EQ414}$_1$ we have 
\begin{equation}\label{LOL2}
\begin{aligned}
	\sum_{(p,q)\in \cK^*_\e}\big\|\cR_i -\cR_{pq,i}\big\|^2_{L^2(\wt\o_{pq})} & \leq  C\d^2 \|\nabla \cR_i\|^2_{L^2(\o_\d)}\leq {C\over \d}\GE(u)^2.
\end{aligned}
\end{equation} 
The above  estimate with \eqref{LOL1} give \eqref{EQ415}$_{1,2}$. Then, \eqref{EQ49}$_{2,4}$ and \eqref{EQ415}$_{1,2}$ give \eqref{EQ414}$_{2,8}$. \\[1mm]
{\bf Step 3.} We prove \eqref{EQ414}$_{5,6,7}$.\\ 
Using the definition of $\cU$ and $\cR$, we have $\partial_1\cU-\cR\land\Ge_1$ vanishes in $\o^H_{pq}$ for $(p,q)\in\cK_\e^*$ and  in rectangle $R^1_{pq}$ for $(p,q)\in\cK_\e^{(1)}$. In $R^2_{pq}$ for $(p,q)\in\cK_\e^{(2)}$ we have 
$$
\begin{aligned}
\big(\partial_1\cU-\cR\land \Ge_1\big)(p\e+z_1,q\e+z_2)={1\over \d}\Big(\Ga_{pq}-\Ga_{p-1q}-{\e\over 2}(\cR_{pq}+\cR_{p-1q})\land \Ge_1\Big) - {z_1 \over \d}(\cR_{pq}-\cR_{p-1q})\land \Ge_1\\
+{z_2\over \d}(\cR_{pq}-\cR_{p-1q})\land\Ge_2,\quad (z_1,z_2)\in \fI_\d\X \fI_{\e-\d} 
	\end{aligned}
$$
We also have in $R^3_{pq}$
$$
\begin{aligned}
\big(\partial_1\cU-\cR\land & \Ge_1\big)(p\e+z_1,q\e+z_2)\\
&={1\over \d}\Big(\Big(\Ga_{pq}-\Ga_{p-1q}-{\e\over 2}(\cR_{pq}+\cR_{p-1q})\land \Ge_1\Big) - z_1(\cR_{pq}-\cR_{p-1q})\land \Ge_1\Big){\d+2z_2\over 2\d}\\
&+{1\over \d}\Big(\Big(\Ga_{pq-1}-\Ga_{p-1q-1}-{\e\over 2}(\cR_{pq-1}+\cR_{p-1q-1})\land \Ge_1\Big) - z_1(\cR_{pq-1}-\cR_{p-1q-1})\land \Ge_1\Big){\d-2z_2\over 2\d}\\
&+{1\over 2}{\d+2z_2\over 2\d}(\cR_{pq}-\cR_{p-1q})\land \Ge_2+{1\over 2}{\d-2z_2\over 2\d}(\cR_{pq-1}-\cR_{p-1q-1})\land \Ge_2,\qquad (z_1,z_2)\in \fI_\d^2, 
\end{aligned}
$$
which along with \eqref{EQ47} implies
$$
\begin{aligned}
\sum_{q=0}^{N_\e}\sum_{p=0}^{N_\e}\|\partial_1\cU-\cR\land\Ge_1\|^2_{L^2(R^3_{pq})}\leq  {C\over \e}\GE(u)\|^2_{L^2(\O_\d)}.
\end{aligned}
$$
Finally, we obtain \eqref{EQ414}$_{5}$, in the same way we prove \eqref{EQ414}$_{6}$. The estimate \eqref{EQ414}$_{11}$ is consequence of \eqref{EQ414}$_{2,5,6}$ and Poincar\'e inequality.\\[1mm]
{\bf Step 4.} We prove \eqref{EQ415}$_{3,4}$.\\[1mm]
As a consequence of  \eqref{EQ414}$_{5,6}$ and the $2D$ Korn inequality, we obtain \eqref{EQ414}$_{9}$. Then, using \eqref{EQ49}$_{4}$  and the $\cQ_1$ character of $\cR^\dia_3$ yield
\begin{equation}\label{EXUcM01}
	\|\cU^\dia_\a-\Ga_{pq,\a}\|^2_{L^2(\o^H_{pq})}\leq C\e^2 \|\nabla\cU^\dia_\a\|^2_{L^2(\o_{pq})}\quad \hbox{and}\quad \|\cU_\a-\Ga_{pq,\a}\|^2_{L^2(\o^H_{pq})}\leq C\e^4 |\cR_{pq,3}|^2.
\end{equation}
From \eqref{EQ43}$_1$ and the above, we obtain
$$
\|\cU_\a-\cU^\dia_\a\|^2_{L^2(\o_\d)}\leq C\big(\e^2 \|\nabla\cU^\dia_\a\|^2_{L^2(\o_\d)}+\e^2\|\cR_3\|^2_{L^2(\o_\d)}+\d^2 \|\nabla(\cU_\a-\cU^\dia_\a)\|^2_{L^2(\o_\d)}\big)
$$ which with \eqref{EQ413}$_1$-\eqref{EQ414}$_{8,9}$ lead to 
$$
\|\cU_\a-\cU^\dia_\a\|^2_{L^2(\o_\d)}\leq C\e\GE(u)^2.
$$ Hence, \eqref{EQ414}$_{10}$, \eqref{EQ415}$_{3}$  hold.\\[1mm]
Besides, we also have
$$
\|\cU^\dia_3-\Ga_{pq,3}\|^2_{L^2(\o^H_{pq})}\leq C\e^2 \|\nabla\cU^\dia_3\|^2_{L^2(\o_{pq})}\quad \hbox{and}\quad \|\cU_3-\Ga_{pq,3}\|^2_{L^2(\o^H_{pq})}\leq C\e^4\big(|\cR_{pq,1}|^2+|\cR_{pq,2}|^2\big).
$$ Then, we obtain
$$
\|\cU_3-\cU^\dia_3\|^2_{L^2(\o_\d)}\leq C\big(\e^2 \|\nabla\cU^\dia_3\|^2_{L^2(\o_\d)}+\e^2\|\cR_\a\|^2_{L^2(\o_\d)}+\d^2 \|\nabla(\cU_3-\cU^\dia_3)\|^2_{L^2(\o_\d)}\big)
$$ which with \eqref{EQ413}$_2$-\eqref{EQ414}$_{2,11}$ lead to \eqref{EQ415}$_4$. This completes the proof. 
\end{proof}
As a consequence of above lemmas, we get
\begin{lemma}\label{lem52} We have
\begin{equation}\label{EQ51}
\begin{aligned}
&\big\|\nabla(\p_1\fU_3+\cR_2)\big\|_{L^2(\o_\d)}+\big\|\nabla(\p_2\fU_3-\cR_1)\big\|_{L^2(\o_\d)}\leq {C\over \d^{3/2}}\GE(u),\\
&\big\|\p_1\fU_3+\cR_2\big\|_{L^2(\o_\d)}+\big\|\p_2\fU_3-\cR_1\big\|_{L^2(\o_\d)}\leq {C\over \d^{1/2}}\GE(u),\\
& \|\fU-\cU\|_{L^2(\o_\d)}\leq  C \d^{1/2}\GE(u),\quad \|\nabla (\fU_3-\cU_3)\|_{L^2(\o_\d)}\leq {C\over \d^{1/2}}\GE(u).
\end{aligned}
\end{equation} The constants do not depend on $\e$ and $\d$.
\end{lemma}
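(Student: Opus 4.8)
The plan is to read off the estimates in \eqref{EQ51} from the bounds already assembled in Theorem~\ref{TH42} and Lemmas~\ref{lem45}--\ref{lem51}, exploiting two structural facts: the interpolated fields, and the particular combinations $\p_1\fU_3+\cR_2$ and $\p_2\fU_3-\cR_1$, vanish \emph{exactly} on every hard cell $\o^H_{pq}$; and the localized Poincar\'e-type inequality \eqref{EQ43}$_1$ converts a gradient bound for such a function into an $L^2$-bound over $\o_\d$ at the price of a single factor $\d$. I would first dispose of the gradient estimates \eqref{EQ51}$_{1,2}$, which are a plain triangle inequality: the entries of $\nabla(\p_1\fU_3+\cR_2)$ are $\p^2_{11}\fU_3+\p_1\cR_2$ and $\p^2_{12}\fU_3+\p_2\cR_2$, and each summand is $\le C\d^{-3/2}\GE(u)$ — the $\fU_3$-terms by \eqref{rem+}$_1$ (recalling $\|e(u)\|_{L^2(\O_\d)}=\GE(u)$) and the $\cR$-terms by \eqref{EQ414}$_1$; the bound for $\p_2\fU_3-\cR_1$ is obtained identically.

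Next I would prove \eqref{EQ51}$_{3,4}$. Put $v_1\doteq\p_1\fU_3+\cR_2$. Since $\cR\equiv\cR_{pq}$ on $\o^H_{pq}$ by construction and \eqref{EQ40} gives $\p_1\fU_3=-\cR_{pq,2}$ there, we have $v_1=0$ a.e.\ in each $\o^H_{pq}$. Applying \eqref{EQ43}$_1$ on $\wt\o_{pq}$ with $\psi=v_1$ therefore kills the $L^2(\o^H_{pq})$-term and leaves $\|v_1\|_{L^2(\wt\o_{pq})}\le C\d\,\|\nabla v_1\|_{L^2(\wt\o_{pq})}$; summing over $(p,q)\in\cK^*_\e$ — the cells $\wt\o_{pq}$ cover $\o_\d$ with bounded overlap — and inserting \eqref{EQ51}$_{1,2}$ yields $\|v_1\|_{L^2(\o_\d)}\le C\d\,\|\nabla v_1\|_{L^2(\o_\d)}\le C\d^{1/2}\GE(u)$. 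The twin estimate for $v_2\doteq\p_2\fU_3-\cR_1$ follows identically from $\p_2\fU_3=\cR_{pq,1}$ in \eqref{EQ40}.

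For \eqref{EQ51}$_{5,6}$ I would start with the gradient term $\p_\a(\fU_3-\cU_3)$: writing $\p_1(\fU_3-\cU_3)=v_1-(\p_1\cU_3+\cR_2)$ and $\p_2(\fU_3-\cU_3)=v_2-(\p_2\cU_3-\cR_1)$ and combining \eqref{EQ51}$_{3,4}$ with \eqref{EQ414}$_{5,6}$ gives $\|\nabla(\fU_3-\cU_3)\|_{L^2(\o_\d)}\le C\d^{-1/2}\GE(u)$ — the large field $\cR$ cancels, so the difference is controlled by the already-small quantities rather than by the two separate $H^1$-norms. Together with $\|\nabla(\fU_\a-\cU_\a)\|_{L^2(\o_\d)}\le C\d^{-1/2}\GE(u)$, which follows from \eqref{EQ46}$_3$ and \eqref{EQ414}$_9$, this gives $\|\nabla(\fU-\cU)\|_{L^2(\o_\d)}\le C\d^{-1/2}\GE(u)$. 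For the $L^2$-term I would use that on each $\o^H_{pq}$ both $\fU$ and $\cU$ reduce to the in-plane component of the rigid displacement determined by $(\Ga_{pq},\cR_{pq})$ — for $\fU$ this is exactly \eqref{EQ40}, for $\cU$ it is the construction \eqref{H01} — so that $\fU-\cU=0$ a.e.\ in $\bigcup_{(p,q)\in\cK^*_\e}\o^H_{pq}$; applying \eqref{EQ43}$_1$ to each component on each $\wt\o_{pq}$ and summing as before yields $\|\fU-\cU\|_{L^2(\o_\d)}\le C\d\,\|\nabla(\fU-\cU)\|_{L^2(\o_\d)}\le C\d^{1/2}\GE(u)$.

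The one point to be careful about — and the reason these bounds are genuinely new rather than a corollary of the triangle inequality — is that one must \emph{not} compare $\fU$ with $\cU$ through $\cU^\dia$: going via Lemmas~\ref{lem47} and \ref{lem51} would only produce the factor $\e^{1/2}$ in place of $\d^{1/2}$ (since $\d<\e$). The sharpness of \eqref{EQ51}$_{3,4,5,6}$ rests entirely on the exact vanishing of $v_1$, $v_2$ and $\fU-\cU$ on the hard cells, after which \eqref{EQ43}$_1$ does all the work; the only remaining bookkeeping is the bounded-overlap summation of the $\wt\o_{pq}$ over $\cK^*_\e$, which contributes merely a universal constant.
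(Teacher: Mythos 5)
Your proof is correct and follows essentially the same route as the paper's: the gradient bounds \eqref{EQ51}$_{1,2}$ by the triangle inequality from \eqref{rem+} and \eqref{EQ414}$_1$, and the $L^2$ bounds by exploiting that $\p_1\fU_3+\cR_2$, $\p_2\fU_3-\cR_1$ and $\fU-\cU$ vanish on the hard cells (via \eqref{EQ40} and \eqref{H01}) so that the local inequality \eqref{EQ43}$_1$, summed over the cells $\wt\o_{pq}$, trades a factor $\d$ against the gradient estimates. Your ordering (gradient of $\fU_3-\cU_3$ before its $L^2$ norm) is slightly more explicit than the paper's terse citation chain, but the substance is identical, and your closing remark about not passing through $\cU^\dia$ correctly identifies why the $\d^{1/2}$ rate is obtained.
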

\begin{proof}
	The estimates in \eqref{EQ51}$_{1,2}$ follow directly from 
	\eqref{rem+}$_2$ and \eqref{EQ414}$_1$. 
	Next, \eqref{EQ51}$_{3,4}$ are obtained from the equalities 
	\eqref{EQ40}$_{1,2}$ together with the estimates 
	\eqref{EQ51}$_{1,2}$ and \eqref{EQ43}$_1$. 
	
	For the third component, we deduce 
	\[
	\|\fU_3 - \cU_3\|_{L^2(\o_\d)} \leq C\,\delta^{1/2}\,\GE(u)
	\]
	using equality \eqref{EQ40}$_5$ in combination with \eqref{EQ43}$_1$. 
	Similarly, for the in-plane components we have
	\[
	\|\fU_\alpha - \cU_\alpha\|_{L^2(\o_\d)} \leq 
	C\,\delta^{1/2}\,\GE(u),
	\]
	which follows from \eqref{EQ414}$_1$, \eqref{EQ46}$_3$, and again \eqref{EQ43}$_1$. 
	
	Finally, estimate \eqref{EQ51}$_6$ is a consequence of 
	\eqref{EQ414}$_{5,6}$ together with \eqref{EQ51}$_{3,4}$.
\end{proof}

\subsection{The functions $\cU^\bt_\a$ and $\cU^{\bt\bt}_\a$}\label{SS44}
To the family $\Ga_{pq,\a}$, $(p,q)\in \ov{\cK}_\e$ we associate  the field  $\cU^\bt_\a$ (constructed using \eqref{Dphi}) and we set
$$\cU^{\bt\bt}_\a=\cU_\a-\cU^\bt_\a.$$
From the expression of $\cU_\a$ and $\cU_\a^\bt$, we get
\begin{equation}\label{621}
\small{
	\begin{aligned}
		\cU_1^{\bt\bt}(p\e+z_1,q\e+z_2)=&-\cR_{pq,3}z_2,\quad \cU_2^{\bt\bt}(p\e+z_1,q\e+z_2)=\cR_{pq,3}z_1 \quad \hbox{a.e. in } \o^H_{pq},\quad (z_1,z_2)\in \fI_{\e-\d}^2,\\
		\cU_1^{\bt\bt}(p\e+z_1,q\e+z_2)= & {\e-\d\over 2}\left({\d+ 2z_2\over 2\d}\cR_{pq,3}-{\d-2z_2\over 2\d}\cR_{pq-1,3}\right)\quad\hbox{a.e. in } R^1_{pq},\quad (z_1,z_2)\in \fI_{\e-\d}\X\fI_\d,\\ 
		\cU_2^{\bt\bt}(p\e+z_1,q\e+z_2)= & z_1\left({\d+ 2z_2\over 2\d}\cR_{pq,3}+{\d-2z_2\over 2\d}\cR_{pq-1,3}\right)\quad\hbox{a.e. in } R^1_{pq},\quad (z_1,z_2)\in \fI_{\e-\d}\X\fI_\d,\\
		\cU_1^{\bt\bt}(p\e+z_1,q\e+z_2)= & - z_2\left({\d+ 2z_1\over 2\d}\cR_{pq,3}+{\d-2z_1\over 2\d}\cR_{p-1q,3}\right)\quad\hbox{a.e. in } R^2_{pq},\quad (z_1,z_2)\in \fI_{\d}\X\fI_{\e-\d},\\
		\cU_2^{\bt\bt}(p\e+z_1,q\e+z_2)= & {\e-\d\over 2}\left(-{\d+ 2z_1\over 2\d}\cR_{pq,3}+{\d-2z_1\over 2\d}\cR_{p-1q,3}\right)\quad\hbox{a.e. in } R^2_{pq}\quad (z_1,z_2)\in \fI_{\d}\X\fI_{\e-\d},,\\
		\cU_1^{\bt\bt}(p\e+z_1,q\e+z_2)= & {\e-\d\over 2}\left({\d+ 2z_2\over 2\d}\cR_{pq,3}-{\d-2z_2\over 2\d}\cR_{pq-1,3}\right){\d+2z_1\over 2\d}\\
		+{\e-\d\over 2}&\left({\d+ 2z_2\over 2\d}\cR_{p-1q,3}-{\d-2z_2\over 2\d}\cR_{p-1q-1,3}\right){\d-2z_1\over 2\d}\quad\hbox{a.e. in } R^3_{pq},\quad (z_1,z_2)\in \fI_{\d}^2,\\
		\cU_2^{\bt\bt}(p\e+z_1,q\e+z_2)= & -{\e-\d\over 2}\left({\d+ 2z_2\over 2\d}\cR_{pq,3}+{\d-2z_2\over 2\d}\cR_{pq-1,3}\right){\d+2z_1\over 2\d}\\
		+{\e-\d\over 2}&\left({\d+ 2z_2\over 2\d}\cR_{p-1q,3}+{\d-2z_2\over 2\d}\cR_{p-1q-1,3}\right){\d-2z_1\over 2\d}\quad\hbox{a.e. in } R^3_{pq},\quad (z_1,z_2)\in \fI_{\d}^2.
\end{aligned}}
\end{equation}
\begin{lemma} 
	We have
	\begin{equation}\label{EQ414Ubt}
		\begin{aligned}
			& \|\nabla \cU^\bt_\alpha\|_{L^2(\o_\d)}\leq {C\over \d^{1/2}}\GE(u),\quad &&\|\cU^\bt_\alpha\|_{L^2(\o_\d)}\leq {C\over \e^{1/2}}\GE(u),\\
			&\|\nabla \cU^{\bt\bt}_\a\|_{L^2(\o_\d)}\leq  {C\over \d^{1/2}}\GE(u),\quad &&\|\cU^{\bt\bt}_\a\|_{L^2(\o_\d)}\leq C \e^{1/2}\GE(u)
		\end{aligned}
	\end{equation} and 
	\begin{equation}\label{EQ415Ubt}
\|\cU^{\bt\bt}_\a\|_{L^2(\o^S_\ed)}\leq C\d^{1/2}\GE(u).
	\end{equation}
	The constants do not depend on $\e$ and $\d$. 
\end{lemma}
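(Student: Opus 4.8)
The plan is to treat $\cU^\bt_\a$ and $\cU^{\bt\bt}_\a$ separately, using that the former is obtained from the single family $(\Ga_{pq,\a})_{(p,q)\in\ov{\cK}_\e}$ through the interpolation rule \eqref{Dphi}, while the latter is given by the fully explicit formulas \eqref{621} in terms of the numbers $\cR_{pq,3}$.

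\textbf{Step 1 (the bounds \eqref{EQ414Ubt}$_{1,2}$ on $\cU^\bt_\a$).} Since $\cU^\bt_\a$ is built by \eqref{Dphi} from the family $(\Ga_{pq,\a})$, repeating verbatim the computation that led to \eqref{INE01}, now with $(\Ga_{pq,\a})$ in place of $(\cR_{pq,i})$, gives
\[
\|\nabla\cU^\bt_\a\|_{L^2(\o_\d)}^2\le {C\e\over\d}\Big(\sum_{p,q}|\Ga_{pq,\a}-\Ga_{p-1q,\a}|^2+\sum_{p,q}|\Ga_{pq,\a}-\Ga_{pq-1,\a}|^2\Big).
\]
The point is then to extract from the \emph{coupled} estimate \eqref{EQ47+} clean bounds for the increments of $\Ga_{\cdot,\a}$. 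For $\a=1$ one reads off the first component of \eqref{EQ47+}$_1$ and uses $\big[(\cR_{pq}+\cR_{p-1q})\land\Ge_1\big]_1=0$, so that $\sum|\Ga_{pq,1}-\Ga_{p-1q,1}|^2\le C\GE(u)^2/\e$; for $\a=2$ one reads off the second component and picks up the extra term $\tfrac{\e}{2}(\cR_{pq,3}+\cR_{p-1q,3})$, which is absorbed by $\e^2\sum|\cR_{pq,3}|^2\le C\GE(u)^2/\e$ from \eqref{EQ417}, so that $\sum|\Ga_{pq,2}-\Ga_{p-1q,2}|^2\le C\GE(u)^2/\e$; the $q$-increments are handled identically from \eqref{EQ47+}$_2$. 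This yields \eqref{EQ414Ubt}$_1$. For \eqref{EQ414Ubt}$_2$ I would use that $\cU^\bt_\a$ is constant equal to $\Ga_{pq,\a}$ on each hard square, so $\|\cU^\bt_\a\|^2_{L^2(\o^H_\ed)}\le C\e^2\sum|\Ga_{pq,\a}|^2\le C\GE(u)^2/\e$ by \eqref{EQ417}, and then propagate this $L^2$ bound to all of $\o_\d$ by applying \eqref{EQ43}$_1$ on each $\wt\o_{pq}$, summing over the (finitely overlapping) $\wt\o_{pq}$, and controlling the gradient contribution by \eqref{EQ414Ubt}$_1$ together with $\d<\e$.

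\textbf{Step 2 (the bounds on $\cU^{\bt\bt}_\a$).} The gradient estimate \eqref{EQ414Ubt}$_3$ is immediate from $\cU^{\bt\bt}_\a=\cU_\a-\cU^\bt_\a$, the triangle inequality, the bound $\|\nabla\cU_\a\|_{L^2(\o_\d)}\le C\d^{-1/2}\GE(u)$ from \eqref{EQ414}, and \eqref{EQ414Ubt}$_1$. The two $L^2$ bounds \eqref{EQ414Ubt}$_4$ and \eqref{EQ415Ubt} I would read directly off \eqref{621}: on each hard square $\cU^{\bt\bt}_\a$ is $\pm\cR_{pq,3}$ times an in-plane coordinate, hence $|\cU^{\bt\bt}_\a|\le C\e|\cR_{pq,3}|$ there, and on each rectangle or corner square $R^1_{pq},R^2_{pq},R^3_{pq}$ of \eqref{Rect} it is a convex combination of such terms, so still $|\cU^{\bt\bt}_\a|\le C\e\big(|\cR_{pq,3}|+|\cR_{p-1q,3}|+|\cR_{pq-1,3}|+|\cR_{p-1q-1,3}|\big)$. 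Multiplying by the measure of the relevant subdomain ($\sim\e^2$ for a hard square, $\sim\e\d$ for $R^1,R^2$, $\sim\d^2$ for $R^3$), summing, and invoking $\sum|\cR_{pq,3}|^2\le C\GE(u)^2/\e^3$ (from \eqref{EQ417}), one obtains $\|\cU^{\bt\bt}_\a\|^2_{L^2(\o_\d)}\le C\e\,\GE(u)^2$ when the sum runs over all subdomains, and the sharper $\|\cU^{\bt\bt}_\a\|^2_{L^2(\o^S_\ed)}\le C\d\,\GE(u)^2$ when it runs only over the rods $R^1_{pq},R^2_{pq},R^3_{pq}$, whose total measure carries the extra factor $\d/\e$ (and $\d<\e$ absorbs the $R^3$ contribution).

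\textbf{Main obstacle.} The only genuinely delicate point is Step 1: disentangling the increments of $\Ga_{\cdot,\a}$ from the mixed quantity appearing in \eqref{EQ47+} and checking that the in-plane rotation term $\cR_{\cdot,3}$ is absorbed with exactly the right power of $\e$ via \eqref{EQ417}. Once this is settled, everything reduces to bounded-overlap summation over the finitely many subdomains of the periodicity cell.
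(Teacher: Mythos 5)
Your proposal is correct and follows essentially the same route as the paper: the key increment bound $\sum_{p,q}\big|(\Ga_{pq}-\Ga_{p-1q})\cdot\Ge_\a\big|^2+\sum_{p,q}\big|(\Ga_{pq}-\Ga_{pq-1})\cdot\Ge_\a\big|^2\le C\,\GE(u)^2/\e$ extracted from \eqref{EQ47+} together with \eqref{EQ417}$_2$, then the $\cR$-type argument of Lemma~\ref{lem51} for \eqref{EQ414Ubt}$_{1,2}$, the triangle inequality with \eqref{EQ414}$_9$ for \eqref{EQ414Ubt}$_3$, and the explicit formulas \eqref{621} with \eqref{EQ417}$_2$ for the last two bounds. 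The only (harmless) variation is that for \eqref{EQ414Ubt}$_4$ you sum direct pointwise bounds over all subdomains rather than combining the hard-part estimate with \eqref{EQ43}$_1$ and the gradient bound as the paper does, and for \eqref{EQ415Ubt} you dispense with \eqref{EQ47}$_{5,6}$, which indeed is not needed.
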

\begin{proof} 
	First, we have using \eqref{EQ47+}$_{1,2}$ and  \eqref{EQ417}$_2$, 
	\begin{equation}\label{EXUcb01}
\sum_{q=0}^{N_\e-1}\sum_{p=1}^{N_\e-1}\big|\big(\Ga_{pq}-\Ga_{p-1q}\big)\cdot\Ge_\a\big|^2+  \sum_{p=0}^{N_\e-1}\sum_{q=1}^{N_\e-1}\big|\big(\Ga_{pq}-\Ga_{pq-1}\big)\cdot\Ge_\a|^2\leq {C\over \e}\GE(u)^2.
	\end{equation}
 Proceeding as in the derivation of the estimates for $\cR$ (see Steps 1 and 2 in the proof of Lemma \ref{lem51}), and using \eqref{EXUcb01}, we obtain \eqref{EQ414Ubt}$_{1,2}$. Then, from  \eqref{EQ414Ubt}$_{1}$ and \eqref{EQ414}$_9$ the estimate \eqref{EQ414Ubt}$_3$ follows.\\
Equalities \eqref{621}$_{1,2}$ and estimate \eqref{EQ417}$_2$ yield
$$\|\cU^{\bt\bt}_\a\|_{L^2(\o^H_\ed)}\leq C\e^{1/2}\GE(u).$$ Then, from the above together with \eqref{EQ43}$_1$ and \eqref{EQ414Ubt}$_{3}$ we obtain \eqref{EQ414Ubt}$_{4}$.\\
 From the expressions \eqref{621} of  $\cU^{\bt\bt}_\a$, the estimates \eqref{EQ47}$_{5,6}$-\eqref{EQ417}$_2$, we obtain  \eqref{EQ415Ubt}. 
\end{proof}
\section{The applied body forces}

In this subsection, we introduce the applied body forces and then using the estimates derived in the previous sections, we give the upper bound for the $L^2$-norm of the linearized strain tensor. 

Let $f$ be in  $L^2(\o)^3$, we set
\begin{equation}\label{Ass01}
	\begin{aligned}
		&f_\ed(x)=\e^\kappa[\d\big(f_1(x')\Ge_1+f_2(x')\Ge_2\big)+\d^2 f_3(x')\Ge_3]\qquad \hbox{for a.e. } x\in \O_\d,\quad \kappa\geq0.
	\end{aligned}		
\end{equation} 
where $f=f_1\Ge_1+f_2\Ge_2+f_3\Ge_3$ (we extend $f$ by $0$ outside of $\o$).
\begin{lemma}\label{Force01}  The solution $u_\ed\in\GU_\ed$ to problem \eqref{Pb} satisfies
	\begin{equation}\label{Main04}
		\GE(u_\ed)\leq C{\e^{\kappa-{1/2}}\d^2}\|f\|_{L^2(\o)},
	\end{equation}
	The constant is independent of $\e$ and $\d$.
\end{lemma}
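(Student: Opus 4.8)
The plan is to test the variational inequality \eqref{Pb} with the admissible displacement $v = 0$, which belongs to $\GU_{\e\d}$ since the zero displacement trivially satisfies \eqref{EQ31}, \eqref{EQ32}, and \eqref{NPCM}. This gives immediately
\[
\int_{\O^S_{\e\d}}\A_{\ed,ijkl}(x)e_{ij}(u_\ed)e_{kl}(u_\ed)\,dx\leq \int_{\O_\d}f_\ed\cdot u_\ed\,dx.
\]
By the coercivity assumption \eqref{CoeCon2} the left-hand side is bounded below by $c_0\|e(u_\ed)\|^2_{L^2(\O^S_{\e\d})}=c_0\,\GE(u_\ed)^2$, so it remains to bound the linear term $\int_{\O_\d}f_\ed\cdot u_\ed\,dx$ from above by a constant times $\e^{\kappa-1/2}\d^2\,\|f\|_{L^2(\o)}\,\GE(u_\ed)$; dividing through by $\GE(u_\ed)$ then yields \eqref{Main04}.

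To estimate the linear term, I would decompose $u_\ed$ according to Theorem~\ref{TH42}, writing $u_\ed = U_{KL}+\wt u_\ed$ with $U_{KL}$ of Kirchhoff--Love type built from $\fU_m$, $\fU_3$, $\fu$, and carry the $x_3$-integration in $\fI_\d$ explicitly. Since $f_\ed$ depends only on $x'$ (see \eqref{Ass01}) and $\int_{\fI_\d}\wt u_\ed(\cdot,x_3)\,dx_3=0$ by \eqref{WMC02}, the residual part $\wt u_\ed$ and the $x_3$-linear terms $-x_3\p_\a\fU_3$ integrate out in the $x_3$-direction, so that
\[
\int_{\O_\d}f_\ed\cdot u_\ed\,dx=\e^\kappa\d\int_{\o_\d}\big(\d f_1\fU_1+\d f_2\fU_2+\d^2 f_3(\fU_3+\fu)\big)\,dx'.
\]
I would then apply Cauchy--Schwarz in $x'$ to each term and invoke the sharp Korn-type estimates already proved: $\|\fU_m\|_{L^2(\o_\d)}\le C\e^{-1/2}\GE(u_\ed)$ from \eqref{EQ46}$_4$, $\|\fU_3\|_{H^1(\o_\d)}\le C\e^{-1/2}\d^{-1}\GE(u_\ed)$ from \eqref{EQ46}$_2$, and $\|\fu\|_{L^2(\o_\d)}\le C\d^{1/2}\GE(u_\ed)$ from \eqref{rem+}$_2$. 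One checks that the $\fU_m$ contribution is of order $\e^\kappa\d\cdot\d\cdot\e^{-1/2}\GE=\e^{\kappa-1/2}\d^2\GE$, the $\fU_3$ contribution is of order $\e^\kappa\d\cdot\d^2\cdot\e^{-1/2}\d^{-1}\GE=\e^{\kappa-1/2}\d^2\GE$, and the $\fu$ contribution, of order $\e^\kappa\d\cdot\d^2\cdot\d^{1/2}\GE$, is even smaller since $\d<1$; all are dominated by $C\e^{\kappa-1/2}\d^2\|f\|_{L^2(\o)}\GE(u_\ed)$.

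The main obstacle is purely bookkeeping: one must make sure that the $x_3$-integral of the Kirchhoff--Love ansatz is handled correctly so that only the membrane term $\fU_m$, the leading bending term $\fU_3$, and the correction $\fu$ survive, and that the particular scaling $\d$ in front of $f_1,f_2$ and $\d^2$ in front of $f_3$ in \eqref{Ass01} is exactly matched to the respective orders $\e^{-1/2}$ and $\e^{-1/2}\d^{-1}$ of the Korn estimates for $\fU_m$ and $\fU_3$ — this is precisely what makes all three contributions of the common order $\e^{\kappa-1/2}\d^2$. No delicate inequality is needed beyond Cauchy--Schwarz together with the estimates \eqref{EQ46} and \eqref{rem+} already established.
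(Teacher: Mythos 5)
Your proposal is correct and follows essentially the same route as the paper: test \eqref{Pb} with $v=0$, use coercivity \eqref{CoeCon2}, insert the decomposition \eqref{EQ41D+} so that the $x_3$-linear and $\wt u_\ed$ parts drop out by \eqref{WMC02} (since $f_\ed$ depends only on $x'$), and conclude with Cauchy--Schwarz and the estimates \eqref{EQ46}$_{2,4}$ (and \eqref{rem+} for $\fu$). Your scaling bookkeeping matches the paper's argument, so nothing further is needed.
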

\begin{proof} First, the solution $u_\ed$ satisfying (choosing $v=0$)
	\begin{equation}\label{Main01}
		c_0\|e(u_\ed)\|^2_{L^2(\O_\d)}\leq \int_{\O^S_{\e\d}}\A_{\ed,ijkl}(x)e_{ij}(u_\ed)e_{kl}(u_\ed)\,dx\leq  \int_{\O_{\d}}f_{\e\d}\cdot u_\ed\,dx,
	\end{equation}
	Using the decomposition \eqref{EQ41D+}, we have
	\begin{equation}\label{Force02}
		\begin{aligned}
			\int_{\O_\ed}f_\ed\cdot u\,dx=\d\int_{\o_\d}f_\ed\cdot\fU\,dx'+\int_{\O^S_\ed}f_\ed\cdot\wt{u}\,dx.
		\end{aligned}
	\end{equation}
	Now, the expression \eqref{Ass01} of the applied forces, the estimates \eqref{EQ46}$_{2,4}$ and equality \eqref{WMC02} lead to \eqref{Main04}.
\end{proof}
{\bf From now on, we assume $\kappa=1$, so that we are in the critical regime of von-K\'arm\'an.}
\section{Unfolding operators and their properties}\label{S06}
In this section, we introduce two rescaled unfolding operators, which play a central role in deriving the asymptotic limits.
	\begin{definition}[$3$D re-scaled unfolding operators]
	For every measurable function $\psi$ on $\O_\ed^{(1)}$ (resp. $\O_\ed^{(2)}$), we define the measurable functions $\Pi^{(1)}_{\e\d}(\psi)$ (resp. $\Pi^{(2)}_{\e\d}(\psi)$) by  
	\begin{equation}\label{Pi}
	\begin{aligned}
	\Pi^{(1)}_{\e\d}(\psi) (x',y) & \doteq \psi \Big(\e\Big[{x'\over \e}\Big]+ \e y_1\Ge_1+\d y_2\Ge_2,\d y_3\Big)\qquad\text{for a.e.}\;(x',y')\in\o\X\cY_1,\\
\hbox{(resp. }\;	\Pi^{(2)}_{\e\d}(\psi) (x',y) & \doteq \psi \Big(\e\Big[{x'\over \e}\Big]+ \d y_1\Ge_1+\e y_2\Ge_2,\d y_3\Big)\qquad\text{for a.e.}\;(x',y')\in\o\X\cY_2\hbox{)}.
	\end{aligned}
	\end{equation} 
\end{definition} 
Below, we recall some of the properties and inequalities related to the $3$D re-scaled unfolding  operator.\\[1mm]
$\bullet$  For every $\psi\in L^2(\O_{\ed}^{(\alpha)})$, we have
	\begin{equation}\label{ruo01}
		\|\Pi^{(\alpha)}_{\e\d}(\psi)\|_{L^2(\o\X\cY_\alpha)}\leq {\sqrt\e\over \d}\|\psi\|_{L^2(\O_{\ed}^{(\alpha)})}.
	\end{equation}
$\bullet$ For any $\phi\in H^1(\O_{\ed}^{(\alpha)})$, one has
	\begin{equation}\label{ruo02}
		\p_{y_\a}\Pi^{(\alpha)}_{\e\d}(\phi)=\e\Pi^{(\alpha)}_{\e\d}\big(\p_\a\phi \big),\quad 	
		\p_{y_{3-\alpha}}\Pi^{(\alpha)}_{\e\d}(\phi)=\d\Pi^{(\alpha)}_{\e\d}\big(\p_{3-\a} \phi\big),\quad 	\p_{y_3}\Pi^{(\alpha)}_{\e\d}(\phi)=\d\Pi^{(\alpha)}_{\e\d}\big(\p_3 \phi\big).
	\end{equation}
\begin{definition}[$2$D re-scaled unfolding operators]
	For every measurable function $\psi$ on $\o^{(1)}_\ed$  (resp. $\o^{(2)}_\ed$) we define the measurable functions $\Ted^{(1)}(\psi)$ (resp. $\Ted^{(2)}(\psi)$) by  
\begin{equation}\label{Ted}
	\begin{aligned}
	\Ted^{(1)}(\psi) (x',y') & \doteq \psi \Big(\e\Big[{x'\over \e}\Big]+ \e y_1\Ge_1+\d y_2\Ge_2\Big)\qquad\text{for a.e.}\;(x',y)\in\o\X Y_1,\\
\hbox{(resp. }\;	\Ted^{(2)}(\psi) (x',y') & \doteq \psi \Big(\e\Big[{x'\over \e}\Big]+ \d y_1\Ge_1+\e y_2\Ge_2\Big)\qquad\text{for a.e.}\;(x',y)\in\o\X Y_2\hbox{)}.
	\end{aligned}
\end{equation}
\end{definition} 
Below, we briefly give the properties of the $2$D re-scaling unfolding operator.\\[1mm]
$\bullet$  For every $\psi\in L^2(\o_\ed^{(\alpha)})$, we have
	\begin{equation}\label{EQ55}
		\|\Ted^{(\alpha)}(\psi)\|_{L^2(\o\X Y_\alpha)}\leq \sqrt{\e\over \d}\|\psi\|_{L^2(\o_\ed^{(\alpha)})}.
	\end{equation}
$\bullet$ For any $\phi\in H^1(\o_\ed^{(\alpha)})$, one has
	\begin{equation}\label{EQ56}
		\partial_{y_\a}\Ted^{(\alpha)}(\phi)=\e\Ted^{(\alpha)}(\p_\a\phi),\quad 	\p_{y_{3-\a}}\Ted^{(\alpha)}(\phi)=\d\Ted^{(\alpha)}(\partial_{3-\a} \phi).
	\end{equation}
$\bullet$ For $\phi\in L^2(\o^{(\alpha)}_\ed)$, we have
	$$\begin{aligned}
		\Pi_\ed^{(\alpha)}(\phi)(x',y',0)=\Ted^{(\alpha)}(\phi)(x',y').
	\end{aligned}$$

\section{Asymptotic limit of the strain tensor}\label{S07}
In this section, we present the limit of the  strain tensor when both $\e$ and $\d$ tend to zero simultaneously, satisfying \eqref{As}. To this end, we consider a sequence of admissible displacements $\{u_\ed\}_{\e,\d}\subset \GU_{\e\d}$ such that
\begin{equation}\label{Eqref01}
	\GE(u_\ed)\leq C {\e^{1/2}\d^2}. 
\end{equation} The constant is independent of $\e$ and $\d$.

Then, we decompose each $u_\ed$ as \eqref{EQ41D+}, then from \eqref{rem+} and \eqref{EQ46} we obtain
\begin{equation}\label{EQ61}
	\begin{aligned}
		&\|\fU_{\ed,3}\|_{H^1(\o_\d)}\leq C\d,\quad \|\p_{\a\b}^2\fU_{\ed,3}\|_{L^2(\o_\d)}\leq C\sqrt{\e\d},\\
		& \|\nabla \fU_{\ed,\a}\|_{L^2(\o_\d)}\leq C\d\sqrt{\ed},\quad\|\fU_{\ed,\a}\|_{L^2(\o_\d)}\leq C{\d^2},\\
		&\|\fu_\ed\|_{L^2(\o_\d)}+\d\|\nabla\fu_\ed\|_{L^2(\o_\d)}\leq C\d^2\sqrt{\ed},
	\end{aligned}
\end{equation}
and
using the properties of the rescaled unfolding operators \eqref{ruo01}-\eqref{ruo02}, we obtain
\begin{equation}\label{55}
	\begin{aligned}
		\|\Pi^{(\alpha)}_{\e\d}(\wt{u}_\ed)\|_{L^2(\o\X\cY_\alpha)}+\|\partial_{y_3}\Pi^{(\alpha)}_{\e\d}( \wt{u}_\ed)\|_{L^2(\o\X\cY_\alpha)}+\|\partial_{y_{3-\alpha}}\Pi_\ed^{(\alpha)}(\wt{u}_\ed)\|_{L^2(\o\X \cY_\alpha)}\leq C\ed^2,\\
		\|\partial_{y_\alpha}\Pi^{(\alpha)}_{\e\d}( \wt{u}_\ed)\|_{L^2(\o\X\cY_\alpha)}\leq C\e^2\d.
	\end{aligned}
\end{equation}
The constants  are independent of $\e$ and $\d$.

In all the lemmas below, we extract  a subsequence  of $\{\ed\}_{\e,\d}$ (still denoted by $\{\ed\}_{\e,\d}$)  in order to get the desired convergences.	
\subsection{Limit behavior of the macroscopic fields}\label{61}
We define the following spaces
$$
\begin{aligned}
	H_{\gamma}^{1}\big(\o\big)&\doteq \big\{\psi\in H^1(\o)\;|\; \psi=0\quad \hbox{a.e. in } \gamma\,\big\},\\
	H_{(0,l)}^{2}\big((0,L)_{x_\alpha}\big)&\doteq \big\{\psi \in H^2(0,L)\;|\; \psi=0\quad \hbox{a.e. in } (0,l)\,\big\}.
\end{aligned}
$$

The following convergences hold for the macroscopic fields:
\begin{lemma}\label{lem61}
	There exist $\fF\in H^2_{(0,l)}\big((0,L)_{x_1}\big)$,  $\fG\in H^2_{(0,l)}\big((0,L)_{x_2}\big)$, $\fU_\a \in H^1_\gamma(\o)$   such that  
	\begin{equation}\label{46}
		\begin{aligned}
			{1\over \d^2}\fU_{\ed,\a}& \to \fU_\a \quad &&\text{strongly in } L^2(\o),\\
			{1\over \d}\fU_{\ed,3}& \to \fU_3=\fF+\fG \quad &&\text{strongly in } H^1_\gamma(\o).
		\end{aligned}
	\end{equation}
Moreover, there exist  $\wh U^{(1)}_\a\in L^2(\o\X(0,1)_{y_1};H^1_0(\fI))$ and $\wh U^{(2)}_\a\in L^2(\o\X(0,1)_{y_2};H^1_0(\fI))$ such that 
	\begin{equation}\label{EQ70}
		\begin{aligned}
		{1\over \ed}\Ted^{(1)}(\p_2 \fU_{\ed,\a}) & \rightharpoonup \p_2\fU_\a+\p_{y_2}\wh U_\a^{(1)}\quad &&\hbox{weakly in } L^2(\o\X Y_1),\\
		{1\over \ed}\Ted^{(1)}(\p_1 \fU_{\ed,\a}) & \rightharpoonup 0 \quad &&\hbox{weakly in } L^2(\o\X Y_1),\\
		{1\over \ed}\Ted^{(2)}(\p_1 \fU_{\ed,\a}) & \rightharpoonup \p_1\fU_\a+\p_{y_1}\wh U_\a^{(2)} \quad &&\hbox{weakly in } L^2(\o\X Y_2),\\
		{1\over \ed}\Ted^{(2)}(\p_2 \fU_{\ed,\a}) & \rightharpoonup 0 \quad &&\hbox{weakly in } L^2(\o\X Y_2),\\
		{1\over \ed}\Ted^{(\beta)}(\p_\a \fu_{\ed}) & \rightharpoonup 0 \quad &&\hbox{weakly in } L^2(\o\X Y_\beta).
		\end{aligned}
	\end{equation}
	Furthermore, there exist $\wh U^{(\a)}_3\in L^2(\o\X(0,1)_{y_\a};H^1_0(\fI))$ such that
	\begin{equation}\label{EQ70+}
		\begin{aligned}
		{1\over \e}\Ted^{(1)}(\p^2_{22}\fU_{\ed,3}) & \rightharpoonup \p^2_{22}\fU_3+\p_{y_{2}}\wh U^{(1)}_3 \quad &&\hbox{weakly in } L^2(\o\X Y_1),\\
		{1\over \e}\Ted^{(1)}(\p^2_{21}\fU_{\ed,3}) & \rightharpoonup 0  \quad &&\hbox{weakly in } L^2(\o\X Y_1),\\
		{1\over \e}\Ted^{(1)}(\p^2_{11}\fU_{\ed,3}) & \rightharpoonup 0 \quad &&\hbox{weakly in } L^2(\o\X Y_1),\\
		{1\over \e}\Ted^{(2)}(\p^2_{11}\fU_{\ed,3}) & \rightharpoonup \p^2_{11}\fU_3+\p_{y_{1}}\wh U^{(2)}_3 \quad &&\hbox{weakly in } L^2(\o\X Y_2),\\
		{1\over \e}\Ted^{(2)}(\p^2_{22}\fU_{\ed,3}) & \rightharpoonup 0 \quad &&\hbox{weakly in } L^2(\o\X Y_2),\\
		{1\over \e}\Ted^{(2)}(\p^2_{12}\fU_{\ed,3}) & \rightharpoonup 0  \quad &&\hbox{weakly in } L^2(\o\X Y_2).
		\end{aligned}
	\end{equation}
\end{lemma}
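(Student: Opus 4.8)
The plan is to read \eqref{46} off the $\cQ_1$-fields $\cU^\dia,\cR^\dia$ of Subsection~\ref{SS42}, and \eqref{EQ70}--\eqref{EQ70+} off the explicit $\cQ_1$-fields $\cU,\cR,\cU^\bt,\cU^{\bt\bt}$ of Subsections~\ref{SS43}--\ref{SS44}, which are \emph{piecewise explicit} on the soft part $\o^S_\ed$ where the operators $\Ted^{(\a)}$ act. Recall that \eqref{Eqref01}, together with \eqref{rem+}, \eqref{EQ46} and \eqref{ruo01}--\eqref{ruo02}, yields the bounds \eqref{EQ61} and \eqref{55}; these are the only facts about $\{u_\ed\}$ used below, so every estimate invoked from Sections~\ref{S04}--\ref{S05} holds with $\GE(u_\ed)\le C\e^{1/2}\d^2$. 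Subsequences are extracted as needed, fixing the limit objects.

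\textbf{Macroscopic limits \eqref{46}.} By \eqref{EQ413} the fields $\d^{-2}\cU^\dia_{\ed,\a}$, $\d^{-1}\cU^\dia_{\ed,3}$ and (by \eqref{EQ49}$_1$) $\d^{-1}\cR^\dia_{\ed,\a}$ are bounded in $H^1(\o_\d)$; restricting to $\o$ and passing to a subsequence they converge weakly in $H^1(\o)$ and, by Rellich, strongly in $L^2(\o)$ to limits $\fU_\a$, $\fU_3$, $\cR_\a$, with $\fU_\a,\fU_3\in H^1_\gamma(\o)$ because $\cU^\dia_\ed=\cR^\dia_\ed=0$ on $\gamma$. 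The estimates \eqref{N3}$_{2,3}$, rescaled, give $\d^{-2}\|\fU_{\ed,\a}-\cU^\dia_{\ed,\a}\|_{L^2(\o_\d)}=O(\e)$ and $\d^{-1}\|\fU_{\ed,3}-\cU^\dia_{\ed,3}\|_{H^1(\o_\d)}=O(\e)$, hence \eqref{46}$_1$ and the weak $H^1$ part of \eqref{46}$_2$; the strong $H^1$ convergence follows by using \eqref{N3}$_1$ to replace $\d^{-1}\nabla\fU_{\ed,3}$ by $\d^{-1}(-\cR^\dia_{\ed,2},\cR^\dia_{\ed,1})$ up to an $O(\e)$ $L^2$-error and invoking the strong $L^2$-convergence of $\d^{-1}\cR^\dia_{\ed,\a}$. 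Finally, passing to the limit in $\p_1\fU_3=-\cR_2$, $\p_2\fU_3=\cR_1$ and in \eqref{EQ49}$_3$ (which forces $\p_1\cR_1=\p_2\cR_2=0$) gives $\p^2_{12}\fU_3=0$, so $\fU_3(x')=\fF(x_1)+\fG(x_2)$ with $\fF'=-\cR_2$, $\fG'=\cR_1$; the $H^1(\o_\d)$-bound on $\d^{-1}\cR^\dia_{\ed,\a}$ upgrades $\fF',\fG'$ to $H^1$, so $\fF,\fG\in H^2$, and the traces on $\gamma$, after absorbing the free additive constant between $\fF$ and $\fG$, make them vanish on $(0,l)$: $\fF\in H^2_{(0,l)}\big((0,L)_{x_1}\big)$, $\fG\in H^2_{(0,l)}\big((0,L)_{x_2}\big)$.

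\textbf{Unfolded limits \eqref{EQ70}, \eqref{EQ70+}.} By \eqref{EQ55} and \eqref{EQ61}, each rescaled unfolded quantity is bounded in $L^2(\o\X Y_\a)$: e.g.\ $\|\Ted^{(1)}(\p_2\fU_{\ed,\a})\|\le\sqrt{\e/\d}\,\|\nabla\fU_{\ed,\a}\|_{L^2(\o_\d)}\le C\ed$ and $\|\Ted^{(1)}(\p^2_{22}\fU_{\ed,3})\|\le\sqrt{\e/\d}\,\|\p^2_{22}\fU_{\ed,3}\|_{L^2(\o_\d)}\le C\e$, matching the $(\ed)^{-1}$ and $\e^{-1}$ prefactors (likewise for all the others, including the $\fu_\ed$-terms since $\|\nabla\fu_\ed\|_{L^2(\o_\d)}\le C\d\sqrt{\ed}$). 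To identify the weak limits I would replace $\fU_{\ed,3}$ by $\cU_{\ed,3}$, $\fU_{\ed,\a}$ by $\cU^\bt_{\ed,\a}$ and $\fu_\ed$ by $0$ --- admissible on $\o^S_\ed$ by \eqref{EQ51}, \eqref{EQ415Ubt}, \eqref{EQ61} --- and read the limit off \eqref{H01}, \eqref{621}. In those formulas the core contribution (where $\p_\a\cU=\cR\land\Ge_\a$) produces the macroscopic derivative: its $y_{3-\a}$-mean, computed by integrating across the beam and matching the lateral traces to the affine rigid values \eqref{EQ40} on the neighbouring plates, equals $\p_\b\fU_\a$ (resp.\ $\p^2_{\b\b}\fU_3$), the residual $\cR_3$-terms being negligible by \eqref{EQ417} and Step~1; the mean-zero remainder is $\p_{y_{3-\a}}$ of a corrector $\wh U^{(\a)}_\bullet$, which belongs to $L^2\big(\o\X(0,1)_{y_\a};H^1_0(\fI)\big)$ precisely because at $y_{3-\a}=\pm\tfrac12$ the unfolded field equals an affine rigid field and carries no fluctuation. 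The vanishing limits \eqref{EQ70}$_{2,4,5}$, \eqref{EQ70+}$_{2,3,5,6}$ all correspond to differentiation \emph{along} the beam axis (or to $\fu_\ed$): the relevant along-beam nodal increments of $\Ga_{pq}$, $\cR_{pq}$ are controlled by \eqref{EQ47}, \eqref{EQ47+} and \eqref{EXUcb01}, so that after dividing by the $\ed$ (resp.\ $\e$) prefactor and using $\d/\e\to0$ (see \eqref{As}) they tend to $0$ in $L^2(\o\X Y_\a)$.

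\textbf{Main obstacle.} The crux is the last limit identification. The substitution $\fU_{\ed,\a}\mapsto\cU^\bt_{\ed,\a}$ is controlled in $L^2(\o^S_\ed)$ by \eqref{EQ415Ubt} but \emph{not} in gradient, so the comparison of $(\ed)^{-1}\Ted^{(1)}(\p_2\fU_{\ed,\a})$ with $(\ed)^{-1}\Ted^{(1)}(\p_2\cU^\bt_{\ed,\a})$ must be made through the potentials, using \eqref{EQ56} (two $L^2$-close functions have the same weak limit of their $y$-derivatives). One must then prove that the unfolded transition-layer part of the $\cQ_1$-fields --- i.e.\ the unfolded family of nodal increments of $\Ga_{pq}$, $\cR_{pq}$, rescaled by $z_j=\d y_j$ --- does converge to an element of $L^2\big(\o\X(0,1)_{y_\a};H^1_0(\fI)\big)$ in the claimed gradient-plus-corrector form, cleanly separated from the core part, and that all the attendant error terms are absorbed by the hierarchy of estimates of Section~\ref{S05}. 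This is where the structural estimate \eqref{EQ415Ubt}, saying $\cU^{\bt\bt}_\a$ is negligible on the soft part, is essential.
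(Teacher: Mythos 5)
Your treatment of the macroscopic convergences \eqref{46} is essentially the paper's: bounds on $\cU^\dia_\ed,\cR^\dia_\ed$ from \eqref{EQ413}, \eqref{EQ49}, comparison with $\fU_\ed$ via \eqref{N3}, and the identities $\p_1\fU_3=-\cR_2$, $\p_2\fU_3=\cR_1$, $\p_\a\cR_\a=0$ to get $\p^2_{12}\fU_3=0$ and the splitting $\fU_3=\fF+\fG$. The gap is in \eqref{EQ70}--\eqref{EQ70+}, and it is a genuine one: your plan to identify the limits by substituting the explicit $\cQ_1$-fields ($\fU_{\ed,\a}\mapsto\cU^\bt_{\ed,\a}$, $\fU_{\ed,3}\mapsto\cU_{\ed,3}$) and ``reading the limit off \eqref{H01}, \eqref{621}'' cannot produce the corrector terms. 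The unfolded derivatives of the interpolated fields contain \emph{no} fluctuation: in the paper they converge exactly to the macroscopic gradients (cf.\ \eqref{EXUcff01}, \eqref{EXUc02}, \eqref{EXUc01}), and the correctors $\wh U^{(\a)}_\a$, $\wh U^{(\a)}_3$ arise instead from the residuals $\fU_{\ed,\a}-\cU_{\ed,\a}$ and $\p_1\fU_{\ed,3}+\cR_{\ed,2}$, $\p_2\fU_{\ed,3}-\cR_{\ed,1}$, which vanish on $\o^H_\ed$ (this is what puts the correctors in $H^1_0(\fI)$) and whose unfolded potentials are bounded, but not small, at exactly the scale of interest: $\|\Ted^{(1)}(\fU_{\ed,\a}-\cU_{\ed,\a})\|_{L^2}\le C\e\d^2$, i.e.\ $O(\e\d^2)$ and not $o(\e\d^2)$. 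Hence your substitution principle ``two $L^2$-close functions have the same weak limit of their $y$-derivatives'' does not apply at that scale, and your ``main obstacle'' paragraph mis-locates the source of the corrector (attributing it to the unfolded transition-layer nodal increments of $\Ga_{pq},\cR_{pq}$, which only contribute $\p_2\fU_\a$, $\p^2_{22}\fU_3$, etc.) while leaving the actual identification unproven. The missing idea is the decomposition $\fU_{\ed,\a}=\cU_{\ed,\a}+(\fU_{\ed,\a}-\cU_{\ed,\a})$ (and its analogue with $\p_\a\fU_{\ed,3}\mp\cR_{\ed,\b}$), with the interpolated part giving the macroscopic term and the hard-part-vanishing residual giving the corrector.

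A second, related gap concerns the vanishing limits, in particular \eqref{EQ70+}$_2$. For the mixed derivative, boundedness of $\tfrac1\e\Ted^{(1)}(\p^2_{21}\fU_{\ed,3})$ is immediate, but its weak limit could a priori contain a fluctuation $\p_{y_2}\wh U^{(1,2)}_3$; your claim that it vanishes ``because the along-beam nodal increments are controlled by \eqref{EQ47}, \eqref{EQ47+}, \eqref{EXUcb01} and $\d/\e\to0$'' does not address this. The paper has to introduce the auxiliary limit $\wh U^{(1,2)}_3$ of $\tfrac1{\ed}\Ted^{(1)}(\p_1\fU_{\ed,3}+\cR_{\ed,2})$, write $\Ted^{(1)}(\p^2_{12}\fU_{\ed,3})$ in two ways (via $\tfrac1\e\p_{y_1}$ of $\p_2\fU_{\ed,3}-\cR_{\ed,1}$ and via $\tfrac1\d\p_{y_2}$ of $\p_1\fU_{\ed,3}+\cR_{\ed,2}$), compare the limits to get $\p_{y_2}\wh U^{(1,2)}_3=0$, and then use $\wh U^{(1,2)}_3\in L^2\big(\o\X(0,1)_{y_1};H^1_0(\fI)\big)$ to conclude $\wh U^{(1,2)}_3=0$. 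This structural argument, together with the analogous potential-plus-derivative argument used for \eqref{EQ70}$_{2,4,5}$ (strong smallness of the rescaled potential combined with boundedness of its $y$-derivative), is absent from your proposal and is precisely where the proof is decided.
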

\begin{proof} 
	The proof is given in the following steps.
	
{\bf Step 1.} We prove \eqref{46}$_1$.
	
	The estimates \eqref{EQ413}$_1$ and \eqref{N3}$_{3}$  give 
	$$
\|\cU^\dia_{\ed,\a}\|_{H^1(\o_\d)}\leq C{\d^2},\qquad \|\fU_{\ed,\a}-\cU^\dia_{\ed,\a}\|_{L^2(\o_\d)}\leq C\e\d^2.
	$$
	Then, there exists $\fU_\a \in H^1_\gamma(\o)$ such that
\begin{equation}\label{Con01}
		{1\over \d^2}\cU^\dia_{\ed,\a} \rightharpoonup \fU_\a\quad \text{weakly in $H^1_\gamma(\o)$ and strongly in } L^2(\o).
\end{equation}
	The above  gives \eqref{46}$_1$.\\
	{\bf Step 2.} We prove \eqref{46}$_2$.
	
	The estimates  \eqref{EQ49}$_{1,2,5,6}$, \eqref{EQ49+}$_{5,6}$, \eqref{EQ413}$_2$ and \eqref{N3}$_{1,2,4}$ give 
	$$
	\begin{aligned}
	& \|\cU^\dia_{\ed,3}\|_{H^1(\o_\d)}\leq C{\d},\qquad \|\cR^\dia_\ed\|_{H^1(\o_\d)}\leq C{\d},\\
	  & \big\|\partial_1\cU^\dia_{\ed,3}+\cR^\dia_{\ed,2}\big\|_{L^2(\o_\d)}+\big\|\partial_2\cU^\dia_{\ed,3}-\cR^\dia_{\ed,1}\big\|_{L^2(\o_\d)}\leq C\ed,\\
 &\|\p_1\cR^\dia_{\ed,1}\|_{L^2(\o_\d)}+\|\p_2\cR^\dia_{\ed,2}\|_{L^2(\o_\d)} \leq C{\d^2\over \e},\\
 &\big\|\p_1\fU_{\ed,3}+\cR^\dia_{\ed,2}\big\|_{L^2(\o_\d)}+\big\|\p_2\fU_{\ed,3}-\cR^\dia_{\ed,1}\big\|_{L^2(\o_\d)}+ \|\fU_{\ed,3}-\cU^\dia_{\ed,3}\|_{H^1(\o_\d)}\leq  C\ed.
	 \end{aligned}
	$$
	Then, there exists $\fU_3,\; \cR_1,\; \cR_2\in H^1_\gamma(\o)$ such that
	\begin{equation}\label{EQ89}
	\begin{aligned}
&{1\over \d}\cU^\dia_{\ed,3}\rightharpoonup \fU_3\quad \text{weakly in $H^1_\gamma(\o)$ and strongly in $L^2(\o)$},\\
&{1\over \d}\cR^\dia_{\ed,\a} \rightharpoonup \cR_\a\quad \text{weakly in $H^1_\gamma(\o)$ and strongly in } L^2(\o)
	\end{aligned}
	\end{equation}
	with
	\begin{equation}\label{EQ90}
	\p_1\fU_3=-\cR_2,\quad \p_2\fU_3=\cR_1,\quad \p_\a\cR_\a=0,\quad \text{a.e. in $\o$}.
	\end{equation}
	So, we get $\fU_3\in H^2(\o)$ with $\p_{12}^2\fU_3=0$ a.e. in $\o$ and 
	$$
	{1\over \d}\cU^\dia_{\ed,3}\to \fU_3\quad \text{strongly in $H^1(\o)$}.
	$$
	This implies  there exist $\fF\in H^2_{(0,l)}((0,L)_{x_1})$ and $\fG\in H^2(_{(0,l)}((0,L)_{x_2}))$ such that
	$$\fU_3(x_1,x_2)=\fF(x_1)+\fG(x_2),\quad\text{for a.e. $(x_1,x_2)\in\o$}.$$
	This completes \eqref{46}$_2$.
	
{\bf Step 3.}  We prove \eqref{EQ70}$_{1,2,3,4}$.\\[1mm]
From the expression \eqref{621} and \eqref{Dphi}, we obtain 
\begin{equation}\label{ExUcI}
	\left.\begin{aligned}
	\p_{y_1}\Ted^{(1)}(\cU_{\ed,1}^{\bt\bt})(x',y_1,y_2)&=0,\\
	\p_{y_2}\Ted^{(1)}(\cU_{\ed,1}^{\bt\bt})(x',y_1,y_2)&=(\e-\d)\int_\fI \Ted^{(1)}(\cR_{\ed,3})(x',y_1,t)dt,\;\\
	\p_{y_1}\Ted^{(1)}(\cU_{\ed,2}^{\bt\bt})(x',y_1,y_2)&=\e\Ted^{(1)}(\cR_{\ed,3}),\\
	\p_{y_2}\Ted^{(1)}(\cU_{\ed,2}^{\bt\bt})(x',y_1,y_2)&=\e y_1\p_{y_2}\Ted^{(1)}(\cR_{\ed,3}),
\end{aligned}\right\}\; \text{ a.e. in}\, \o\X\left({\d\over \e},1-{\d\over \e}\right)\X\fI.
\end{equation}
Using the estimates \eqref{EQ414}$_{7,8}$, \eqref{EQ55},  \eqref{EQ56} and \eqref{EQAp}$_2$ (see Lemma \ref{lemAp1} in Appendix), we have
\begin{equation}\label{EQ813}
\begin{aligned}
&\|\cR_{\ed,3}\|_{L^2(\o_\d)} \leq C{\d^2},\qquad \|\cR_{\ed,3}\|_{L^2(\o^S_\ed)} \leq C{\d^2}\sqrt{\d\over \e},\qquad\|\nabla \cR_{\ed,3}\|_{L^2(\o_\d)}\leq C{\d}\sqrt{\d\over \e},\\
&\|\Ted^{(1)}(\cR_{\ed,3})\|_{L^2(\o\X Y_1)}+\|\p_{y_2}\Ted^{(1)}(\cR_{\ed,3})\|_{L^2(\o\X Y_1)}\leq C{\d^2},\quad \|\p_{y_1}\Ted^{(1)}(\cR_{\ed,3})\|_{L^2(\o\X Y_1)}\leq C\ed.
\end{aligned}
\end{equation} The estimates \eqref{EQ813}$_{1,3}$ can also be read as follows:
$$
\Big\|{\e\over \d^2}\cR_{\ed,3}\Big\|_{L^2(\o_\d)} \leq C\e,\qquad \Big\|{\e\over \d^2}\nabla \cR_{\ed,3}\Big\|_{L^2(\o_\d)}\leq C\sqrt{\e\over \d}.
$$ Now, the above and Lemma \ref{lemAp2} give
$$
\begin{aligned}
	{\e\over \d^2}\cR_{\ed,3}&\to 0\quad &&\text{strongly in $L^2(\o)$},\\
	{1\over \ed}\p_{y_1}\Ted^{(1)}(\cR_{\ed,3})&\rightharpoonup 0\quad &&\text{weakly in $L^2(\o\X Y_1)$},\\
	{1\over \d^2}\p_{y_2}\Ted^{(1)}(\cR_{\ed,3})&\rightharpoonup 0\quad &&\text{weakly in $L^2(\o\X Y_1)$}.
\end{aligned}
$$
The above together with the expressions \eqref{ExUcI} give
\begin{equation}\label{EXUcff01}
	\begin{aligned}
{1\over \e^2\d}\p_{y_1}\Ted^{(1)}(\cU^{\bt\bt}_{\ed,\a})& \rightharpoonup 0\quad&&\text{weakly in $L^2(\o\X Y_1)$},\\
{1\over \e\d^2}\p_{y_2}\Ted^{(1)}(\cU^{\bt\bt}_{\ed,\a})&  \rightharpoonup 0\quad &&\text{weakly in $L^2(\o\X Y_1)$}.
	\end{aligned}
\end{equation}
Below, we consider the asymptotic behavior of $\{\Ted^{(1)}(\cU^\bt_{\ed,\a})\}_{\e,\d}$ and its partial derivatives.\\
First, using the estimates \eqref{EQ414Ubt}$_{1,2}$, \eqref{EQ415}$_3$-\eqref{EQ415Ubt} and \eqref{EQ414Ubt}$_2$, we have
$$\|\cU^\bt_{\ed,\a}\|_{L^2(\o_\d)}\leq C{\d^2},\quad \|\nabla \cU^\bt_{\ed,\a}\|_{L^2(\o_\d)}\leq C{\d}\sqrt{\ed},\quad \|\cU^\bt_{\ed,\a}-\cU^\dia_{\ed,\a}\|_{L^2(\o_\d)}\leq C\e\d^2,$$
which together with the convergence \eqref{Con01}, give
\begin{equation}\label{EQ67}
{1\over \d^2}\cU^\bt_{\ed,\a} \to \fU_\a\quad \text{strongly in $L^2(\o)$}.
\end{equation}
The above estimate of  $\nabla \cU^\bt_{\ed,\a}$ can also be read as follows:
$$\Big\|{1\over \d^2}\nabla \cU^\bt_{\ed,\a}\Big\|_{L^2(\o_\d)}\le C\sqrt{\e\over \d}.$$ Then, from Lemma \ref{lemAp2} and convergence  \eqref{EQ67}, we obtain
\begin{equation}\label{EXUc02}
\begin{aligned}
{1\over \e\d^2}\p_{y_2}\Ted^{(1)}(\cU^\bt_{\ed,\a}) &\rightharpoonup \p_2\fU_\a\quad &&\text{weakly in $L^2(\o\X Y_1)$},\\
{1\over \e^2\d}\p_{y_1}\Ted^{(1)}(\cU^\bt_{\ed,\a}) &\rightharpoonup 0\quad &&\text{weakly in $L^2(\o\X Y_1)$}.
\end{aligned}
\end{equation}
The above convergences and those  in \eqref{EXUcff01} lead to
\begin{equation}\label{EXUc01}
	\begin{aligned}
		{1\over \e\d^2}\p_{y_2}\Ted^{(1)}(\cU_{\ed,\a}) &\rightharpoonup \p_2\fU_\a \quad &&\text{weakly in $L^2(\o\X Y_1)$},\\
		{1\over \e^2\d}\p_{y_1}\Ted^{(1)}(\cU_{\ed,\a}) &\rightharpoonup 0\quad &&\text{weakly in $L^2(\o\X Y_1)$}.
	\end{aligned}	
\end{equation}
Observe that using \eqref{EQ414}$_9$, \eqref{EQ51}$_5$  and \eqref{EQ61}$_3$, we have
\begin{equation}\label{H02}
	\|\nabla(\fU_{\ed,\a}-\cU_{\ed,\a})\|_{L^2(\o_\d)}\leq {\d}\sqrt{\ed},\qquad \|\fU_{\ed,\a}-\cU_{\ed,\a}\|_{L^2(\o_\d)}\leq C\e\d^2\sqrt{\d\over \e}.
\end{equation}
Transforming using $\Ted^{(1)}$ (see \eqref{EQ55}-\eqref{EQ56}) gives
$$\begin{aligned}
	&\|\Ted^{(1)}(\fU_{\ed,\a}-\cU_{\ed,\a})\|_{L^2(\o\X Y_1)}\leq C\e\d^2,\quad  \big\|\p_{y_2}\Ted^{(1)}(\fU_{\ed,\a}-\cU_{\ed,\a})\big\|_{L^2(\o\X Y_1)}\leq C\e\d^2,\\
	&\big\|\p_{y_1}\Ted^{(1)}(\fU_{\ed,\a}-\cU_{\ed,\a})\big\|_{L^2(\o\X Y_1)}\leq C\e^2\d. 
\end{aligned}
$$
From the above inequalities, combined with the fact that  $\fU_{\ed,\a}-\cU_{\ed,\a}$ vanishes in $\o^H_\ed$, we deduce that there exist  $\wh U^{(1)}_\a\in L^2(\o\X(0,1)_{y_1};H^1_0(\fI))$ such that 
$$
\begin{aligned}
	{1\over \e\d^2}\Ted^{(1)}(\fU_{\ed,\a}-\cU_{\ed,\a})&\rightharpoonup \wh U^{(1)}_\a \quad &&\text{weakly in $L^2(\o\X(0,1)_{y_1}; H^1(\cJ))$},\\
	{1\over \e^2\d}\p_{y_1}\Ted^{(1)}(\fU_{\ed,\a}-\cU_{\ed,\a}) &\rightharpoonup 0\quad &&\text{weakly in $L^2(\o\X Y_1)$}.
\end{aligned}
$$
So, the above convergences and those in \eqref{EXUc01} yield \eqref{EQ70}$_{1,2}$.  Similarly, we get \eqref{EQ70}$_{3,4}$. \\[1mm]
{\bf Step 4.}  We prove \eqref{EQ70}$_{5}$.\\[1mm]
From the estimates \eqref{EQ61}$_{5,6}$ and the properties of the operator $\Ted^{(1)}$, we get
$$\|\Ted^{(1)}(\fu_\ed)\|_{L^2(\o\X Y_1)}+\|\p_{y_2}\Ted^{(1)}(\fu_\ed)\|_{L^2(\o\X Y_1)}\leq C\e^2\d^2,\quad \|\p_{y_1}\Ted^{(1)}(\fu_\ed)\|_{L^2(\o\X Y_1)}\leq C \e^2\d.$$
These estimates give \eqref{EQ70}$_{5}$ for $\b=1$. Similarly, we prove the convergence \eqref{EQ70}$_5$ for $\b=2$.\\[1mm]
{\bf Step 5.} We prove \eqref{EQ70+}.
	
	From \eqref{EQ414}$_{1,2}$, \eqref{EQ51}$_{1,2,3,4}$ and \eqref{EQ415}$_1$ we obtain
\begin{equation}\label{EQ610}
	\begin{aligned}
		&\|\cR_{\ed,\a}\|_{L^2(\o_\d)} \leq C{\d}, \qquad \|\nabla \cR_{\ed,\a}\|_{L^2(\o_\d)} \leq C\,\sqrt{\ed},\quad  \|\cR_{\ed,\a}-\cR^\dia_{\ed,\a}\|_{L^2(\o_\d)}\leq C\ed, \\
		&\|\p_1 \fU_{\ed,3} + \cR_{\ed,2}\|_{L^2(\o_\d)}
		+ \|\p_2 \fU_{\ed,3} - \cR_{\ed,1}\|_{L^2(\o_\d)} \leq C\,\d\sqrt{\ed},\\
		&\|\nabla(\p_1 \fU_{\ed,3} + \cR_{\ed,2})\|_{L^2(\o_\d)}
		+ \|\nabla(\p_2 \fU_{\ed,3} - \cR_{\ed,1})\|_{L^2(\o_\d)} \leq C\,\d\sqrt{\ed}.
	\end{aligned}
\end{equation}
Using these inequalities together with the convergence \eqref{EQ89}$_2$ and equalities \eqref{EQ90}, we deduce
\begin{equation*}
	\begin{aligned}
		\frac{1}{\d}\,\cR_{\ed,2} &\to -\p_1 \fU_3 
		&\quad &\text{strongly in $L^2(\o)$}, \\[4pt]
		\frac{1}{\d}\,\cR_{\ed,1} &\to \;\;\p_2 \fU_3 
		&\quad &\text{strongly in $L^2(\o)$}.
	\end{aligned}
\end{equation*}

Next, by the definition of $\cR_{\ed,\a}$ and the estimates \eqref{EQ610}$_{1,2}$ together with Lemma~\ref{lemAp2}, we obtain
\begin{equation}\label{EXRc01}
	\begin{aligned}
		{1\over \e}\Ted^{(1)}(\p_2 \cR_{\ed,2}) 
		&\rightharpoonup -\p^2_{21} \fU_3 = 0 
		&\quad &\text{weakly in $L^2(\o \times Y_1)$}, \\[4pt]
		{1\over \e}\Ted^{(1)}(\p_1 \cR_{\ed,2}) 
		&\rightharpoonup 0, 
		&\quad &\text{weakly in $L^2(\o \times Y_1)$}, \\[4pt]
		{1\over \e}\Ted^{(1)}(\p_2 \cR_{\ed,1}) 
		&\rightharpoonup \p^2_{22} \fU_3 
		&\quad &\text{weakly in $L^2(\o \times Y_1)$}, \\[4pt]
		{1\over \e}\Ted^{(1)}(\p_1 \cR_{\ed,1}) 
		&\rightharpoonup 0
		&\quad &\text{weakly in $L^2(\o \times Y_1)$}.
	\end{aligned}
\end{equation}
From \eqref{EQ610}$_{4,5,6,7}$ and \eqref{EQ55}  we get
$$
\begin{aligned}
	&\|\Ted^{(1)}(\p_1 \fU_{\ed,3} + \cR_{\ed,2})\|_{L^2(\o\X Y_1)} + \|\Ted^{(1)}(\p_2 \fU_{\ed,3} - \cR_{\ed,1})|_{L^2(\o\X Y_1)} \leq C\ed,\\
	&\|\p_{y_2}\Ted^{(1)}\big(\p_1 \fU_{\ed,3} + \cR_{\ed,2}\big)\|_{L^2(\o\X Y_1)} + \|\p_{y_2}\Ted^{(1)}\big(\p_2 \fU_{\ed,3} - \cR_{\ed,1}\big)|_{L^2(\o\X Y_1)} \leq C\ed,\\
		&\|\p_{y_1}\Ted^{(1)}\big(\p_1 \fU_{\ed,3} + \cR_{\ed,2}\big)\|_{L^2(\o\X Y_1)} + \|\p_{y_1}\Ted^{(1)}\big(\p_2 \fU_{\ed,3} - \cR_{\ed,1}\big)|_{L^2(\o\X Y_1)} \leq C\e^2.
\end{aligned}
$$ Besides, we recall that from \eqref{EQ40}$_{1,2}$, the functions $\p_1 \fU_{\ed,3} + \cR_{\ed,2}$ and $\p_2 \fU_{\ed,3} - \cR_{\ed,1}$ vanish in $\o^H_\ed$, so there exist $\wh U_3^{(1)},\; \wh U_3^{(1,2)}\in L^2(\o\X(0,1)_{y_1};H_0^1(\fI))$ such that
\begin{equation}\label{814+}
	\begin{aligned}
		{1\over \ed}\Ted^{(1)}(\p_2 \fU_{\ed,3} - \cR_{\ed,1}) &\rightharpoonup \wh U^{(1)}_3\quad&&\text{weakly in $L^2(\o\X(0,1)_{y_1}; H^1(\fI))$},\\
		{1\over \e^2}\p_{y_1}\Ted^{(1)}\big(\p_2 \fU_{\ed,3} - \cR_{\ed,1}\big)&\rightharpoonup 0\quad &&\text{weakly in $L^2(\o\X Y_1)$},\\
		{1\over \ed}\Ted^{(1)}(\p_1 \fU_{\ed,3} + \cR_{\ed,2}) &\rightharpoonup \wh U^{(1,2)}_3\quad&&\text{weakly in $L^2(\o\X(0,1)_{y_2}; H^1(\fI))$},\\
		{1\over \e^2}\p_{y_1}\Ted^{(1)}\big(\p_1 \fU_{\ed,3} + \cR_{\ed,2}\big)&\rightharpoonup 0\quad &&\text{weakly in $L^2(\o\X Y_1)$}.
	\end{aligned}
\end{equation} 
Note that
$$\begin{aligned}
	\Ted^{(1)}(\p^2_{22}\fU_{\ed,3})={1\over \d}\p_{y_2}\Ted^{(1)}(\p_2\fU_{\ed,3}-\cR_{\ed,1})+\Ted^{(1)}(\p_2\cR_{\ed,1}),\\
	\Ted^{(1)}(\p^2_{11}\fU_{\ed,3})={1\over \e}\p_{y_1}\Ted^{(1)}(\p_1\fU_{\ed,3}+\cR_{\ed,2})-\Ted^{(1)}(\p_1\cR_{\ed,2}),\\
\Ted^{(1)}(\p^2_{12}\fU_{\ed,3})={1\over \e}\p_{y_1}\Ted^{(1)}(\p_2\fU_{\ed,3}-\cR_{\ed,1})+\Ted^{(1)}(\p_1\cR_{\ed,1}),\\
\Ted^{(1)}(\p^2_{12}\fU_{\ed,3})=\Ted^{(1)}(\p^2_{21}\fU_{\ed,3})={1\over \d}\p_{y_2}\Ted^{(1)}(\p_1\fU_{\ed,3}+\cR_{\ed,2})-\Ted^{(1)}(\p_2\cR_{\ed,2}).
\end{aligned}
$$ 
The last two above expressions together with those in \eqref{EXRc01} and \eqref{814+} give 
$$0=\p_{y_2}\wh U^{(1,2)}_3-\p^2_{21} \fU_3=\p_{y_2}\wh U_3^{(1,2)}.$$
So, we have
$$
\p_{y_2}\wh U^{(1,2)}_3(x',y')=0\quad\hbox{for a.e. } (x',y')\in \o\X Y_1.
$$ As a consequence, since $\wh U_3^{(1,2)}\in L^2(\o\X(0,1)_{y_1};H_0^1(\fI))$, we have $\wh U^{(1,2)}_3=0$. 
This and  the convergences \eqref{EXRc01}-\eqref{814+} give \eqref{EQ70+}$_{1,2,3}$. Similarly, we show \eqref{EQ70+}$_{4,5,6}$.
\end{proof}

\subsection{Limit behavior of  the strain tensor}\label{62}
First, we introduce the limit microscopic displacement space by
$$
\begin{aligned}
\GL^2\big(\o\X(0,1)_{y_1}; H^1\big(\fI^2\big)\big) &= \Big\{\wh{v}^{(1)}\in  L^2\big(\o\X(0,1)_{y_1};H^1(\fI^2)\big)^3\;\;|\;\; \text{$\wh{v}^{(1)}$ satisfying \eqref{limW02}$_1$}\Big\},\\
\GL^2\big(\o\X(0,1)_{y_2}; H^1\big(\fI^2\big)\big) & =\Big\{\wh u^{(2)}\in  L^2\big(\o\X(0,1)_{y_2}; H^1(\fI^2)\big)^3\;\;|\;\;\text{$\wh{v}^{(2)}$ satisfying \eqref{limW02}$_2$}\Big\},
\end{aligned}
$$
where
\begin{equation}\label{limW02}
\begin{aligned}
\wh{v}^{(1)}(x',y)=0,\quad \text{for a.e. $(x',y)$ in $\o\X \big(0,1\big)\X\Big\{\pm{1\over 2}\Big\}\X\fI$},\\
 \wh{v}^{(2)}(x',y)=0,\quad \text{for a.e. $(x',y)$ in $\o\X\Big\{\pm{1\over 2}\Big\}\X\big(0,1\big)\X\fI$}.
\end{aligned}
\end{equation}
Using the decomposition \eqref{EQ41D+}, we have the following expression for the strain tensor
\begin{equation}\label{EXST}
	e(u_\ed)=\begin{pmatrix*}
		e_{11}(\fU_{\ed,m})-x_3\p^2_{11}\fU_{\ed,3} & *& *\\[2mm]
		e_{12}(\fU_{\ed,m})-x_3\p^2_{12}\fU_{\ed,3}& e_{22}(\fU_{\ed,m})-x_3\p^2_{22}\fU_{\ed,3}& *\\[1mm]
		\p_1\fu_{\ed}  & \p_2\fu_{\ed}   & 0
	\end{pmatrix*}+e(\wt{u}_\ed).
\end{equation}
We have following convergences of the unfolded fields: 
\begin{lemma}\label{L64}
We have the following convergences for the linearized strain tensor:
	\begin{equation}\label{STL01}
		\begin{aligned}
			{1\over \ed}\Pi_\ed^{(1)}\big(e(u_\ed)\big)&\rightharpoonup E^{(1)}(\fU,\wh u^{(1)}) \quad &&\hbox{weakly in } L^2(\o\X \cY_1)^{3\X3},\\
			{1\over \ed}\Pi_\ed^{(2)}\big(e(u_\ed)\big)&\rightharpoonup E^{(2)}(\fU,\wh u^{(2)})  \quad &&\hbox{weakly in } L^2(\o\X \cY_2)^{3\X3},
		\end{aligned}
	\end{equation}
	where
	$$
	\begin{aligned}
		E^{(1)}(\fU,\wh u^{(1)})&= \begin{pmatrix}
			0 & \ds{1\over 2}\p_2\fU_1 +{1\over 2}\p_{y_2}\wh{u}^{(1)}_1 &  *  \\[1mm]
			*&\p_{2}\fU_2 -y_3\partial^2_{22}\fU_3+e_{y,22}(\wh{u}^{(1)}) & * \\[1mm]
			\ds {1\over 2}\p_{y_3}\wh{u}^{(1)}_1 &  e_{y,23}(\wh u^{(2)}) & e_{y,33}(\wh{u}^{(1)})
		\end{pmatrix}  \\[1mm]
		E^{(2)}(\fU,\wh u^{(2)})&= \begin{pmatrix}
			\p_1 \fU_1-y_3\partial^2_{11}\fU_3+e_{y,11}(\wh{u}^{(2)})  &  * & * \\[1mm]
			\ds{1\over 2}\p_1\fU_2 +\ds{1\over 2}\p_{y_1}\wh{u}^{(2)}_2 & 0 & * \\[1mm]
			e_{y,13}(\wh u^{(2)}) & \ds {1\over 2}\p_{y_3}\wh{u}^{(2)}_2  & e_{y,33}(\wh{u}^{(2)})
		\end{pmatrix},
	\end{aligned}
	$$
	with the microscopic displacements $\wh u^{(\a)}\in \GL^2(\o\X (0,1)_{y_\a};H^1(\fI^2))^3$.
	\end{lemma}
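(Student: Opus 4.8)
The plan is to start from the explicit expression \eqref{EXST}, apply $\tfrac{1}{\e\d}\Pi^{(\alpha)}_\ed$ componentwise and pass to the limit, using Lemma~\ref{lem61} for the macroscopic pieces and the estimates \eqref{55} for the residual $\wt u_\ed$. First I would record two extractions. From \eqref{ruo01} and the bound \eqref{Eqref01}, $\tfrac{1}{\e\d}\Pi^{(\alpha)}_\ed(e(u_\ed))$ is bounded in $L^2(\o\X\cY_\alpha)^{3\X3}$, so along a subsequence it converges weakly to some $E^{(\alpha)}$. From \eqref{55} the rescaled residual $W^{(\alpha)}_\ed\doteq\tfrac{1}{\e\d^2}\Pi^{(\alpha)}_\ed(\wt u_\ed)$ is bounded in $L^2\big(\o\X(0,1)_{y_\alpha};H^1(\fI^2)\big)^3$ — only the cross-section derivatives $\p_{y_{3-\alpha}},\p_{y_3}$ have to be controlled, while the along-beam derivative $\p_{y_\alpha}W^{(\alpha)}_\ed$ is merely $O(\e/\d)$, which is harmless because that target space is only $L^2$ in $y_\alpha$ — hence $W^{(\alpha)}_\ed\rightharpoonup\wh w^{(\alpha)}$; since $\wt u_\ed=0$ in $\O^H_\ed$ and, by $H^1$-continuity, on the beam/plate interfaces $\{y_{3-\alpha}=\pm\tfrac12\}$, the limit $\wh w^{(\alpha)}$ satisfies the Dirichlet condition in \eqref{limW02}.

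Next I would identify the weak limit of each entry of \eqref{EXST}, taking $\alpha=1$ for definiteness. The membrane entries $e_{\alpha\beta}(\fU_{\ed,m})$ and the warping terms $\p_\alpha\fu_\ed$ depend only on $x'$, so on them $\Pi^{(1)}_\ed$ reduces to $\Ted^{(1)}$ (constant in $y_3$) and their $\tfrac1{\e\d}$-rescaled limits are those of \eqref{EQ70}; in particular all $\p_1$-derivatives of $\fU_{\ed,\alpha}$ and all $\p_\alpha\fu_\ed$ contribute $0$. For the bending entries $-x_3\p^2_{\alpha\beta}\fU_{\ed,3}$ I would use $x_3=\d y_3$ under the unfolding, so that $\tfrac1{\e\d}\Pi^{(1)}_\ed(x_3\p^2_{\alpha\beta}\fU_{\ed,3})=\tfrac{y_3}{\e}\Ted^{(1)}(\p^2_{\alpha\beta}\fU_{\ed,3})$, whose limit comes from \eqref{EQ70+} (only the $\p^2_{22}$ term survives, carrying the corrector $\wh U^{(1)}_3$). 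For the entries of $e(\wt u_\ed)$ I would convert each $\p_j$ into a $y_j$-derivative of $\Pi^{(1)}_\ed(\wt u_\ed)$ via \eqref{ruo02}: the transverse derivatives $\p_2,\p_3$ produce $\p_{y_2}W^{(1)}_\ed,\ \p_{y_3}W^{(1)}_\ed$, which converge to $\p_{y_2}\wh w^{(1)},\ \p_{y_3}\wh w^{(1)}$, while the along-beam derivative $\p_1$ produces $\tfrac\d\e\,\p_{y_1}W^{(1)}_\ed$, which is bounded in $L^2$ and converges weakly to $0$ because $W^{(1)}_\ed\rightharpoonup\wh w^{(1)}$ in $L^2$ and $\d/\e\to0$ — this is the step where hypothesis \eqref{As} is essential.

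Collecting these limits gives $E^{(1)}_{11}=0$ at once, and for the remaining entries it produces the macroscopic membrane and bending terms $\p_\beta\fU_\alpha$, $-y_3\p^2_{22}\fU_3$ together with $y$-derivatives of the correctors $\wh U^{(1)}_1,\wh U^{(1)}_2,\wh U^{(1)}_3$ (from Lemma~\ref{lem61}) and of $\wh w^{(1)}$. The remaining task — and the step I expect to be the main obstacle — is the identification: one must assemble these partial correctors, which come from two different analyses (the $2$D macroscopic analysis of $\fU_\ed$ on $\o^{(1)}_\ed$ in Lemma~\ref{lem61}, and the $3$D analysis of $\wt u_\ed$ on $\O^{(1)}_\ed$), into a single microscopic field $\wh u^{(1)}\in\GL^2\big(\o\X(0,1)_{y_1};H^1(\fI^2)\big)$ so that $E^{(1)}=E^{(1)}(\fU,\wh u^{(1)})$. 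Concretely this amounts to matching the six independent component identities simultaneously and integrating them in $y_2$ (and $y_3$), which is possible with the homogeneous condition \eqref{limW02}$_1$ precisely because $\wh U^{(1)}_1,\wh U^{(1)}_2,\wh U^{(1)}_3$ already vanish at $y_2=\pm\tfrac12$ and $\wh w^{(1)}$ does too. The operator $\Pi^{(2)}_\ed$ is handled identically, using the $1\leftrightarrow2$ symmetry of \eqref{EXST} and of the constructions of Section~\ref{S05}; everything else is bookkeeping of the unfolding identities \eqref{ruo01}--\eqref{ruo02} and \eqref{EQ55}--\eqref{EQ56} against the estimates already established.
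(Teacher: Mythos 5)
Your proposal follows essentially the same route as the paper: unfold the decomposed strain \eqref{EXST}, use the convergences \eqref{EQ70}--\eqref{EQ70+} of Lemma~\ref{lem61} for the Kirchhoff--Love part, obtain the limit of the rescaled residual $\tfrac{1}{\e\d^2}\Pi^{(\a)}_\ed(\wt u_\ed)$ from \eqref{55} (this is exactly \eqref{59}, including the vanishing of the along-beam derivative via $\d/\e\to0$ and the Dirichlet condition \eqref{limW02} inherited from $\wt u_\ed=0$ on $\O^H_\ed$), and all of your entry-by-entry limit computations are correct. The only place where you stop short of the paper is the final ``assembly'', which you flag as the main obstacle and propose to handle by matching the six component identities and integrating in $y_2$ and $y_3$; in the paper this step is not an integration problem at all but a one-line definition, see \eqref{Warping011}: one sets $\wh u^{(1)}=\wh U^{(1)}_1\Ge_1+\big(\wh U^{(1)}_2-y_3\wh U^{(1)}_3\big)\Ge_2+\wt u^{(1)}$ and $\wh u^{(2)}=\big(\wh U^{(2)}_1-y_3\wh U^{(2)}_3\big)\Ge_1+\wh U^{(2)}_2\Ge_2+\wt u^{(2)}$, where the term $-y_3\wh U^{(\a)}_3$ is dictated by the bending contribution in the $(3-\a,3-\a)$ entry (for $\a=1$, the $22$ entry $\p_2\fU_2-y_3\p^2_{22}\fU_3+e_{y,22}(\wh u^{(1)})$), and membership in $\GL^2\big(\o\X(0,1)_{y_\a};H^1(\fI^2)\big)^3$ is immediate since $\wh U^{(\a)}_i\in L^2(\o\X(0,1)_{y_\a};H^1_0(\fI))$ and $\wt u^{(\a)}$ already vanish at $y_{3-\a}=\pm\tfrac12$. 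Trying instead to reconstruct $\wh u^{(\a)}$ by integrating the full set of strain-entry identities is both unnecessary and awkward (the system is over-determined, so compatibility questions arise that the direct definition simply bypasses); with the explicit formula in hand, the displayed entries of $E^{(\a)}(\fU,\wh u^{(\a)})$ are read off directly from the convergences you have already established, which is precisely how the paper concludes.
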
	
\begin{proof}
	First, from the estimates \eqref{55} and the fact that $\wt{u}_\ed$ vanish in $\o^H_\ed$, we have: There exist  $\wt{u}^{(\alpha)}\in \GL^2\big(\o\X (0,1)_{y_\alpha};H^1\big(\fI^2 \big)\big)^3$ such that 
	\begin{equation}\label{59}
		\begin{aligned}
			{1\over \e\d^2}\Pi^{(\alpha)}_{\e\d}\big(\wt{u}_\ed\big) & \rightharpoonup \wt{u}^{(\alpha)}\quad&&\text{weakly in $L^2\big(\o\X (0,1)_{y_\alpha};H^1\big(\fI^2\big)\big)^3$},\\
			{1 \over \e^2\d}\partial_{y_\alpha}\Pi^{(\alpha)}_{\e\d}\big(\wt{u}_\ed\big) &\rightharpoonup 0\quad&&\text{weakly in $L^2(\o\X \cY_\alpha)^{3}$}.
		\end{aligned}
	\end{equation}
	The convergences \eqref{EQ70}, \eqref{EQ70+} and \eqref{59} along with the expression \eqref{EXST} give \eqref{STL01} by defining $\wh u^{(\a)}$ as
	\begin{equation}\label{Warping011}
		\begin{aligned}
			\wh u^{(1)}=\wh U^{(1)}_1\Ge_1+(\wh U_2^{(1)}-y_3\wh U^{(1)}_3)\Ge_2+\wt{u}^{(1)},\qquad \wh u^{(2)}=(\wh U_1^{(2)}-y_3\wh U^{(2)}_3)\Ge_1+\wh U^{(2)}_2\Ge_2+\wt{u}^{(2)}.
		\end{aligned}
	\end{equation}
So, we get   $\wh u^{(\a)}\in \GL^2(\o\X(0,1)_{y_\a};H^1(\fI^2))^3$. This completes the proof.
\end{proof}

\subsection{Limit non-penetration condition}\label{SS73}
	 The non-penetration condition \eqref{NPCM}$_2$ using decomposition \eqref{EQ41D+} and equality \eqref{WMC02}$_1$,  then transforming by $\Ted^{(2)}$ gives 
	\begin{multline*}
		\Ted^{(2)}\big(\fU_{\ed,1}\big) \Big(x',{1\over 2}, y_2\Big)-\Ted^{(2)}\big(\fU_{\ed,1}\big) \Big(x', -{1\over 2}, y_2\Big)\\
		-\d y_3\Big(\Ted^{(2)}\big(\p_1\fU_{\ed,3}\big) \Big(x',{1\over 2}, y_2\Big)-\Ted^{(2)}\big(\p_1\fU_{\ed,3}\big) \Big(x',-{1\over 2}, y_2\Big)\Big)\geq -\d,\;\;
		\text{for a.e. } (x',y_2,y_3)\in \o\X\Big({\d\over 2\e},1-{\d\over 2\e}\Big)\X \fI.
	\end{multline*}
 The above and \eqref{EQ56}$_2$ imply 
\begin{multline*}
\int_\fI \p_{y_1}\Ted^{(2)}\big(\fU_{\ed,1}\big) \big(x',y_1, y_2\big)dy_1-\d y_3\int_\fI \p_{y_1}\Ted^{(2)}\big(\p_{1}\fU_{\ed,3}\big) \big(x',y_1, y_2\big)dy_1 \\
=\int_\fI \d\Ted^{(2)}\big(\p_1\fU_{\ed,1}\big) \big(x',y_1, y_2\big)dy_1-\d^2 y_3\int_\fI\Ted^{(2)}\big(\p^2_{11}\fU_{\ed,3}\big) \big(x',y_1, y_2\big)dy_1 \geq -\d,\\
		\text{for a.e. } (x',y_2,y_3)\in \o\X\Big({\d\over 2\e},1-{\d\over 2\e}\Big)\X \fI.
\end{multline*}
So, we have
\begin{multline}\label{NEPReq}
	\int_\fI \Ted^{(2)}\big(\p_1\fU_{\ed,1}\big) \big(x',y_1, y_2\big)dy_1-\d y_3\int_\fI\Ted^{(2)}\big(\p^2_{11}\fU_{\ed,3}\big) \big(x',y_1, y_2\big)dy_1 \geq -1,\\
	\text{for a.e. } (x',y_2,y_3)\in \o\X\Big({\d\over 2\e},1-{\d\over 2\e}\Big)\X \fI.
\end{multline}
	We divide the above inequality by $\ed$ and then we pass to the limit. Due to the convergences \eqref{EQ70}$_1$ and \eqref{EQ70+}$_1$ we obtain
	\begin{multline*}
		\int_\fI \big(\p_1\fU_1(x')+\p_{y_1}\wh U_1^{(2)}(x',y_1,y_2) \big)dy_1- y_3\int_\fI\big(\p^2_{11}\fU_3(x')+\p_{y_{1}}\wh U^{(2)}_3(x',y_1,y_2)\big)dy_1 \geq -\infty,\\
		\text{for a.e. } (x',y_2,y_3)\in \o\X (0,1)\X \fI,
	\end{multline*} 
	which imply
	$$\p_1\fU_1(x')-y_3\p^2_{11}\fU_3(x')\geq -\infty,\quad \text{for a.e. $(x',y_3)\in\o\X\fI$}.$$
	Similarly, we treat the non-penetration condition \eqref{NPCM}$_1$. There is therefore no additional condition on the limit displacements arising from the non-penetration condition. 

	\section{Homogenized limit problem}\label{S09}
	The limit macroscopic displacement set is defined as
$$
\D_0\doteq \big[H^1_\gamma(\o)\big]^2 \X \GH^2(\o),
$$
	where
	$$\GH^2(\o)\doteq \left\{\fU_3\in H^2(\o)\,|\,\fU_3=\fF+\fG,\,\text{with $\fF\in H^2_{(0,l)}\big((0,L)_{x_1}\big)$,  $\fG\in H^2_{(0,l)}\big((0,L)_{x_2}\big)$ }\right\}.$$
	The set of limit displacements is defined as
	$$\D=\D_0\X \GL^2(\o\X (0,1)_{y_1};H^1(\fI^2))\X \GL^2(\o\X (0,1)_{y_2};H^1(\fI^2)).$$
	To define the limit elasticity problem, we assume the following: there exist $\A^{(\alpha)}_{ijkl}\in L^\infty\big(\cY_\alpha\big)$, for $\alpha\in\{1,2\}$ such that
	\begin{equation}\label{HC01}
		\Pi^{(\alpha)}_\ed(\A_{\ed,ijkl}) (x',y)\to \A^{(\alpha)}_{ijkl}(y) \qquad \hbox{a.e. in } \o\X \cY_\alpha, 
	\end{equation}
	Observe that (see Assumption \eqref{CoeCon2}) $\A^{(\alpha)}$ satisfy the following for a.e. $y\in \cY_\alpha$ and for all $\GS\in\GS_3$
	\begin{equation}\label{UL78}
		\begin{aligned}
			&\A^{(\alpha)}_{ijkl}(y)=\A^{(\alpha)}_{ijlk}=\A^{(\alpha)}_{klij}(y),\\
			&\A^{(\alpha)}_{ijkl}(y)\GS_{ij}\GS_{kl}=\lim_{(\e,\d)\to(0,0)}\Pi_\ed^{(\alpha)}(\A_{\ed,ijkl})(x',y)\GS_{ij}\GS_{kl}\geq c_0|\GS|^2.
		\end{aligned}
	\end{equation}

\subsection{Construction of the recovery sequence}\label{SS81}
In this subsection, we construct the recovery sequence.
\begin{lemma}\label{Lem81}
	For every $(\fV,\wh{v}^{(1)},\wh{v}^{(2)})\in \D$, there exist a sequence $\{v_\ed\}_{\e,\d}$ in $\GU_\ed$ such that
\begin{equation}\label{Test81}
	\begin{aligned}
		{1\over \ed}\Pi_\ed^{(1)}\big(e(v_\ed)\big)&\to E^{(1)}(\fV,\wh v^{(1)}) \quad &&\hbox{strongly in } L^2(\o\X \cY_1)^{3\X3},\\
		{1\over \ed}\Pi_\ed^{(2)}\big(e(v_\ed)\big)&\to E^{(2)}(\fV,\wh v^{(2)})  \quad &&\hbox{strongly in } L^2(\o\X \cY_2)^{3\X3},
	\end{aligned}
\end{equation}
	and
$$
		\lim_{(\e,\d)\to(0,0)}{1\over \e\d^4}\int_{\O_\d}f_\ed\cdot v_\ed\,dx =\int_\o f\cdot\fV\,dx'.
$$
\end{lemma}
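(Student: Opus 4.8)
The plan is to build $v_\ed$ by reversing the decomposition \eqref{EQ41D+} and the unfolding limits of Lemma \ref{lem61} and Lemma \ref{L64}: to each target $(\fV,\wh v^{(1)},\wh v^{(2)})\in\D$ we attach a Kirchhoff--Love part driven by $\fV=(\fV_m,\fV_3)$ plus an oscillating corrector built from $\wh v^{(1)},\wh v^{(2)}$ and then cut everything off so that the result is rigid on each $\O^H_{pq}$ and satisfies the boundary and non-penetration constraints, hence lies in $\GU_\ed$. Concretely, I would first reduce to smooth data: by Theorems~\ref{lemAp5}--\ref{lemAp7} (density of $W^{1,\infty}(\o_\d)$ with vanishing gradient on the hard part, resp.\ $W^{2,\infty}$ with vanishing second derivative there, in $W^{1,\infty}(\o)$ resp.\ $W^{2,\infty}(0,L)$ in the unfolding sense), it suffices to construct the recovery sequence when $\fF,\fG$ are smooth and vanish near $(0,l)$, $\fV_\a$ is smooth and vanishes near $\gamma$, and $\wh v^{(\a)}$ is smooth in $y$ with compact support in the relevant $\fI^2$-slices; the general case follows by a diagonal argument since all the maps $(\fV,\wh v)\mapsto E^{(\a)}(\fV,\wh v)$ are continuous and the left-hand side of the force identity is continuous too.

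For smooth data I would set, on the beam $\O^{(2)}_{\e\d}$ (direction $\Ge_1$), a displacement of the form
\[
v_\ed(x)=\begin{pmatrix}\fV_1(x')-x_3\p_1\fV_3(x')\\ \fV_2(x')-x_3\p_2\fV_3(x')\\ \fV_3(x')\end{pmatrix}\d^{2}\!\Big/\!\d^{2}\;+\;\e\d^{2}\,\wh v^{(2)}\Big(x',\frac{x_1}{\e}\ \mathrm{mod}\ 1,\ \frac{x_2}{\d},\ \frac{x_3}{\d}\Big)+\text{(lower order rigidity corrections)},
\]
where the $\fV$-part is scaled as in \eqref{EQ41D+} (membrane $\sim\d^2$, bending $\sim\d$) and the corrector carries the factor $\e\d^2$ dictated by \eqref{59}; an analogous definition is used on $\O^{(1)}_{\e\d}$ with $\wh v^{(1)}$ and the roles of $y_1,y_2$ swapped, and the two definitions must be reconciled on the overlaps $\wt\O_{pq}$. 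The key technical step is the \textbf{modification near the junctions}: on each $\O^H_{pq}$ I replace $v_\ed$ by the rigid displacement $\Ga_{pq}+\cR_{pq}\wedge\ov x_{pq}$ obtained by freezing the $\fV$-part at the cell center (using $\p^2_{12}\fV_3=0$, which is exactly why $\GH^2(\o)$ splits as $\fF+\fG$, this freezing is consistent across the two beam directions), extend this rigid value into the surrounding soft connectors by the same $\cQ_1$/affine interpolation as in Section~\ref{S05} (cf.\ \eqref{Dphi}--\eqref{H01}), and multiply the corrector by a cutoff vanishing on $\o^H_{pq}$ — this is consistent with $\wh v^{(\a)}$ already vanishing on the lateral faces $y_{3-\a}=\pm\frac12$ via \eqref{limW02}. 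One then checks $e(v_\ed)=0$ a.e.\ in $\O^H_\ed$, $v_\ed=0$ on $\Gamma_\d$ (from the vanishing of $\fV,\nabla\fV_3,\wh v$ near $\gamma$), and the NPC \eqref{NPCM}: since $\fV_1,\fV_3,\wh v^{(2)}$ are bounded and smooth, the jump $v_{\ed,1}(p\e+\tfrac\d2,\cdot)-v_{\ed,1}(p\e-\tfrac\d2,\cdot)$ is $O(\e\d^2+\d)=O(\d)$, hence $\ge-\d$ for $(\e,\d)$ small — as already foreshadowed by the "no additional condition" conclusion of Subsection~\ref{SS73}.

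It remains to verify the two convergences. For \eqref{Test81}, I compute $\Pi^{(\a)}_\ed(e(v_\ed))$ directly from the scaling rules \eqref{ruo02}: on the bulk of each beam the $\fV$-part reproduces the membrane and bending entries $\p_\a\fV_\a-y_3\p^2_{\a\a}\fV_3$ of $E^{(\a)}$ (the $\p^2_{12}\fV_3$ and the "wrong-direction" first derivatives drop out precisely because of the scaling $\e$ vs.\ $\d$, matching \eqref{EQ70}--\eqref{EQ70+}), while the corrector reproduces all $e_{y,\cdot\cdot}(\wh v^{(\a)})$ terms; the contributions of the junction modifications are supported on sets of relative measure $O(\d/\e)\to0$ and are $O(1)$ in the rescaled $L^2$ norm there by the estimates of Lemmas~\ref{lem51}--\ref{lem52}, so they vanish in the limit, giving strong (not just weak) convergence because everything is built from fixed smooth profiles. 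For the force term, $\tfrac1{\e\d^4}\int_{\O_\d}f_\ed\cdot v_\ed\,dx = \tfrac1{\d}\int_{\o_\d}\big(\d f_m\!\cdot\!\fV_m+\d^2 f_3\fV_3\big)\,dx'+ (\text{corrector and }\wt u\text{ terms})$ by \eqref{Ass01} with $\kappa=1$ and \eqref{Force02}; the leading term tends to $\int_\o f_3\fV_3\,dx'$... — more carefully, with the $\d^2$/$\d$ scaling of the $\fV$-components the membrane and bending both contribute at order $\d^4$, and after dividing by $\e\d^4$ and integrating the cell average one gets $\int_\o f\cdot\fV\,dx'$, while the corrector term is $O(\e\d^{2}\cdot\d^{2}/\d^{4})=O(\e)\to0$. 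The \textbf{main obstacle} I anticipate is the junction bookkeeping: making the two beam-direction definitions agree on the square overlaps $\wt\o_{pq}$ while keeping $v_\ed\in H^1$, rigid on $\O^H_{pq}$, and controlling the rescaled energy of the interpolation layers — this is where the $\cQ_1$-interpolation machinery of Section~\ref{S05} and the estimates \eqref{EQ43}, \eqref{EQ47}–\eqref{EQ47+} are indispensable, and it is the reason the construction must mirror, in reverse, the global-fields analysis carried out there.
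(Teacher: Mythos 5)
Your overall architecture matches the paper's: reduce by density to smooth data ($\fV$ with $\fV_3=\fF+\fG$, smooth $\wh v^{(\a)}$), build a scaled Kirchhoff--Love field whose macroscopic ingredients are frozen at the cell centres and interpolated across the width-$\d$ connectors (this is exactly what Lemmas \ref{lemAp5} and \ref{lemAp7} encode), add the corrector $\e\d^2\,\wh v^{(\a)}$ only on the beams (continuity at the plate faces coming from \eqref{limW02}), and then check \eqref{EQ31}, \eqref{EQ32} and \eqref{NPCM}. However, the convergence part of your plan contains a genuine confusion about where the limit strain comes from. You claim that ``on the bulk of each beam the $\fV$-part reproduces the membrane and bending entries'' while ``the contributions of the junction modifications \dots vanish in the limit''. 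This is backwards: for an unmodified smooth field the rescaled strain is $\frac1{\e\d}\,\d^2\,\p_\a\fV_\a=\frac\d\e\,\p_\a\fV_\a\to 0$ (similarly for bending), so a smooth bulk field contributes nothing in the limit; the entries $\p_\a\fV_\a-y_3\p^2_{\a\a}\fV_3$ of $E^{(\a)}$ are produced precisely by the interpolation layers, where cell-to-cell increments of size $O(\e)$ are divided by the connector width $\d$, giving derivatives of order $\e/\d$ that compensate the factor $\d/\e$. Consequently the modified fields $\fV_{\ed}$ must be used on the whole soft part, and the strong convergences you need are exactly those of Lemmas \ref{lemAp5} and \ref{lemAp7} (\eqref{EQ85}, \eqref{B5}, i.e.\ \eqref{Test01} and \eqref{EQ810}), not the a priori estimates of Lemmas \ref{lem51}--\ref{lem52}, which concern fields extracted from a general admissible displacement and provide no such limits for a recovery sequence. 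As written, your verification of \eqref{Test81} would either lose the macroscopic part of the limit or rest on the wrong lemmas.

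A second, smaller slip: the non-penetration argument ``the jump is $O(\e\d^2+\d)=O(\d)$, hence $\ge-\d$ for small $(\e,\d)$'' is a non sequitur, since an $O(\d)$ bound with an unspecified constant does not imply the bound $\ge -\d$. The paper's explicit computation of the inter-plate jump of the recovery field yields the bound $\e\d^2\big(\|\p_1\fV_1\|_{L^\infty(\o)}+\tfrac12\|\p^2_{11}\fV_3\|_{L^\infty(\o)}\big)$, i.e.\ $o(\d)$, which is what actually gives \eqref{NPCM} for $(\e,\d)$ small; your construction produces the same bound once the frozen/interpolated fields are used, but the estimate has to be carried out rather than inferred from the order $O(\d)$. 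The remaining deviations from the paper (keeping the macroscopic variable of the corrector continuous instead of freezing it at the cell nodes, and the superfluous cutoff of the corrector, which already vanishes on the beam--plate interfaces) are harmless for smooth data.
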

\begin{proof} 	
	By density argument, it is enough to prove that main result for $\wh{v}^{(\alpha)}\in L^2\big(\o;\GL^2\big((0,1)_{y_\alpha};H^1(\fI^2)\big)^3\big)\cap \cC^2_c(\ov{\o}\X \ov{Y}_\a; H^1(\fI))^3$  and $\fV\in\D_0\cap [W^{1,\infty}(\o)]^2\X W^{2,\infty}(\o)$.\\
	So, there exist $\fF\in W^{2,\infty}\big((0,L)_{x_1}\big)$ and $\fG\in W^{2,\infty}\big((0,L)_{x_2}\big)$ such that
	$$\fV_3(x_1,x_2)=\fF(x_1)+\fG(x_2),\quad\text{for a.e. $(x_1,x_2)\in\o$}.$$
	{\bf Step 1.} We defined $\fV_{\ed,3}\in W^{2,\infty}(\o_\d)$. 
	
We set
\begin{equation}
	\fV_{\ed,3}(x_1,x_2)=\fF_\ed(x_1)+\fG_\ed(x_2),\quad\text{for a.e. $(x_1,x_2)\in\o_\d$}
\end{equation} where $\fF_\ed$ and $\fG_\ed$ are defined from $\fF$ and $\fG$ using Lemma \ref{lemAp7}.

{\bf Step 2.} In this step we construct the sequence of admissible displacements $\{v_{\ed}\}_{\e,\d}\subset \GU_\ed$.\\[1mm]
The sequence of test displacements $v_\ed$ is			
\begin{equation}\label{Ele03}
	v_{\ed}=\left\{\begin{aligned}
	&V_{e,\ed}+\wh{v}^{(1)}_\ed\qquad &&\hbox{in}\;\; \O^{(1)}_\ed,\\
	&V_{e,\ed}+\wh{v}^{(2)}_\ed\qquad &&\hbox{in}\;\; \O^{(2)}_\ed,\\
	&V_{e,\ed}\qquad &&\hbox{in}\;\; \O^H_\ed
\end{aligned}\right.
\end{equation}
where
\begin{equation*}
	\begin{aligned}
		\wh{v}^{(1)}_{\ed}(p\e+\d y_1,q\e+\e y_2,\d y_3)= \e\d^2\wh{v}^{(1)}(p\e,q\e, y)\quad  \text{for a.e. $y\in \cY_1$ and  for all $ (p,q)\in \cK_\e$},\\
		\wh{v}^{(2)}_{\ed}(p\e+\d y_1,q\e+\e y_2,\d y_3)= \e\d^2\wh{v}^{(2)}(p\e,q\e, y)\quad  \text{for a.e. $y\in \cY_2$ and  for all $ (p,q)\in \cK_\e$}.			
	\end{aligned}
\end{equation*}
The field $V_{e,\ed}$ is defined as follows: In $\O_\d$ by
\begin{equation}\label{Ele01}
	\begin{aligned}
		V_{e,\ed}(x)=\begin{pmatrix*}
			\ds{\d^2}\fV_{\ed,1}-\ds{\d} x_3\p_1\fV_{\ed,3}\\[2mm]
			\ds{\d^2}\fV_{\ed,2}-\ds{\d} x_3\p_2\fV_{\ed,3}\\[2mm]
			\ds{\d}\fV_{\ed,3}
		\end{pmatrix*},
	\end{aligned}
\end{equation}
where 
the field $\ds \fV_{\ed,\a}\in W^{1,\infty}(\o_\d)$ is constructed using Lemma \ref{lemAp5} from $\fV_\a\in W^{1,\infty}(\o)$.

So, by construction, we have $v_{\ed}\in H^1(\O_\d)^3$ and $v_{\ed}=0$ a.e. on $\Gamma_\d$ due to the boundary condition satisfied by $\fV$ and $\wh v^{(\a)}$.\\
Due to \eqref{Ele01}, we have the following:
$$
e(v_\ed)=e(V_{e,\ed})=0, \quad\text{a.e. in } \O^H_{pq},\quad (p,q)\in\cK_\e^*
$$
since by construction of $\fV_\ed$ we have
$$
\begin{aligned}
	\p_\alpha\fV_{\ed,\b}=\p^2_{\a\b}\fV_{\ed,3}&=0,\quad \text{a.e. in $\o^H_{pq}$ for $(p,q)\in\cK_\e^*$}.
\end{aligned}$$
Observe that  we have for a.e. $\ds (z_2,z_3)\in \left({\d\over 2},\e-{\d\over 2}\right)\X \fI_\d$
\begin{multline*}
	v_{\ed,1}\Big(p\e+{\d\over 2},q\e+z_2,z_3\Big)-v_{\ed,1}\Big(p\e-{\d\over 2},q\e+z_2,z_3\Big)
	={\d^2}\left( \fV_{\ed,1}\Big(p\e+{\d\over 2},q\e+z_2\Big)-\fV_{\ed,1}\Big(p\e-{\d\over 2},q\e+z_2\Big)\right)\\
	-z_3{\d}\left(\p_1\fV_{\ed,3}\Big(p\e+{\d\over 2},q\e+z_2\Big)-\p_1\fV_{\ed,3}\Big(p\e-{\d\over 2},q\e+z_2\Big)\right).
\end{multline*}
Due to the definition of $\fV_{\ed,3}$ in Lemma \ref{lemAp7}, we have (see \eqref{REDef}) 
$$\left.\begin{aligned}
&\fV_{\ed,3}(p\e+z_1,q\e+z_2)=\fF_\ed(p\e+z_1)+\fG_\ed(q\e+z_2),\\
&\p_1\fV_{\ed,3}\Big(p\e+{\d\over 2},q\e+z_2\Big)=d_{1}\fF_\ed\Big(p\e+{\d\over 2}\Big)=d_{1}\fF\Big(p\e+{\e\over 2}\Big),\\
&\p_1\fV_{\ed,3}\Big(p\e-{\d\over 2},q\e+z_2\Big)=d_{1}\fF_\ed\Big(p\e-{\d\over 2}\Big)=d_{1}\fF\Big(p\e-{\e\over 2}\Big)
\end{aligned}\right\}\quad (z_1,z_2)\in \fI_\d\X\Big({\d\over 2},\e-{\d\over 2}\Big).$$
From the definition of $\fV_{\ed,1}$ in Lemma \ref{lemAp5}, we have (see \eqref{Dphi}$_3$)
$$\fV_{\ed,1}\Big(p\e+{\d\over 2},q\e+z_2\big)=\fV_1(p\e+{\e\over2},q\e+{\e\over 2}),\quad \fV_{\ed,1}\Big(p\e-{\d\over 2},q\e+z_2\big)=\fV_1(p\e-{\e\over 2},q\e+{\e\over 2}),\quad \forall z_2\in \Big({\d\over 2},\e-{\d\over 2}\Big).$$
 Hence
$$
\begin{aligned}
&d_{1}\fF\Big(p\e+{\e\over 2}\Big)-d_{1}\fF\Big(p\e-{\e\over 2}\Big)=\int^{p\e+\e/2}_{p\e-\e/2}  d^2_{11}\fF(t)dt,\\
&\fV_1(p\e+{\e\over 2},q\e+{\e\over 2})-\fV_1(p\e-{\e\over2},q\e+{\e\over 2})=\int^{p\e+{\e\over 2}}_{p\e-{\e\over 2}} \p_1\fV_1(t,q\e+{\e\over 2})dt.
\end{aligned}
$$So for a.e. $\ds (z_2,z_3)\in \left({\d\over 2},\e-{\d\over 2}\right)\X \fI_\d$ we have
$$
v_{\ed,1}\Big(p\e+{\d\over 2},q\e+z_2,z_3\Big)-v_{\ed,1}\Big(p\e-{\d\over 2},q\e+z_2,z_3\Big) ={\d^2}\int^{p\e}_{p\e-\e}\p_1 \fV_1(t,q\e)dt-z_3{\d}\int^{p\e+\e/2}_{p\e-\e/2}  d^2_{11}\fF(t)dt.
$$
So for a.e. $\ds (z_2,z_3)\in \left({\d\over 2},\e-{\d\over 2}\right)\X \fI_\d$ since $d^2_{11}\fF=\p^2_{11}\fV_3$ is independent of $x_2$, we have
$$
\begin{aligned}
&v_{\ed,1}\Big(p\e+{\d\over 2},q\e+z_2,z_3\Big)-v_{\ed,1}\Big(p\e-{\d\over 2},q\e+z_2,z_3\Big) \\
=&{\d^2}\int^{p\e+{\e\over 2}}_{p\e-{\e\over 2}} \fV_1(t,q\e+{\e\over 2})dt-z_3{\d}\int^{p\e+\e/2}_{p\e-\e/2}  d^2_{11}\fV_3(t,q\e+{\e\over 2})dt\\
	\geq &{\d^2}\int_{p\e-{\e\over2}}^{p\e+{\e\over 2}}\left(\p_1\fV_1(t,q\e+{\e\over 2})-{1\over 2}\big|\p^2_{11}\fV_3(t,q\e+{\e\over 2})\big|\right)\,dt\\
	\geq  &-{\d^2}\int_{p\e-{\e\over2}}^{p\e+{\e\over 2}} \big(\|\p_1\fV_1\|_{L^\infty(\o)}+{1\over 2}\|\p^2_{11}\fV_3\|_{L^\infty(\o)}\big)\,dt\\
	\geq & -\e\d^2 \big(\|\p_1\fV_1\|_{L^\infty(\o)}+{1\over 2}\|\p^2_{11}\fV_3\|_{L^\infty(\o)}\big).
\end{aligned}
$$
Therefore, if $\e$ and $\d$ are small enough, the non-penetration condition \eqref{NPCM}$_2$ is satisfied by $v_\ed$. Similarly, $v_\ed$ satisfy \eqref{NPCM}$_1$. This implies $v_\ed\in \GU_\ed$. 

{\bf Step 3.} In this step, we give convergence of the fields.\\[1mm]
Observe that
\begin{equation}\label{TWL01}
	{1\over \e\d^2}\Pi^{(\a)}_\ed(\wh{v}^{(\a)}_\ed)  \to \wh{v}^{(\a)}\quad \text{strongly in $L^2\big(\o;L^2((0,1)_{y_\a};H^1(\fI^2))\big)^3$}.
\end{equation}
The definition $\fV_{\ed,3}$ (see Lemma \ref{lemAp7} in Appendix \ref{SAppB}) give the following convergences:  
\begin{equation}\label{Test01}
	\begin{aligned}
		{\d\over \e}\Ted^{(2)}(\p^2_{11}\fV_{\ed,3}) & \to \p^2_{11}\fF \quad &&\hbox{strongly in } L^2(\o\X Y_2),\\
		{\d\over \e}\Ted^{(2)}(\p^2_{22}\fV_{\ed,3}) & \to 0 \quad &&\hbox{strongly in } L^2(\o\X Y_2),\\
		{\d\over \e}\Ted^{(1)}(\p^2_{22}\fV_{\ed,3}) & \to \p^2_{22}\fG \quad &&\hbox{strongly in } L^2(\o\X Y_1),\\
		{\d\over \e}\Ted^{(1)}(\p^2_{11}\fV_{\ed,3}) & \to 0 \quad &&\hbox{strongly in } L^2(\o\X Y_1),\\
		{\d\over \e}\Ted^{(\a)}(\p_{12}\fV_{\ed,3}) & \to 0 \quad &&\hbox{strongly in } L^2(\o\X Y_\a).
	\end{aligned}
\end{equation}
We also have from Lemma \ref{lemAp5} in Appendix \ref{SAppB})
\begin{equation}\label{EQ810}
	\begin{aligned}
		{\d\over \e}\Ted^{(2)}(\p_1 \fV_{\ed,\a}) & \to \p_1\fV_\a \quad &&\hbox{strongly in } L^2(\o\X Y_2),\\
		{\d\over \e}\Ted^{(2)}(\p_2 \fV_{\ed,\a}) & \to 0 \quad &&\hbox{strongly in } L^2(\o\X Y_2),\\
		{\d\over \e}\Ted^{(1)}(\p_2 \fV_{\ed,\a}) & \to \p_2\fV_\a \quad &&\hbox{strongly in } L^2(\o\X Y_1),\\
		{\d\over \e}\Ted^{(1)}(\p_1 \fV_{\ed,\a}) & \to 0 \quad &&\hbox{strongly in } L^2(\o\X Y_1).
	\end{aligned}
\end{equation}
{\bf Step 4.} In this step, we present the convergence of the strain tensor and the right hand side.\\[1mm]
Observe that
\begin{equation*}
	 \Pi_\ed^{(2)}\big(e(V_{e,\ed})\big)= {\d^2}\begin{pmatrix*}
		\Ted^{(2)}(e_{11}(\fV_{\ed}))-y_3\Ted^{(2)}(\p^2_{11}\fV_{\ed,3}) & *& *\\[2mm]
		 \Ted^{(2)}(e_{12}(\fV_{\ed}))-y_3\Ted^{(2)}(\p^2_{12}\fV_{\ed,3})& \Ted^{(2)}(e_{22}(\fV_{\ed}))-y_3\Ted^{(2)}(\p^2_{22}\fV_{\ed,3})& *\\[1mm]
		0 & 0 & 0
	\end{pmatrix*},
\end{equation*}
a.e. in $\o\X \cY_2$, which along with the convergences \eqref{TWL01},\eqref{Test01}$_{1,2,5}$ and \eqref{EQ810}$_{1,2}$ give
\begin{equation*}
	\begin{aligned}
		{1\over \e\d}\Pi_\ed^{(2)}\big(e(v_\ed)\big) \to E^{(2)}(\fV,\wh v^{(2)})\quad \text{strongly in $L^2(\o\X \cY_2)^{3\X3}$}.
	\end{aligned}
\end{equation*}
Similarly, using the convergences \eqref{TWL01}, \eqref{Test01}$_{3,4,5}$ and \eqref{EQ810}$_{3,4}$
$$		{1\over \e\d}\Pi_\ed^{(1)}\big(e(v_\ed)\big) \to E^{(1)}(\fV,\wh v^{(1)})\quad \text{strongly in $L^2(\o\X \cY_1)^{3\X3}$}.$$
With definition of forces \eqref{Ass01} and $V_{e,\ed}$ \eqref{Ele01}, we get
$$
\lim_{(\e,\d)\to(0,0)}{1\over \e\d^4}\int_{\O_\d}f_\ed\cdot v_{\ed}\,dx=\int_\o f\cdot\fV\,dx',
$$
since from Lemma \ref{lemAp5} and \ref{lemAp7}, we have
$$\begin{aligned}
	\fV_{\ed,\a}&\to \fV_\a,\quad &&\text{strongly in $L^2(\o)$},\\
	\fV_{\ed,3} &\to\fV_3,\quad &&\text{strongly in $H^1(\o)$}.
\end{aligned}$$
This completes the proof. 
\end{proof}

\subsection{Unfolded limit problem}
\begin{theorem}\label{Th82}
	Let $u_{\ed}\in\GU_\ed$ be the solution of the elasticity problem \eqref{Pb}. Then, for the whole sequence $\{\ed\}_{\e,\d}$ the following convergences hold:
	\begin{equation}\label{814}
		\begin{aligned}
			{1\over \ed}\Pi_\ed^{(1)}\big(e(u_\ed)\big)&\to E^{(1)}(\fU,\wh u^{(1)}) \quad &&\hbox{strongly in } L^2(\o\X \cY_1)^{3\X3},\\
			{1\over \ed}\Pi_\ed^{(2)}\big(e(u_\ed)\big)&\to E^{(2)}(\fU,\wh u^{(2)})  \quad &&\hbox{strongly in } L^2(\o\X \cY_2)^{3\X3},
		\end{aligned}
	\end{equation} 
	and $(\fU,\wh u^{(1)},\wh u^{(2)})\in \D$ is the solution of the following rescaled unfolded problem:
	\begin{equation}\label{ULP01}
	\sum_{\a=1}^2\int_{\o\X \cY_\a}\A^{(\a)}_{ijkl}E^{(\a)}_{ij}(\fU,\wh u^{(\a)})E^{(\a)}_{kl}(\fV, \wh v^{(\a)})\,dx'dy =\int_\o f\cdot \fV\,dx',\quad\,\forall\,(\fV,\wh v^{(1)},\wh v^{(2)})\in\D.
	\end{equation}
\end{theorem}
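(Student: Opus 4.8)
The plan is to pass to the limit in the variational inequality \eqref{Pb}, combining the compactness results already obtained (Lemmas \ref{lem61}, \ref{L64}), the recovery sequence (Lemma \ref{Lem81}), and a lower-semicontinuity/energy-convergence argument. First I would recall that the a priori bound \eqref{Main04} with $\kappa=1$ gives exactly $\GE(u_\ed)\le C\e^{1/2}\d^2$, which is the hypothesis \eqref{Eqref01} under which Lemmas \ref{lem61} and \ref{L64} apply; hence, up to a subsequence, ${1\over\ed}\Pi^{(\a)}_\ed(e(u_\ed))\rightharpoonup E^{(\a)}(\fU,\wh u^{(\a)})$ weakly in $L^2(\o\X\cY_\a)^{3\X3}$, with $(\fU,\wh u^{(1)},\wh u^{(2)})\in\D$, and the non-penetration analysis of Subsection~\ref{SS73} shows $\D$ is the correct constraint set (no extra limit constraint survives).

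Next I would derive the limit variational inequality. Take any $(\fV,\wh v^{(1)},\wh v^{(2)})\in\D$ and let $\{v_\ed\}$ be the recovery sequence from Lemma~\ref{Lem81}, so $v_\ed\in\GU_\ed$ and ${1\over\ed}\Pi^{(\a)}_\ed(e(v_\ed))\to E^{(\a)}(\fV,\wh v^{(\a)})$ strongly. Insert $v_\ed$ as test function in \eqref{Pb}: after dividing by $\e\d^4$ and writing everything through the unfolding operators (using $\|\Pi^{(\a)}_\ed(\psi)\|^2_{L^2(\o\X\cY_\a)}={\e\over\d}\|\psi\|^2_{L^2(\O^{(\a)}_\ed)}$ up to the overlap on $\O^{(1)}_\ed\cap\O^{(2)}_\ed$, which has measure $O(\d^2)$ per cell and is negligible), the bilinear term becomes $\sum_\a\int_{\o\X\cY_\a}\Pi^{(\a)}_\ed(\A_{\ed,ijkl})\,{1\over\ed}\Pi^{(\a)}_\ed(e_{ij}(u_\ed))\,{1\over\ed}\Pi^{(\a)}_\ed(e_{kl}(u_\ed-v_\ed))$. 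Using \eqref{HC01}, the strong convergence of the coefficients against the weak convergence of ${1\over\ed}\Pi^{(\a)}_\ed(e(u_\ed))$ and the strong convergence of ${1\over\ed}\Pi^{(\a)}_\ed(e(v_\ed))$, together with weak lower semicontinuity of the quadratic form $\GS\mapsto\int\A^{(\a)}\GS\GS$ (legitimate by \eqref{UL78}) applied to the $u_\ed$–$u_\ed$ piece, I obtain in the limit
\[
\sum_{\a=1}^2\int_{\o\X\cY_\a}\A^{(\a)}_{ijkl}E^{(\a)}_{ij}(\fU,\wh u^{(\a)})E^{(\a)}_{kl}(\fU-\fV,\wh u^{(\a)}-\wh v^{(\a)})\,dx'dy\le\int_\o f\cdot(\fU-\fV)\,dx',
\]
where the right-hand side comes from the force convergence ${1\over\e\d^4}\int_{\O_\d}f_\ed\cdot(u_\ed-v_\ed)\,dx\to\int_\o f\cdot(\fU-\fV)\,dx'$ (the $u_\ed$ part via \eqref{Force02}, \eqref{Ass01}, \eqref{46} and \eqref{WMC02}; the $v_\ed$ part by Lemma~\ref{Lem81}). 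Since $\D$ is a linear space, replacing $\fV$ by $\fU\pm t\fV$ and letting $t\to0$ turns the inequality into the equality \eqref{ULP01}.

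Then I would upgrade weak to strong convergence of the strain via energy convergence. Choosing $v=u_\ed$... wait—rather, choosing in \eqref{Pb} the admissible field $v_\ed$ as a recovery sequence for the \emph{limit solution} $(\fU,\wh u^{(1)},\wh u^{(2)})$ itself (allowed since $(\fU,\wh u^{(1)},\wh u^{(2)})\in\D$), one gets ${1\over\e\d^4}\int_{\O^S_\ed}\A_{\ed}e(u_\ed)e(u_\ed)\le{1\over\e\d^4}\int_{\O^S_\ed}\A_\ed e(u_\ed)e(v_\ed)+{1\over\e\d^4}\int f_\ed\cdot(u_\ed-v_\ed)$; the right side converges to $\sum_\a\int_{\o\X\cY_\a}\A^{(\a)}E^{(\a)}(\fU,\wh u^{(\a)})E^{(\a)}(\fU,\wh u^{(\a)})$ by the same arguments, while $\liminf$ of the left side is bounded below by the same quantity via weak lower semicontinuity. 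Hence $\lim{1\over\e\d^4}\int_{\O^S_\ed}\A_\ed e(u_\ed)e(u_\ed)=\sum_\a\int\A^{(\a)}E^{(\a)}(\fU,\wh u^{(\a)})E^{(\a)}(\fU,\wh u^{(\a)})$, and combined with the weak convergence and coercivity \eqref{UL78} this forces ${1\over\ed}\Pi^{(\a)}_\ed(e(u_\ed))\to E^{(\a)}(\fU,\wh u^{(\a)})$ strongly in $L^2$. Finally, since the limit problem \eqref{ULP01} is a coercive linear problem on $\D$ (coercivity of the right-hand bilinear form follows from \eqref{UL78}$_2$ plus the Korn-type and Poincaré inequalities that make $\|E^{(\a)}(\fV,\wh v^{(\a)})\|_{L^2}$ a norm on $\D$), its solution is unique, so the whole sequence converges, not merely a subsequence.

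\textbf{Main obstacle.} The delicate point is the passage to the limit in the bilinear form under \emph{weak–weak} products: ${1\over\ed}\Pi^{(\a)}_\ed(e(u_\ed))$ only converges weakly, so $\int\A^{(\a)}_\ed({1\over\ed}\Pi^{(\a)}_\ed e(u_\ed))^2$ cannot be passed to the limit directly — this is exactly why one needs the energy-convergence trick (testing with a recovery sequence for the limit itself) to first prove \emph{strong} convergence of the unfolded strains, after which all products become weak–strong and everything passes cleanly. A secondary technical nuisance is bookkeeping the overlap region $\O^{(1)}_\ed\cap\O^{(2)}_\ed$ (the $\d\X\d$ corner squares) so that the two-operator decomposition of the energy integral over $\O^S_\ed$ is exact in the limit; this is handled by the measure estimate $|\O^{(1)}_\ed\cap\O^{(2)}_\ed|\le C\d^2/\e\cdot|\o|$ relative to $|\O^S_\ed|\sim\d$, making the overlap contribution $o(1)$ after rescaling.
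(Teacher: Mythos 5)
Your proposal follows essentially the same route as the paper: a priori bound $\GE(u_\ed)\le C\e^{1/2}\d^2$ to invoke the compactness Lemmas~\ref{lem61}--\ref{L64}, the recovery sequence of Lemma~\ref{Lem81} plugged into the variational inequality \eqref{Pb} with weak lower semicontinuity on the quadratic term and weak--strong convergence on the cross term, linearity of $\D$ to turn the limit inequality into the equality \eqref{ULP01}, uniqueness of the limit solution (coercivity \eqref{CoeL02} plus Lax--Milgram) to pass from a subsequence to the whole sequence, and an energy-convergence argument to upgrade to the strong convergences \eqref{814}. The only cosmetic deviations are that you symmetrize via $\fU\pm t\fV$ instead of the paper's $2\fU-\fV$ test and run the energy step by testing with a recovery sequence for the limit itself rather than taking $v=0$ and using \eqref{ULP01} with $\fV=\fU$; both are equivalent, and your explicit treatment of the overlap $\O^{(1)}_\ed\cap\O^{(2)}_\ed$ is a harmless extra precaution.
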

\begin{proof}
	The proof is given as follows.
	
{\bf Step 1.} We prove the convergences \eqref{814} in weak sense and \eqref{ULP01}.

Since $u_\ed\in\GU_\ed$, then using Theorem \ref{TH42} and the assumption on forces \eqref{Ass01}, we have \eqref{Main04} and \eqref{EQ61}. Then, we have the convergences as in Lemmas \ref{lem61}--\ref{L64}, so we got the convergence at least for a subsequence
\begin{equation}\label{814++}
	\begin{aligned}
	{1\over \ed}\Pi_\ed^{(1)}\big(e(u_\ed)\big)&\rightharpoonup E^{(1)}(\fU,\wh u^{(1)}) \quad &&\hbox{weakly in } L^2(\o\X \cY_1)^{3\X3},\\
	{1\over \ed}\Pi_\ed^{(2)}\big(e(u_\ed)\big)&\rightharpoonup E^{(2)}(\fU,\wh u^{(2)})  \quad &&\hbox{weakly in } L^2(\o\X \cY_2)^{3\X3},
\end{aligned}	
\end{equation}
and $(\fU,\wh u^{(1)},\wh u^{(2)})\in\D$. 

Multiplying \eqref{Pb} by $\ds{\e\over \d^4}$ then, using Lemma \ref{Lem81} and the convergences \eqref{EQ70}, \eqref{EQ70+} and \eqref{STL01}, we get
\begin{equation}\label{WLSC01}
	\begin{aligned}
		\sum_{\a=1}^2&\int_{\o\X \cY_\a}\A^{(\a)}_{ijkl}E^{(\a)}_{ij}(\fU,\wh u^{(\a)})E^{(\a)}_{kl}(\fU-\fV,\wh u^{(\a)}-\wh v^{(\a)})\,dx'dy\\
		&\leq\liminf_{(\e,\d)\to(0,0)} \sum_{\a=1}^2\int_{\o\X\cY_\a} \Pi^{(\a)}_\ed(\A_{\ed,ijkl}){1\over \ed}\Pi^{(\a)}_\ed(e_{ij}(u_\ed)){1\over \ed}\Pi^{(\a)}_\ed(e_{kl}(u_\ed-v_\ed))\,dx'dy\\
		&\leq \liminf_{(\e,\d)\to(0,0)}{1\over \e\d^4}\int_{\O^S_{\e\d}}\A_{\ed,ijkl}(x)e_{ij}(u_\ed)e_{kl}(u_\ed-v_\ed)\,dx\\
		&\leq  \limsup_{(\e,\d)\to(0,0)}{1\over \e\d^4}\int_{\O^S_{\e\d}}\A_{\ed,ijkl}(x)e_{ij}(u_\ed)e_{kl}(u_\ed-v_\ed)\,dx\\ 
		&\leq \limsup_{(\e,\d)\to(0,0)}{1\over \e\d^4}\int_{\O_{\d}}f_{\e\d}\cdot (u_\ed-v_\ed)\,dx
		= \int_\o f\cdot(\fU-\fV)\,dx',\quad\,\forall\,(\fV,\wh v^{(1)},\wh v^{(2)})\in\D.
	\end{aligned}
\end{equation}
The first inequality in the above expression is due to the identity 
\begin{multline*}
	\int_{\o\X\cY_\a} \Pi^{(\a)}_\ed(\A_{\ed,ijkl}){1\over \ed}\Pi^{(\a)}_\ed(e_{ij}(u_\ed)){1\over \ed}\Pi^{(\a)}_\ed(e_{kl}(u_\ed-v_\ed))\,dx'dy\\
	=\int_{\o\X\cY_\a} \Pi^{(\a)}_\ed(\A_{\ed,ijkl}){1\over \ed}\Pi^{(\a)}_\ed(e_{ij}(u_\ed)){1\over \ed}\Pi^{(\a)}_\ed(e_{kl}(u_\ed))\,dx'dy\\
	-\int_{\o\X\cY_\a} \Pi^{(\a)}_\ed(\A_{\ed,ijkl}){1\over \ed}\Pi^{(\a)}_\ed(e_{ij}(u_\ed)){1\over \ed}\Pi^{(\a)}_\ed(e_{kl}(v_\ed))\,dx'dy.
\end{multline*}
We obtain a liminf bound for the first term by weak lower semicontinuity of the convex Carath\'eodory integrand, and the second term 
converges by weak–strong 
convergence of the solution sequence and the recovery sequence, respectively.

Since the set of limit displacement $\D$ is a subspace without any limit non-penetration condition, we can take $2\fU-\fV$ and $2\wh u^{(\a)}-\wh v^{(\a)}$ as test function to get
	 \begin{multline*}
		\sum_{\a=1}^2\int_{\o\X \cY_\a}\A^{(\a)}_{ijkl}E^{(\a)}_{ij}(\fU,\wh u^{(\a)})E^{(\a)}_{kl}(\fU-\fV,\wh u^{(\a)}-\wh v^{(\a)})\,dx'dy
		\geq \int_\o f\cdot(\fU-\fV)\,dx',\quad\,\forall\,(\fV,\wh v^{(1)},\wh v^{(2)})\in\D.
	\end{multline*} 
 So, we have proved \eqref{ULP01}, by replacing $\fU-\fV$ and $\wh u^{(\a)}-\wh v^{(\a)}$ by $\fV$ and $\wh v^{(\a)}$ respectively.
	
Using the coercivity result \eqref{CoeL02} together with \eqref{UL78} and using Lax Milgram’s lemma, we have unique solution to the unfolded problem \eqref{ULP01}. So, the convergences \eqref{814++} hold for the whole sequence.

Finally, passing in the problem \eqref{Pb} with $v=0$, and proceeding as in \eqref{WLSC01} with \eqref{ULP01} for $\fV=\fU$ and $\wh v^{(\a)}=\wh u^{(\a)}$ give
\begin{multline*}
 \lim_{(\e,\d)\to(0,0)}{1\over \e\d^4}\int_{\O^S_{\e\d}}\A_{\ed,ijkl}(x)e_{ij}(u_\ed)e_{kl}(u_\ed)\,dx\\
 =\lim_{(\e,\d)\to(0,0)} \sum_{\a=1}^2\int_{\o\X\cY_\a} \Pi^{(\a)}_\ed(\A_{\ed,ijkl}){1\over \ed}\Pi^{(\a)}_\ed(e_{ij}(u_\ed)){1\over \ed}\Pi^{(\a)}_\ed(e_{kl}(u_\ed))\,dx'dy\\
 =	\sum_{\a=1}^2\int_{\o\X \cY_\a}\A^{(\a)}_{ijkl}E^{(\a)}_{ij}(\fU,\wh u^{(\a)})E^{(\a)}_{kl}(\fU,\wh u^{(\a)})\,dx'dy,	
\end{multline*}
which leads to the strong convergences \eqref{814}.

This completes the proof.
\end{proof}

\subsection{Cell problems}
In this subsection, we derive the homogneized problem. For that we express the microscopic displacements $\wh u^{(\a)}$ in-terms of the macroscopic displacement $\fU$ and some correctors. 

In the problem \eqref{ULP01} taking $\fV=0$, we get
\begin{multline*}
\int_{\cY_\a}\A^{(\a)}_{ijkl}E^{(\a)}_{ijkl}(0,\wh u^{(\a)})E^{(\a)}_{kl}(0,\wh v^{(\a)})=-\int_{\cY_\a}\A^{(\a)}_{ijkl}E^{(\a)}_{ijkl}(\fU,0)E^{(\a)}_{kl}(0,\wh v^{(\a)})\,dy,\\
 \forall\, \wh v^{(\a)}\in L^2(\o;\GL^2((0,1)_{y_\a};H^1(\fI^2))).
\end{multline*}
 We define the following $3\X3$ symmetric matrices by
$$
\begin{aligned}
\GM^{(1)}_1&=\begin{pmatrix}
0 & \ds{1/ 2} & 0\\
\ds{1/2} & 0 & 0\\
0 & 0 & 0
\end{pmatrix},\quad &&\GM^{(1)}_2=\begin{pmatrix}
0 & 0 & 0\\
0 & 1 & 0\\
0 & 0 & 0
\end{pmatrix},\quad &&\GM^{(1)}_3=\begin{pmatrix}
0 & 0 & 0\\
0 & -y_3 & 0\\
0 & 0 & 0
\end{pmatrix},\\
\GM^{(2)}_1&=\begin{pmatrix}
1 & 0 & 0\\
0 & 0 & 0\\
0 & 0 & 0
\end{pmatrix},\quad &&\GM^{(2)}_2=\begin{pmatrix}
0 & {1/2} & 0\\
{1/2} & 0 & 0\\
0 & 0 & 0
\end{pmatrix},\quad &&\GM^{(2)}_3=\begin{pmatrix}
-y_3 & 0 & 0\\
0 & 0 & 0\\
0 & 0 & 0
\end{pmatrix}.
\end{aligned}
$$ Let  $\ds\chi^{(\a)}_r$ be in $\GL^2((0,1)_{y_\a};H^1(\fI^2))$,  $r=1,2,3$, the solutions of the following cell problems ($\a\in\{1,2\}$):
\begin{equation}\label{Cell01}
	\left.\begin{aligned}
		\int_{\cY_\a}\A_{ijkl}^{(\a)}(y)\big(\GM_{r,ij}^{(\a)}+E^{(\a)}_{ij}(0,\chi^{(\a)}_r)\big)E^{(\a)}_{kl}(0,\wh v^{(\a)})= 0,		
	\end{aligned}\right.\quad\forall\,\wh v^{(\a)}\in \GL^2((0,1)_{y_\a},H^1(\fI^2)).
\end{equation}
The six  correctors above allow us to express $\wh u^{(\a)}$ in terms of the partial derivatives of $\fU$.  We obtain
\begin{equation*}
	\begin{aligned}
		\wh u^{(1)}(x',y)&=\p_2\fU_1\chi^{(1)}_{1}(y)+\p_2\fU_2\chi^{(1)}_{2}(y)+\p^2_{22}\fU_3\chi^{(1)}_{3}(y),\quad &&\text{a.e. in $\o\X\cY_1$},\\
		\wh u^{(2)}(x',y)&=\p_1\fU_2\chi^{(2)}_{1}(y)+\p_1\fU_1\chi^{(2)}_{2}(y)+\p^2_{11}\fU_3\chi^{(2)}_{3}(y),\quad &&\text{a.e. in $\o\X\cY_2$},
	\end{aligned}
\end{equation*}
We define the homogenized coefficients as
\begin{equation}\label{HomC01}
	    \begin{aligned}
	    \cA_{rs}^{(\a)}&=\int_{\cY_\a}\A^{(\a)}_{ijkl}(y)\left[\GM_{r,ij}^{(\a)}+E^{(\a)}_{ij}(0,\chi_r^{(\a)})\right]\GM_{s,kl}^{(\a)}\,dy\\
	    &=\int_{\cY_\a}\A^{(\a)}_{ijkl}(y)\left[\GM_{r,ij}^{(\a)}+E^{(\a)}_{ij}(0,\chi_r^{(\a)})\right]\left[\GM_{s,kl}^{(\a)}+E^{(\a)}_{kl}(0,\chi_s^{(\a)})\right]\,dy
	\end{aligned}\qquad\text{for $r,s\in\{1,2,3\}$}.
\end{equation}
Hence, by setting $\cA^{(\a)}=[\cA^{(\a)}_{rs}]\in\GM_3$, we get the homogenized varitional inequality, given by
\begin{equation*}
	\begin{aligned}
		&\sum_{\a=1}^2\int_{\o}\cA^{(\a)}E^{(\a)}(\fU)\cdot E^{(\a)}(\fV)\,dx'= \int_\o f\cdot\fV\,dx',\quad\forall\,\fV\in\D_0,
	\end{aligned}
\end{equation*}
where
\begin{equation}\label{HomH01}
	\begin{aligned}
		E^{(1)}(\fV)=\begin{pmatrix*}
			\p_2\fV_1\\
			\p_2\fV_2\\
			\p^2_{22}\fV_3
		\end{pmatrix*},\quad E^{(2)}(\fV)=\begin{pmatrix*}
		\p_1\fV_2\\
		\p_1\fV_1\\
		\p^2_{11}\fV_3
		\end{pmatrix*},\quad\forall\,\fV\in\D_0.
	\end{aligned}
\end{equation}
The following is the final result of this paper
\begin{theorem}
	The limit macroscopic displacements $\fU\in\D_0$ is the unique solution to the homogenized problem
	\begin{equation*}
		\begin{aligned}
			\sum_{\a=1}^2\int_{\o}\cA^{(\a)}E^{(\a)}(\fU)\cdot E^{(\a)}(\fV)\,dx'= \int_\o f\cdot\fV\,dx',\quad\forall\,\fV\in\D_0,
		\end{aligned}
	\end{equation*}
	where the homogenized tensor is given by \eqref{HomH01} with the coefficients \eqref{HomC01}.
\end{theorem}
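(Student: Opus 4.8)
The plan is to start from the rescaled unfolded problem \eqref{ULP01} of Theorem~\ref{Th82} and to eliminate the microscopic correctors $\wh u^{(\a)}$, thereby reducing it to a closed problem for the macroscopic field $\fU$ alone. Since the non-penetration constraint disappears in the limit (Subsection~\ref{SS73}), the sets $\D$ and $\D_0$ are linear spaces, so \eqref{ULP01} is a genuine variational equality and the reduction immediately produces an equation for $\fU$; existence of $\fU$ is already guaranteed by Theorem~\ref{Th82}, so the only extra facts to prove are that $\fU$ satisfies the stated homogenized equation and that this equation has at most one solution.

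I would first decouple the scales. Choosing $\fV=0$ in \eqref{ULP01} and localizing in $x'$ shows that, for a.e.\ $x'\in\o$, the field $\wh u^{(\a)}(x',\cdot)$ solves
$$\int_{\cY_\a}\A^{(\a)}_{ijkl}\,E^{(\a)}_{ij}(0,\wh u^{(\a)})\,E^{(\a)}_{kl}(0,\wh v^{(\a)})\,dy=-\int_{\cY_\a}\A^{(\a)}_{ijkl}\,E^{(\a)}_{ij}(\fU,0)\,E^{(\a)}_{kl}(0,\wh v^{(\a)})\,dy$$
for every $\wh v^{(\a)}\in\GL^2\big((0,1)_{y_\a};H^1(\fI^2)\big)$. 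The left-hand bilinear form is coercive on $\GL^2\big((0,1)_{y_\a};H^1(\fI^2)\big)$: by \eqref{UL78} it dominates $\int_{\cY_\a}|E^{(\a)}(0,\wh v^{(\a)})|^2\,dy$, and the lateral Dirichlet condition \eqref{limW02} prevents rigid fluctuations, so this quantity controls the full norm of $\wh v^{(\a)}$; hence the cell problem has a unique solution. Now $E^{(\a)}(\fU,0)$ is a linear combination of the constant matrices $\GM^{(\a)}_r$ whose coefficients are exactly the components of $E^{(\a)}(\fU)$ in \eqref{HomH01}; by linearity of $E^{(\a)}$ and uniqueness this forces $\wh u^{(\a)}=\sum_{r=1}^3E^{(\a)}_r(\fU)\,\chi^{(\a)}_r$ with $\chi^{(\a)}_r$ the solutions of the cell problems \eqref{Cell01}, the $x'$-measurability being inherited from that of $\fU$.

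Next I would substitute back into \eqref{ULP01}. For $\fV\in\D_0$, test with $\wh v^{(\a)}=\sum_rE^{(\a)}_r(\fV)\,\chi^{(\a)}_r+w^{(\a)}$ where $w^{(\a)}$ is an arbitrary fluctuation; by \eqref{Cell01} (tested against $w^{(\a)}$) the contribution of $w^{(\a)}$ cancels, and what remains is
$$\sum_{\a=1}^2\int_\o\sum_{r,s=1}^3E^{(\a)}_r(\fV)\,E^{(\a)}_s(\fU)\left(\int_{\cY_\a}\A^{(\a)}_{ijkl}\big(\GM^{(\a)}_{s,ij}+E^{(\a)}_{ij}(0,\chi^{(\a)}_s)\big)\big(\GM^{(\a)}_{r,kl}+E^{(\a)}_{kl}(0,\chi^{(\a)}_r)\big)\,dy\right)dx'.$$
The inner $y$-integral equals $\cA^{(\a)}_{rs}$ of \eqref{HomC01}: the two expressions there coincide because their difference is $\int_{\cY_\a}\A^{(\a)}_{ijkl}(\GM^{(\a)}_{s,ij}+E^{(\a)}_{ij}(0,\chi^{(\a)}_s))E^{(\a)}_{kl}(0,\chi^{(\a)}_r)=0$ by \eqref{Cell01}, and $\cA^{(\a)}_{rs}=\cA^{(\a)}_{sr}$ by the major symmetry of $\A^{(\a)}$. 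Therefore \eqref{ULP01} reduces to $\sum_\a\int_\o\cA^{(\a)}E^{(\a)}(\fU)\cdot E^{(\a)}(\fV)\,dx'=\int_\o f\cdot\fV\,dx'$ for every $\fV\in\D_0$, i.e.\ the claimed homogenized equation.

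Finally I would prove uniqueness via coercivity of the homogenized form on $\D_0$. From the symmetric form of \eqref{HomC01} and \eqref{UL78}, $\cA^{(\a)}_{rs}\xi_r\xi_s\ge c_0\int_{\cY_\a}\big|\sum_r\xi_r\big(\GM^{(\a)}_r+E^{(\a)}(0,\chi^{(\a)}_r)\big)\big|^2\,dy$; since every $E^{(\a)}(0,\cdot)$ is assembled from $y$-derivatives of fields vanishing on the faces $y_{3-\a}=\pm\tfrac12$, its partial average in $y_{3-\a}$ is zero and the fluctuation cannot cancel the constant block $\sum_r\xi_r\GM^{(\a)}_r$, which yields $\cA^{(\a)}_{rs}\xi_r\xi_s\ge c|\xi|^2$. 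Consequently the homogenized form is bounded below by $c\sum_\a\int_\o|E^{(\a)}(\fV)|^2\,dx'$, and the two directional tensors together contain $\nabla\fV_1$, $\nabla\fV_2$, $\p^2_{11}\fV_3$ and $\p^2_{22}\fV_3$; a $2$D Poincar\'e inequality on $H^1_\gamma(\o)$ for the membrane part and the $1$D Poincar\'e inequality applied to $\fF$ and $\fG$ (which vanish together with their first derivatives on $(0,l)$) then give $\|\fV\|_{\D_0}^2$, hence coercivity. Boundedness of the form and of $\fV\mapsto\int_\o f\cdot\fV$ are immediate, so Lax--Milgram yields a unique solution, which by the above is $\fU$. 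The main obstacle is precisely this coercivity of the effective tensor $\cA^{(\a)}$ — the one point where the cell geometry and the lateral boundary conditions on the correctors genuinely enter — together with the recombination of the two one-directional energies into a full norm on the constrained space $\D_0$.
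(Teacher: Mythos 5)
Your proposal is correct and follows essentially the same route as the paper: eliminate the microscopic fields from the unfolded problem \eqref{ULP01} via the cell problems \eqref{Cell01}, substitute the corrector representation of $\wh u^{(\a)}$ to obtain the homogenized form with coefficients \eqref{HomC01}, and conclude existence/uniqueness by coercivity and Lax--Milgram. The only difference is one of presentation: you prove the positive-definiteness of $\cA^{(\a)}$ (via the vanishing $y_{3-\a}$-averages forced by \eqref{limW02}) and the macroscopic coercivity on $\D_0$ explicitly, whereas the paper condenses these points into the citation of \eqref{UL78} and the Appendix~\ref{SAppC} estimate \eqref{CoeL02}.
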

\begin{proof}
	Observe that for $\xi\in\R^3$, we have, using  \eqref{Cell01}-\eqref{HomC01}) 
	$$
	\begin{aligned}
	\cA^{(\a)}\xi\cdot \xi =\int_{\cY_\a}\A^{(\a)}_{ijkl}(y)\left[\xi_r\GM_{r,ij}^{(\a)}+E^{(\a)}_{ij}(0,\xi_r\chi_r^{(\a)})\right]\left[\xi_s\GM_{s,kl}^{(\a)}+E^{(\a)}_{ij}(0,\xi_s\chi_s^{(\a)})\right]\,dy.
	\end{aligned}
	$$
	Then, using \eqref{HC01}--\eqref{UL78}, we get $\cA^{(\a)}$ is symmetric, non-negative and coercive. Then, the estimate \eqref{CoeL02} and using Lax-Milgram's lemma with boundary condition satisfied by elements in the subspace $\D_0$ give that there exist a unique solution to the homogenized problem.
	Finally, using the Theorem \eqref{Th82} along with cell problems completes the proof.
\end{proof}

\bibliographystyle{plain}
\bibliography{reference}

@article{GGDecomplPlate,
	author  = {Griso, G.},
	title   = {Asymptotic behavior of structures made of plates},
	journal = {Analysis and Applications},
	volume  = {3},
	number  = {4},
	pages   = {325--356},
	year    = {2005}
}

@article{GDecomp,
	author  = {Griso, G.},
	title   = {Decomposition of displacements of thin structures},
	journal = {Journal de Math\'ematiques Pures et Appliqu\'ees},
	volume  = {89},
	pages   = {199--233},
	year    = {2008}
}

@book{PUM,
	author    = {Cioranescu, D. and Damlamian, A. and Griso, G.},
	title     = {The Periodic Unfolding Method: Theory and Applications to Partial Differential Equations},
	publisher = {Springer},
	address   = {Singapore},
	year      = {2018}
}

@article{GGKL,
	author  = {Griso, G.},
	title   = {Decomposition of plate displacements via Kirchhoff--Love displacements},
	journal = {Mathematical Methods in the Applied Sciences},
	volume  = {46},
	number  = {17},
	pages   = {18234--18257},
	year    = {2023}
}

@unpublished{GGPumed,
	author  = {Griso, G.},
	title   = {Dimension reduction and homogenization in a like-mosaic domain},
	note = {HAL preprint hal-05388525},
	year    = {2025}
}

@book{Stampa01,
	author    = {Kinderlehrer, D. and Stampacchia, G.},
	title     = {An Introduction to Variational Inequalities and Their Applications},
	publisher = {SIAM},
	address   = {Philadelphia},
	year      = {2000}
}

@article{UFO1,
	author  = {Cioranescu, D. and Damlamian, A. and Griso, G.},
	title   = {The periodic unfolding method in homogenization},
	journal = {SIAM Journal on Mathematical Analysis},
	volume  = {40},
	number  = {4},
	pages   = {1585--1620},
	year    = {2008}
}

@book{ciarlet1997mathematical,
	author    = {Ciarlet, P. G.},
	title     = {Mathematical Elasticity, Vol.~II: Theory of Plates},
	publisher = {North-Holland},
	address   = {Amsterdam},
	year      = {1997}
}

@article{friesecke2002theorem,
	author  = {Friesecke, G. and James, R. D. and M\"uller, S.},
	title   = {A theorem on geometric rigidity and the derivation of nonlinear plate theory from three-dimensional elasticity},
	journal = {Communications on Pure and Applied Mathematics},
	volume  = {55},
	number  = {11},
	pages   = {1461--1506},
	year    = {2002}
}

@book{bensoussan1978asymptotic,
	author    = {Bensoussan, A. and Lions, J. L. and Papanicolaou, G.},
	title     = {Asymptotic Analysis for Periodic Structures},
	publisher = {North-Holland},
	address   = {Amsterdam},
	year      = {1978}
}

@article{allaire1992homogenization,
	author  = {Allaire, G.},
	title   = {Homogenization and two-scale convergence},
	journal = {SIAM Journal on Mathematical Analysis},
	volume  = {23},
	number  = {6},
	pages   = {1482--1518},
	year    = {1992}
}

@article{nguetseng1989,
	author  = {Nguetseng, G.},
	title   = {A general convergence result for a functional related to the theory of homogenization},
	journal = {SIAM Journal on Mathematical Analysis},
	volume  = {20},
	number  = {3},
	pages   = {608--623},
	year    = {1989}
}

@incollection{masonary1,
	author    = {Braides, A. and Chiad\`o Piat, V.},
	title     = {Another brick in the wall},
	booktitle = {Variational Problems in Materials Science},
	pages     = {13--24},
	publisher = {Birkh\"auser},
	address   = {Basel},
	year      = {2006}
}

@article{masonary2,
	author  = {Braides, A. and Nodargi, N. A.},
	title   = {Homogenization of cohesive fracture in masonry structures},
	journal = {Mathematics and Mechanics of Solids},
	volume  = {25},
	number  = {2},
	pages   = {181--200},
	year    = {2020}
}

@article{larysa01,
	author  = {Griso, G. and Khilkova, L. and Orlik, J. and Sivak, O.},
	title   = {Asymptotic behavior of stable structures made of beams},
	journal = {Journal of Elasticity},
	volume  = {143},
	number  = {2},
	pages   = {239--299},
	year    = {2021}
}

@article{larysa02,
	author  = {Griso, G. and Khilkova, L. and Orlik, J.},
	title   = {Asymptotic behavior of 3D unstable structures made of beams},
	journal = {Journal of Elasticity},
	volume  = {150},
	number  = {1},
	pages   = {7--76},
	year    = {2022}
}

@article{Migunova01,
	author  = {Griso, G. and Migunova, A. and Orlik, J.},
	title   = {Homogenization via unfolding in periodic layer with contact},
	journal = {Asymptotic Analysis},
	volume  = {99},
	number  = {1--2},
	pages   = {23--52},
	year    = {2016}
}

@article{Migunova02,
	author  = {Griso, G. and Migunova, A. and Orlik, J.},
	title   = {Asymptotic analysis for domains separated by a thin layer made of periodic vertical beams},
	journal = {Journal of Elasticity},
	volume  = {128},
	number  = {2},
	pages   = {291--331},
	year    = {2017}
}

@article{geymonat,
	author  = {Geymonat, G. and Krasucki, F. and Lenci, S.},
	title   = {Mathematical analysis of a bonded joint with a soft thin adhesive},
	journal = {Mathematics and Mechanics of Solids},
	volume  = {4},
	pages   = {201--225},
	year    = {1999}
}

@article{CDO,
	author  = {Cioranescu, D. and Damlamian, A. and Orlik, J.},
	title   = {Homogenization via unfolding in periodic elasticity with contact on closed and open cracks},
	journal = {Asymptotic Analysis},
	volume  = {82},
	number  = {3--4},
	pages   = {201--232},
	year    = {2013}
}

@article{Julia02,
	author  = {Griso, G. and Orlik, J.},
	title   = {Homogenization of a contact problem with Coulomb's friction on periodic cracks},
	journal = {Mathematical Methods in the Applied Sciences},
	volume  = {42},
	number  = {18},
	pages   = {6435--6458},
	year    = {2019}
}

@article{larysa03,
	author  = {Griso, G. and Khilkova, L. and Orlik, J. and Sivak, O.},
	title   = {Homogenization of perforated elastic structures},
	journal = {Journal of Elasticity},
	volume  = {141},
	number  = {2},
	pages   = {181--225},
	year    = {2020}
}

@article{GFOW01,
	author  = {Orlik, J. and Falconi, R. and Griso, G. and Wackerle, S.},
	title   = {Asymptotic behavior for textiles with loose contact},
	journal = {Mathematical Methods in the Applied Sciences},
	volume  = {46},
	number  = {16},
	pages   = {17082--17127},
	year    = {2023}
}

@article{GOW01,
	author  = {Griso, G. and Orlik, J. and Wackerle, S.},
	title   = {Asymptotic behavior for textiles},
	journal = {SIAM Journal on Mathematical Analysis},
	volume  = {52},
	number  = {2},
	pages   = {1639--1689},
	year    = {2020}
}

@article{hauck01,
	author  = {Griso, G. and Hauck, M. and Orlik, J.},
	title   = {Asymptotic analysis for periodic perforated shells},
	journal = {ESAIM: Mathematical Modelling and Numerical Analysis},
	volume  = {55},
	number  = {1},
	pages   = {1--36},
	year    = {2021}
}

@article{lattice,
	author  = {Falconi, R. and Griso, G. and Orlik, J.},
	title   = {Periodic unfolding for lattice structures},
	journal = {Ricerche di Matematica},
	volume  = {73},
	number  = {5},
	pages   = {2505--2539},
	year    = {2024}
}

@article{anisotropic,
	author  = {Falconi, R. and Griso, G. and Orlik, J.},
	title   = {Periodic unfolding for anisotropically bounded sequences},
	journal = {Asymptotic Analysis},
	volume  = {132},
	number  = {3--4},
	pages   = {383--407},
	year    = {2023}
}

@article{africangirls,
	author  = {Merzougui, L. and Griso, G. and Bengouga, N.},
	title   = {Dimension reduction and error estimates},
	journal = {Asymptotic Analysis},
	year    = {2025},
	pages   = {09217134251356902}
}

@article{Kreisbeck01,
	author  = {Christowiak, F. and Kreisbeck, C.},
	title   = {Asymptotic rigidity of layered structures and its application in homogenization theory},
	journal = {Archive for Rational Mechanics and Analysis},
	volume  = {235},
	number  = {1},
	pages   = {51--98},
	year    = {2020}
}

@article{Kreisbeck02,
	author  = {Engl, D. and Kreisbeck, C. and Ritorto, A.},
	title   = {Asymptotic analysis of deformation behavior in high-contrast fiber-reinforced materials: rigidity and anisotropy},
	journal = {Mathematical Models and Methods in Applied Sciences},
	volume  = {32},
	number  = {8},
	pages   = {1633--1669},
	year    = {2022}
}

@article{Kreisbeck03,
	author  = {D\"ull, W. P. and Engl, D. and Kreisbeck, C.},
	title   = {A variational perspective on auxetic metamaterials of checkerboard type},
	journal = {Archive for Rational Mechanics and Analysis},
	volume  = {248},
	number  = {3},
	pages   = {46},
	year    = {2024}
}

@article{Amartya01,
	author  = {Chakrabortty, A. and Griso, G. and Orlik, J.},
	title   = {Dimension reduction and homogenization of a composite plate with matrix pre-strain},
	journal = {Asymptotic Analysis},
	volume  = {138},
	number  = {4},
	pages   = {255--310},
	year    = {2024}
}

@unpublished{Amartya02,
	author = {Chakrabortty, A. and Griso, G. and Orlik, J.},
	title  = {Dimension reduction and homogenization for thin plates with disconnected rigid inclusions},
	note   = {HAL preprint hal-05176256},
	year   = {2025}
}

@article{julia01,
	author  = {Orlik, J. and Neusius, D. and Chakrabortty, A. and Backes, S. and Gries, T. and Steiner, K.},
	title   = {Modelling of flat pre-strain driven structures, folding to desired surfaces and application to 3D-printing on textiles},
	journal = {International Journal of Engineering Science},
	volume  = {208},
	pages   = {104201},
	year    = {2025}
}

@article{glass,
	author  = {Gladman, A. Sydney and Matsumoto, E. A. and Nuzzo, R. G. and Mahadevan, L. and Lewis, J. A.},
	title   = {Biomimetic 4D printing},
	journal = {Nature Materials},
	year    = {2016},
	number  = {4},
	pages   = {413--418},
	doi     = {10.1038/nmat4544}
}

@article{Nedelec01,
	author  = {Caillerie, D. and N\'edelec, J. C.},
	title   = {Thin elastic and periodic plates},
	journal = {Mathematical Methods in the Applied Sciences},
	volume  = {6},
	number  = {1},
	pages   = {159--191},
	year    = {1984}
}

@book{contact,
	author    = {Popov, V. L. and He{\ss}, M. and Willert, E.},
	title     = {Handbook of Contact Mechanics: Exact Solutions of Axisymmetric Contact Problems},
	publisher = {Springer Nature},
	address   = {Cham},
	year      = {2019}
}

@book{Ole,
	author    = {Ole\"inik, O. A. and Shamaev, A. S. and Yosifian, G. A.},
	title     = {Mathematical Problems in Elasticity and Homogenization},
	publisher = {Elsevier},
	address   = {Amsterdam},
	year      = {1992}
}

@article{Neu1,
	author  = {Neukamm, S. and Vel\v{c}i\'c, I.},
	title   = {Derivation of a homogenized von-{K}\'arm\'an plate theory from 3D nonlinear elasticity},
	journal = {Mathematical Models and Methods in Applied Sciences},
	volume  = {23},
	number  = {14},
	pages   = {2701--2748},
	year    = {2013}
}

@unpublished{Friedrich01,
	author = {Friedrich, M. and Kreutz, L. and Zemas, K.},
	title  = {Derivation of Kirchhoff-type plate theories for elastic materials with voids},
	note   = {arXiv preprint arXiv:2504.05837},
	year   = {2025}
}

@article{Schmidt01,
	author  = {Santilli, M. and Schmidt, B.},
	title   = {A Blake--Zisserman--Kirchhoff theory for plates with soft inclusions},
	journal = {Journal de Math\'ematiques Pures et Appliqu\'ees},
	volume  = {175},
	pages   = {143--180},
	year    = {2023}
}

@article{High01,
	author  = {Cherdantsev, M. and Cherednichenko, K. and Neukamm, S.},
	title   = {High-contrast homogenization in nonlinear elasticity under small loads},
	journal = {Asymptotic Analysis},
	volume  = {104},
	number  = {1--2},
	pages   = {67--102},
	year    = {2017}
}

@article{High02,
	author  = {Cherednichenko, K. D. and Evans, J. A.},
	title   = {Homogenization of thin periodic frameworks with high-contrast inclusions},
	journal = {Journal of Mathematical Analysis and Applications},
	volume  = {473},
	number  = {2},
	pages   = {658--679},
	year    = {2019}
}

@incollection{Ciar01,
	author    = {Ciarlet, P. G. and Mardare, C.},
	title     = {An introduction to shell theory},
	booktitle = {Differential Geometry: Theory and Applications},
	pages     = {94--184},
	year      = {2008}
}

@book{Panasenko01,
	author    = {Panasenko, G. and Pileckas, K.},
	title     = {Multiscale Analysis of Viscous Flows in Thin Tube Structures},
	publisher = {Birkh\"auser},
	year      = {2024}
}

@article{Panasenko02,
	author  = {Gaudiello, A. and Panasenko, G.},
	title   = {Stokes--Brinkman equations with diffusion and convection in thin tube structures},
	journal = {Journal of Differential Equations},
	volume  = {450},
	pages   = {113728},
	year    = {2026}
}

\appendix
\section{Appendix: Proof of Theorem \ref{TH42}}\label{AppA01}
\begin{proof} 
	The proof is divided into three steps given below.
	
	{\it Step 1.} Below we recall two theorems concerning the decomposition of plate displacements.\\[1mm]
The first statements of the  theorem below is found in \cite[Theorem 2.3]{GGDecomplPlate} and \cite[Theorem 4.1]{GDecomp}, and it was completed by \cite[Theorem 11.4]{PUM}.
\begin{theorem}[First decomposition of a plate displacement]\label{THA1} Any  displacement $u$ in $H^1(\O_{\d})^3$ can be decomposed as follows: 
\begin{equation}\label{EQ41DA1}
u(x)= \begin{pmatrix}
\ds \cU_1(x')+x_3 \cR_1 (x')\\[1.5mm]
\ds\cU_2(x')+x_3 \cR_2 (x')\\[1.5mm]
\cU_3(x')\\
\end{pmatrix}+\ov{u}(x)\quad  \hbox{ for a.e. $x$ in }  \Omega_\delta.
\end{equation} where $\cU\in H^1(\o_{\d})^3$, $\cR_\a \in H^2(\o_{\d})$ and  $\ov{u}\in H^1(\O_\d)^3$ satisfies
\begin{equation}\label{WMC01A1}
	\int_{\fI_\d} \ov{u}\, dx_3=0,\qquad \int_{\fI_\d} x_3\ov{u}_\alpha\, dx_3=0,\qquad \hbox{a.e. in }\o_\d.
\end{equation}
We set 
$$\cU_m=\cU_1\Ge_1+\cU_2\Ge_2,\qquad \cR=\cR_1\Ge_1+\cR_2\Ge_2.$$ 
The following estimates hold:
	\begin{equation}\label{remA1}
	\begin{aligned}
	&\|e_{\alpha\beta}(\cU_m)\|_{L^2(\o)}+\d\big\|\nabla \cR\big\|_{L^2(\o)} \leq {C\over \d^{1/2}}\|e(u)\|_{L^2(\O_\d)},\\
	&\big\|\nabla \cU_3+\cR\big\|_{L^2(\o_\d)}\leq {C\over \d^{1/2}}\|e(u)\|_{L^2(\O_\d)},\\
	&\|\overline{u}\|_{L^2(\O_\d )}+\d\|\nabla \overline{u}\|_{L^2(\O_\d)}\le C \d \|e(u)\|_{L^2(\O_\d)}.
	\end{aligned}
	\end{equation}
	The constants  depend only on $\o$.\\
	Moreover, if $u=0$ a.e. on $\Gamma_\d$ then, the terms of the decomposition \eqref{EQ41DA1} satisfy
$$\cU=0,\quad \cR=0\quad \hbox{and}\quad \ov{u}=0\qquad \hbox{a.e. on } \gamma.$$	
\end{theorem}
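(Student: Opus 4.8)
The plan is to follow the plate--decomposition method of \cite{GGDecomplPlate,GDecomp,PUM}: isolate the Kirchhoff--Love part of $u$ by averaging through the thickness, and estimate the remainder by a Korn--type inequality tuned to the normalisation \eqref{WMC01A1}. First I would set, for a.e.\ $x'\in\o_\d$,
\[
\cU_i(x')\doteq\frac1\d\int_{\fI_\d}u_i(x',x_3)\,dx_3\ \ (i=1,2,3),\qquad
\cR_\a(x')\doteq\frac{12}{\d^{3}}\int_{\fI_\d}x_3\,u_\a(x',x_3)\,dx_3\ \ (\a=1,2),
\]
and $\ov u\doteq u-\big(\cU_1\Ge_1+\cU_2\Ge_2+\cU_3\Ge_3+x_3(\cR_1\Ge_1+\cR_2\Ge_2)\big)$. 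Because $1$ and $x_3$ are orthogonal on $\fI_\d$, both conditions in \eqref{WMC01A1} hold by construction, and $\cU\in H^1(\o_\d)^3$, $\cR_\a\in H^1(\o_\d)$, $\ov u\in H^1(\O_\d)^3$ since they are integral averages of an $H^1$ field.

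The estimate for $\ov u$ in \eqref{remA1} reduces to two ingredients. The first is a thin--plate Korn inequality $\|\nabla\ov u\|_{L^2(\O_\d)}\le C\|e(u)\|_{L^2(\O_\d)}$ with $C$ depending only on $\o$; I would prove it by covering $\o_\d$ with squares of side $\sim\d$, so that each slab $Q\X\fI_\d$ dilates by $1/\d$ to a fixed reference cube, applying on each slab the classical $3$D Korn inequality (whose gradient form is scale invariant) to get a local infinitesimal rigid displacement $r_Q=a_Q+b_Q\land x$ with $\|\nabla(u-r_Q)\|_{L^2(Q\X\fI_\d)}\le C\|e(u)\|_{L^2(Q\X\fI_\d)}$ and $\|u-r_Q\|_{L^2(Q\X\fI_\d)}\le C\d\,\|e(u)\|_{L^2(Q\X\fI_\d)}$, then controlling the jumps $|a_Q-a_{Q'}|$, $|b_Q-b_{Q'}|$ on overlapping slabs by Korn on the pair, and summing the $\sim(L/\d)^2$ contributions with the correct powers of $\d$; the conditions \eqref{WMC01A1} are what force the only admissible rigid displacement to be $0$ and hence make the constant $\d$--uniform. (Equivalently one defines $\cU,\cR$ from the patched $r_Q$ and then corrects by the mean and first moment of the resulting remainder to recover exactly \eqref{WMC01A1}.) The second ingredient is a one--dimensional Poincar\'e--Wirtinger inequality on each fibre (zero mean), which upgrades the gradient bound to $\|\ov u\|_{L^2(\O_\d)}\le C\d\,\|e(u)\|_{L^2(\O_\d)}$.

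The membrane, bending and shear estimates then follow by averaging strain identities over the fibre. For $\a,\b\in\{1,2\}$, $e_{\a\b}(u)=e_{\a\b}(\cU_m)+x_3\,e_{\a\b}(\cR)+e_{\a\b}(\ov u)$; integrating in $x_3$ kills the $\ov u$ term (since $\int_{\fI_\d}\ov u\,dx_3=0$), giving $e_{\a\b}(\cU_m)=\tfrac1\d\int_{\fI_\d}e_{\a\b}(u)\,dx_3$ and hence $\|e_{\a\b}(\cU_m)\|_{L^2(\o)}\le\d^{-1/2}\|e(u)\|_{L^2(\O_\d)}$; multiplying by $x_3$ and integrating (using $\int_{\fI_\d}x_3\ov u_\a\,dx_3=0$) gives $e_{\a\b}(\cR)=\tfrac{12}{\d^3}\int_{\fI_\d}x_3\,e_{\a\b}(u)\,dx_3$, so $\|e_{\a\b}(\cR)\|_{L^2(\o)}\le C\d^{-3/2}\|e(u)\|_{L^2(\O_\d)}$, and a $2$D Korn inequality on $\o$ (constant depending only on $\o$) turns this symmetric--gradient bound into $\d\|\nabla\cR\|_{L^2(\o)}\le C\d^{-1/2}\|e(u)\|_{L^2(\O_\d)}$. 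For the transverse shear, $2e_{\a3}(u)=(\p_\a\cU_3+\cR_\a)+2e_{\a3}(\ov u)$, whence $\p_\a\cU_3+\cR_\a=2e_{\a3}(u)-2e_{\a3}(\ov u)$ and the preceding remainder bound yields $\|\nabla\cU_3+\cR\|_{L^2(\o_\d)}\le C\d^{-1/2}\|e(u)\|_{L^2(\O_\d)}$. Finally, if $u=0$ a.e.\ on $\Gamma_\d=\gamma_\d\X\fI_\d$, then $u(x',\cdot)\equiv0$ on $\fI_\d$ for a.e.\ $x'\in\gamma_\d$, so every thickness average above vanishes there: $\cU=\cR=0$ and $\ov u=0$ on $\gamma_\d$, hence on $\gamma$.

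The main obstacle is the covering--and--patching step: one must bound the jumps of the local infinitesimal rigid displacements across overlapping slabs and sum the $\sim(L/\d)^2$ contributions with exactly the right powers of $\d$, so that $\d$ cancels in the gradient estimate and appears with the correct weight in the $L^2$ estimate, while keeping the constant dependent only on $\o$ and reconciling the patched elementary field with the thickness--averaged definition. A secondary technicality is the passage from the estimates that come out naturally on $\o_\d$ to the ones stated on $\o$ in \eqref{remA1}, handled by restricting to $\o$ and absorbing the thin collar $\o_\d\setminus\o$.
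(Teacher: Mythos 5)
First, a framing remark: the paper does not prove Theorem \ref{THA1} at all --- it is quoted from \cite[Theorem 2.3]{GGDecomplPlate}, \cite[Theorem 4.1]{GDecomp} and \cite[Theorem 11.4]{PUM} --- so the relevant comparison is with the proof in those references, and your strategy is indeed the same one: $\cU$ is the fibre mean, $\cR_\a$ the first moment weighted by $12/\d^3$, $\ov u$ the remainder, so that \eqref{WMC01A1} holds by construction; the membrane identity $e_{\a\b}(\cU_m)=\frac1\d\int_{\fI_\d}e_{\a\b}(u)\,dx_3$, the shear identity $\p_\a\cU_3+\cR_\a=2e_{\a3}(u)-2e_{\a3}(\ov u)$, the bounds on $\ov u$ (scaled Korn on $\d$-cells plus fibre Poincar\'e with zero mean) and the boundary conditions on $\gamma$ are all handled correctly. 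Two side remarks: your construction gives $\cR_\a\in H^1(\o_\d)$, which is all that \eqref{remA1} uses (the $H^2$ in the statement is inherited from the complete decomposition); and the jump control between overlapping cells is not actually needed for the $\ov u$ estimate, since a rigid displacement has zero remainder, so on each cell $\ov u$ coincides with the remainder of $u-r_Q$ and the local Korn bound already suffices.

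There is, however, a genuine gap in your bending estimate. From the moment identity you correctly obtain $\|e_{\a\b}(\cR)\|_{L^2}\le C\d^{-3/2}\|e(u)\|_{L^2(\O_\d)}$, but \eqref{remA1} bounds the \emph{full} gradient $\nabla\cR$, and ``a 2D Korn inequality on $\o$'' cannot deliver it here: the theorem carries no boundary condition, so Korn's second inequality $\|\nabla v\|\le C(\|v\|_{L^2}+\|e(v)\|)$ is unusable ($\|\cR\|_{L^2}$ is not controlled by $\|e(u)\|$ --- take $u$ a large rigid rotation), and Korn modulo rigid motions only yields $\|\nabla\cR-A\|\le C\|e(\cR)\|$ with an uncontrolled constant skew matrix $A$. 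The torsion field $u(x)=(x_2x_3,\,-x_1x_3,\,0)$ makes the obstruction concrete: it gives $\cR=(x_2,-x_1)$, hence $e(\cR)=0$ while $\nabla\cR$ is a nonzero constant skew matrix, whose admissible size is governed only by the transverse shears $e_{13}(u),e_{23}(u)$ --- information your 2D Korn step never uses. The repair is the one in the cited proofs and fits your own covering argument: on each $\d$-cell $Q\X\fI_\d$ apply the local Korn inequality in gradient form, $\|\nabla u-B_Q\|_{L^2(Q\X\fI_\d)}\le C\|e(u)\|_{L^2(Q\X\fI_\d)}$ with $B_Q$ a constant skew matrix; since $\int_{\fI_\d}x_3\,dx_3=0$ you may write $\p_\b\cR_\a(x')=\frac{12}{\d^3}\int_{\fI_\d}x_3\big(\p_\b u_\a-(B_Q)_{\a\b}\big)\,dx_3$, so that $\|\p_\b\cR_\a\|^2_{L^2(Q)}\le \frac{12}{\d^3}\|\nabla u-B_Q\|^2_{L^2(Q\X\fI_\d)}$, and summing over the disjoint cells gives $\|\nabla\cR\|_{L^2(\o_\d)}\le C\d^{-3/2}\|e(u)\|_{L^2(\O_\d)}$ directly, with no global 2D Korn inequality. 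With this substitution your argument is complete and coincides with the standard proof.
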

Theorem below  is an immediate consequence of  \cite[Theorem 2]{GGKL}.
\begin{theorem}[Complete decomposition of a plate displacement]\label{THA2} Any  displacement $u$ in $H^1(\O_{\d})^3$ can be decomposed as follows: 
\begin{equation}\label{EQ41DA2}
u(x)= \begin{pmatrix}
\ds \fU_1(x')-x_3 \p_1\fU_3 (x')+x_3\fr_1(x')\\[1.5mm]
\ds\fU_2(x')-x_3 \p_2 \fU_3 (x')+x_3\fr_2(x')\\[1.5mm]
\fU_3(x')+\fu(x')\\
\end{pmatrix}+\ov{u}(x)\quad  \hbox{ for a.e. $x$ in }  \Omega_\delta.
\end{equation} where $\fU_\a \in H^1(\o_{\d})$, $\fU_3 \in H^2(\o_{\d})$,  $\fr_\a,\; \fu \in H^1(\o_\d)$ and  $\ov{u}\in H^1(\O_\d)^3$ satisfies
\begin{equation}\label{WMC01A2}
	\int_{\fI_\d} \ov{u}\, dx_3=0,\qquad \int_{\fI_\d} x_3\ov{u}_\alpha\, dx_3=0,\qquad \hbox{a.e. in }\o_\d.
\end{equation}
We set
$$\fU_m=\fU_1\Ge_1+\fU_2\Ge_2,\qquad  \fr=\fr_1\Ge_1+\fr_2\Ge_2 \in H^1(\o_\d)^2.$$ 
The following estimates hold:
	\begin{equation}\label{remA2}
	\begin{aligned}
	&\|e_{\alpha\beta}(\fU_m)\|_{L^2(\o)}+\d\big\|\p^2_{\alpha\beta}\fU_3\big\|_{L^2(\o)}\le {C\over \d^{1/2}}\|e(u)\|_{L^2(\O_\d)},\\
	&\|\fr\|_{L^2(\o_\d)}+\d \big\|\nabla \fr\big\|_{L^2(\o_\d)} \leq {C\over \d^{1/2}}\|e(u)\|_{L^2(\O_\d)},\\
	&\|\fu\|_{L^2(\o_\d)}+\d \big\|\nabla \fu\big\|_{L^2(\o_\d)} \leq {C \d^{1/2}}\|e(u)\|_{L^2(\O_\d)},\\
	&\|\overline{u}\|_{L^2(\O_\d )}+\d\|\nabla \overline{u}\|_{L^2(\O_\d)}\le C \d \|e(u)\|_{L^2(\O_\d)}.
	\end{aligned}
	\end{equation}
	The constants  depend only on $\o$.\\
	Moreover, if $u=0$ a.e. on $\Gamma_\d$ then, the terms of the decomposition \eqref{EQ41DA2} satisfy
$$\fU_m=0,\quad \fU_3=0,\quad \nabla\fU_3=0, \quad \fr=0,\quad \fu=0,\quad \ov{u}=0\qquad \hbox{a.e. on } \gamma.$$	
\end{theorem}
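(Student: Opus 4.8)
The statement is a refinement of the first decomposition, so I would obtain it by post-processing Theorem~\ref{THA1}. Apply Theorem~\ref{THA1} to $u$ to get $\cU_m\in H^1(\o_\d)^2$, $\cU_3\in H^1(\o_\d)$, $\cR\in H^1(\o_\d)^2$ and a warping $\ov u\in H^1(\O_\d)^3$ satisfying \eqref{remA1}; write $s\doteq\nabla\cU_3+\cR$ for the shear, so that $\|s\|_{L^2(\o_\d)}\le C\d^{-1/2}\|e(u)\|_{L^2(\O_\d)}$ and $\|\nabla\cR\|_{L^2(\o_\d)}\le C\d^{-3/2}\|e(u)\|_{L^2(\O_\d)}$. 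I keep $\fU_m\doteq\cU_m$ and the same $\ov u$ (its normalizations \eqref{WMC01A1} coincide with \eqref{WMC01A2} and are left untouched), so the whole task is to re-split the a priori free rotation as $\cR_\a=-\p_\a\fU_3+\fr_\a$ for a genuine out-of-plane field $\fU_3\in H^2(\o_\d)$ and a small residual $\fr$; then $\fu\doteq\cU_3-\fU_3$ is forced, and substituting $\cR_\a=-\p_\a\fU_3+\fr_\a$, $\fU_m=\cU_m$, $\cU_3=\fU_3+\fu$ into \eqref{EQ41DA1} reproduces \eqref{EQ41DA2}, while $e_{\a\b}(\fU_m)=e_{\a\b}(\cU_m)$ yields the first estimate in \eqref{remA2}.

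\textbf{Construction of $\fU_3$ and the estimates.} I would take $\fU_3$ to be a regularization of $\cU_3$ at the scale $\d$ — a mollification after a bounded extension across $\partial\o_\d$, or any linear low-pass filter at scale $\d$ — and set $\fu\doteq\cU_3-\fU_3$, $\fr\doteq\cR+\nabla\fU_3$. The delicate point is that $\|\nabla\cU_3\|_{L^2}$ is \emph{not} controlled by $\|e(u)\|_{L^2(\O_\d)}$ (adding a rigid rotation changes $\nabla\cU_3$ by an arbitrary constant vector), whereas its content at wavelengths $\lesssim\d$ \emph{is}: from $\nabla\cU_3=s-\cR$, that content is bounded by $\|s\|_{L^2}+C\d\|\nabla\cR\|_{L^2}\le C\d^{-1/2}\|e(u)\|_{L^2(\O_\d)}$, the constant mode of $\cR$ being invisible on such scales. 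Since only that scale-$\lesssim\d$ oscillation survives into $\fu$, one gets $\|\fu\|_{L^2(\o_\d)}\le C\d\cdot\d^{-1/2}\|e(u)\|_{L^2(\O_\d)}=C\d^{1/2}\|e(u)\|_{L^2(\O_\d)}$ and $\|\nabla\fu\|_{L^2(\o_\d)}\le C\d^{-1/2}\|e(u)\|_{L^2(\O_\d)}$, i.e.\ the third line of \eqref{remA2}. For the rest: $\fr$ equals $\cR$ minus its scale-$\gtrsim\d$ part plus the scale-$\gtrsim\d$ part of $s$, so $\|\fr\|_{L^2}\le\|s\|_{L^2}+C\d\|\nabla\cR\|_{L^2}\le C\d^{-1/2}\|e(u)\|_{L^2(\O_\d)}$; differentiation kills the constant mode of $\cR$, hence $\|\nabla\fr\|_{L^2}$ and $\|\p^2_{\a\b}\fU_3\|_{L^2}$ are each $\le\|\nabla\cR\|_{L^2}+C\d^{-1}\|s\|_{L^2}\le C\d^{-3/2}\|e(u)\|_{L^2(\O_\d)}$, giving the second line of \eqref{remA2} and the $\fU_3$-bound in the first; the warping estimate is inherited verbatim from \eqref{remA1}.

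\textbf{Boundary conditions; the main obstacle.} If $u=0$ a.e.\ on $\Gamma_\d$, then by Theorem~\ref{THA1} $\cU=\cR=\ov u=0$ a.e.\ on $\gamma$. To get $\fU_m=\fU_3=\nabla\fU_3=\fr=\fu=0$ a.e.\ on $\gamma$, the scale-$\d$ regularization of $\cU_3$ must be carried out so that these vanishing traces are preserved near $\gamma$ — for instance by a trace-compatible extension across $\gamma$ before mollifying, together with a thin boundary-layer correction that restores $\nabla\fU_3=0$ on $\gamma$. I expect this to be the main obstacle: building $\fU_3\in H^2(\o_\d)$ that simultaneously realizes the \emph{sharp} powers of $\d$ in \eqref{remA2} and the exact boundary behaviour on $\gamma$, while remaining linear in $u$. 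The observation that unlocks it is the one used above, namely that the ``rough'' part of $\cU_3$ ending up in $\fu$ is governed only by the rigid-motion-invariant quantities $s$ and $\d\,\nabla\cR$, which are precisely what $\|e(u)\|_{L^2(\O_\d)}$ controls. As this is exactly \cite[Theorem~2]{GGKL}, with the warping normalizations written in the form \eqref{WMC01A2}, in practice I would perform the reduction to the first decomposition and then invoke that result.
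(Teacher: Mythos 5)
Your proposal is essentially the paper's own treatment: the paper offers no independent proof of this theorem but states it is an immediate consequence of \cite[Theorem~2]{GGKL}, which is exactly the citation you fall back on, and the re-splitting you use ($\fU_m=\cU_m$, same $\ov u$, $\cU_3=\fU_3+\fu$, $\cR=-\nabla\fU_3+\fr$) is precisely the relation the paper records when comparing the two decompositions. Your scale-$\delta$ low-pass construction, with the estimates driven by the rigid-motion-invariant quantities $\nabla\cU_3+\cR$ and $\d\,\nabla\cR$ and a boundary-layer correction near $\gamma$, is a faithful reconstruction of the cited construction (the paper's Appendix~\ref{AppA01}, Step~2, carries out the analogous cell-average/cutoff version when it adapts the decomposition), so there is no genuine discrepancy.
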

We proved Theorem \ref{THA1} from Theorem \ref{THA2} by setting $ \fU_\a=\cU_\a$ and using the same deformation of the fibers $\ov{u}$, then constructed the field $\fU_3\in H^2(\o_\d)$ and defined $\fr\in H^1(\o_\d)^2$ and $\fu\in H^1(\o_\d)$ to get
$$\cU_3=\fU_3+\fu,\qquad \cR=-\nabla \fU_3+\fr.$$

\begin{figure}[ht]
		\centering
		\begin{tikzpicture}[line cap=round,line join=round, scale=0.3]
			\def\sideD{16}   
			\def\sideC{12}   
			\def\gap{1.0}    
			
			\pgfmathsetmacro{\sideB}{\sideC - 2*\gap}
			\pgfmathsetmacro{\sideA}{\sideC - 4*\gap}
			
			\path[fill=red!50, even odd rule]
			(-\sideC/2,-\sideC/2) rectangle (\sideC/2,\sideC/2)
			(-\sideA/2,-\sideA/2) rectangle (\sideA/2,\sideA/2);
			\fill[blue!50] (-\sideA/2,-\sideA/2) rectangle (\sideA/2,\sideA/2);
			
			\draw[thin]       (-\sideD/2,-\sideD/2) rectangle (\sideD/2,\sideD/2); 
			\draw[thin]       (-\sideC/2,-\sideC/2) rectangle (\sideC/2,\sideC/2); 
			\draw[very thick] (-\sideB/2,-\sideB/2) rectangle (\sideB/2,\sideB/2); 
			\draw[thin]       (-\sideA/2,-\sideA/2) rectangle (\sideA/2,\sideA/2); 
			
			\draw[->] (\sideD/2,0) -- ++(2,0)  node[right] {$\cO^{E}_{\ed}$};
			\draw[->] (0,-\sideC/2) -- ++(0,-2) node[below] {$\cO_\ed$};
			\draw[->] (-\sideB/2,0) -- ++(-2,0) node[left]  {$\cO_{\e}$};
			\draw[->] (0,\sideA/2) -- ++(0,2)   node[above] {$\cO^{H}_{\ed}$};
		\end{tikzpicture}
	\caption{The different open sets: $\cO^H_\ed$ (blue), $\cO_\e$, $\cO_\ed$ (red+blue) and $\cO^E_\ed$ (empty+red+blue).}
	\label{F3}
\end{figure}

{\it Step 2.} In this step we explain how to modify  $\cU_3$ in a cell.\\[1mm]
Denote (see Figure \ref{F3})
$$
\cO_\e\doteq (0,\e)^2,\qquad \cO_\ed\doteq \Big(-{\d\over 2},\e+{\d\over 2}\Big)^2,\qquad \cO^H_\ed\doteq \Big({\d\over 2},\e-{\d\over 2}\Big)^2,\qquad  \cO^E_\ed\doteq \Big(-{3\d\over 2},\e+{3\d\over 2}\Big)^2.
$${\it \large For the sake of simplicity we assume that $\ds {\e\over \d}$ is an integer greater than 4. } \\[1mm]
We recall that there exists an extension operator $P$ from  $H^1(\cO_\ed\X \fI_\d)^3$ into $H^1(\cO^E_\ed\X\fI_\d)^3$ such that (see \cite[Lemma 4.2]{GDecomp})
$$\forall v\in H^1(\cO_\ed\X \fI_\d)^3,\qquad \big\|e\big(P(v)\big)\big\|_{L^2(\cO^E_\ed\X \fI_\d)} \leq C \big\|e(v)\big\|_{L^2(\cO_\ed\X \fI_\d)}.$$
The constant is independent of $\e$ and $\d$.

Below, since $v$ satisfies $e(v)=0$ a.e. in $\cO^H_\ed\X \fI_\d$ we have $v=\GR$ in $\cO^H_\ed\X \fI_\d$, where $\GR$ is a rigid displacement. To extend $v$, we consider $v-\GR$ and  first make reflections with respect to two opposite sides of   $\p\cO_\ed\X \fI_\d$, then reflections with respect to the  other two sides of this domain, finally adding the rigid displacement $\GR$ to the result.\footnotemark\\[1mm]
\footnotetext{$ $ In the third step, using this result, we will extend the restriction of a displacement $u$ to $\ds \O_{pq,\ed}\doteq  \Big(p\e-{\d\over 2}, p\e+\e+{\d\over 2}\Big)\X\Big(q\e-{\d\over 2}, q\e+\e+{\d\over 2}\Big)\X\fI_\d$, $(p,q)\in \cK^*_\e$, into a displacement of $\ds \Big(p\e-{3\d\over 2}, p\e+\e+{3\d\over 2}\Big)\X\Big(q\e-{3\d\over 2}, q\e+\e+{3\d\over 2}\Big)\X\fI_\d$, if necessary. For simplicity, the extensions will  always be denoted $u$.}
Let  $v$ be a displacement in $H^1(\cO_\ed\X \fI_\d)^3$, satisfying $e(v)=0$ a.e. in $\cO^H_{\ed}\X \fI_\d$ and extended in a displacement, still denoted $v$, belonging to $H^1(\cO^E_{\ed}\X \fI_\d)^3$. Note that with this displacement, applying  Theorem \ref{THA1}, gives estimates with constants independent of $\e$ and $\d$. We have $v=\GR$ in $\cO^H_\ed\X \fI_\d$, where $\GR$ is a rigid displacement. Below, we consider $v$ as a displacement of $\cO^E_\ed\X \fI_\d$. Theorem \ref{THA1} allows to decompose $v$ as 
\begin{equation}\label{EQ41DA1+}
v(x)= \begin{pmatrix}
\ds \cV_1(x')+x_3 \cS_1 (x')\\[1.5mm]
\ds\cV_2(x')+x_3 \cS_2 (x')\\[1.5mm]
\cV_3(x')\\
\end{pmatrix}+\ov{v}(x)\quad  \hbox{ for a.e. $x$ in }  \cO_\ed\X \fI_\d.
\end{equation}
We have 
$$\cV(x')+x_3\cS(x')=\GR(x)\quad \hbox{ a.e. in } x \in \cO^H_\ed, \qquad \ov{v}=0\quad \hbox{a.e. in } \cO^H_\ed\X \fI_\d.$$

We cover $\cO_\ed$ with squares whose edges have length $\d$. 
	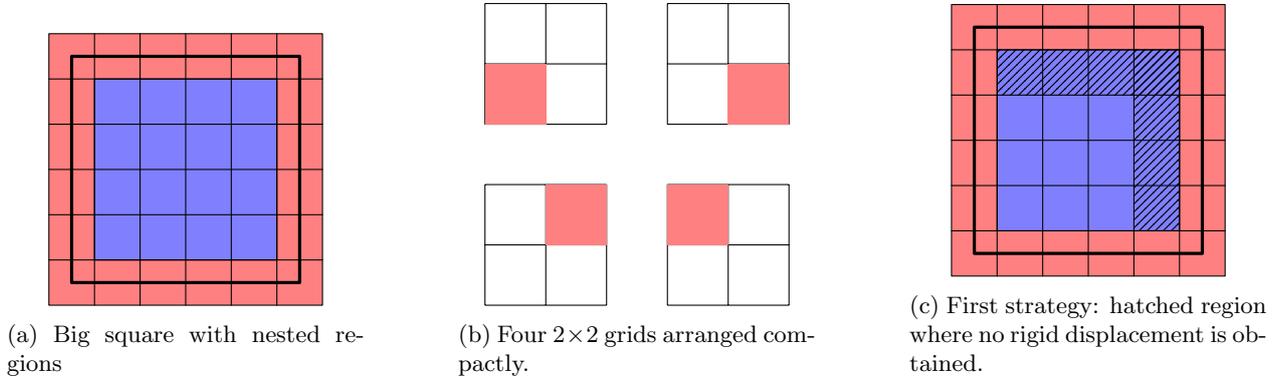
\begin{figure}[ht]
		\centering
		
		\begin{subfigure}{0.28\textwidth}
			\centering
			\begin{tikzpicture}[line cap=round,line join=round, scale=0.3]
				\def\sideC{12}   
				\def\gap{1.0}    
				\def\n{6}        
				
				\pgfmathsetmacro{\sideB}{\sideC - 2*\gap}
				\pgfmathsetmacro{\sideA}{\sideC - 4*\gap}
				
				\path[fill=red!50, even odd rule]
				(-\sideC/2,-\sideC/2) rectangle (\sideC/2,\sideC/2)
				(-\sideA/2,-\sideA/2) rectangle (\sideA/2,\sideA/2);
				
				\fill[blue!50]
				(-\sideA/2,-\sideA/2) rectangle (\sideA/2,\sideA/2);
				
				\foreach \i in {1,...,\numexpr\n-1}{
					\draw (-\sideC/2 + \i*\sideC/\n, -\sideC/2) --
					(-\sideC/2 + \i*\sideC/\n,  \sideC/2);
					\draw (-\sideC/2, -\sideC/2 + \i*\sideC/\n) --
					(\sideC/2,  -\sideC/2 + \i*\sideC/\n);
				}
				
				\draw[thin]       (-\sideC/2,-\sideC/2) rectangle (\sideC/2,\sideC/2); 
				\draw[very thick] (-\sideB/2,-\sideB/2) rectangle (\sideB/2,\sideB/2); 
				\draw[thin]       (-\sideA/2,-\sideA/2) rectangle (\sideA/2,\sideA/2); 
			\end{tikzpicture}
			\caption{Big square with nested regions}
		\end{subfigure}
		\hspace{1cm}
		\begin{subfigure}{0.28\textwidth}
			\centering
			\begin{tikzpicture}[scale=0.8]
				\def\s{1}   
				\def\dx{3.0}  
				\def\dy{3.0}  
				
				\begin{scope}[shift={(0*\dx,0*\dy)}]
					\draw (0,0) grid (2*\s,2*\s);
					\fill[red!50] (0,0) rectangle (\s,\s);
				\end{scope}
				
				\begin{scope}[shift={(1*\dx,0*\dy)}]
					\draw (0,0) grid (2*\s,2*\s);
					\fill[red!50] (\s,0) rectangle (2*\s,\s);
				\end{scope}
				
				\begin{scope}[shift={(0*\dx,-1*\dy)}]
					\draw (0,0) grid (2*\s,2*\s);
					\fill[red!50] (\s,\s) rectangle (2*\s,2*\s);
				\end{scope}
				
				\begin{scope}[shift={(1*\dx,-1*\dy)}]
					\draw (0,0) grid (2*\s,2*\s);
					\fill[red!50] (0,\s) rectangle (\s,2*\s);
				\end{scope}
			\end{tikzpicture}
			\caption{Four \(2\times2\) grids arranged compactly.}
		\end{subfigure}
		\hspace{1cm}
			\begin{subfigure}{0.28\textwidth}
			\centering
			\begin{tikzpicture}[line cap=round,line join=round, scale=0.3]
				\def\sideC{12}
				\def\gap{1.0}
				\def\n{6}
				\pgfmathsetmacro{\sideB}{\sideC - 2*\gap}
				\pgfmathsetmacro{\sideA}{\sideC - 4*\gap}
				\def\cell{\sideC/\n}
				
				\path[fill=red!50, even odd rule]
				(-\sideC/2,-\sideC/2) rectangle (\sideC/2,\sideC/2)
				(-\sideA/2,-\sideA/2) rectangle (\sideA/2,\sideA/2);
				\fill[blue!50] (-\sideA/2,-\sideA/2) rectangle (\sideA/2,\sideA/2);
				
				\fill[pattern=north east lines, pattern color=black]
				(-\sideA/2, \sideA/2-\cell) rectangle (\sideA/2, \sideA/2);
				\fill[pattern=north east lines, pattern color=black]
				(\sideA/2-\cell, -\sideA/2) rectangle (\sideA/2, \sideA/2);
				
				\foreach \i in {1,...,\numexpr\n-1}{
					\draw (-\sideC/2 + \i*\cell, -\sideC/2) --
					(-\sideC/2 + \i*\cell,  \sideC/2);
					\draw (-\sideC/2, -\sideC/2 + \i*\cell) --
					(\sideC/2,  -\sideC/2 + \i*\cell);
				}
				
				\draw[thin]       (-\sideC/2,-\sideC/2) rectangle (\sideC/2,\sideC/2); 
				\draw[very thick] (-\sideB/2,-\sideB/2) rectangle (\sideB/2,\sideB/2); 
				\draw[thin]       (-\sideA/2,-\sideA/2) rectangle (\sideA/2,\sideA/2); 
			\end{tikzpicture}
			\caption{First strategy: hatched region where no rigid displacement is obtained.}
			\label{F4}
		\end{subfigure}
		\caption{$\mathcal{O}_{\ed}$ is covered with squares; below are four strategies for defining $\mathcal{V}_3$.}
		\label{F2}
	\end{figure}
To define $\fV_3$, we have four possible strategies. In the first, we consider four cells included in $\cO^E_\ed$, the first four cells under the domain $\cO_\ed$ (see Figure \ref{F2}), such that the red cell is included in $\cO_\ed$. Then, we consider the mean values in these four cells of $\cV_3$, $\cS_1$ and $\cS_2$. Thanks to these values we define $\fV_3$ in the red cell (see  \cite[Section 8]{GGKL}). In this way, and because we have extended $v$ to the domain $\cO^E_\ed\X \fI_\d$, we obtain a function denoted $\fV_{3,A}$. The values obtained for $\fV_{3,A}$ do not allow to obtain a Kirchhoff-Love displacement equals to $\GR$  in the hatched part of  $\cO^H_\ed\X \fI_\d$ (see Figure \ref{F4}).

Then, we use the other possibilities to construct $\fV_3$ (see Figure \ref{F2}). This gives the  fields $\fV_{3,B}$, $\fV_{3,C}$ and $\fV_{3,D}$. Of course, the values of these fields do not allow us to obtain  Kirchhoff-Love displacements equal to $\GR$  in some cells close the boundary of  $\cO^H_\ed\X \fI_\d$.\\
Now, let $\psi$ be in $\cC^2(\R)$ satisfying
\begin{equation}\label{C2Basis}
	\left\{
	\begin{aligned}
		& \psi(t)\in [0,1], && \forall t\in \R,\\
		& \psi(t)=1, && \forall t\leq 0,\\
		& \psi(t)=0, && \forall t\geq 1,\\
		&\psi'(0)=\psi'(1)=0
	\end{aligned}
	\right.
\end{equation}
 and
$$ \psi_\d(t)  =\psi\Big({t\over \d}+{3\over 2}\Big),\qquad \forall t\in \R.$$ It satisfies
\begin{equation}\label{EQpsi}
\|\psi_\d\|_{L^\infty(\R)}\leq 1,\qquad \|\psi_\d^{'}\|_{L^\infty(\R)}\leq {C\over \d},\qquad \|\psi_\d^{''}\|_{L^\infty(\R)}\leq {C\over \d^2}.
\end{equation} The constant is independent of $\d$.\\
We set
$$\cV_{3}=\fV_{3,Z}+\fv_Z,\qquad \cS=-\nabla \fV_{3,Z}+\fs_Z,\qquad Z\in\big\{A,\, B,\, C,\, D\big\}.$$
We have
\begin{equation}\label{EQA8}
v(x)= \begin{pmatrix}
\ds \cV_1(x')-x_3 \p_1\fV_{3,Z} (x')+x_3\fs_{1,Z}(x')\\[1.5mm]
\ds\cV_2(x')-x_3 \p_2 \fV_{3,Z} (x')+x_3\fs_{2,Z}(x')\\[1.5mm]
\fV_{3,Z}(x')+\fv_Z(x')\\
\end{pmatrix}+\ov{v}(x)\quad  \hbox{ for a.e. $x$ in }  \cO_\ed\X\fI_\d.
\end{equation} where  $\fV_{3,Z} \in H^2(\cO_\ed)$ and  $\fs_{\a,Z},\; \fv_Z \in H^1(\cO_\ed)$. The estimates are given by Theorem \ref{THA2} with constants independent of $\e$ and $\d$. They are for any $Z\in\big\{A,\, B,\, C,\, D\big\}$
$$
\d\big\|\p^2_{\alpha\beta}\fV_{3,Z}\big\|_{L^2(\cO_\ed)}+\|\fs_Z\|_{L^2(\cO_\ed)}+\d \big\|\nabla \fs_Z\big\|_{L^2(\cO_\ed)} +{1\over \d}\|\fv_Z\|_{L^2(\cO_\ed)}+\big\|\nabla \fv_Z\big\|_{L^2(\cO_\ed)} \leq {C\over  \d^{1/2}}\|e(v)\|_{L^2(\cO_\ed\X\fI_\d)}.
$$ By construction, we have
$$\fV_{3,Z}-\cV_3=\fs_{\a,Z}=\fv_Z=0\qquad \hbox{a.e. in } \cO^H_\ed,\qquad Z\in\big\{A,\, B,\, C,\, D\big\}.$$
Now, we define $\fV_3$, $\fs$ and $\fv$. We set
$$
\begin{aligned}
\fV_3=&\fV_{3,A}\psi_{\d,A}+\fV_{3,B}\psi_{\d,B}+\fV_{3,C}\psi_{\d,C}+\fV_{3,D}\psi_{\d,D},\\
\fv=&\fv_A\psi_{\d,A}+\fv_B\psi_{\d,B}+\fv_C\psi_{\d,C}+\fv_D\psi_{\d,D},\\
\fs_\a=&\fs_{\a,A}\psi_{\d,A}+\fs_{\a,B}\psi_{\d,B}+\fs_{\a,C}\psi_{\d,C}+\fs_{\a,D}\psi_{\d,D}-\big(\fV_{3,A}\p_\a\psi_{\d,A}+\fV_{3,B}\p_\a\psi_{\d,B}+\fV_{3,C}\p_\a\psi_{\d,C}+\fV_{3,D}\p_\a\psi_{\d,D}\big),\\
=&\fs_{\a,A}\psi_{\d,A}+\fs_{\a,B}\psi_{\d,B}+\fs_{\a,C}\psi_{\d,C}+\fs_{\a,D}\psi_{\d,D}-\big(\fv_{A}\p_\a\psi_{\d,A}+\fv_{B}\p_\a\psi_{\d,B}+\fv_{C}\p_\a\psi_{\d,C}+\fv_{D}\p_\a\psi_{\d,D}\big),
\end{aligned}
$$ where
$$
\begin{aligned}
\psi_{\d,A}(x')&= \psi_\d(x_2)\psi_\d(x_1),\qquad && \psi_{\d,B}(x')=\psi_\d(x_2)\big(1-\psi_\d(x_1)\big),\\
\psi_{\d,C}(x')&=\big(1-\psi_\d(x_2)\big)\big(1-\psi_\d(x_1)\big),\quad &&\psi_{\d,D}(x')=\big(1-\psi_\d(x_2)\big)\psi_\d(x_1),
 \end{aligned}\qquad \forall x'\in \ov{\cO_\ed}.
$$ Observe that by construction 
$$
\begin{aligned}
&(\fV_{3,A}-\cV_3)\psi_{\d,A}+(\fV_{3,B}-\cV_3)\psi_{\d,B}+(\fV_{3,C}-\cV_3)\psi_{\d,C}+(\fV_{3,D}-\cV_3)\psi_{\d,D}=0,\\
&\fV_{3,A}\p_1\psi_{\d,A}+\fV_{3,B}\p_1\psi_{\d,B}+\fV_{3,C}\p_1\psi_{\d,C}+\fV_{3,D}\p_1\psi_{\d,D}=0,
\end{aligned}\qquad \hbox{a.e. in } \cO^H_\ed.
$$  So, we obtain the desired decomposition of $v$.
\begin{equation}\label{EQA10}
v(x)= \begin{pmatrix}
\ds \cV_1(x')-x_3 \p_1\fV_3 (x')+x_3\fs_1(x')\\[1.5mm]
\ds\cV_2(x')-x_3 \p_2 \fV_3 (x')+x_3\fs_2(x')\\[1.5mm]
\fV_3(x')+\fv(x')\\
\end{pmatrix}+\ov{v}(x)\quad  \hbox{ for a.e. $x$ in }  \cO_\ed\X\fI_\d.
\end{equation} where $\fV_3 \in H^2(\cO_\ed)$ and  $\fs_\a,\; \fv \in H^1(\cO_\ed)$. By construction, we have
$$\fV_{3}-\cV_3=\fs_{\a}=\fv=0\qquad \hbox{a.e. in } \cO^H_\ed.$$
 Bellow, we give the estimates satisfied by $\fV_3$ and  $\fs_\a,\; \fv$.
First, from \eqref{EQpsi} we have 
$$
\begin{aligned}
\big\|\fv\|_{L^2(\cO_\ed)}\leq &\big\|\fv_A\|_{L^2(\cO_\ed)}+\big\|\fv_B\|_{L^2(\cO_\ed)}+\big\|\fv_C\|_{L^2(\cO_\ed)}+\big\|\fv_D\|_{L^2(\cO_\ed)}\leq C \d^{1/2} \|e(v)\|_{L^2(\cO_\ed\X \fI_\d)},\\
\big\|\nabla\fv\|_{L^2(\cO_\ed)}\leq &\big\|\nabla\fv_A\|_{L^2(\cO_\ed)}+\big\|\nabla\fv_B\|_{L^2(\cO_\ed)}+\big\|\nabla\fv_C\|_{L^2(\cO_\ed)}+\big\|\nabla \fv_D\|_{L^2(\cO_\ed)}\\
+& {C\over \d}\big(\big\|\fv_A\|_{L^2(\cO_\ed)}+\big\|\fv_B\|_{L^2(\cO_\ed)}+\big\|\fv_C\|_{L^2(\cO_\ed)}+\big\|\fv_D\|_{L^2(\cO_\ed)}\big)\leq {C\over \d^{1/2} } \|e(v)\|_{L^2(\cO_\ed\X \fI_\d)}.
\end{aligned}
$$ Then, again with \eqref{EQpsi} we get 
$$
\begin{aligned}
\big\|\fs\|_{L^2(\cO_\ed)}\leq &\big\|\fs_A\|_{L^2(\cO_\ed)}+\big\|\fs_B\|_{L^2(\cO_\ed)}+\big\|\fs_C\|_{L^2(\cO_\ed)}+\big\|\fs_D\|_{L^2(\cO_\ed)}\\
+& {C\over \d}\big(\big\|\fv_A\|_{L^2(\cO_\ed)}+\big\|\fv_B\|_{L^2(\cO_\ed)}+\big\|\fv_C\|_{L^2(\cO_\ed)}+\big\|\fv_D\|_{L^2(\cO_\ed)}\big)\leq {C\over  \d^{1/2} }\|e(v)\|_{L^2(\cO_\ed\X \fI_\d)},\\
\big\|\nabla\fs\|_{L^2(\cO_\ed)}\leq &\big\|\nabla\fs_A\|_{L^2(\cO_\ed)}+\big\|\nabla\fs_B\|_{L^2(\cO_\ed)}+\big\|\nabla\fs_C\|_{L^2(\cO_\ed)}+\big\|\nabla \fs_D\|_{L^2(\cO_\ed)}\\
+& {C\over \d}\big(\big\|\fs_A\|_{L^2(\cO_\ed)}+\big\|\fs_B\|_{L^2(\cO_\ed)}+\big\|\fs_C\|_{L^2(\cO_\ed)}+\big\|\fs_D\|_{L^2(\cO_\ed)}\big)\\
+& {C\over \d^2}\big(\big\|\fv_A\|_{L^2(\cO_\ed)}+\big\|\fv_B\|_{L^2(\cO_\ed)}+\big\|\fv_C\|_{L^2(\cO_\ed)}+\big\|\fv_D\|_{L^2(\cO_\ed)}\big)\\
+&{C\over \d}\big\|\nabla\fv_A\|_{L^2(\cO_\ed)}+\big\|\nabla\fv_B\|_{L^2(\cO_\ed)}+\big\|\nabla\fv_C\|_{L^2(\cO_\ed)}+\big\|\nabla \fv_D\|_{L^2(\cO_\ed)} \leq {C\over \d^{3/2}}\|e(v)\|_{L^2(\cO_\ed\X \fI_\d)}.
\end{aligned}
$$ Below, we estimate $\p^2_{\a\b}\fV_3$. We have
$$
\begin{aligned}
\p^2_{\a\b}\fV_3=&\sum_{Z\in \{A,B,C,D\}}\big(\p^2_{\a\b}\fV_{3,Z}\psi_{\d,Z}+\p_\a\fV_{3,Z}\p_\b\psi_Z+\p_\b\fV_{3,Z}\p_\a\psi_Z+\fV_{3,Z}\p^2_{\a\b}\psi_{\d,Z}\big)\\
=&\sum_{Z\in \{A,B,C,D\}}\big(\p^2_{\a\b}\fV_{3,Z}\psi_{\d,Z}+\p_\a\fv_{Z}\p_\b\psi_Z+\p_\b\fv_{Z}\p_\a\psi_Z+\fv_{Z}\p^2_{\a\b}\psi_{\d,Z}\big).
\end{aligned}
$$ The estimates \eqref{remA2} and \eqref{EQpsi} lead to the following estimate:
$$\big\|\p^2_{\alpha\beta}\fV_3\big\|_{L^2(\o)}\le {C\over \d^{3/2}}\|e(u)\|_{L^2(\cO_\ed)}.$$
The constants are independent of $\e$ and $\d$.\\[1mm]
{\it Step 3.} We apply Step 2 in every cell $\O_{pq,\ed}$. With the help of function $\psi$ we construct the desired decomposition.\\[1mm]
Denote
$$
\begin{aligned}
&\psi_{pq,\ed}(x')=\psi\Big({x_1-(p+1)\e\over \d}-{1\over 2}\Big)\Big(1-\psi\Big({x_1-p\e\over \d}+{1\over 2}\Big)\Big)\psi\Big({x_2-(q+1)\e\over \d}-{1\over 2}\Big)\Big(1-\psi\Big({x_2-q\e\over \d}+{1\over 2}\Big)\Big),\\
&\hskip 85mm (p,q)\in\{1,\ldots,N_\e-2\}\X\{1,\ldots,N_\e-2\},\\
& \psi_{0q,\ed}(x')=\psi\Big({x_1-\e\over \d}-{1\over 2}\Big)\psi\Big({x_2-(q+1)\e\over \d}-{1\over 2}\Big)\Big(1-\psi\Big({x_2-q\e\over \d}+{1\over 2}\Big)\Big),\qquad q\in \{1,\ldots,N_\e-2\},\\
& \psi_{N_\e-1q,\ed}(x')=\Big(1-\psi\Big({x_1-(N_\e-1)\e\over \d}+{1\over 2}\Big)\Big)\psi\Big({x_2-(q+1)\e\over \d}-{1\over 2}\Big)\Big(1-\psi\Big({x_2-q\e\over \d}+{1\over 2}\Big)\Big),\\ &\hskip 115mm  q\in \{1,\ldots,N_\e-2\},\\
&\psi_{p0,\ed}(x')=\psi\Big({x_1-(p+1)\e\over \d}-{1\over 2}\Big)\Big(1-\psi\Big({x_1-p\e\over \d}+{1\over 2}\Big)\Big)\psi\Big({x_2-\e\over \d}-{1\over 2}\Big),\qquad  p\in\{1,\ldots,N_\e-1\},\\
&\psi_{pN_\e-1,\ed}(x')=\psi\Big({x_1-(p+1)\e\over \d}-{1\over 2}\Big)\Big(1-\psi\Big({x_1-p\e\over \d}+{1\over 2}\Big)\Big)\Big(1-\psi\Big({x_2-(N_\e-1)\e\over \d}-{1\over 2}\Big)\Big),\\ &\hskip 115mm  p\in\{1,\ldots,N_\e-1\}.
\end{aligned}
$$ Similarly, we define 
$$
\begin{aligned}
	&\psi_{00,\ed}(x')=\psi\Big({x_1-\e\over \d}-{1\over 2}\Big)\Big(1-\psi\Big({x_1\over \d}+{1\over 2}\Big)\Big)\psi\Big({x_2-\e\over \d}-{1\over 2}\Big)\Big(1-\psi\Big({x_2\over \d}+{1\over 2}\Big)\Big),\\
	& \psi_{0N_\e-1,\ed}(x')=\psi\Big({x_1-\e\over \d}-{1\over 2}\Big)\Big(1-\psi\Big({x_1\over \d}+{1\over 2}\Big)\Big)\psi\Big({x_2-N_\e\e\over \d}-{1\over 2}\Big)\Big(1-\psi\Big({x_2-(N_\e-1)\e\over \d}+{1\over 2}\Big)\Big),\\
	& \psi_{N_\e-10,\ed}(x')=\psi\Big({x_1-N_\e\e\over \d}-{1\over 2}\Big)\Big(1-\psi\Big({x_1-(N_\e-1)\e\over \d}+{1\over 2}\Big)\Big)\psi\Big({x_2-\e\over \d}-{1\over 2}\Big)\Big(1-\psi\Big({x_2\over \d}+{1\over 2}\Big)\Big),\\ 
	&\psi_{N_\e-1N_\e-1,\ed}(x')\\
	&\hskip 5mm =\psi\Big({x_1-N_\e\e\over \d}-{1\over 2}\Big)\Big(1-\psi\Big({x_1-(N_\e-1)\e\over \d}+{1\over 2}\Big)\Big)\psi\Big({x_2-N_\e\e\over \d}-{1\over 2}\Big)\Big(1-\psi\Big({x_2-(N_\e-1)\e\over \d}-{1\over 2}\Big)\Big).
\end{aligned}
$$
We have
$$\sum_{(p,q)\in \cK^*_\e}\psi_{pq,\ed}(x')=1\qquad \forall x'\in \o_\d$$ and
\begin{equation}\label{EQpsied}
\|\psi_{pq,\ed}\|_{L^\infty(\o_\d)}\leq 1,\qquad \|\nabla \psi_{pq,\ed}\|_{L^\infty(\o_\d)}\leq {C\over \d},\qquad \|\p^2_{\a\b}\psi\|_{L^\infty(\o_\d)}\leq {C\over \d^2}.
\end{equation} The constants are  independent of $\e$ and $\d$.\\[1mm]
Now, let $u$ be in $\cH^1(\O_\d)^3$, we decompose $u$ as \eqref{EQ41DA1+}. Then, we apply the result obtained in Step 2 to all the restrictions to $u$ to the sets $\O_{pq,\ed}$, $(p,q)\in \cK_\e$. This gives
\begin{equation}\label{EQA8}
u(x)= \begin{pmatrix}
\ds \cU_1(x')-x_3 \p_1\fU_{3pq} (x')+x_3\fr_{1,pq}(x')\\[1.5mm]
\ds\cU_2(x')-x_3 \p_2 \fU_{3,pq} (x')+x_3\fr_{2,pq}(x')\\[1.5mm]
\fU_{3,pq}(x')+\fu_{pq}(x')\\
\end{pmatrix}+\ov{v}(x)\quad  \hbox{ for a.e. $x$ in }  \O_{pq,\ed}\X\fI_\d.
\end{equation} where  $\fU_{3,pq} \in H^2(\O_{pq,\ed})$ and  $\fr_{\a,pq},\; \fu_{pq} \in H^1(\O_{pq,\ed})$. Of course, the fields $\fU_{3,pq}$ and  $\fr_{\a,pq},\; \fu_{pq}$ satisfy the estimates obtained in the previous step  with constants independent of $\e$ and $\d$. By construction, we have
$$\fV_{3,pq}-\cV_3=\fs_{\a,pq}=\fv_{pq}=0\qquad \hbox{a.e. in } \o^H_{pq}\qquad \forall(p,q)\in \cK_\e.$$
Now, we define $\fU_3$, $\fr$ and $\fu$. We set
$$
\begin{aligned}
\fU_3=&\sum_{(p,q)\in \cK_\e} \fU_{3,pq}\psi_{pq,\ed},\qquad \fu=\sum_{(p,q)\in \cK_\e} \fu_{pq}\psi_{pq,\ed},\\
\fr_\a=&\sum_{(p,q)\in \cK_\e} \fr_{\a,pq}\psi_{pq,\ed}-\sum_{(p,q)\in \cK_\e} \fU_{3,pq}\p_\a\psi_{pq,\ed},\\
=&\sum_{(p,q)\in \cK_\e} \fr_{\a,pq}\psi_{pq,\ed}-\sum_{(p,q)\in \cK_\e} \fu_{pq}\p_\a\psi_{pq,\ed},\\
\end{aligned}\qquad \hbox{a.e. in }\O_{pq,\ed}.
$$
So, $\fU_3 \in H^2(\o_\d)$ and  $\fr_\a,\; \fu \in H^1(\o_\d)$ and by construction, we have
$$\fU_{3}-\cU_3=\fr_{\a}=\fu=0\qquad \hbox{a.e. in } \o^H_\ed.$$ Bellow, we give the estimates satisfied by $\fU_3$ and  $\fr_\a,\; \fu$.
First, from \eqref{EQpsied} we have 
$$
\begin{aligned}
\big\|\fu\|_{L^2(\o_\d)}\leq &\sum_{(p,q)\in \cK_\e} \big\|\fu_{pq}\|_{L^2(\wt{\o}_{pq})} \leq C \d^{1/2} \|e(u)\|_{L^2(\O_\d)},\\
\big\|\nabla\fu\|_{L^2(\o_\d)}\leq & \sum_{(p,q)\in \cK_\e}\big\|\nabla\fu_{pq}\|_{L^2(\wt{\o}_{pq})} + {C\over \d}\sum_{(p,q)\in \cK_\e} \big\|\fu_{pq}\|_{L^2(\wt{\o}_{pq})} \leq {C\over \d^{1/2} } \|e(u)\|_{L^2(\O_\d)}.
\end{aligned}
$$ Then, again with \eqref{EQpsi} we get 
$$
\begin{aligned}
\big\|\fr\|_{L^2(\wt{\o}_{pq})}\leq & \sum_{(p,q)\in \cK_\e} \big\|\fr_{pq}\|_{L^2(\wt{\o}_{pq})}+ {C\over \d} \sum_{(p,q)\in \cK_\e} \big\|\fu_{pq}\|_{L^2(\wt{\o}_{pq})}\leq {C\over  \d^{1/2} }\|e(u)\|_{L^2(\o_\d))},\\
\big\|\nabla\fr\|_{L^2(\wt{\o}_{pq})}\leq &  \sum_{(p,q)\in \cK_\e}\big\|\nabla\fr_{pq}\|_{L^2(\wt{\o}_{pq})}+ {C\over \d}  \sum_{(p,q)\in \cK_\e}\big\|\fr_{pq}\|_{L^2(\wt{\o}_{pq})}+ {C\over \d^2} \sum_{(p,q)\in \cK_\e}\big(\big\|\fu_{pq}\|_{L^2(\wt{\o}_{pq})}\\
+&{C\over \d} \sum_{(p,q)\in \cK_\e}\big\|\nabla\fu_{pq}\|_{L^2(\wt{\o}_{pq})}  \leq {C\over \d^{3/2} } \|e(u)\|_{L^2(\O_\d)}.
\end{aligned}
$$ Below, we estimate $\p^2_{\a\b}\fU_3$. We have
$$
\begin{aligned}
\p^2_{\a\b}\fU_3=& \sum_{(p,q)\in \cK_\e} \big(\p^2_{\a\b}\fU_{3,pq}\psi_{pq,\ed}+\p_\a\fU_{3,pq}\p_\b\psi_{pq,\ed}+\p_\b\fU_{3,pq}\p_\a\psi_{pq,\ed}+\fU_{3,pq}\p^2_{\a\b}\psi_{pq,\ed}\big)\\
=& \sum_{(p,q)\in \cK_\e} \big(\p^2_{\a\b}\fU_{3,pq}\psi_{pq,\ed}+\p_\a\fu_{pq}\p_\b\psi_{pq,\ed}+\p_\b\fu_{pq}\p_\a\psi_{pq,\ed}+\fu_{pq}\p^2_{\a\b}\psi_{pq,\ed}\big).
\end{aligned}
$$ The estimates obtained in Step 2 together with  \eqref{EQpsied} lead to the following estimate of $\p^2_{\alpha\beta}\fU_3$:
$$\big\|\p^2_{\alpha\beta}\fU_3\big\|_{L^2(\o)}\le {C\over \d^{3/2}}\|e(u)\|_{L^2(\O_\d)}.$$
The constants are independent of $\e$ and $\d$. Finally, we set
$$\wt{u}=\ov{u}+x_3\fr.$$
This ends the proof of Theorem \ref{TH42}.
\end{proof}

\section{Appendix: Some Lemmas}\label{SAppB}
Denote 
$$
\begin{aligned}
\GV & \doteq\big\{\phi\in H^1(\o_\d)\;|\; \nabla \phi=0\quad\hbox{a.e. in } \; \o^H_{\ed},\;\hbox{ and } \; \phi=0 \hbox{ a.e. on $\gamma$ and }\; \phi \hbox{ defined by \eqref{Dphi}} \big\},\\
\end{aligned}
$$
Below, we recall a few results from \cite{GGPumed} (see Lemmas 5.1, 5.2, 6.1 and 6.3 in this article).
\begin{lemma}\label{lemAp1}[Poincar\'e inequality] For any $\phi\in \GV$  we have the following  inequalities:
	\begin{equation}\label{EQAp}
\|\phi\|_{L^2(\o^S_\ed)}\leq C\sqrt{\d\over \e}\|\phi\|_{L^2(\o_\d)},\qquad  \|\phi\|_{L^2(\o_\d)}\leq C\sqrt{\d\over \e}\|\nabla\phi\|_{L^2(\o_\d)}.
	\end{equation}
	The constants do not depend on $\e$ and $\d$.
\end{lemma}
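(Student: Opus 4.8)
The plan is to parametrize every $\phi\in\GV$ by its constant values $\phi_{pq}$ on the rigid cells $\o^H_{pq}$: since $\nabla\phi=0$ a.e. in $\o^H_\ed$ we have $\phi\equiv\phi_{pq}$ there, and by construction \eqref{Dphi} the whole field $\phi$ is the $\cQ_1$-type interpolate of the family $(\phi_{pq})_{(p,q)\in\ov{\cK}_\e}$ in the thin rods $R^1_{pq},R^2_{pq},R^3_{pq}$. Both inequalities then reduce to elementary estimates on this discrete family. First I would record the comparison
\[
\tfrac49\,\e^2\sum_{(p,q)\in\cK^*_\e}|\phi_{pq}|^2\;\le\;\|\phi\|^2_{L^2(\o^H_\ed)}\;\le\;\e^2\sum_{(p,q)\in\cK^*_\e}|\phi_{pq}|^2 ,
\]
which uses $|\o^H_{pq}|=(\e-\d)^2$ and $3\d<\e$. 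Next, integrating the explicit expressions \eqref{Dphi} over each side rod $R^1_{pq},R^2_{pq}$ (area $\le\e\d$) and each corner square $R^3_{pq}$ (area $\d^2\le\e\d$), and noting that each value $\phi_{pq}$ occurs in boundedly many of these pieces, gives
\[
\|\phi\|^2_{L^2(\o^S_\ed)}\;\le\; C\,\e\d\sum_{(p,q)}|\phi_{pq}|^2 .
\]
The first inequality of the lemma follows at once: $\|\phi\|^2_{L^2(\o^S_\ed)}\le C\e\d\sum|\phi_{pq}|^2\le C\tfrac\d\e\,\|\phi\|^2_{L^2(\o^H_\ed)}\le C\tfrac\d\e\,\|\phi\|^2_{L^2(\o_\d)}$, and this part uses no boundary condition. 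Adding the two displays also gives
\[
\|\phi\|^2_{L^2(\o_\d)}\;\le\; C\,\e^2\sum_{(p,q)}|\phi_{pq}|^2 ,
\]
which I shall use below.

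For the second inequality I would exploit that $\nabla\phi$ is supported in $\o^S_\ed$ and, in the long rods, is a difference quotient of the cell values: in $R^2_{pq}$ one has $\p_1\phi=(\phi_{pq}-\phi_{p-1q})/\d$, so
\[
\|\p_1\phi\|^2_{L^2(R^2_{pq})}=\frac{\e-\d}{\d}\,|\phi_{pq}-\phi_{p-1q}|^2\;\ge\;\frac{2\e}{3\d}\,|\phi_{pq}-\phi_{p-1q}|^2 ,
\]
and symmetrically $\p_2\phi$ on $R^1_{pq}$ controls $|\phi_{pq}-\phi_{pq-1}|^2$. Summing over the pairwise disjoint rods yields the discrete Dirichlet-energy bound
\[
\sum_{(p,q)}|\phi_{pq}-\phi_{p-1q}|^2+\sum_{(p,q)}|\phi_{pq}-\phi_{pq-1}|^2\;\le\; C\,\frac{\d}{\e}\,\|\nabla\phi\|^2_{L^2(\o_\d)} .
\]
It then remains to prove a discrete Poincaré inequality for $(\phi_{pq})$ on the grid $\ov{\cK}_\e$. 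Since $\phi=0$ a.e. on $\gamma$, the values $\phi_{pq}$ vanish on the discretization of $\gamma$ (cf. the boundary conventions recorded in Subsection~\ref{SS42}), a connected ``staircase'' of at least $c\,n_\e$ grid points meeting the boundary, with ratio $n_\e/N_\e=l/L$ fixed. Joining every grid point to this zero set by a monotone path of length $\le C N_\e$ and applying Cauchy--Schwarz along it, then summing over all $O(N_\e^2)$ points while observing that each grid edge is used in $O(N_\e)$ of these paths, gives
\[
\sum_{(p,q)}|\phi_{pq}|^2\;\le\; C\,N_\e^2\Big(\sum_{(p,q)}|\phi_{pq}-\phi_{p-1q}|^2+\sum_{(p,q)}|\phi_{pq}-\phi_{pq-1}|^2\Big)\;\le\; C\,\frac{\d}{\e^3}\,\|\nabla\phi\|^2_{L^2(\o_\d)} ,
\]
using $N_\e=L/\e$ and absorbing the fixed factor $L^2$. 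Plugging this into $\|\phi\|^2_{L^2(\o_\d)}\le C\e^2\sum|\phi_{pq}|^2$ and taking square roots yields $\|\phi\|_{L^2(\o_\d)}\le C\sqrt{\d/\e}\,\|\nabla\phi\|_{L^2(\o_\d)}$, the second inequality of the lemma.

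The routine parts are the rod-by-rod integrations and the combinatorics of how often each $\phi_{pq}$ appears in \eqref{Dphi}. The one step genuinely requiring care is the discrete Poincaré estimate: because $\gamma$ meets only a fixed proportion of the boundary grid, and along a staircase, one must choose the connecting paths — e.g.\ move first in the $x_1$-direction, then in the $x_2$-direction toward the seed segments — so that the per-edge multiplicity stays $O(N_\e)$ and the resulting constant depends only on the fixed ratio $l/L$, hence only on $\o$, uniformly in $\e$ and $\d$.
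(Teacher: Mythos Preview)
Your argument is essentially correct and self-contained, whereas the paper does not give its own proof of this lemma at all: it merely cites \cite{GGPumed} (Lemmas~5.1, 5.2, 6.1, 6.3 there) and states the result. So there is no ``paper's proof'' to compare against; your discrete approach via the cell values $\phi_{pq}$ is a perfectly good direct argument, and in particular your derivation of \eqref{EQAp}$_1$ from the area comparison $|\o^H_{pq}|\asymp\e^2$ versus $|R^\alpha_{pq}|\le\e\d$ is clean.

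One small gap to flag in your treatment of \eqref{EQAp}$_2$: the condition $\phi=0$ on $\gamma$ does \emph{not} force individual values $\phi_{pq}$ to vanish. The line $\{0\}\times(0,l)$ runs through the rods $R^2_{0q}$ and corner squares $R^3_{0q}$, never through any hard cell $\o^H_{pq}$; evaluating \eqref{Dphi}$_3$ at $z_1=0$ gives only $\phi_{0q}+\phi_{-1q}=0$, not $\phi_{0q}=0$. The boundary conventions in Subsection~\ref{SS42} you cite apply to the specific fields $\cU^\dia,\cR^\dia$ constructed there, not to an arbitrary $\phi\in\GV$. The fix is immediate, though: from $\phi_{0q}=-\phi_{-1q}$ you get $|\phi_{0q}|^2+|\phi_{-1q}|^2=\tfrac12|\phi_{0q}-\phi_{-1q}|^2$, which is already controlled by your discrete gradient bound via $R^2_{0q}$. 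So the seed set for your discrete Poincar\'e path argument consists of points whose values are directly dominated by $(\d/\e)\|\nabla\phi\|^2_{L^2(\o_\d)}$, rather than being exactly zero, and the rest of the telescoping-plus-Cauchy--Schwarz argument goes through unchanged with at most a doubled constant.
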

\begin{lemma}\label{lemAp2} Let $\{\phi_\ed\}_{\e,\d}$ be a sequence of functions belonging to $\GV$ and satisfying 
	\begin{equation}\label{EQ83}
		\|\nabla\phi_\ed\|_{L^2(\o_\d)}\leq C\sqrt{\e\over \d}.
	\end{equation}
	There exist a subsequence of $\{(\e,\d)\}$, still denoted $\{(\e,\d)\}$, and $\phi\in H^1(\o)$  satisfying $\phi=0$ a.e. on $\gamma$  such that 
	\begin{equation}\label{EQ72}
		\begin{aligned}
		\phi_\ed&\to \phi\quad &&\hbox{strongly in } L^2(\o),\\
	\sqrt{\e\over \d}\Ted^{(1)}(\phi_\ed)&\to  \phi \quad&& \hbox{strongly in } L^2(\o\X Y_1),\\
	{\d\over \e}\Ted^{(1)}(\p_2\phi_\ed)={1\over \e}\p_{y_2} \Ted^{(1)}(\phi_\ed)&\rightharpoonup \p_2\phi \quad&& \hbox{weakly in } L^2(\o\X Y_1),\\
	{\d\over \e}\Ted^{(1)}(\p_1\phi_\ed)={\d\over \e^2}\p_{y_1} \Ted^{(1)}(\phi_\ed)&\rightharpoonup 0 \quad&& \hbox{weakly in } L^2(\o\X Y_1),\\
	{\d\over \e}\Ted^{(2)}(\p_1\phi_\ed)={1\over \e}\p_{y_1}\Ted^{(2)}(\phi_\ed)&\rightharpoonup \p_1\phi \quad &&\hbox{weakly in } L^2(\o\X Y_2),\\
	{\d\over \e}\Ted^{(2)}(\p_2\phi_\ed)={\d\over \e^2}\p_{y_2}\Ted^{(2)}(\phi_\ed)&\rightharpoonup 0 \quad&& \hbox{weakly in } L^2(\o\X Y_2).
		\end{aligned}
	\end{equation}
\end{lemma}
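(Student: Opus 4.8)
The plan is to reduce the statement to a discrete compactness argument for the nodal values of $\phi_\ed$, and then transport the limits through the rescaled unfolding operators using the explicit piecewise‑$\cQ_1$ structure \eqref{Dphi}. Recall that an element $\phi\in\GV$ is completely determined by a family of reals $\{\phi_{pq}\}$: it equals the constant $\phi_{pq}$ on the hard cell attached to the node $(p\e,q\e)$, it is affine (respectively bilinear) in the connecting strips $R^1_{pq},R^2_{pq}$ (respectively the corner squares $R^3_{pq}$) of \eqref{Rect}, it vanishes at the clamped nodes, and $\nabla\phi=0$ on $\o^H_\ed$. I would start by noting that the Poincar\'e inequalities \eqref{EQAp} of Lemma~\ref{lemAp1}, together with \eqref{EQ83}, give $\|\phi_\ed\|_{L^2(\o_\d)}\le C$ and $\|\phi_\ed\|_{L^2(\o^S_\ed)}\le C\sqrt{\d/\e}$.

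The next step is a discrete $H^1$ bound. Differentiating \eqref{Dphi} shows that on $R^1_{pq}$ one has $\p_1\phi_\ed=0$ and $\p_2\phi_\ed=(\phi_{\ed,pq}-\phi_{\ed,pq-1})/\d$, a constant across the strip; symmetrically on $R^2_{pq}$; and on $R^3_{pq}$ both partials are convex combinations of two such difference quotients. Since each $R^i_{pq}$ has measure at most $\e\d$, summing the $L^2$‑norms of these explicit expressions and invoking \eqref{EQ83} yields
\[
\sum_{(p,q)}\Big(|\phi_{\ed,pq}-\phi_{\ed,p-1q}|^2+|\phi_{\ed,pq}-\phi_{\ed,pq-1}|^2\Big)\le \tfrac{\d}{\e}\,\|\nabla\phi_\ed\|_{L^2(\o_\d)}^2\le C,
\]
i.e.\ the nodal family is bounded in the discrete $H^1_\gamma$ seminorm on the $\e$‑grid. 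I would then introduce $\Phi_\ed\in H^1(\o_\d)$, constructed exactly as $\cU^\dia$ in Subsection~\ref{SS42} but from the scalar family $\{\phi_{\ed,pq}\}$ (the continuous $\cQ_1$ interpolant on the cells $\o_{pq}$). By the $\cQ_1$ character $\|\nabla\Phi_\ed\|_{L^2(\o_\d)}\le C$ and $\Phi_\ed=0$ on $\gamma$, so along a subsequence $\Phi_\ed\rightharpoonup\phi$ weakly in $H^1_\gamma(\o)$ and strongly in $L^2(\o)$, with the cellwise‑extended difference quotients $(\phi_{\ed,pq}-\phi_{\ed,p-1q})/\e$ and $(\phi_{\ed,pq}-\phi_{\ed,pq-1})/\e$ converging weakly in $L^2(\o)$ to $\p_1\phi$ and $\p_2\phi$.

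To obtain \eqref{EQ72} I would compare $\phi_\ed$ with $\Phi_\ed$. A cellwise estimate, using that the two share the same nodal values together with the displayed bound, gives $\|\phi_\ed-\Phi_\ed\|_{L^2(\o^H_\ed)}^2\le C\e^2$ and, from the explicit strip formulas, $\|\phi_\ed-\Phi_\ed\|_{L^2(\o^S_\ed)}^2\le C\e\d$; hence $\phi_\ed-\Phi_\ed\to0$ in $L^2(\o_\d)$, which is \eqref{EQ72}$_1$. For \eqref{EQ72}$_2$, split $\Ted^{(1)}(\phi_\ed)=\Ted^{(1)}(\Phi_\ed)+\Ted^{(1)}(\phi_\ed-\Phi_\ed)$: the norm identity behind \eqref{EQ55} forces the second term to zero in $L^2(\o\X Y_1)$, while $\Ted^{(1)}(\Phi_\ed)$ converges to $\phi$ by the usual unfolding of a strongly $L^2$‑convergent macroscopic field. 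For \eqref{EQ72}$_{3,4}$ I would use \eqref{EQ56}: on $R^1_{pq}$ we have $\tfrac{\d}{\e}\Ted^{(1)}(\p_2\phi_\ed)=(\phi_{\ed,pq}-\phi_{\ed,pq-1})/\e$, which does not depend on the microscopic variable, so its weak $L^2(\o\X Y_1)$ limit is $y$‑independent and, by the previous step, equals $\p_2\phi$; whereas $\tfrac{\d}{\e}\Ted^{(1)}(\p_1\phi_\ed)$ is supported, within each period, only on the corner squares $R^3$ --- a set of relative measure $O(\d/\e)$ --- and the norm identity gives $\|\tfrac{\d}{\e}\Ted^{(1)}(\p_1\phi_\ed)\|_{L^2(\o\X Y_1)}^2\le C\tfrac{\d}{\e}\to0$. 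The convergences \eqref{EQ72}$_{5,6}$ are obtained identically with the two in‑plane directions exchanged, using $\Ted^{(2)}$ and the strips $R^2_{pq}$.

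The main obstacle is the ``no corrector'' phenomenon in \eqref{EQ72}$_{3,5}$: since an element of $\GV$ is \emph{affine} in the transverse variable inside each connecting strip, its transverse difference quotient is literally the discrete $x_\alpha$‑derivative of the nodal values and carries no oscillating microscopic part, so the rescaled unfolded gradient converges to $\p_\alpha\phi$ with no extra cell term. Turning this into a rigorous proof is essentially all the work: it requires careful bookkeeping of the explicit expressions from \eqref{Dphi} on $R^1_{pq}$, $R^2_{pq}$, $R^3_{pq}$ and their corners, and matching each rescaling factor in \eqref{EQ55}--\eqref{EQ56} against the power of $\d/\e$ coming from the corresponding strip volume; the remaining arguments are soft functional analysis.
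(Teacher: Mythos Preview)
The paper does not give its own proof of this lemma; it is merely quoted from \cite{GGPumed} (the sentence introducing Appendix~\ref{SAppB} reads ``we recall a few results from \cite{GGPumed}''). Your plan---extracting the nodal values, passing through the coarse $\cQ_1$ interpolant $\Phi_\ed$ built in the style of Subsection~\ref{SS42}, and reading off the unfolded limits from the explicit strip formulas \eqref{Dphi}---is precisely the natural argument and matches the mechanics the paper itself deploys when it \emph{applies} this lemma in Step~3 of Lemma~\ref{lem61} (compare the treatment of $\cU^\bt_{\ed,\a}$ and $\cR_{\ed,3}$ there). The discrete $H^1$ bound on the nodal family, the $L^2$ comparison $\phi_\ed-\Phi_\ed\to0$, and the key observation that on each $R^1_{pq}$ the quantity $\tfrac{\d}{\e}\Ted^{(1)}(\p_2\phi_\ed)$ is the microscopically constant difference quotient $(\phi_{pq}-\phi_{pq-1})/\e$, hence carries no cell corrector, are all correct.

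One remark: your argument for \eqref{EQ72}$_2$ in fact establishes $\Ted^{(1)}(\phi_\ed)\to\phi$ \emph{without} the prefactor $\sqrt{\e/\d}$. The statement as printed cannot hold, since $\sqrt{\e/\d}\to\infty$ while $\|\Ted^{(1)}(\phi_\ed)\|_{L^2(\o\times Y_1)}$ is itself of order one (e.g.\ take $\phi_{pq}=p\e$). This is almost certainly a transcription slip in the recalled statement and is harmless for the paper: only the derivative convergences \eqref{EQ72}$_{3\text{--}6}$ are actually used downstream (see \eqref{EXRc01} and \eqref{EXUc02}). Your proof covers exactly those.
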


\begin{lemma}\label{lemAp5}[Test function] For any $\phi\in W^{1,\infty}(\o)$ there exists $\phi_\ed\in W^{1,\infty}(\o_\d)\cap \GV$ satisfying
	\begin{equation}\label{EQ85}
\begin{aligned}
&\phi_\ed \longrightarrow \phi\quad \hbox{strongly in } L^2(\o),\\
{\d\over \e} &\Ted^{(1)}(\p_2\phi_\ed) \longrightarrow  \p_2\phi  \quad \hbox{strongly in } L^2(\o\X Y_1),\\
{\d\over \e} & \Ted^{(2)}(\p_1\phi_\ed) \longrightarrow  \p_1\phi  \quad \hbox{strongly in } L^2(\o\X Y_2).
		\end{aligned}
	\end{equation}
\end{lemma}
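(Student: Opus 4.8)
The plan is to obtain $\phi_\ed$ by sampling $\phi$ over the periodicity lattice and interpolating through the thin beams exactly according to the prescription \eqref{Dphi}, so that membership in $\GV$ is automatic, and then to read the convergences off the explicit piecewise form of $\nabla\phi_\ed$. Concretely, after extending $\phi$ to a Lipschitz function on a neighbourhood of $\ov\o$ (or simply by the constancy rule already used in Subsection~\ref{SS42} for the indices of $\ov\cK_\e\setminus\cK^*_\e$), I would take $\phi_{pq}$, $(p,q)\in\ov\cK_\e$, to be the mean value of $\phi$ over the cell of index $(p,q)$, and, when $\phi=0$ on $\gamma$, reset $\phi_{pq}=0$ for the indices meeting $\gamma$ — this is harmless since $l/\e=n_\e\in\N^*$. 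Then the function $\phi_\ed$ built from $\{\phi_{pq}\}$ via \eqref{Dphi} is $\cQ_1$ on the squares $R^3_{pq}$, affine on the strips $R^1_{pq}$ and $R^2_{pq}$, and constant on each $\o^H_{pq}$; hence $\phi_\ed\in W^{1,\infty}(\o_\d)$, and by construction $\nabla\phi_\ed=0$ a.e. in $\o^H_\ed$ and $\phi_\ed=0$ a.e. on $\gamma$, i.e. $\phi_\ed\in\GV$.

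The first convergence in \eqref{EQ85} is then immediate: for a.e. $x'\in\o_\d$ the value $\phi_\ed(x')$ is a convex combination of numbers $\phi_{pq}$ attached to cells lying within distance $C\e$ of $x'$, each of which differs from $\phi(x')$ by at most $C\e\|\nabla\phi\|_{L^\infty(\o)}$, the boundary reset affecting only an $O(\e)$-layer; hence $\phi_\ed\to\phi$ in $L^\infty(\o)$ and, a fortiori, strongly in $L^2(\o)$.

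For the unfolded gradients I would compute $\nabla\phi_\ed$ cell by cell from \eqref{Dphi}: on the strip $R^1_{pq}$ one has $\p_1\phi_\ed=0$ and $\p_2\phi_\ed$ equal to the difference of the two adjacent nodal values over $\d$; on $R^2_{pq}$, $\p_2\phi_\ed=0$ and $\p_1\phi_\ed$ a difference of adjacent nodal values over $\d$; on the corner squares $R^3_{pq}$ both derivatives are bounded by $C\e\d^{-1}\|\nabla\phi\|_{L^\infty}$. Multiplying by $\d/\e$, transforming by $\Ted^{(1)}$ and using $\p_{y_2}\Ted^{(1)}(\phi_\ed)=\d\,\Ted^{(1)}(\p_2\phi_\ed)$ from \eqref{EQ56}, the scaled unfolded derivative $\frac{\d}{\e}\Ted^{(1)}(\p_2\phi_\ed)$ equals, on the part of $\o\X Y_1$ coming from the strips $R^1$ — the complement of a set of measure $O(\d/\e)$, namely the image of the $\d\X\d$ crossing squares — the cell-constant difference quotient $x'\mapsto(\phi_{pq}-\phi_{pq-1})/\e$ of $\phi$ over an $\e$-step in the $x_2$-direction. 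This function is bounded in $L^2(\o)$ by $C\|\nabla\phi\|_{L^2(\o)}$ uniformly in $(\e,\d)$ — because the shifted cells appearing in it overlap with uniformly bounded multiplicity — and converges uniformly to $\p_2\phi$ when $\phi\in\cC^1(\ov\o)$; approximating $\phi$ by $\cC^1$ functions in $H^1(\o)$ and invoking this uniform bound then yields strong $L^2(\o\X Y_1)$-convergence to $\p_2\phi$, a function independent of $y'$. On the exceptional set the integrand stays $\le C\|\nabla\phi\|_{L^\infty}$ while its measure tends to $0$, so it contributes nothing in the limit. This proves \eqref{EQ85}$_2$, and \eqref{EQ85}$_3$ follows by the symmetric argument with $\Ted^{(2)}$, the strips $R^2$ and $\p_1$.

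The main obstacle is the strong $L^2$-convergence of the discrete difference quotients $(\phi_{pq}-\phi_{pq-1})/\e$ to $\p_2\phi$ for a merely Lipschitz $\phi$: this is the usual $L^2$-continuity of translations of the gradient, made effective through the uniform-in-$(\e,\d)$ estimate $\|(\phi_{pq}-\phi_{pq-1})/\e\|_{L^2(\o)}\le C\|\nabla\phi\|_{L^2(\o)}$ — which is why sampling by cell averages rather than point values is convenient — combined with density of $\cC^1(\ov\o)$ in $H^1(\o)$. The only other point requiring care is that the $\d\X\d$ crossing squares, where $\nabla\phi_\ed$ is genuinely of order $\e/\d$, carry no mass in the limit; this holds because the $\d/\e$ rescaling built into $\Ted^{(\a)}$ leaves them occupying only a fraction $O(\d/\e)$ of the unfolded cell, over which the rescaled integrand is uniformly bounded.
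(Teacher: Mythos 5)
Your construction is essentially the paper's own proof: sample $\phi$ on the $\e$-lattice, interpolate via \eqref{Dphi} so that $\phi_\ed\in\GV$ automatically, observe that the rescaled unfolded derivative on the beam strips reduces to an $\e$-scale difference quotient of the sampled values converging strongly to $\p_\a\phi$, and discard the $\d\times\d$ crossing squares since after unfolding they occupy a fraction $O(\d/\e)$ of the cell with a bounded integrand. The only (harmless) deviations are that the paper takes $\phi_{pq}$ as the value of the extended $\phi$ at the cell center rather than a cell average, and it leaves the $L^2$-convergence of the Lipschitz difference quotients implicit where you argue by a uniform bound plus density of $\cC^1$.
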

\begin{proof} First, thanks to symmetries with respect to the edges of $\o$, we extend $\phi$ into a function, still denoted $\phi$, belonging to $W^{1,\infty}(\o_\d)$. Then, proceeding as in Subsection \ref{SS42} to construct $\cR$,  setting
	$$\phi_{\ed,pq}=\phi\Big(p\e+{\e\over 2},q\e+{\e\over 2}\Big)\qquad \forall (p,q)\in \ov{\cK}_\e$$ 
	we construct the function $\phi_\ed$ belonging to $W^{1,\infty}(\o_\d)$. We easily show that
	$$
	\|\phi_\ed\|_{L^\infty(\o_\d)}\leq \|\phi\|_{L^\infty(\o)},\qquad \sum_{(p,q)\in \cK_\e} \big\|\nabla\phi_\ed\big\|^2_{L^2(R^3_{pq})}\leq C\|\nabla\phi\|^2_{L^\infty(\o)}
	$$ and (similarly as in Lemma \ref{lemAp1})
	$$
	\sum_{(p,q)\in \cK^{(1)}_\e} \big\|\p_2\phi_\ed\big\|^2_{L^2(R^1_{pq})}\leq C{\e\over \d}\|\p_2\phi\|^2_{L^\infty(\o)},\qquad  \sum_{(p,q)\in \cK^{(2)}_\e} \big\|\p_1\phi_\ed\big\|^2_{L^2(R^2_{pq})}\leq C{\e\over \d}\|\p_1\phi\|^2_{L^\infty(\o)}.
	$$
	We recall that (see \eqref{EXRc01}) $\p_1\phi_\ed=0$ in $R^1_{pq}$ and $\p_2\phi_\ed=0$ in $R^2_{pq}$. \\
	From \eqref{EQ55}-\eqref{EQ56} we obtain
	$$
	\begin{aligned}
		&\|\Ted^{(1)}(\p_2\phi_\ed)\|_{L^2(\o\X Y_1)}\leq C{\e\over \d}\|\p_2\phi\|_{L^\infty(\o)},\qquad \|\Ted^{(2)}(\p_1\phi_\ed)\|_{L^2(\o\X Y_2)}\leq C{\e\over \d}\|\p_1\phi\|_{L^\infty(\o)},\\
		\Longrightarrow\quad &\Big\|{\p\Ted^{(1)}(\phi_\ed)\over \p y_2}\Big\|_{L^2(\o\X Y_1)}\leq C\e\|\p_2\phi\|_{L^\infty(\o)},\qquad \Big\|{\p \Ted^{(2)}(\phi_\ed)\over \p y_1}\Big\|_{L^2(\o\X Y_2)}\leq C\e\|\p_1\phi\|_{L^\infty(\o)}.
	\end{aligned}
	$$ By construction of $\phi_\ed$, we have 
$$
\begin{aligned}
	{\p\Ted^{(1)}(\phi_\ed)\over \p y_2}(x',y')=\phi\Big(\e\Big[{x'\over \e}\Big]+{\e\over 2}\Ge_2\Big)-\phi\Big(\e\Big[{x'\over \e}\Big]-{\e\over 2}\Ge_2\Big),\quad \hbox{for a.e. } (x',y')\in \o \X \Big({\d\over 2\e}, 1- {\d\over 2\e}\Big)\X \fI,\\
	{\p\Ted^{(2)}(\phi_\ed)\over \p y_1}(x',y')=\phi\Big(\e\Big[{x'\over \e}\Big]+{\e\over 2}\Ge_1\Big)-\phi\Big(\e\Big[{x'\over \e}\Big]-{\e\over 2}\Ge_1\Big),\quad \hbox{for a.e. } (x',y')\in \o \X \fI  \X \Big({\d\over 2\e}, 1- {\d\over 2\e}\Big).
\end{aligned}
$$ 
As a consequence of the above estimates and equalities we obtain the convergences \eqref{EQ85}$_{2,3}$.
\end{proof}
\begin{remark} In the above lemma, if $\phi$ vanishes on $\gamma$ we construct $\phi_\ed$ by setting
	$$\phi_{pq}=0\qquad \hbox{for any }\; (p,q)\in \{-1,0\}\X\{0,\ldots,n_\e\}\cup\{0,\ldots, n_\e\}\X\{-1,0\}$$ in order to obtain a test function vanishing in the neighborhood of $\gamma$. The results of Lemma \ref{lemAp5} remain valid.
\end{remark}
\begin{lemma}\label{lemAp7} To any $\Phi\in W^{2,\infty}(0,L)$ we associate a sequence of functions $\Phi_\ed \in\ds W^{2,\infty}\Big(-{\d\over 2},L+{\d\over 2}\Big)$such that
\begin{equation}\label{B5}
	\begin{aligned}
		\Phi_\ed &\to \Phi\quad &&\text{strongly in $H^1(0,L)$},\\
		{\d\over \e}\Ted^{(2)}(d^2\Phi_\ed) &\to d^2\Phi\quad &&\text{strongly in $L^2(\o\X Y_2)$},\\
		{\d\over \e}\Ted^{(1)}(d^2\Phi_\ed) &\to 0\quad &&\text{strongly in $L^2(\o\X Y_1)$}.
	\end{aligned}
\end{equation}
Moreover, if $\Phi=d\Phi=0$ a.e. on $(0,l)$, then we have $\Phi_\ed=d\Phi_\ed=0$ a.e on $(-{\d\over 2},l)$.
\end{lemma}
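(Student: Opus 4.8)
The plan is to build $\Phi_\ed$ so that it is affine on every hard interval $\big(p\e+\tfrac{\d}{2},(p+1)\e-\tfrac{\d}{2}\big)$, with the whole curvature of $\Phi_\ed$ concentrated on the soft intervals of width $\d$ centred at the nodes $p\e$; this is exactly what the admissibility condition $d^2\Phi_\ed=0$ on the hard cells demands once one sets $\fV_{\ed,3}=\fF_\ed(x_1)+\fG_\ed(x_2)$ in Lemma~\ref{Lem81}. Concretely, I would first extend $\Phi$ across $x_1=0$ and $x_1=L$ by reflection, choose the nodal slopes $\beta_p\doteq d\Phi(p\e)$ for $0\le p\le N_\e$ (with $\beta_{-1}\doteq\beta_0$ and $\beta_{N_\e}\doteq\beta_{N_\e-1}$), and define $d\Phi_\ed$ from the family $\{\beta_p\}$ by the one–dimensional analogue of the interpolation formula \eqref{Dphi}: $d\Phi_\ed\equiv\beta_p$ on $\big(p\e+\tfrac{\d}{2},(p+1)\e-\tfrac{\d}{2}\big)$ and $d\Phi_\ed$ affine on $\big(p\e-\tfrac{\d}{2},p\e+\tfrac{\d}{2}\big)$, interpolating $\beta_{p-1}$ and $\beta_p$. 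Then I put $\Phi_\ed(x_1)\doteq\Phi(0)+\int_0^{x_1}d\Phi_\ed(t)\,dt$. By construction $d\Phi_\ed$ is continuous and piecewise affine, hence $\Phi_\ed\in W^{2,\infty}\big(-\tfrac{\d}{2},L+\tfrac{\d}{2}\big)$, and $d^2\Phi_\ed$ is the piecewise constant function equal to $0$ on every hard interval and to $\tfrac1\d(\beta_p-\beta_{p-1})=\tfrac1\d\int_{(p-1)\e}^{p\e}d^2\Phi$ on the soft interval around $p\e$; in particular $\|d^2\Phi_\ed\|_{L^\infty}=O(\e/\d)$.

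For \eqref{B5}$_1$: since $\Phi\in W^{2,\infty}$, $d\Phi$ is Lipschitz, so on each hard interval $|d\Phi_\ed-d\Phi|=|d\Phi(p\e)-d\Phi(x_1)|\le\e\|d^2\Phi\|_{L^\infty}$, and on each soft interval $d\Phi_\ed$ is a convex combination of two such nearby nodal values; hence $\|d\Phi_\ed-d\Phi\|_{L^\infty(0,L)}\le 2\e\|d^2\Phi\|_{L^\infty}$, and integrating with $\Phi_\ed(0)=\Phi(0)$ gives $\|\Phi_\ed-\Phi\|_{L^\infty(0,L)}\le 2L\e\|d^2\Phi\|_{L^\infty}$, so $\Phi_\ed\to\Phi$ in $W^{1,\infty}(0,L)\hookrightarrow H^1(0,L)$. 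For the last assertion, if $\Phi=d\Phi=0$ a.e.\ on $(0,l)$ then $d^2\Phi=0$ on $(0,l)$, so $\beta_p=0$ for $p\e\le l$ and $d^2\Phi_\ed=0$ on every soft interval around such a node; since $l=n_\e\e$ this forces $d\Phi_\ed\equiv0$, hence $\Phi_\ed\equiv0$, on $\big(-\tfrac{\d}{2},l+\tfrac{\d}{2}\big)\supset\big(-\tfrac{\d}{2},l\big)$.

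For \eqref{B5}$_2$ and \eqref{B5}$_3$, observe that $d^2\Phi_\ed$ depends only on $x_1$. By the definition of $\Ted^{(2)}$, for $x_1$ in the cell $(p\e,(p+1)\e)$ the first argument $\e[x_1/\e]+\d y_1=p\e+\d y_1$ runs over the soft interval around $p\e$ as $y_1\in\fI$, so $\Ted^{(2)}(d^2\Phi_\ed)(x',y)=\tfrac1\d\int_{(p-1)\e}^{p\e}d^2\Phi$ there, independently of $y$; thus $\tfrac{\d}{\e}\Ted^{(2)}(d^2\Phi_\ed)$ is (the lift to $\o\X Y_2$ of) the piecewise–constant cell–average of $d^2\Phi$ up to an $O(\e)$ translation, which converges to $d^2\Phi$ strongly in $L^2(0,L)$, hence in $L^2(\o\X Y_2)$. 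On the other hand $\Ted^{(1)}$ resolves a whole period: for $x_1\in(p\e,(p+1)\e)$ and $y_1\in(0,1)$ the argument $p\e+\e y_1$ sweeps the full cell, where $d^2\Phi_\ed$ is nonzero only on the two soft sub–intervals, a set of relative measure $\d/\e$, on which it is bounded by $\tfrac1\d\int_{\rm cell}|d^2\Phi|\le\tfrac{\e}{\d}\|d^2\Phi\|_{L^\infty}$; using also \eqref{EQ55}--\eqref{EQ56} for the extra variables, $\big\|\tfrac{\d}{\e}\Ted^{(1)}(d^2\Phi_\ed)\big\|_{L^2(\o\X Y_1)}^2\le C\|d^2\Phi\|_{L^\infty}^2\,\tfrac{\d}{\e}\to0$ by \eqref{As}.

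I expect the delicate points to be essentially bookkeeping: matching the soft/hard interval pattern to the cut–cells of $\Ted^{(\a)}$ and to the set $\gamma$ so that vanishing data really propagates to all of $\big(-\tfrac{\d}{2},l\big)$, and keeping track that the construction yields precisely $\|d^2\Phi_\ed\|_{L^\infty}=O(\e/\d)$ — the rate that makes the $\Ted^{(2)}$–limit finite and nonzero while forcing the $\Ted^{(1)}$–limit to vanish. The one step that is conceptual rather than mechanical is the strong $L^2$ convergence in \eqref{B5}$_2$ for a merely $W^{2,\infty}$ (not $C^2$) datum: there one uses that the cell–averaging operators are contractions on $L^2(0,L)$ converging to the identity on the dense subspace $C([0,L])$, so the convergence extends to all of $L^2\supset L^\infty\ni d^2\Phi$, and the same density argument absorbs the $O(\e)$ translation.
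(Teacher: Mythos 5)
Your proof is correct, and it reaches the lemma by a genuinely different construction than the paper's. The paper does not interpolate nodal slopes: it defines $d\Phi_\ed$ on each soft strip $p\e+\fI_\d$ as the \emph{compressed} profile $z\mapsto d\Phi\big(p\e+\tfrac{\e}{\d}z\big)$ and takes the constant value $d\Phi\big(p\e+\tfrac{\e}{2}\big)$ on the hard intervals (see \eqref{REDef}), then integrates from $\Phi(0)$ as you do. The payoff of that choice is that $\tfrac{\d}{\e}\Ted^{(2)}(d^2\Phi_\ed)(x',y)=d^2\Phi\big(\e[\tfrac{x_1}{\e}]+\e y_1\big)$ is \emph{exactly} the standard $\e$-unfolding of $d^2\Phi$, so the strong convergence in \eqref{B5}$_2$ is immediate, with no averaging step; moreover the identity $d\Phi_\ed\big(p\e\pm\tfrac{\d}{2}\big)=d\Phi\big(p\e\pm\tfrac{\e}{2}\big)$ is used later, in Step~2 of Lemma~\ref{Lem81}, to turn the jump of $\p_1\fV_{\ed,3}$ across a rigid plate into $\int_{p\e-\e/2}^{p\e+\e/2}d^2_{11}\fF$ when checking the non-penetration condition. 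Your variant (nodal slopes $\beta_p=d\Phi(p\e)$, affine across the soft strips) instead produces the one-cell-shifted cell average $\tfrac1\e\int_{(p-1)\e}^{p\e}d^2\Phi$, and you correctly close the argument with the contraction-plus-density (and translation-continuity) argument for $L^\infty\subset L^2$ data; your $\Ted^{(1)}$ smallness estimate, the $W^{1,\infty}$-rate for \eqref{B5}$_1$, and the propagation of vanishing data up to $l=n_\e\e$ are all sound, and all the properties needed downstream ($d^2\Phi_\ed=0$ on the hard intervals, $\|d^2\Phi_\ed\|_{L^\infty}=O(\e/\d)$) also hold for your interpolant, so the analogous NPC bound would still go through. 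One cosmetic point: the even reflection of a $W^{2,\infty}(0,L)$ function across $x_1=0$ or $x_1=L$ is in general only $W^{1,\infty}$ (the reflected first derivative jumps unless it vanishes at the endpoint), but this is harmless since your construction never actually uses values of $\Phi$ outside $[0,L]$; if an extension is wanted, the paper's affine extension $\Phi(0)+t\,\Phi'(0)$, $\Phi(L)+(t-L)\Phi'(L)$ is the clean choice.
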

\begin{proof} First, we extend $\Phi$ to a function belonging to $\ds W^{2,\infty}\Big(-{\d\over 2},L+{\d\over 2}\Big)$ by setting
	$$\left\{\begin{aligned}
		&\Phi(0)+t\Phi'(0),\quad &&t\in(-{\d\over 2},0],\\
		&\Phi(t),\quad &&t\in(0,L),\\
		&\Phi(L)+(t-L)\Phi'(L),\quad && t\in[L,L+{\d\over 2}).
	\end{aligned}\right.
	$$ This extension of $\Phi$ is still denoted $\Phi$.
	
	Now, we define  the function $\Phi_\ed$. For that first we set 
	\begin{equation}\label{REDef}
			d\Phi_\ed(p\e+z)=\left\{\begin{aligned}
		&d\Phi\left(p\e+{\e\over \d}z\right)&& z\in\fI_\d,\qquad &&p\in\{0,\ldots,N_\e\},\\
		&d\Phi\left(p\e+{\e\over 2}\right)  && z\in\left({\d\over 2},\e-{\d\over 2}\right),\quad &&p\in\{0,\ldots,N_\e-1\}
			\end{aligned}\right.
\end{equation} and
$$\Phi_\ed(z)=\Phi(0)+\int_0^{z}d\Phi_\ed(t)\,dt,\qquad z\in \left(-{\d\over 2},L+{\d\over 2}\right).$$
We have $\ds\Phi_\ed\in W^{2,\infty}\Big(-{\d\over 2},L+{\d\over 2}\Big)$ and 
\begin{equation}\label{B7}
	\begin{aligned}
		&\|d\Phi_\ed\|_{L^\infty(-{\d\over 2},L+{\d\over 2})}\leq \|d\Phi\|_{L^\infty(-{\d\over 2},L+{\d\over 2})},\quad \|d^2\Phi_\ed\|_{L^\infty(-{\d\over 2},L+{\d\over 2})}\leq {\e\over \d}\|d^2\Phi\|_{L^\infty(-{\d\over 2},L+{\d\over 2})},\\
		&\|d\Phi_\ed-d\Phi\|_{L^2(-{\d\over 2},L+{\d\over 2})}\leq C\e,\quad \|\Phi_\ed-\Phi\|_{L^2(-{\d\over 2},L+{\d\over 2})}\leq C\e.
	\end{aligned}
\end{equation}
For the last estimate, we use $1$D anchored Poincar\'e inequality, since $\Phi_\ed(0)=\Phi(0)$ and $0\in (-{\d\over 2},L+{\d\over 2})$.
So, we get \eqref{B5}$_2$. We also have
\begin{equation}\label{B9}
	\begin{aligned}
		 &d^2\Phi_\ed(p\e+z_1)=0,\quad  z_1\in \left({\d\over 2},\e-{\d\over 2}\right),\quad p\in\{0,\ldots,N_\e-1\},\\
		 &d^2\Phi_\ed(p\e+z_1)={\e\over \d}d^2\Phi\left(p\e+{\e\over \d}z_1\right),\quad  z_1\in\fI_\d,\quad p\in\{0,\ldots,N_\e\},
	\end{aligned}
\end{equation}
Below, we consider the function $x_1\longmapsto \Phi_\ed(x_1)$ (variable $x_1$) we have due to the definition of $\Ted^{(\a)}$ and \eqref{B9}$_{2,3}$
$$\begin{aligned}
	{\d\over \e}\Ted^{(2)}(d^2_{11}\Phi_\ed)(x',y')&=d_{11}^2\Phi(p\e+\e y_1),\quad &&\text{for a.e. $(x',y')\in\o\X\fI\X\left({\d\over 2\e},1-{\d\over2\e}\right)$},\\
	{\d\over \e}\Ted^{(1)}(d^2_{11}\Phi_\ed)(x',y')&=0,\quad &&\text{for a.e. $(x',y')\in\o\X\left({\d\over 2\e},1-{\d\over2\e}\right)\X\fI$},
\end{aligned}$$
which give the strong convergence \eqref{B5}$_{3,4}$.

Observe that if $\Phi=d\Phi=0$ on $(0,l)$, then we have the following
$$\Phi_\ed=d\Phi_\ed=0,\quad \text{a.e. on $\left(-{\d\over 2},l\right)$}.$$
This completes the proof.
\end{proof}

\section{Appendix: Coercivity results}\label{SAppC}

In this subsection, we give a coercivity result required for the proof of the existence of a unique weak solution to the limit unfolded problem.
\begin{lemma}
	For every $(\cV,\wh v^{(1)},\wh v^{(2)})\in \D$, the following estimate hold:
	\begin{equation}\label{CoeL02}
		\|\cV_1\|^2_{H^1(\o)}+\|\cV_2\|^2_{H^1(\o)}+\|\cV_3\|^2_{H^2(\o)}+\sum_{\a=1}^2\|\wh v^{(\a)}\|^2_{L^2(\o\X(0,1)_{y_\a};H^1(\fI^2))}\leq C\sum_{\a=1}^2\|E^{(\a)}(\cV,\wh v^{(\a)})\|^2_{L^2(\o\X\cY_\a)}.
	\end{equation}
\end{lemma}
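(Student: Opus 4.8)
The plan is to run a two-scale Korn argument in three stages: first recover each macroscopic component of $(\cV,\wh v^{(1)},\wh v^{(2)})$ from fibre-averages of the cell strains $E^{(\a)}$, then control the oscillating fields $\wh v^{(\a)}$ by a Korn/Poincar\'e estimate on the fixed cross-section $\fI^2$, and finally upgrade the resulting gradient bounds on $\cV$ to full $H^1$/$H^2$ bounds using the boundary conditions built into $\D_0$. Throughout I write $R\defeq\sum_{\a=1}^2\|E^{(\a)}(\cV,\wh v^{(\a)})\|^2_{L^2(\o\X\cY_\a)}$ for the right-hand side of \eqref{CoeL02}.

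\textbf{Step 1: the macroscopic parts.} The key observation is that $\wh v^{(1)}$ vanishes on $y_2=\pm\frac12$ by \eqref{limW02}$_1$, so $\int_{\fI}\p_{y_2}\wh v^{(1)}_i\,dy_2=0$; combined with $\int_{\fI}y_3\,dy_3=0$ and $\int_{\fI}y_3^2\,dy_3=\frac1{12}$, this makes the cell-corrector terms and the $y_3$-bending term disappear from fibre averages. Averaging the relevant entries of $E^{(1)}$ over $\cY_1$ (and, for the bending term, first multiplying by $y_3$) gives the a.e. identities
\[
\tfrac12\,\p_2\cV_1=\int_{\cY_1}E^{(1)}_{12}\,dy,\qquad \p_2\cV_2=\int_{\cY_1}E^{(1)}_{22}\,dy,\qquad \p^2_{22}\cV_3=-12\int_{\cY_1}y_3\,E^{(1)}_{22}\,dy,
\]
and symmetrically, from $E^{(2)}$, $\tfrac12\p_1\cV_2=\int_{\cY_2}E^{(2)}_{12}\,dy$, $\p_1\cV_1=\int_{\cY_2}E^{(2)}_{11}\,dy$, $\p^2_{11}\cV_3=-12\int_{\cY_2}y_3\,E^{(2)}_{11}\,dy$. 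By Cauchy--Schwarz and integration in $x'$ this yields $\|\nabla\cV_1\|^2_{L^2(\o)}+\|\nabla\cV_2\|^2_{L^2(\o)}+\|\p^2_{11}\cV_3\|^2_{L^2(\o)}+\|\p^2_{22}\cV_3\|^2_{L^2(\o)}\le CR$; and since $\cV_3\in\GH^2(\o)$ splits as $\cV_3(x_1,x_2)=\fF(x_1)+\fG(x_2)$, automatically $\p^2_{12}\cV_3=0$, so the full Hessian of $\cV_3$ is controlled by $CR$.

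\textbf{Step 2: the microscopic parts.} Fix $\a=1$ and, for a.e. $(x',y_1)$, regard $\wh v^{(1)}(x',y_1,\cdot,\cdot)$ as a field in $H^1(\fI^2)^3$ in the cross-section variables $(y_2,y_3)$ that vanishes on the faces $y_2=\pm\frac12$. The entries $E^{(1)}_{12},E^{(1)}_{13}$ give $\p_{y_2}\wh v^{(1)}_1$ and $\p_{y_3}\wh v^{(1)}_1$ up to the macroscopic term $\p_2\cV_1$, so the Poincar\'e inequality on $\fI^2$ controls $\|\wh v^{(1)}_1(x',y_1,\cdot)\|_{H^1(\fI^2)}$; and $E^{(1)}_{22},E^{(1)}_{23},E^{(1)}_{33}$ are exactly the two-dimensional symmetrized-gradient components of $(\wh v^{(1)}_2,\wh v^{(1)}_3)$ up to the macroscopic terms $\p_2\cV_2$ and $y_3\p^2_{22}\cV_3$, so Korn's inequality on $\fI^2$ with homogeneous Dirichlet data on $y_2=\pm\frac12$ controls $\|(\wh v^{(1)}_2,\wh v^{(1)}_3)(x',y_1,\cdot)\|_{H^1(\fI^2)}$ (using $|y_3|\le\frac12$ on $\fI$). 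Squaring, integrating over $\o\X(0,1)_{y_1}$, and inserting Step 1 gives $\|\wh v^{(1)}\|^2_{L^2(\o\X(0,1)_{y_1};H^1(\fI^2))}\le CR$; the same argument with the roles of $y_1$ and $y_2$ exchanged handles $\wh v^{(2)}$.

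\textbf{Step 3: full norms of $\cV$ and conclusion.} Since $\cV_1,\cV_2\in H^1_\gamma(\o)$ with $\gamma$ of positive surface measure, the Poincar\'e inequality gives $\|\cV_\a\|_{H^1(\o)}\le C\|\nabla\cV_\a\|_{L^2(\o)}\le CR^{1/2}$. For $\cV_3=\fF+\fG$, since $\fF$ (being $C^1$, hence also $\fF'$) vanishes on $(0,l)$ and likewise for $\fG$, two applications of the one-dimensional Poincar\'e inequality give $\|\fF\|_{H^2(0,L)}\le C\|\fF''\|_{L^2(0,L)}$ and $\|\fG\|_{H^2(0,L)}\le C\|\fG''\|_{L^2(0,L)}$; because $\p^2_{11}\cV_3=\fF''$, $\p^2_{22}\cV_3=\fG''$ and $\p^2_{12}\cV_3=0$, this gives $\|\cV_3\|^2_{H^2(\o)}\le C\big(\|\p^2_{11}\cV_3\|^2_{L^2(\o)}+\|\p^2_{22}\cV_3\|^2_{L^2(\o)}\big)\le CR$. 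Adding the bounds of Steps 1--3 yields \eqref{CoeL02} with $C=C(\o)$. The only step that is more than bookkeeping is the $\fI^2$-Korn inequality of Step 2, but $\fI^2$ is a fixed Lipschitz square and the relevant field vanishes on a whole face, so this is the classical Korn inequality with partial homogeneous Dirichlet condition with a universal constant; there is no uniformity issue in $\e,\d$, as those parameters have already been sent to zero.
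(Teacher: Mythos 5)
Your proof is correct and follows essentially the same route as the paper: what the paper calls a ``straightforward calculation'' is precisely your separation of the macroscopic and cell contributions using the vanishing of $\wh v^{(\a)}$ on the faces $y_{3-\a}=\pm\tfrac12$ (fibre averages with weights $1$ and $y_3$), followed by Korn/Poincar\'e on the fixed cross-section $\fI^2$ and Poincar\'e on $\o$ exploiting the boundary conditions and the splitting $\cV_3=\fF+\fG$ built into $\D_0$. Your write-up just makes explicit the averaging identities and the final Poincar\'e steps that the paper leaves implicit.
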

\begin{proof}
	Since, we have
	$$\wh v^{(1)}=0,\quad\text{a.e. on $\o\X(0,1)\X\{\pm{1\over 2}\}\X\fI$},\quad \wh v^{(2)}=0,\quad\text{a.e. on $\o\X\{\pm{1\over 2}\}\X(0,1)\X\fI$},$$
	after a straightforward calculation we get
	$$
	\begin{aligned}
	&\|\nabla \fU_1\|^2_{L^2(\o)}+\|\nabla \fU_2\|^2_{L^2(\o)}+\|\p^2_{11}\cV_3\|^2_{L^2(\o)}+\|\p^2_{22}\cV_3\|^2_{L^2(\o)}+\|\p_{y_2}\wh{u}^{(1)}_1\|^2_{L^2(\o\X \cY_1)}+\|\p_{y_3}\wh{u}^{(1)}_1\|^2_{L^2(\o\X \cY_1)}\\
	+&\|\p_{y_1}\wh{u}^{(2)}_2\|^2_{L^2(\o\X \cY_2)}+\|\p_{y_3}\wh{u}^{(2)}_2\|^2_{L^2(\o\X \cY_2)} +\|e_{y,22}(\wh{u}^{(1)})\|^2_{L^2(\o\X \cY_1)}+\|e_{y,23}(\wh{u}^{(1)})\|^2_{L^2(\o\X \cY_1)}\\
	+&\|e_{y,33}(\wh{u}^{(1)})\|^2_{L^2(\o\X \cY_1)}+\|e_{y,11}(\wh{u}^{(2)})\|^2_{L^2(\o\X \cY_2)}+\|e_{y,13}(\wh{u}^{(2)})\|^2_{L^2(\o\X \cY_2)}+\|e_{y,33}(\wh{u}^{(2)})\|^2_{L^2(\o\X \cY_2)}\\
&\leq C\sum_{\a=1}^2\|E^{(\a)}(\cV,\wh v^{(\a)})\|^2_{L^2(\o\X\cY_\a)}
	\end{aligned}
	$$ from which we easily deduce \eqref{CoeL02}.
\end{proof}

\end{document}